\newlist{thmlist}{enumerate}{1}
\setlist[thmlist]{label=(\roman{thmlisti}), ref=\thethm.(\roman{thmlisti}),noitemsep}
\newcommand{\EE}{\mathbb E }
\newcommand{\ti}{(t) }
\newcommand{\limg}{\lim_{\gamma \rightarrow 0}}
\newcommand{\MM}[1]{M_{#1}^\gamma(\phi)}
\newcommand{\lfsmall}[1]{\left( #1 \right )}
\newcommand{\q}{{\bf q}}
\newcommand{\vsum}[1]{\langle {\bf 1}, {\bf #1} \rangle}
\theoremstyle{remark}
\newtheorem{remark}{Remark}
\newtheorem{claim}{Claim}
\let\@authorsaddresses\@empty
\renewcommand\footnotetextcopyrightpermission[1]{}
\def\@copyrightspace{\relax}
\begin{document}
\title{Join-the-Shortest Queue with Abandonment: Critically Loaded and Heavily Overloaded Regimes}


\author{Prakirt R. Jhunjhunwala}
\email{prakirt@gatech.edu}
\affiliation{%
  \institution{Georgia Institute of Technology}
  \streetaddress{North Avenue}
  \city{Atlanta}
  \state{Georgia}
  \country{USA}
  \postcode{30332}
}

\author{Martin Zubeldia}
\email{zubeldia@umn.edu}
\affiliation{%
  \institution{University of Minnesota}
  \streetaddress{207 Church St SE}
  \city{Minneapolis}
  \state{Minnesota}
  \country{USA}
  \postcode{55455}
}

\author{Siva Theja Maguluri}
\email{siva.theja@gatech.edu}
\affiliation{%
  \institution{Georgia Institute of Technology}
  \streetaddress{North Avenue}
  \city{Atlanta}
  \state{Georgia}
  \country{USA}
  \postcode{30332}
}

\begin{abstract}
We consider a load balancing system comprised of a fixed number of single server queues, operating under the well-known Join-the-Shortest Queue policy, and where jobs/customers are impatient and abandon if they do not receive service after some (random) amount of time. In this setting, we characterize the centered and appropriately scaled steady state queue length distribution (hereafter referred to as limiting distribution), in the limit as the abandonment rate goes to zero at the same time as the load either converges to one or is larger than one.

Depending on the arrival, service, and abandonment rates, we observe three different regimes of operation that yield three different limiting distributions. The first regime is when the system is underloaded and its load converges relatively slowly to one. In this case, abandonments do not affect the limiting distribution, and we obtain the same exponential distribution as in the system without abandonments. When the load converges to one faster, we have the second regime, where abandonments become significant. 
Here, the system undergoes a phase transition, and the limiting distribution is a truncated Gaussian. Further, the third regime is when the system is heavily overloaded, and so the queue lengths are very large. In this case, we show the limiting distribution converges to a normal distribution. 

To establish our results, we first prove a weaker form of State Space Collapse by providing a uniform bound on the second moment of the (unscaled) perpendicular component of the queue lengths, which shows that the system behaves like a single server queue. We then use exponential Lyapunov functions to characterize the limiting distribution of the steady state queue length vector.
\end{abstract}




\maketitle
\thispagestyle{plain}

\section{Introduction}

With the ever-increasing human dependence on the internet, the past decade has seen an explosion in the number of data centers with up to hundreds of thousands of servers in each one of them. Increasingly more businesses and industries are dependent on these data centers to perform their compute-intensive tasks. As competition fuels business dynamics, there is a pressing need to make information exchange time-efficient. A five millisecond delay on Amazon’s website, can result in a loss of up to \$4 billion per millisecond in revenues \cite{yoav2019amazon}. If the delays become large, customers may abandon the system and move to other platform. The phenomenon of job abandonment is apparent in many systems. For example, customer abandonment \cite{garnett_designing_2002} plays a key role in the performance of call centers \cite{koole_queueing_2002},
where customers have limited patience. In food delivery systems or Perishable Inventory Management \cite{nahmias1982perishable}, the delivery or demand service needs to be fulfilled before the item perishes. A similar phenomenon is observed in Quantum Communication Networks \cite{yurke1984quantum, quantum2-towsley}, where entangled qubits can become useless after some time due to quantum decoherence \cite{schlosshauer2019quantum}. Therefore, understanding the effect of abandonment on the performance of general Stochastic Processing Networks \cite{williams_survey_SPN} is crucial to characterize the quality of service. 

Motivated by applications to data centers, in this paper we analyse the effect of abandonment on a Load Balancing system \cite{khiyaita2012load} operating under the Join-the-Shortest Queue (JSQ) policy. One of the most important performance metrics in Load Balancing systems is the queue lengths, which informs design decisions such as buffer dimensioning, and serves as a proxy for the delay. However, finding closed form expressions for the queue lengths can be quite challenging in general, so one often looks at the behaviour of such systems in various limiting regimes. The Heavy traffic regime \cite{kingman1962_brownian, bramson2001heavy} is one 
such regime that is well-studied in the literature 
especially 
in context of Load Balancing systems. With that in mind, we aim to analyze the steady state queue length distribution in a load balancing system with abandonments operating either in heavy traffic regimes, or in overload.

We consider a JSQ system with abandonments (dubbed JSQ-A), and the question we pose is the following: \textit{What is the limiting distribution of the queue length vector when the abandonment rate goes to zero in heavy traffic or overload?} 
We show that the limiting behavior of JSQ system is similar to that of a single-server queue with abandonments. Our main contributions are the following. 

\subsection{Main Contributions}
In this paper, we consider a parameterized family (with parameter $\gamma$) of discrete-time load balancing systems with a fixed number 
of single-server FIFO queues with total processing rate $\mu_\gamma$. Jobs arrive as an i.i.d. process of rate $\lambda_\gamma$, and all jobs that arrive in the same time slot join a shortest queue immediately upon arrival. Moreover, we assume that each job in a queue abandons the system in each time slot with probability $\gamma$, independently of other jobs. For this system, we characterize the limiting distribution of the (appropriately scaled) steady state queue length vector as the abandonment probability $\gamma$ goes to zero, in three different regimes. \\

\begin{itemize}
    \item[1.] \textbf{Classic heavy traffic regime:}  In the first regime, the load $\rho_\gamma = \lambda_\gamma/\mu_\gamma$ is such that the slack $1-\rho_\gamma$ is positive, and of order $\gamma^\alpha$ for some $\alpha \in (0,1/2)$. Here, the system is  underloaded 
    and we have $1-\rho_\gamma >> \sqrt{\gamma}$, i.e., the heavy-traffic parameter is large relative to abandonment rate. 
    For this regime, we show that the limiting distribution of JSQ-A is the same as the limiting distribution of JSQ without abandonments. In particular, we show that the scaled steady state queue length $\gamma^\alpha\q$ converges in distribution to an exponential random variable, i.e., that
    \begin{equation*}
        \gamma^\alpha\q \stackrel{d}{\rightarrow} \text{Exp}(\cdot) \times \bf{1},
    \end{equation*}
    where $\bf{1}$ is an $n$-dimensional vector of all ones.
    \item[2.] \textbf{Critical heavy traffic regime:} In the second regime, the load $\rho_\gamma$ is such that the slack $|1-\rho_\gamma|$ is of order $\gamma^\alpha$ for some $\alpha\in [1/2,\infty)$. Here, we allow the system to be underloaded, critically loaded, and overloaded. For this regime, we show that the limiting distribution of $\sqrt{\gamma}\q$ undergoes a phase transition, and changes from exponential to a truncated normal distribution. That is, we show that
    \begin{equation*}
        \sqrt{\gamma}\q \stackrel{d}{\rightarrow} \text{Truncated-Normal}(\cdot,\cdot) \times \bf{1},
    \end{equation*}
    where the mean of the underlying normal distribution is negative if $\rho_\gamma <1$ and $\alpha=1/2$, zero if $\rho_\gamma =1$ or $\alpha>1/2$, and positive if $\rho_\gamma>1$ and $\alpha=1/2$.
    \item[3.] \textbf{ Heavily overloaded regime:} In the third regime, the load $\rho_\gamma$ is such that the slack $1-\rho_\gamma$ is negative and of order $\gamma^\alpha$ for some $\alpha\in[0,1/2)$. Here, the system is heavily overloaded, with $\rho_\gamma-1>>\sqrt{\gamma}$. For this regime, we show that the scaled queue length $\sqrt{\gamma}\q$ diverges as $\gamma\to 0$, and that the distribution of the centered and appropriately scaled queue length converges to a normal distribution of zero mean. That is, we show that
    \begin{equation*}
        \sqrt{\gamma}(\q-\EE[\q]) \stackrel{d}{\rightarrow} \text{Normal}(0,\cdot) \times \bf{1}.
    \end{equation*}
\end{itemize}

In all the above regimes, in addition to the weak convergence result, we also establish convergence of moments (see Lemma \ref{COR: MGF_CONVERGENCE}). In essence, we show convergence of the moment generating functions (MGFs), which implies convergence in distribution as well as the moments. 

The above results can be interpreted as follows in the special case of 
a SSQ-A system. The service process and the abandonment process are similar to the discrete time analogue of an $M/M/1$ queue and an $M/M/\infty$ queue, respectively. Further, the ratio between the number of customers 
served by these analogous $M/M/1$ and $M/M/\infty$ queues (i.e., customers departing due to service and abandonment respectively) depends on the load and abandonment rates. 
In the classic heavy traffic regime, the 
behavior is completely dominated by the service process alone, and the abandonments are insignificant. Therefore, the 
overall behavior is similar to that of an $M/M/1$ queue. As we move to the critical heavy traffic regime, the abandonment, i.e., the $M/M/\infty$ behavior starts playing a significant role, and so, we observe a phase transition. Finally in the heavily overloaded regime, abandonment plays the primary role, and so the behavior is analogous to that of an $M/M/\infty$ queue. 
More details and precise mathematical definitions of the regimes are provided in Section \ref{sec: regimes}, and  details regarding the limiting distribution for JSQ-A are provided in Theorem \ref{THM: JSQ_LIMIT_DIS}. A pictorial representation of the results is given in Table \ref{tbl: dist}.


\newcolumntype{P}[1]{>{\centering\arraybackslash}p{#1}}
\newcolumntype{N}{@{}m{0pt}@{}}
\begin{table}[ht]
     \begin{center}
     \small\addtolength{\tabcolsep}{-3pt}
     \begin{tabular}{| P{2.6cm} | P{7.8cm} | P{2.8cm} | N }
     \hline
     \rule{0pt}{11pt}
     Classic-HT
     & Critical-HT   &  Heavily Overloaded   &  \\
     \hline
     \rule{0pt}{12pt}
     $\rho_\gamma<1, \alpha \in \big(0,\frac{1}{2}\big)$ & (a) $\rho_\gamma< 1, \alpha =\frac{1}{2}$, (b) $\rho_\gamma=1 \text{ or } \alpha >\frac{1}{2}$, (c) $\rho_\gamma>1, \alpha =\frac{1}{2}$ & $\rho_\gamma>1, \alpha \in \big[0,\frac{1}{2}\big)$ &\rule{0pt}{18pt} \\
     \hline
      $\gamma^\alpha \q \stackrel{d}{\rightarrow}$ & $\sqrt{\gamma}\q \stackrel{d}{\rightarrow}$ & $\sqrt{\gamma}(\q-\EE[\q]) \stackrel{d}{\rightarrow}$ &\rule{0pt}{16pt} \\ 
      \hline
     \includegraphics[width=0.16\textwidth]{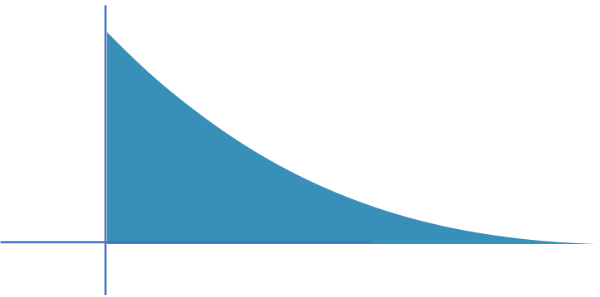}
      & \includegraphics[width=0.16\textwidth]{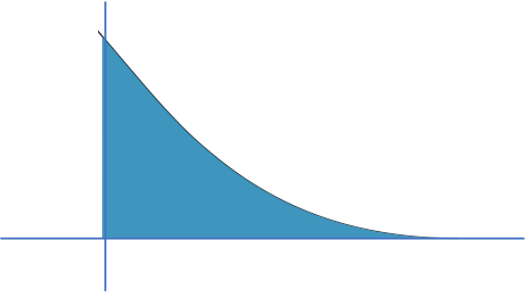} \includegraphics[width=0.16\textwidth]{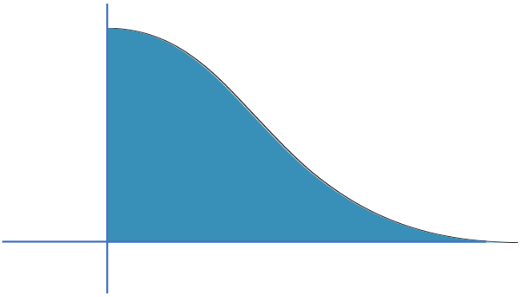} \includegraphics[width=0.16\textwidth]{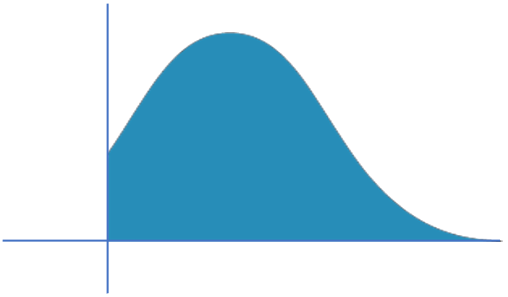}
      & \vspace{-34pt}
      \includegraphics[width=0.2\textwidth]{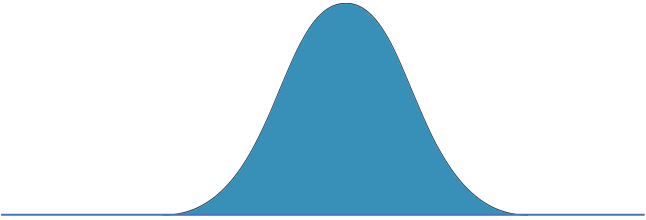} &
      \\ 
      $\text{Exp}(\cdot) \times \bf{1}$ & $\text{Truncated-Normal}(\cdot,\cdot) \times \bf{1}$ & $\text{Normal}(0,\cdot) \times \bf{1}$ & \\ \hline
      \end{tabular}
      \vspace{1mm}
      \caption{Limiting steady state distribution of the scaled queue length vector in the three regimes, as the abandonment probability $\gamma$ goes to zero. }
      \label{tbl: dist}
      \end{center}
\end{table}

A key step in characterizing the limiting distributions is to show that JSQ-A exhibits a \textit{State Space Collapse} (SSC), where the queue length vector collapses to the subspace where all of its coordinates are equal. This implies that the limiting behaviour of JSQ-A is similar to that of a SSQ-A. In this paper, we show a form of SSC where the second moment of the difference between the individual queue lengths and their average (i.e., the ``perpendicular component'' in SSC parlance) is uniformly bounded for all $\gamma>0$, even though the queue lengths diverge as $\gamma\rightarrow 0$. 
The formal statement for the SSC is provided in Theorem \ref{LEM: JSQ_SSC}.
While such SSC results for JSQ in the absence of abandonement are known, 
to the best of our knowledge, ours is the first work to show SSC for a JSQ-A. The SSC result plays a crucial role in 
characterizing its limiting distribution.

\subsection{Summary of Our Techniques}

Our key approach for proving the results is setting the drift of an exponential Lyapunov function (test function) to zero in steady-state, an approach dubbed the Transform Method that was first introduced in \cite{hurtado2020transform}. We work with the exponential of an 
appropriately centered and scaled steady state queue length, and perform a Lyapunov drift analysis to obtain our results. Our approach comprises three main components, each of which presents several technical challenges that arise due to the abandonments. These challenges set our work apart from existing literature. We mention some of the fundamental challenges below.

\begin{enumerate}
    \item \textbf{Boundedness of the MGF:} A primary requirement of setting the drift to zero 
    is that the expected value of the chosen test function should exist and be finite. Further, such a result has to hold even asymptotically (here, as $\gamma\rightarrow 0$). Since we use exponential test functions, we need the boundedness of the MGF of the steady state queue length. 
    While an upper bound on the MGF of steady state queue length of the JSQ system was presented in \cite{hurtado2020transform}, it is not bounded asymptotically, and it is not applicable in presence of abandonments. 
    In order to handle abandonments, we prove that the MGF of the (centered and appropriately scaled) steady state queue length is bounded in an interval around $0$, uniformly for all $\gamma>0$. 
    
    This turns out to be a major technical challenge, especially in the heavily overloaded regime, where the 
    abandonment rate diverges since the
    scaled mean queue length itself diverges as $\gamma \rightarrow 0$. 
    We address this challenge as follows: We first identify the rate at which the mean diverges. Based on this rate, we  couple the abandonment process with two processes
    where the abandonments are truncated. 
    We finally use Lyapunov drift arguments on the coupled processes to 
    obtain a uniform bound on the MGF. More details on the boundedness of the MGF are provided in Lemma \ref{lem: jsq_mgfexist}.
    
    \item \textbf{State Space Collapse:} 
    In the absence of abandonments, SSC for the JSQ system was established using the drfit arguments in \cite{atilla} (or in \cite{hurtado2020transform}), where the authors show that all the moments of a well designed scaled perpendicular component (the difference between the queue length vector and its lower dimensional representation) converge to zero. However, their technique does not extend to the case when there are abandonments. 
    In this work, as presented in Theorem \ref{LEM: JSQ_SSC}, we show that JSQ-A exhibits a form of SSC. A major challenge here, unlike in \cite{atilla}, is that there is an inter-dependency between the perpendicular component and the abandonments. Usually,  SSC is proved using the drift arguments from 
    \cite{hajek1982hitting} which relies on a bounded drift condition. Since the number of abandonments  is proportional to the queue lengths, they are unbounded, and so the bounded drift condition is not satisfied. As such, abandonments create an (unbounded) additional variability in the perpendicular component, which is challenging to deal with. Thus, arguing that the perpendicular component has a negative drift (as in \cite{atilla}) is not enough. To tackle this, we use additional steady state arguments. We first get a handle on the number of abandonments in steady state using the boundedness of MGF result (previous paragraph). 
    We then decouple the variability due to abandonments from the perpendicular component to get a bound on its second moment. 
    
 \item \textbf{Differential equation for MGF:}
The primary approach in our proof is to set the drift of an exponential function to zero, after which we use 
the boundedness of MGF,  SSC, and second order Taylor approximations.  
In the absence of abandonment, one then directly obtains the MGF of the limiting distribution \cite{hurtado2020transform}. 
We similarly obtain the limiting MGF in the classical heavy-traffic regime. However, the other two regimes are more challenging, where
we obtain differential equations on the MGF  (one each for each of the two regimes), where the derivative term appears because the abandonment process acts like an $M/M/\infty$ queue. Thus, the differential equation captures the trade-off between $M/M/1$  and $M/M/\infty$ like behaviors in the JSQ-A system. 
These differential equations involve certain terms that correspond to unused service (idleness of servers due to lack of customers). Characterizing these terms in each of the two regimes leads is a technical  challenge that we overcome. We finally solve these differential equations which gives the MGFs of the limiting distributions, thus proving the results. 
\end{enumerate}

\subsection{Related Work}
The study of the limiting behaviour of the steady state queue length distribution was first done by Kingman \cite{kingman1961_charfunction, kingman1962_brownian}, where he studied a single server queue in heavy traffic. In the past couple of decades, there have been quite a large body of work about the analysis of a SSQ-A with extensions to Multi-Server Queues with Abandonment. A popular tool to analyse such systems is a process level diffusion limit approximation \cite{harrison1978diffusion}, where the process level convergence to a (reflected) Brownian motion \cite{morters2010brownian} is shown. Closest to our results are the results for continuous time SSQ-A in \cite{ward_diffusion_2003}, with follow-up work in \cite{ward_diffusion_2005, reed_approximating_2008}. There, the authors showed that, under a certain scaling of the system parameters, the scaled queue length process converges weakly to an Ornstein-Uhlenbeck (O-U) process \cite{uhlenbeck1930theory}. Since then, there have been some efforts towards studying Multi-server queues with abandonment  \cite{dai_customer_2010, dai_many-server_2010, zeltyn_call_2005, kang2010fluid}. We refer the readers to \cite{dai2012many} for a more detailed survey on many-server queues with abandonment. In a different setting, in \cite{P2P}, the authors study a peer-to-peer content distribution network. There, the peer-to-peer nature of the network makes it behave as a mixture of a single-server and an infinite server queue for the users. In that setting, they show that the steady state distribution of the number of users converges to either the distribution of a single server queue or of a infinite server queue, depending on the (fluid) scaling applied.

In the literature that relies on process level diffusion limits (such as \cite{ward_diffusion_2003}), they would need to show a certain interchange of limits in order to show that the limit of the steady state distribution of the scaled queue length process matches the steady state distribution of the limiting process. This interchange of limits is generally hard to prove, and it is generally not done. In that regard, our work is complementary to the work based on process level diffusion limits in the sense that we directly show the convergence of the steady state distribution. Even without the complication brought by the need to prove interchange of limits, the process level diffusion limit approach is hard to generalize to load balancing systems. With no abandonment it is known that, for many load balancing systems, the queue length process behaves like a SSQ. This phenomenon is called \textit{State Space Collapse} \cite{Williams_state_space, Bramson_state_space, rei_state_space}, as the state space of the multi-dimensional system collapses to a one-dimensional subspace. The idea of state space collapse was first introduced in \cite{foschini1978basic}, where the authors used the diffusion limit approach to show that Join-the-Shortest Queue is heavy traffic optimal for a two-server system. JSQ \cite{Gamarnik_JSQ} has been popular and well-studied example of a load balancing system, commonly used to model queueing systems in supermarkets and Data Centers. 
Under the JSQ policy, the state space collapses to a subspace in which all the queues are equal. A major challenge in generalizing the diffusion limit approach to JSQ-A system is to prove that the state space collapse is still achieved.

Several alternatives to the diffusion limit approach have been developed recently to study the steady state distribution a queueing system in heavy traffic. For example, in \cite{braverman_steins_2017}, the Stein's method (previously introduced in \cite{braverman2017stein}) was used to study a $M/Ph/n+M$ system. Another alternative to a diffusion limit approach is the drift method, which was first introduced in \cite{atilla}. The idea behind the drift method is to choose a Lyapunov function and equate its drift to zero in steady state to derive bounds on meaningful quantities, like on the moments of the queue lengths. As an extension of the drift method, the Transform method was developed in \cite{hurtado2020transform} and applied to different queueing models in \cite{Jhun_heavy_traffic, twosided_ht}, which uses exponential Lyapunov functions to characterize the limiting MGF (or characteristic function) of the queue length process. 

\subsection{Basic Notations}
We use $\mathbb R$ to denote the set of real numbers and $\mathbb R_+$ to denote the set of non-negative real numbers. Also, $\mathbb N$ denotes the set of natural numbers. Similarly, $\mathbb R^d$ denotes the set of $d$-dimensional real vectors. We use bold letters to denote vectors and, for any vector ${\bf x}$, we use $x_i$ to denote the $i^{th}$ coordinate of ${\bf x}$. The inner product of two vectors ${\bf x}$ and ${\bf y}$ in $\mathbb R^d$ is defined as $\langle {\bf x},{\bf y}\rangle = {\bf x}^T {\bf y} = \sum_{i=1}^d x_iy_i$. For any vector $x\in \mathbb R^d$, the $\ell_2$-norm is denoted by $\|{\bf x}\| = \sqrt{\langle {\bf x} , {\bf x}\rangle }$. For any positive natural number $d$, $\bf{1}_d$ and $\bf{0}_d$ denotes the vector of all ones and vector of all zeros of size $d$ respectively. For ease of notation, at most places, we drop the subscript and just use $\bf{1}$ and $\bf{0}$ instead of $\bf{1}_d$ and $\bf{0}_d$.  For any set $A$, $\mathds{1}_A$ denotes the indicator random variable for set $A$. We use $\lfloor \cdot \rfloor$ and $\lceil \cdot \rceil$ to denote the floor and ceiling functions respectively. For any random variable $X$ and real number $\gamma>0$, $\MM{X} = \EE[e^{\sqrt{\gamma}\phi X}]$ denotes the Moment Generating Function (MGF) of $\sqrt{\gamma}X$. For a sequence of random variables $\{X_{\gamma}\}_{\gamma \in \Gamma}$, we use $X_\gamma \stackrel{d}{\rightarrow} X$ to denote that  $\{X_{\gamma}\}_{\gamma \in \Gamma}$ converges in distribution to random variable $X$.\\

\section{System model}
\label{sec: model}

In this section, we present the mathematical model for our JSQ-A system, and then introduce the different limiting regimes in Subsection \ref{sec: regimes}.\\

We consider a discrete time queueing system consisting of $n$ single-server FIFO queues of infinite capacity. We denote the queue length vector at the beginning of time slot $t$ by $\q(t)$, where $q_i(t)$ is the length of the $i^{th}$ queue. Jobs are assumed to be impatient and, at the start of each time slot, any job waiting in the queue chooses to abandon the queue with probability $\gamma>0$, independently of other jobs. The total number of jobs that abandon the queue in time slot $t$ is denoted by ${\bf d}(t)$, with ${\bf d}(t) \sim Bin(\q\ti,\gamma)$.

Jobs arrive to the system as an i.i.d. process $\{a(t)\}_{t\geq 0}$, with $\EE [a(t)] = \lambda_\gamma$, Var$(a(t)) = \sigma_{\gamma,a}^2$, and $a(t)\leq A$ almost surely. When $a(t)$ jobs arrive, they are all dispatched according to the Join-the-Shortest Queue policy to the same queue. That is, they are all dispatched to a queue with index
\begin{equation*}
    i^*(t) \in \underset{i\in\{1,\dots,n\}}{\arg\min} \big\{q_i(t)\big\},
\end{equation*}
where ties are broken uniformly at random. Once the jobs join the queue, the servers serve up to ${\bf s}(t)$ jobs waiting in the queues, with $\EE[{\bf s}(t)] = \boldsymbol{\mu}_\gamma$\footnote{For all discussions and interpretations throughout the paper, we assume that ${\bf \mu}_\gamma$ is of constant order. However, our results hold in more generality, even when ${\bf \mu}_\gamma\to 0$ as $\gamma\to 0$.}, Var$(s_i(t)) = \sigma_{\gamma,i}^2$, and $s_i(t)\leq A$ almost surely, for any $i\in \{1,\dots,n\}$. Similar to the arrival process, the potential services are also independent and identically distributed across time slots. Moreover, these potential services are independent of the queue length vector. 

To describe the queue dynamics, we denote the ``dispatching action" chosen by the dispatcher by ${\bf Y}(t) \in \{0,1\}^n$ such that, $Y_{j}(t) = 1$ for $j = i^*(t) $ and $Y_{j}(t) = 0$ otherwise. We denote the difference between arrivals and potential services as ${\bf c}(t) = a(t) {\bf Y}(t) -{\bf s}(t)$, and for ease  of notation, we denote $c(t)= \langle {\bf c}(t), {\bf 1}\rangle$. Note that, even though ${\bf c}(t)$ depends on the current queue length vector ${\bf q}(t)$, the sum $c(t)= \langle {\bf c}(t), {\bf 1}\rangle$ does not, as $\langle {\bf Y}(t), {\bf 1}\rangle =1$ for any time $t$. Then, $\EE[c(t)] = \nu_\gamma = \lambda_\gamma -\langle \boldsymbol \mu_\gamma, {\bf 1}\rangle$ and Var$(c(t)) = \sigma^2_\gamma = \sigma^2_{\gamma,a} +\langle \bm{\sigma}^2_{\gamma}, {\bf 1} \rangle$. Using this, the queue length process is given by
\begin{equation}
\label{eq: jsq_lindley}
    \q(t+1) = [\q(t) +a(t){\bf Y}(t) -{\bf s}(t) -{\bf d}(t)]^+ = \q(t) +{\bf c}(t) -{\bf d}(t)+ {\bf u}(t) ,
\end{equation}
where the operation $[\,\cdot\,]^+$ above is used because the queue lengths cannot be negative, and the term ${\bf u}(t)$ are the unused services that arise because there might not be enough jobs to serve. Note that, the unused service term $u_i\ti $ is positive only if $q_i(t+1) =0$, which implies $q_i(t+1)u_i(t) =0$ for all $i$, or simply $\langle \q(t+1),{\bf u}(t) \rangle =0$ for any $t>0$. Also, the unused service cannot be larger than the service itself, so we have $0\leq u_i(t)\leq s_i(t) \leq A$.  

We drop the dependence on $t$ to denote the variables in steady state, i.e., $\q$ follows the steady state distribution of the queue length process $\{\q(t)\}_{t=0}^\infty$, ${\bf Y}$ gives the destination of incoming jobs under JSQ for the state $\q$, and ${\bf d} \sim Bin(\q,\gamma)$. We use $\q^+$ to denote the state that comes after $\q$, i.e., $\q^+ = \q + {\bf c} - {\bf d} +{\bf u}$, where ${\bf c} = a {\bf Y} - {\bf s}$, with $a$ and ${\bf s}$ being distributed as $a(t)$ and ${\bf s}(t)$, respectively. The steady state distribution of the random variables $(\q^+,\q,{\bf c},{\bf d},{\bf u})$ depends on the parameter $\gamma$ but, for ease of notation, we do not make this explicit.

When $n=1$, the system described above corresponds to a Single Server Queue with Abandonment (SSQ-A). A major distinction between SSQ-A and JSQ-A is that the dispatcher has only one queue to send jobs to, and so there is no decision to be made. For simplicity and distinction from JSQ-A, we do not use bold face fonts in the notation for SSQ-A.\\

\subsection{Regimes of interest} \label{sec: regimes}

Since the number of abandonments grows with the queue lengths, then the expected number of abandonments is larger than the expected number of arrivals if the queue lengths are large enough. This ensures the stability of the system for any values of $\lambda_\gamma$, $ \mu_\gamma$, and $\gamma>0$. Formally, the stability can be shown by using the Foster-Lyapunov Theorem with $f(q)=q^2$ as the Lyapunov function. Our goal is to find the limit of the steady state distribution of the (appropriately scaled and centered) queue length vector. Depending on how $\nu_\gamma=\lambda_\gamma -\langle \boldsymbol \mu_\gamma, {\bf 1}\rangle$ behaves as a function of $\gamma$, we have three regimes:
\begin{enumerate}
    \item \textit{Classic Heavy Traffic Regime}: The first regime is when there exist constants $C_f >0$, $\alpha\in \lfsmall{0,1/2}$, and $\beta >0$ such that $|\nu_\gamma +C_f\gamma^{ \alpha }| \leq \gamma^{\alpha + \beta}$. In particular, this means that $\nu_\gamma=-C_f\gamma^{ \alpha }+\mathcal{O}\big(\gamma^{\alpha+\beta}\big)$, and thus the the system is in underload (i.e., we have $\nu_\gamma<0$ for all $\gamma$ small enough). We call this regime `\textit{Classic Heavy Traffic}'  because, in this regime, the system is in a light enough heavy traffic so that the queues are not large enough for the drift due to abandonments to be significant. 
    
    \item \textit{Critical Heavy Traffic Regime}: The second regime is when there exist constants $C_c\in \mathbb R$ and $\beta > 0$ such that $|\nu_\gamma -C_c\sqrt{\gamma}| \leq \gamma^{\frac{1}{2}+\beta}$. In particular, this means that $\nu_\gamma=C_c\sqrt{\gamma}+\mathcal{O}\big(\gamma^{\frac{1}{2}+\beta}\big)$, and thus the system can be in underload if $C_c<0$, in overload if $C_c>0$, or in a very heavy traffic if $C_c=0$. We call this regime `{\it Critical Heavy Traffic}' because, in this regime, the system is very close to being critically loaded. 
    
    \item \textit{Heavily Overloaded Regime}: The third and final regime is when there exist constants $C_s>0$, $\alpha \in [0,1/2)$ and $\beta >0$ such that $|\nu_\gamma - C_s \gamma^{\alpha}| \leq \gamma^{\alpha +\beta}$. In particular, this means that $\nu_\gamma=C_s \gamma^{\alpha}+\mathcal{O}\big(\gamma^{\alpha +\beta}\big)$, and thus the system is in overload (i.e., we have $\nu_\gamma>0$ for all $\gamma$ small enough). We call this regime `{\it Heavily Overloaded}' because, in this regime, the system is far enough in overload so that the queue length has to become very large for the (negative) drift due to abandonments to be in equilibrium with the (positive) drift due to arrivals/services.
\end{enumerate}
The results (and more discussions) about each of the regimes are provided in the following section.\\

\section{Main results}
\label{sec: results}

In this section we present our main results for JSQ-A.  In particular, we provide our main result about the limiting queue length distributions in Subsection \ref{sec: jsq_limit_thm_discussion}, and our State Space Collapse result in Subsection \ref{sec: jsq_ssc}.

\subsection{Limiting distribution of JSQ-A}\label{sec: jsq_limit_thm_discussion}

The following theorem provides the limiting distribution of the (appropriately scaled and centered) steady state queue length vector for JSQ-A operating under each of the regimes introduced in Section \ref{sec: regimes}. Its proof is given in Section \ref{sec: proofs}.

\begin{theorem}
\label{THM: JSQ_LIMIT_DIS}
Consider the JSQ-A system as in Section \ref{sec: model}. We have the following results.
\begin{enumerate}[label=(\alph*), ref=\ref{THM: JSQ_LIMIT_DIS}.\alph*]
    \item \label{thm: jsq_fast}  Classic Heavy Traffic: Suppose $ |\nu_\gamma +C_f\gamma^{ \alpha }| \leq \gamma^{\alpha + \beta}$, where $ \alpha\in \lfsmall{0,1/2}$, $\beta >0$, and $C_f >0$ are constants. Let $\Upsilon_f$ be an exponential random variable with mean $\frac{\sigma^2}{2nC_f}$, where $\sigma^2 = \limg \sigma^2_\gamma$. Then, for any $m_1,\dots,m_n \in \mathbb N$ with $\sum_{i=1}^n m_i=m$, we have that as $\gamma\rightarrow 0$,
     \begin{align*}
         \gamma^\alpha {\bf q} \stackrel{d}{\rightarrow} \Upsilon_f \bf 1, && \limg \gamma^{\alpha m} \EE \left[ \prod_{i=1}^n q_i^{m_i} \right] = \EE[\Upsilon_f^m].
     \end{align*}
    \item \label{thm: jsq_crit} Critical Heavy Traffic: Suppose $|\nu_\gamma-C_c\sqrt{\gamma}| \leq \gamma^{\frac{1}{2} + \beta}$, where $\beta >0$, and $C_c \in \mathbb R$ are constants. Let $\Upsilon_c$ be a Gaussian random variable with mean $\frac{1}{n} C_c$ and variance $\frac{1}{2n^2}\sigma^2$. Then, for any $m_1,\dots,m_n \in \mathbb N$ with $\sum_{i=1}^n m_i=m$, we have that as $\gamma\rightarrow 0$,
    \begin{align*}
        \sqrt{\gamma} {\bf q} \stackrel{d}{\rightarrow} [\Upsilon_c  | \Upsilon_c>0] {\bf 1}, && \limg \gamma^{\frac{m}{2}}\EE\left[ \prod_{i=1}^n q_i^{m_i} \right] = \EE[\Upsilon_c^m | \Upsilon_c>0],
    \end{align*}
    where $\Upsilon_c | \Upsilon_c >0$ denotes the random variable $\Upsilon_c$ conditioned on the event $\{\Upsilon_c>0\}$.
    \item \label{thm: jsq_slow} Heavily Overloaded: Suppose $|\nu_\gamma - C_s \gamma^{\alpha}| \leq \gamma^{\alpha +\beta}$, where $\alpha \in [0,1/2)$, $\beta >0$, and $C_s >0$ are constants. Let $\Upsilon_s$ is a Gaussian random variable with zero mean and variance $\frac{1}{2n^2}\bar\sigma^2$, where $\bar\sigma^2 = \lim_{\gamma\rightarrow 0} \sigma^2_\gamma +\nu_\gamma$. Then, for any $m_1,\dots,m_n \in \mathbb N$ with $\sum_{i=1}^n m_i=m$, we have that as $\gamma\rightarrow 0$,
\begin{align*}
     \sqrt{\gamma} \left( {\bf q} - \frac{\nu_\gamma}{n\gamma} \bf 1 \right) \stackrel{d}{\rightarrow} \Upsilon_s \bf 1, &&  \limg \gamma^{\frac{m}{2}}\EE \left[\prod_{i=1}^n  \left( q_i - \frac{\nu_\gamma}{n\gamma}\right)^{m_i} \right] = \EE[\Upsilon_s^m].
\end{align*}
\end{enumerate}
\end{theorem}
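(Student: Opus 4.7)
The plan is to apply the Transform Method of \cite{hurtado2020transform}: for each regime I set the drift of an exponential Lyapunov function of the appropriately scaled (and, in the Heavily Overloaded case, centered) total queue length equal to zero in steady state. Let $Q=\langle{\bf 1},\q\rangle$, and pick $\theta_\gamma=\gamma^\alpha\phi$ in the Classic regime and $\theta_\gamma=\sqrt{\gamma}\phi$ in the other two, with $Q$ replaced by $Q-\nu_\gamma/\gamma$ in the Heavily Overloaded regime; consider $M_\gamma(\phi)=\EE\!\left[e^{\theta_\gamma Q/n}\right]$. The SSC result (Theorem \ref{LEM: JSQ_SSC}) lets me replace individual $q_i$ by $Q/n$ inside expectations up to asymptotically vanishing errors, and the total abandonment $\langle{\bf 1},{\bf d}\rangle$ has conditional mean $\gamma Q$ regardless of SSC, so the aggregate behaves as a single-server queue with abandonments. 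Writing $Q^+-Q=\langle{\bf 1},{\bf c}-{\bf d}+{\bf u}\rangle$ and second-order Taylor expanding $e^{\theta_\gamma(Q^+-Q)/n}$, the identity $\EE[e^{\theta_\gamma Q^+/n}]=\EE[e^{\theta_\gamma Q/n}]$ decomposes into four contributions: a drift term from $\nu_\gamma$; an abandonment term proportional to $\EE[Q\,e^{\theta_\gamma Q/n}]=(n/\theta_\gamma)M_\gamma'(\phi)$; an unused-service term $\EE[\langle{\bf 1},{\bf u}\rangle e^{\theta_\gamma Q/n}]/n$; and a second-order variance term $(\theta_\gamma^2\sigma_\gamma^2/(2n^2))M_\gamma(\phi)$. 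Cubic and higher Taylor remainders are controlled by the uniform MGF bound of Lemma \ref{lem: jsq_mgfexist}.

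Collecting leading-order contributions in $\gamma$ then produces a functional equation for $M(\phi)=\limg M_\gamma(\phi)$ in each regime. In Classic-HT the derivative term is of order $\gamma^{1-\alpha}$ and is subdominant to the $\gamma^{2\alpha}$-scale terms because $\alpha<1/2$; the identity collapses to $(C_f/n-\phi\sigma^2/(2n^2))M(\phi)=C_f/n$, whose unique solution with $M(0)=1$ is the MGF of the exponential $\Upsilon_f$ of part (a). In Critical-HT all four contributions sit at order $\gamma$ and yield the linear first-order ODE
\begin{equation*}
M'(\phi)=\left(\frac{C_c}{n}+\frac{\phi\sigma^2}{2n^2}\right)M(\phi)+M'(0)-\frac{C_c}{n},
\end{equation*}
whose unique analytic solution with $M(0)=1$ is the MGF of $[\Upsilon_c\mid\Upsilon_c>0]$, matching part (b). In Heavily Overloaded, centering at $\nu_\gamma/\gamma$ cancels the linear drift, the asymptotic conditional variance of $\langle{\bf 1},{\bf c}-{\bf d}\rangle$ becomes $\bar\sigma^2=\limg(\sigma_\gamma^2+\nu_\gamma)$, and the identity reduces to $M'(\phi)=(\phi\bar\sigma^2/(2n^2))M(\phi)$, solved by $e^{\phi^2\bar\sigma^2/(4n^2)}$, the MGF of the zero-mean Gaussian $\Upsilon_s$ of part (c). Convergence of the mixed moments $\EE[\prod_i q_i^{m_i}]$ then follows from convergence of these MGFs on a neighborhood of $0$, combined with SSC to reduce multivariate moments to moments of $Q/n$ (which is the content of Lemma \ref{COR: MGF_CONVERGENCE}).

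The principal obstacle will be controlling the unused-service term $\EE[\langle{\bf 1},{\bf u}\rangle e^{\theta_\gamma Q/n}]$. Since $u_i>0$ forces $q_i^+=0$, SSC implies that $Q$ itself is small on the event $\{\langle{\bf 1},{\bf u}\rangle>0\}$, so $e^{\theta_\gamma Q/n}$ may be replaced by $1$ there up to easily bounded errors, reducing the problem to $\EE[\langle{\bf 1},{\bf u}\rangle]$; the steady-state conservation identity $\EE[\langle{\bf 1},{\bf u}\rangle]=\gamma\EE[Q]-\nu_\gamma$ then delivers the correct contribution in the first two regimes. In Classic-HT the $\gamma\EE[Q]\sim\gamma^{1-\alpha}$ summand is negligible against $-\nu_\gamma\sim C_f\gamma^\alpha$, whereas in Critical-HT both summands are of order $\sqrt{\gamma}$ and together produce the nonhomogeneous constant $M'(0)-C_c/n$ appearing in the ODE. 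For Heavily Overloaded I need the strictly stronger statement $\EE[\langle{\bf 1},{\bf u}\rangle]=o(\sqrt{\gamma})$ to recover a pure Gaussian limit; I plan to obtain it by combining the inclusion $\{\langle{\bf 1},{\bf u}\rangle>0\}\subseteq\{Q\leq nA\}$ with a left-tail bound on $Q-\nu_\gamma/\gamma$ coming from the uniform MGF control of Lemma \ref{lem: jsq_mgfexist}, which forces the probability of being so far in the left tail to decay faster than any polynomial in $\gamma$.
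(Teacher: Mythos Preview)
Your overall strategy---Transform Method applied to the total queue length, SSC to pass from $Q/n$ to individual coordinates, uniform MGF bounds to control remainders, and the conservation identity $\EE[\langle{\bf 1},{\bf u}\rangle]=\gamma\EE[Q]-\nu_\gamma$ for the unused-service term---is exactly the paper's approach, and your treatment of the Classic and Heavily Overloaded regimes is essentially correct (including the left-tail argument for $\EE[\langle{\bf 1},{\bf u}\rangle]=o(\sqrt{\gamma})$ in overload, which is how the paper does it too).

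There is, however, a genuine gap in the Critical regime. Your ODE
\[
M'(\phi)=\left(\frac{C_c}{n}+\frac{\phi\sigma^2}{2n^2}\right)M(\phi)+M'(0)-\frac{C_c}{n}
\]
does \emph{not} have a unique analytic solution with $M(0)=1$: for any constant $K$ the function $M(\phi)=e^{G(\phi)}\bigl(1+K\int_0^\phi e^{-G(s)}\,ds\bigr)$, $G(\phi)=C_c\phi/n+\phi^2\sigma^2/(4n^2)$, is analytic and satisfies $M(0)=1$, and the self-referential condition $K=M'(0)-C_c/n$ is a tautology (differentiate at zero and you get $M'(0)=C_c/n+K$). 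So $M(0)=1$ leaves a one-parameter family, and you have not said what pins down $K$.

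The missing ingredient is the nonnegativity of $Q$: since $Q\geq 0$, the pre-limit MGF $M_\gamma(\phi)=\EE[e^{\sqrt{\gamma}\phi Q/n}]$ is bounded by $1$ for all $\phi<0$, hence so is the limit $M(\phi)$. But $e^{G(\phi)}\to+\infty$ as $\phi\to-\infty$, so boundedness forces $1+K\int_0^{-\infty} e^{-G(s)}\,ds=0$, i.e.\ $K=\bigl(\int_{-\infty}^0 e^{-G(s)}\,ds\bigr)^{-1}$, which is exactly the MGF of the truncated Gaussian. To exploit this you must establish the (pre-limit) differential equation not just in a neighborhood of $0$ but for \emph{all} $\phi<0$, and then integrate from $-\infty$; the paper does precisely this and remarks explicitly that in the Critical regime this is the only way to solve for the unknown unused-service constant (whereas in the other two regimes a neighborhood of $0$ suffices). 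Your plan as written---``convergence of these MGFs on a neighborhood of $0$''---is therefore insufficient for part (b).
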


We now provide insights into the results and proof techniques for each of the regimes presented in Theorem \ref{THM: JSQ_LIMIT_DIS}. For the sake of simplicity, the intuitions are given for the case $n=1$, which corresponds to the SSQ-A system. However, the same intuitions hold in general when we combine them with the State Space Collapse detailed in Subsection \ref{sec: jsq_ssc}, which implies that all coordinates of the scaled queue length vector converge to the same random variable.

\subsubsection{Classic Heavy Traffic Regime}
Theorem \ref{thm: jsq_fast} states that the limiting distribution in the classic heavy traffic regime is the same exponential distribution as the one for the same system (but without abandonments) in heavy traffic \cite{hurtado2020transform}, with the same scaling $\gamma^\alpha$. 
To understand why the limit is not affected by the abandonments note that, without abandonments, the queue length would be of order $(1-\rho_\gamma)^{-1} \approx \gamma^{-\alpha}$. Therefore, the magnitude of the (negative) drift due to abandonments, given by $-\gamma q$ (as $d\sim Bin(q,\gamma)$), is at most of order $\gamma^{1-\alpha}$. On the other hand, the (negative) drift due to the arrivals/services is $\nu_\gamma$, which is of order $\gamma^\alpha$. Since $\alpha<1/2$, we have $\gamma^{1-\alpha}<<\gamma^\alpha$, and thus the drift due to abandonments is negligible compared to the drift due to arrivals/services. Hence, the limiting distribution is unaffected by the abandonments.\\

{\it Technical note:} In this regime, using the transform method, we establish the equation 
\begin{equation*}
    \left( \frac{1}{\gamma^\alpha}\nu_\gamma + \frac{1}{2}\phi \big( \sigma^2_\gamma + \nu_\gamma^2 \big) \right) \EE\left[{e^{\gamma^\alpha\phi \vsum{q} }}\right] + \frac{1}{\gamma^\alpha} \EE[\vsum{u}] = o(1),
\end{equation*}
which is valid for all $\phi$ in a neighbourhood of zero. Essentially, we show that the term corresponding to the abandonments is approximately zero as $\gamma \rightarrow 0$. Here, we can show that $\EE[\vsum{u}] \approx -\nu_\gamma$. Therefore, we observe that the solution of the equation corresponds to the MGF of an exponential distribution. Additionally, using the State Space Collapse result (presented in Subsection \ref{sec: jsq_ssc}) we prove that, for suitably chosen $\boldsymbol \phi$, $\EE[e^{\gamma^\alpha \langle \boldsymbol \phi, \q \rangle}] \approx \EE[e^{\frac{1}{n}\gamma^\alpha \phi \langle \bf 1, \q \rangle}]$ for $\gamma$ small enough, where $\phi = \langle \bf 1, \boldsymbol \phi \rangle$. Thus, we only need to find the limiting distribution of the total queue length. The proof of Theorem~\ref{thm: jsq_fast} is provided in Section \ref{sec: jsq_proofsketch1}. 

\subsubsection{Critical Heavy Traffic Regime}

Theorem \ref{thm: jsq_crit} states that the limiting steady state queue length distribution in the critical heavy traffic regime is a truncated Gaussian distribution, with scaling factor $\sqrt{\gamma}$. Therefore, abandonments do affect the limiting distribution in this regime. 
To understand why this is the case, we consider the cases $C_c<0$, $C_c>0$, and $C_c=0$ separately.
\begin{itemize}
    \item If $C_c<0$, the system is in underload and, without abandonments, it would have a queue length of order $(1-\rho_\gamma)\approx \gamma^{-\frac{1}{2}}$. For these queue lengths, the magnitude of the drift due to abandonments is of order $\sqrt{\gamma}$. This is comparable to the drift due to arrivals/services $\nu_\gamma \approx \sqrt{\gamma}$, but it is not enough to make the queue smaller than of order $\gamma^{-\frac{1}{2}}$. 
    \item If $C_c>0$, the system is in overload and would not be stable without abandonments. In order for the abandonments to compensate for the positive drift $\nu_\gamma\approx\sqrt{\gamma}$, the queue length must be of order $\gamma^{-\frac{1}{2}}$.
    \item If $C_c=0$, the system is in a very heavy traffic $|1-\rho_\gamma|\approx \gamma^{\frac{1}{2}+\beta}$. However, this system can be upper and lower bounded by systems with $C_c>0$ and $C_c<0$, respectively. Therefore, its queue length must also be of order $\gamma^{-\frac{1}{2}}$.
\end{itemize}
In all three cases, there is a critical inter-play between abandonments and arrivals/services, which is missing in the classic heavy traffic regime. Moreover, unlike in the classic heavy traffic regime (where abandonments are negligible), in this regime the abandonments make the drift become smaller (larger) as the queue length becomes larger (smaller). This results in much faster decay of the p.d.f. of the steady state distribution. In particular, this leads to a phase transition in the limiting distribution, from an exponential distribution in the classic heavy traffic regime, to a truncated normal distribution in the critical heavy traffic regime.\\ 

{\it Technical note:} In terms of the MGF equation, the term introduced due to abandonment acts as a derivative of $\MM{\vsum{q}}$ for small values of $\gamma$ and so, instead of directly getting the MGF as in the classic heavy traffic regime, we get a differential equation on the MGF $\MM{\vsum{q}} = \EE\big[e^{\sqrt{\gamma}\phi \vsum{q}}\big]$ given by
\begin{equation*}
   \left(\frac{1}{\sqrt{\gamma}}\nu_\gamma + \frac{1}{2} \phi \big(\sigma^2_\gamma+\nu_\gamma^2\big) \right)\MM{\vsum{q}} + \frac{1}{\sqrt{\gamma}}\EE[\vsum{u}]  -\frac{d}{d\phi} \MM{\vsum{q}}  = o(1),
\end{equation*}
which is valid for all $\phi$ in a neighbourhood of zero, and for any negative value of $\phi$. Here, $\EE[\vsum{u}]$ acts as an unknown constant. The reason why we establish the above differential equation for all negative values of $\phi$ is so that we can solve for $\EE[\vsum{u}]$. The solution of the differential equation matches with that of a truncated normal distribution. The proof of Theorem \ref{thm: jsq_crit} is provided in Section \ref{sec: jsq_proofsketch2}. 

\subsubsection{Heavily Overloaded Regime}

Theorem \ref{thm: jsq_slow} states that the limiting distribution of the (appropriately centered and scaled) steady state queue length is Gaussian. In this case, the centering and scaling used imply that the mean queue length is of order $\nu_\gamma/\gamma$, and that the variance is of order $\gamma$. We provide intuitive explanations for these magnitudes below.
\begin{itemize}
    \item {\it Mean queue length magnitude:} Note that the drift due to arrivals/services is $\nu_\gamma$, which is positive and much larger than $\sqrt{\gamma}$. As a result, for the drift due to abandonments (given by $-\gamma q$) to compensate for this, the queue length must be of order $\nu_\gamma/\gamma >> \gamma^{-\frac{1}{2}}$. 
    \item {\it Variance magnitude:} Recall that, in the critical heavy traffic regime, $\sqrt{\gamma}\q$ converges to a normal random variable (with mean $C_c$) truncated at zero. Thus, $\sqrt{\gamma} \bar{\q} : = \sqrt{\gamma} \big( \q - \frac{\nu_\gamma}{n\gamma} \bf 1\big) $ converges to a zero mean normal random variable truncated at $-C_c$, where $C_c = \lim_{\gamma\rightarrow 0} \nu_\gamma/\sqrt{\gamma}$. The heavily overloaded regime can be thought of as a critical heavy traffic regime where the parameter $C_c$ goes to $\infty$, and thus the truncation happens at $-\infty$ (which is the same as no truncation). Hence, the variance has the same scaling.\\ 
\end{itemize}

{\it Technical note:} In the heavily overloaded regime, unlike in the other two regimes, we observe that the unused services in the system (divided by $\sqrt{\gamma}$) are approximately zero. Intuitively, this holds because the scaled queue length $\sqrt{\gamma}\q$ (without centering) goes to infinity under the heavily overloaded regime, and so there is almost no unused services. In order to prove the result, we establish a differential equation on the appropriately scaled and centered queue length 
given by
\begin{equation*}
    \frac{1}{2}\phi \bar \sigma^2_{\gamma}\MM{\langle \bf 1 , \bar{\q}\rangle}  -  \frac{d}{d\phi} \MM{\langle \bf 1 , \bar{\q}\rangle} \in o(1),
\end{equation*}
which is valid for all $\phi$ in a neighbourhood of zero. One can check that the solution to the above differential equation is the MGF of a zero mean normal distribution. The proof of Theorem \ref{thm: jsq_slow} is provided in Section \ref{sec: jsq_proofsketch3}. 

\subsection{State Space Collapse for JSQ-A}
\label{sec: jsq_ssc}
In this subsection, we present our State Space Collapse (SSC) result for the JSQ-A system. In order to state this result, we define the subspace $\mathcal{S} \subset \mathbb R_{+}^n$ by
\begin{equation*}
    \mathcal{S} = \big\{{\bf x} \in \mathbb R^n_+ : \exists w \in \mathbb R_+ \ s.t. \  x_i = w, \ \forall i \big\}. 
\end{equation*}
We define $\q_{\|}$ as the projection of the queue length vector $\q$ onto the subspace $\mathcal{S}$, that is 
\begin{align*}
    \q_{\|} = \arg\min_{\mathbf x\in \mathcal S} \|\mathbf q - \mathbf x\|.
\end{align*}
Moreover, the perpendicular component is denoted by $\q_{\perp} = \q - \q_{\|}$. Since $\mathcal{S}$ is a one-dimensional subspace, we have $q_{\|i} = \frac{1}{n} \sum_{i=1}^n q_i =  \frac{1}{n}\langle {\bf q},{\bf 1} \rangle$ or $\q_{\|} = \frac{1}{n}\langle {\bf q},{\bf 1} \rangle \mathbf 1$. In the theorem below, we show that the second moment of $\|\q_{\perp}\|$ is bounded by a constant, uniformly for any $\gamma>0$. \\

\begin{theorem}
\label{LEM: JSQ_SSC}
Consider the JSQ-A system as defined in the Section \ref{sec: model}. Suppose $\nu_\gamma \geq -\frac{1}{2}n\mu_{\gamma,\min}$, where $\mu_{\gamma,\min} = \min_i\{\mu_{\gamma,i}\}$ Then, for any $\gamma \in (0,1)$, in steady state, we have
\begin{equation*}
    \EE\left[ \|{\q}_{\perp}\|^2 \right] \leq M_{\perp},
\end{equation*}
where $M_{\perp}$ is a constant independent of $\gamma$.\\
\end{theorem}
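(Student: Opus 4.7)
The plan is a two-pass drift argument: I first bound $\EE[\|\q_\perp\|]$ uniformly in $\gamma$ using the quadratic Lyapunov function $V_1(\q)=\|\q_\perp\|^2$ together with flow conservation, and then I bootstrap to the second moment using the cubic $V_2(\q)=\|\q_\perp\|^3$ combined with the MGF boundedness established in the preceding lemma. Let $\Delta := \q_\perp^+ - \q_\perp = ({\bf c}-{\bf d}+{\bf u})_\perp$ denote the one-step increment of the perpendicular component.

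\textbf{First pass.} Expanding the drift,
\begin{equation*}
\EE\big[V_1(\q^+) - V_1(\q)\,\big|\,\q\big] = 2\langle \q_\perp,\, \EE[\Delta\,|\,\q]\rangle + \EE\big[\|\Delta\|^2\,\big|\,\q\big].
\end{equation*}
Four observations organize the estimate. (i) The JSQ dispatching of arrivals to a shortest queue yields the linear drift $\langle\q_\perp, \EE[{\bf c}_\perp|\q]\rangle \leq -\eta\|\q_\perp\|$ for some $\eta>0$; the hypothesis $\nu_\gamma \geq -\frac{1}{2}n\mu_{\gamma,\min}$ is what guarantees $\eta>0$ after accounting for any service-rate heterogeneity. (ii) Since $\EE[{\bf d}|\q]=\gamma\q$, abandonments contribute the negative quadratic drift $-2\gamma\|\q_\perp\|^2$. (iii) The unused-service cross-term $\EE[\langle\q_\perp,{\bf u}_\perp\rangle|\q]$ is bounded in absolute value by $C\|\q_\perp\|$ using $u_i\leq s_i\leq A$, and will be absorbed below. (iv) The variance of $\Delta$ is dominated by $\EE[\|{\bf d}_\perp\|^2|\q] = \gamma(1-\gamma)\frac{n-1}{n}\vsum{q} + \gamma^2\|\q_\perp\|^2$; the first summand is unbounded in $\q$, but is controlled by the flow identity $\gamma\EE[\vsum{q}] = \nu_\gamma + \EE[\vsum{u}] = O(1)$, obtained by taking expectation of the queue update summed over coordinates in stationarity. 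Equating the drift to zero and combining:
\begin{equation*}
2\eta\,\EE[\|\q_\perp\|] + 2\gamma\,\EE[\|\q_\perp\|^2] \leq O(1) + O(\gamma^2)\,\EE[\|\q_\perp\|^2],
\end{equation*}
so for $\gamma$ small, $\EE[\|\q_\perp\|]=O(1)$ uniformly, while we only get $\EE[\|\q_\perp\|^2]=O(1/\gamma)$---not yet uniform.

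\textbf{Second pass.} A second-order Taylor expansion of $\|\q_\perp+\Delta\|^3=(\|\q_\perp\|^2+2\langle\q_\perp,\Delta\rangle+\|\Delta\|^2)^{3/2}$ around $\|\q_\perp\|$, valid on $\{\|\q_\perp\|>B\}$ for a $\gamma$-independent threshold $B$, gives
\begin{equation*}
\EE[V_2(\q^+) - V_2(\q)\,|\,\q] \leq -3\eta\|\q_\perp\|^2 - 3\gamma\|\q_\perp\|^3 + O\big(\|\q_\perp\|(1+\gamma\vsum{q})\big),
\end{equation*}
while the complementary event contributes at most a $\gamma$-independent additive constant. Setting the expected drift to zero in stationarity and absorbing $O(\EE[\|\q_\perp\|])$ via the first-pass bound:
\begin{equation*}
3\eta\,\EE[\|\q_\perp\|^2] \leq O(1) + O\big(\gamma\,\EE[\|\q_\perp\|\vsum{q}]\big).
\end{equation*}
By Cauchy--Schwarz, $\gamma\,\EE[\|\q_\perp\|\vsum{q}] \leq \sqrt{\EE[\|\q_\perp\|^2]}\sqrt{\gamma^2\EE[\vsum{q}^2]}$. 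The MGF-boundedness lemma gives uniform control of all moments of $\sqrt{\gamma}\,(\vsum{q}-m_\gamma)$ for an appropriate regime-dependent centering $m_\gamma$; together with the regime-specific size of $m_\gamma$, this yields $\gamma^2\EE[\vsum{q}^2] = O(\gamma) + O(\nu_\gamma^2) = O(1)$ uniformly across all three regimes. Therefore
\begin{equation*}
3\eta\,\EE[\|\q_\perp\|^2] \leq O(1) + K\sqrt{\EE[\|\q_\perp\|^2]},
\end{equation*}
a quadratic inequality in $\sqrt{\EE[\|\q_\perp\|^2]}$ whose solution is bounded by a constant $M_\perp$ independent of $\gamma$.

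\textbf{Main obstacle.} The central technical difficulty is making the second-pass Taylor expansion rigorous in the presence of the abandonment-driven, state-dependent jump, since $\|{\bf d}_\perp\|$ can be of order $\sqrt{\gamma\vsum{q}}$ and need not be small compared to $\|\q_\perp\|$. I would handle this by choosing $B$ large enough (but $\gamma$-independent) so that the Taylor remainder on $\{\|\q_\perp\|>B\}$ is dominated by the $-3\gamma\|\q_\perp\|^3$ drift, while on $\{\|\q_\perp\|\le B\}$ both the Lyapunov function and its drift are controlled by constants. A secondary but essential check is uniform boundedness of $\gamma^2\EE[\vsum{q}^2]$ in the heavily overloaded regime, where $\EE[\vsum{q}]$ itself diverges like $\nu_\gamma/\gamma$; the MGF lemma delivers exactly this control via the bounded variance $\mathrm{Var}(\vsum{q})=O(1/\gamma)$ and squared mean $O(\nu_\gamma^2/\gamma^2)$, both of which become $O(1)$ after the $\gamma^2$ multiplier under the standing assumption $\alpha \ge 0$.
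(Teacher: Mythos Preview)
Your two-pass architecture (quadratic Lyapunov for $\EE[\|\q_\perp\|]$, cubic Lyapunov plus Cauchy--Schwarz and a moment bound on $\gamma^2\EE[\vsum{q}^2]$ for $\EE[\|\q_\perp\|^2]$, ending in a quadratic inequality) is exactly the paper's strategy. Three technical choices differ, and one of them is a real gap in your write-up.

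\textbf{The unused-service cross-term.} Your step (iii)---bound $|\langle\q_\perp,\EE[{\bf u}_\perp\mid\q]\rangle|\le C\|\q_\perp\|$ and absorb $C$ into $\eta$---does not close. Cauchy--Schwarz gives $C=\sqrt{n}A$, whereas the JSQ linear drift constant is $\eta=\tfrac12\mu_{\gamma,\min}$; there is no hypothesis forcing $\eta>C$, so the absorption can flip the sign of the drift. The paper avoids the cross-term entirely by computing the drifts of $\|\q\|^2$ and $\langle\q,{\bf 1}\rangle^2$ separately and then using Pythagoras, $\|\q_\perp\|^2=\|\q\|^2-\tfrac1n\langle\q,{\bf 1}\rangle^2$. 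In the $\|\q\|^2$ drift the identity $q_i^+u_i=0$ makes the ${\bf u}$ contribution $-\|{\bf u}\|^2\le 0$; in the $\langle\q,{\bf 1}\rangle^2$ drift it is an $O(1)$ additive constant. The outcome is $\EE[\Delta\|\q_\perp\|^2\mid\q]\le 2nA^2+\gamma\vsum{q}-(1-\gamma)\mu_{\gamma,\min}\|\q_\perp\|$ with the linear constant untouched. (Your approach is salvageable: using $q_i^+u_i=0$ one gets $\langle\q_\perp,{\bf u}\rangle\le nA^2+A\langle{\bf d},{\bf 1}\rangle$, i.e.\ an additive term with bounded expectation rather than a $\|\q_\perp\|$-proportional one; but this is not what you wrote.)

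\textbf{Cubic step and threshold.} Instead of a vector Taylor expansion, the paper works with the scalar $\delta:=\|\q_\perp^+\|-\|\q_\perp\|$, uses the exact identity $\Delta\|\q_\perp\|^3=3\|\q_\perp\|^2\delta+3\|\q_\perp\|\delta^2+\delta^3$ together with $|\delta|\le 2nA+\langle{\bf d},{\bf 1}\rangle$, and uses $\EE[\delta\mid\q]\le\tfrac{1}{2\|\q_\perp\|}\EE[\Delta\|\q_\perp\|^2\mid\q]$ to read off a \emph{state-dependent} threshold $L(\q)=(4nA^2+2\gamma\vsum{q})/((1-\gamma)\mu_{\gamma,\min})$ for the negative drift of $\delta$. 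Your fixed $B$ with explicit remainder $O(\|\q_\perp\|(1+\gamma\vsum{q}))$ followed by Cauchy--Schwarz is equivalent in spirit; the paper's version just avoids the Taylor-remainder bookkeeping you flag as the main obstacle.

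\textbf{Moment input.} You invoke the JSQ-A MGF lemma and a regime-by-regime argument for $\gamma^2\EE[\vsum{q}^2]=O(1)$. The paper instead proves directly, by coupling each coordinate $q_i$ with a SSQ-A and applying the SSQ-A MGF bound, that $\gamma^m\EE[\vsum{q}^m]\le E_m$ for every $m$, uniformly in $\gamma$ and regime. Both routes are non-circular, but the coupling keeps the SSC proof self-contained and avoids splitting into cases.
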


{\it Proof sketch:} In order to prove this, we first get a bound on the first order drift of the perpendicular component, i.e., on $\EE[\Delta \|\q_\perp \| | \q]$, by using drift arguments on $\|\q\|^2$ and $\|\q_{\|}\|^2$, and then using that $2\|\q_{\perp}\|\Delta\|\q_{\perp}\| \leq \Delta \|\q_{\perp}\|^2 $ and the Pythagoras identity, $\|\q_\perp \|^2 = \|\q\|^2 - \|\q_{\|}\|^2$. Next, we equate the drift of the third moment, i.e., $\EE[\Delta \|\q_\perp \|^3|\q]$ to zero in steady state, and use the negative drift of $\|\q_\perp\|$ (i.e., $\EE[\Delta \|\q_\perp \| | \q]$) to get a bound on the second moment $\EE[\|\q_\perp \|^2]$. The proof of Theorem~\ref{LEM: JSQ_SSC} is provided in Section \ref{sec: jsq_ssc_proof}.\\

{\it Intuitive interpretation:} 
Theorem \ref{LEM: JSQ_SSC} states that the second moment of the norm of the projection $\q_{\perp}$ (i.e., the norm of the difference between the individual queue lengths and their average) is uniformly bounded by a constant for any $\gamma>0$. Furthermore, Theorem \ref{THM: JSQ_LIMIT_DIS} states that the moments of the scaled queue length vector converge to a positive number, and thus the unscaled moments diverge as $\gamma\rightarrow 0$ (e.g., in the critical heavy traffic regime, we have that $\EE[\|\q\|^2]$ is of order $\gamma^{-1}$). Combining these two results, we conclude that the scaled queue length vectors are approximately equal. Formally, this phenomenon is called \textit{State Space Collapse} (SSC). 

To understand why this happens note that, under the JSQ policy, arrivals join the shortest queue, and thus shorter queues tend to become larger (due to arrivals) and larger queues tend to become shorter (due to services). Moreover, due to the nature of the abandonments, large queues have a larger number of abandonments than short queues. Thus, both the JSQ policy and the abandonments push the queue lengths towards becoming equal. This is even more effective in heavy traffic or in overload, where the queue lengths are inherently large, which makes the random perturbations small in comparison.\\

{\it Comparisson with the SSC of JSQ without abandonments:} The SSC result given in Theorem~\ref{LEM: JSQ_SSC} is weaker than the SSC result for the classic JSQ system without abandonments \cite{hurtado2020transform}, where the authors show that all moments of the perpendicular component $\|\q_{\perp}\|$ are uniformly bounded by a (possibly moment-dependent) constant. 
Specifically, the authors show that there exists a neighbourhood of zero, such that the MGF of the unscaled $\|\q_{\perp}\|$ is uniformly bounded in that interval, for any $\gamma>0$. To prove this, they use arguments as in \cite{hajek1982hitting}, which require two conditions: a negative drift condition, and a (stochastically) bounded drift assumption, which requires the drift to be bounded by an independent geometric random variable. In the JSQ-A system, due to the state-dependent abandonment, the bounded drift assumption is not satisfied. Therefore, proving a bound on the MGF of the unscaled $\|\q_{\perp}\|$ when we have abandonments turns out to be technically challenging, and so we only provide a weaker form of SSC. We observe that, in order to characterize the limiting joint distribution of JSQ-A, one does not need to derive a bound on the MGF of $\|\q_{\perp}\|$ in a neighbourhood of zero. In this work, we show that our weaker SSC result given in Theorem \ref{LEM: JSQ_SSC}, in combination with the boundedness of the MGF of the scaled queue length (presented later in Lemma \ref{lem: jsq_mgfexist}), is enough to establish Theorem \ref{THM: JSQ_LIMIT_DIS}.

\def\jsqlimitdis{\ref{THM: JSQ_LIMIT_DIS}}

\section{Proof of Theorem \jsqlimitdis} 
\label{sec: proofs} 

For clarity purposes, we condense all the tedious details and calculations into lemmas, and relegate their proofs to Appendix \ref{app: jsq}. This allows us to focus the presentation on the high level ideas of the proof without getting lost in the technicalities. Moreover, the proof of these lemmas are oftentimes based on the behavior of an equivalent SSQ-A system. Thus, we have included the corresponding proofs for the SSQ-A system in Appendix \ref{app: ssq_results}.\\ 

Before getting into the proof of Theorem \ref{THM: JSQ_LIMIT_DIS}, we start by presenting a technical Lemma which states that the pointwise convergence of an MGF in an interval around zero implies convergence in distribution and convergence of moments. \\

\begin{lemma}[Implications of Convergence of MGF]
\label{COR: MGF_CONVERGENCE}
Suppose $\{X_n\}_{n\in \mathbb N}$ is a sequence of random variables, and $M_n(s) = \EE[e^{sX_n}]$ are their corresponding MGFs. Suppose the following two conditions hold:
\begin{itemize}
    \item[(a)] There exists $s_0>0$, such that $M_n(s) < \infty$ for all $s\in [-s_0,s_0]$ and $n \in \mathbb N$.
    \item[(b)] The sequence of MGF $\{M_n\}_{n\in \mathbb N}$ converges point wise to $M(\cdot)$ for $s\in [-s_0,s_0]$, where $M(\cdot)$ is MGF of a random variable $X$.
\end{itemize}
Then, $X_{n} $ converges weakly to $X$ and all the moments of $X_n$ converge to the corresponding moments of $X$, i.e.,
\begin{align*}
    X_n \stackrel{d}{\rightarrow} X, && \lim_{n\rightarrow \infty} \EE[X_n^m] = \EE[X^m], \quad \forall m\in \mathbb N.
\end{align*}
\end{lemma}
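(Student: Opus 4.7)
The plan is a standard two-step argument. First, I would use the exponential Markov bound provided by hypothesis (a) together with the pointwise convergence in (b) to establish tightness of the sequence $\{X_n\}$. Second, for any weakly convergent subsequence, I would identify its limit as $X$ by passing to the limit inside the MGF; this step is legitimate because (a) gives enough room around every $s\in(-s_0,s_0)$ to produce uniform integrability of $\{e^{sX_n}\}$. Convergence of moments follows from the same uniform integrability idea applied to polynomial test functions.

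For tightness, I would note that pointwise convergence in (b) implies $C := \sup_n \max\{M_n(s_0), M_n(-s_0)\} < \infty$ (a pointwise convergent real sequence is bounded), and then use
\begin{equation*}
P\big(|X_n| > x\big) \le P\big(e^{s_0 X_n} > e^{s_0 x}\big) + P\big(e^{-s_0 X_n} > e^{s_0 x}\big) \le 2 C e^{-s_0 x}
\end{equation*}
to conclude tightness. By Prokhorov's theorem, every subsequence has a further subsequence $\{X_{n_k}\}$ converging weakly to some random variable $Y$. To identify $Y$ with $X$, for each fixed $s \in (-s_0, s_0)$ I would pick $r > 1$ with $rs \in (-s_0, s_0)$, and use $\sup_n \EE\big[(e^{sX_n})^r\big] = \sup_n M_n(rs) < \infty$ to conclude that $\{e^{sX_n}\}$ is uniformly integrable. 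Combined with the weak convergence $X_{n_k}\to Y$, this yields $\EE[e^{sY}] = \lim_k M_{n_k}(s) = M(s)$ for every $s \in (-s_0, s_0)$. Since an MGF that is finite on an open neighborhood of zero uniquely determines its distribution (via analytic extension to the characteristic function), $Y \stackrel{d}{=} X$. As this holds for every subsequential limit, the full sequence converges weakly to $X$.

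For the convergence of moments, I would use the elementary bound $|x|^m \le \frac{m!}{s_0^m}\big(e^{s_0 x} + e^{-s_0 x}\big)$, which follows from $e^{s_0|x|} \ge (s_0|x|)^m/m!$ together with $e^{s_0|x|} \le e^{s_0 x} + e^{-s_0 x}$. Applying this bound with exponent $m+1$ gives $\sup_n \EE[|X_n|^{m+1}] < \infty$, so the family $\{X_n^m\}$ is uniformly integrable. Combined with the weak convergence already established, this yields $\EE[X_n^m] \to \EE[X^m]$ for every $m \in \mathbb N$. The main obstacle is the passage to the limit inside the MGF in the second paragraph, because $e^{sx}$ is unbounded and hence weak convergence alone does not guarantee convergence of $\EE[e^{sX_{n_k}}]$. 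The key observation is that hypothesis (a) supplies finiteness not at a single point but on the whole interval $[-s_0, s_0]$, which provides exactly the higher-order moment needed for uniform integrability at each interior $s$.
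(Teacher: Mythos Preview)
Your proposal is correct and follows essentially the same approach as the paper: tightness via the exponential Markov bound, subsequential weak limits via Prokhorov/Helly, identification of the limit through the MGF, and uniqueness via analytic continuation to the characteristic function. The one notable difference is that the paper's proof simply asserts ``by continuity'' that the MGF of the subsequential limit equals $M(\cdot)$, whereas you explicitly justify the passage to the limit inside $\EE[e^{sX_{n_k}}]$ via uniform integrability (using the extra room in the interval $(-s_0,s_0)$); your treatment is more careful on this point, and similarly your moment-convergence argument via the bound $|x|^m \le \frac{m!}{s_0^m}(e^{s_0 x}+e^{-s_0 x})$ makes explicit what the paper only sketches.
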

The proof of Lemma \ref{COR: MGF_CONVERGENCE} relies on results regarding tight sequences of probability measures, as described in \cite{Bill86}. However, for the sake of completeness, we provide its proof in Appendix \ref{app: mgf_convergence}.\\

It is important to note that Lemma \ref{COR: MGF_CONVERGENCE} does not hold if we replace the MGFs with characteristic functions. While convergence of characteristic functions implies convergence in distribution, which in turn implies convergence of expectations for bounded continuous functions (by Portmanteau's theorem), convergence of moments is not guaranteed since moments are unbounded functions.
Fortunately, as shown in Lemma \ref{COR: MGF_CONVERGENCE}, convergence of MGFs not only implies convergence in distribution, but also convergence of moments. Moreover, convergence of MGFs in an interval around zero can be extended to the convergence of characteristic functions through analytic continuation. Therefore, convergence of MGFs provides a stronger mathematical argument.
However, proving convergence of MGFs can be more challenging as we must first establish the existence of MGFs, which is not always guaranteed unlike for characteristic functions. In this paper, we establish the existence and uniform boundedness of appropriately scaled and centered queue length MGFs, which in turn implies the non-divergence of the limit. These results are presented in the following subsection.

\subsection{Boundedness of the MGF}

To prove Theorem \ref{THM: JSQ_LIMIT_DIS}, the first step is to establish bounds on the MGFs of the steady-state queue lengths. This is necessary because the limiting distributions in Theorem \ref{THM: JSQ_LIMIT_DIS} are derived as solutions of differential equations on the MGFs of the steady-state queue lengths. The MGF bounds are initially established for an SSQ-A system in Subsection \ref{sec:MGF_bounds_SSQ-A}, and then extended to the JSQ-A system in Subsection \ref{sec:MGF_bounds_JSQ-A}.\\

\begin{remark}
    If the limiting distribution is the only quantity of interest (ignoring the convergence of moments), one could just work with characteristic functions, and establish a differential equation in terms of characteristic functions. However, even though working with characteristic functions might simplify some arguments, one would still need to prove strong enough moment bounds on the steady state queue length, which in turn bring similar challenges as the ones we encountered proving the boundedness of the MGF. 
\end{remark}

\subsubsection{Boundedness of the MGF for SSQ-A} \label{sec:MGF_bounds_SSQ-A}
We have the following result for SSQ-A.\\

\begin{lemma}
\label{lem: ssq_mgfexist}
Consider the SSQ-A setting (i.e., JSQ-A with $n=1$). There exists constants $M_0 >0, \phi_0 \in (0,1)$ independent of $\gamma$ and $\gamma_0 \in (0,1)$, such that, the following hold
\begin{enumerate}[label=(\alph*), ref=\ref{lem: ssq_mgfexist}.\alph*]
    \item \label{lem: ssq_mgfexist_a}  For any  $0<\phi < \phi_0$ and $\gamma\in(0,1)$,
    \begin{equation*}
        \MM{q} \leq M_0\exp\Bigg(\frac{\phi \nu_\gamma^+}{\sqrt{\gamma}}\Bigg),
    \end{equation*}
    for any $\nu_\gamma \in \mathbb R$, where $\nu_\gamma^+ = \max\{\nu_\gamma,0\}$.
    \item \label{lem: ssq_mgfexist_b} For any  $0<\phi < \phi_0$ and $\gamma\in(0,\gamma_0)$, we have
    \begin{equation*}
        M_{q}^\gamma(-\phi) \leq  M_0 \exp\Bigg(-\frac{\phi \nu_\gamma}{\sqrt{\gamma}}\Bigg).
    \end{equation*}
    \item \label{lem: ssq_mgfexist_c} Suppose $ |\nu_\gamma +C_f\gamma^{ \alpha }| \leq \gamma^{\alpha + \beta}$, where $ \alpha\in \lfsmall{0,1/2}$, $\beta >0$, and $C_f >0$ are constants. Then, for any $0<\phi<\phi_0$ and $\gamma\in (0,1)$, we have
    \begin{align*}
        \EE\left[e^{\gamma^\alpha \phi q}\right] \leq M_0.
    \end{align*}
\end{enumerate}
\end{lemma}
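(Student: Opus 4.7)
The plan is to apply the Transform Method: in steady state $\mathbb{E}[V_\phi(q^+)-V_\phi(q)]=0$ for any exponential test function $V_\phi$, and the corresponding identity can be manipulated to extract bounds on the MGF. I choose the test function to match the scaling that appears in the conclusion: $V_\phi(q)=e^{\sqrt{\gamma}\phi q}$ for part (a), $V_\phi(q)=e^{-\sqrt{\gamma}\phi q}$ for part (b), and $V_\phi(q)=e^{\gamma^\alpha\phi q}$ for part (c). For each, I compute the conditional one-step drift using the recursion $q^+=(q+c-d)^+$, the conditional independence of $c$ and $d$ given $q$, and the exact Binomial MGF $\mathbb{E}[e^{\pm\sqrt{\gamma}\phi d}\mid q]=(1-\gamma+\gamma e^{\pm\sqrt{\gamma}\phi})^q$. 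A second-order Taylor expansion in $\phi$ exposes the leading contribution $V_\phi(q)\bigl(\sqrt{\gamma}\phi(\nu_\gamma-\gamma q)+\tfrac{1}{2}\gamma\phi^2\sigma_\gamma^2+O(\gamma^{3/2})\bigr)$, together with a reflection term supported on $\{q+c-d<0\}$ that is uniformly bounded thanks to the almost-sure bound $|c|\leq A$.

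For part (a), this shows that the drift of $V_\phi$ is strictly negative (by a factor independent of $\gamma$ and of $\phi\in(0,\phi_0)$) once $q$ exceeds a threshold $K_\gamma$ of order $\nu_\gamma^+/\gamma$, while below that threshold $V_\phi(q)$ is deterministically at most $V_\phi(K_\gamma)=e^{\phi\nu_\gamma^+/\sqrt{\gamma}}$. Splitting the steady-state identity on $\{q\leq K_\gamma\}$ versus $\{q>K_\gamma\}$ in the style of Foster-Lyapunov then yields the claimed bound $M_0\,e^{\phi\nu_\gamma^+/\sqrt{\gamma}}$. Part (b) is the mirror image: $V_\phi(q)=e^{-\sqrt{\gamma}\phi q}$ is decreasing in $q$, so its drift is negative on the complementary set $\{q<\nu_\gamma/\gamma\}$, and the same decomposition yields $M_0\,e^{-\phi\nu_\gamma/\sqrt{\gamma}}$. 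For part (c), because $\nu_\gamma\approx-C_f\gamma^\alpha$ with $\alpha<1/2$, the leading $\phi$-linear piece of the log-drift is $\gamma^{2\alpha}\phi(-C_f+\tfrac{1}{2}\phi\sigma_\gamma^2)$, which is strictly negative and dominates the abandonment contribution $-\gamma^{1+\alpha}\phi q$ for every $q$ as long as $\phi<\phi_0:=C_f/\sup_\gamma\sigma_\gamma^2$. The threshold in this regime can therefore be chosen finite and independent of $\gamma$, and the Foster-Lyapunov step produces the uniform constant $M_0$.

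The principal obstacle is that the abandonment contribution $-\gamma q$ to the conditional drift is unbounded in $q$, so the stochastically bounded-drift hypothesis of Hajek's lemma (as used in \cite{hurtado2020transform} for the no-abandonment case) fails. This is most severe in part (a) under heavy overload, where $K_\gamma=\nu_\gamma^+/\gamma$ itself diverges and $M_0$ must be extracted cleanly from the divergent prefactor $e^{\phi\nu_\gamma^+/\sqrt{\gamma}}$. To handle this, I plan to couple the system to an auxiliary process $\tilde q$ in which abandonments are truncated, namely $\tilde d\mid\tilde q\sim\mathrm{Bin}(\min(\tilde q,K_\gamma),\gamma)$, with $K_\gamma$ chosen to match the equilibrium queue length: the truncation only removes abandonments, so $\tilde q$ stochastically dominates $q$, while on the truncated system the drift \emph{is} bounded and a direct Hajek-type MGF bound applies. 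Transferring that bound back through the coupling yields the $\gamma$-uniform constants $M_0$ and $\phi_0$. The remaining bookkeeping is to verify that the Taylor remainder terms and the reflection contribution do not spoil uniformity, which follows from the almost-sure boundedness of the increments.
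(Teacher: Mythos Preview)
Your plan for parts (a) and (c) matches the paper's. For (a) the paper also couples to a dominating process $\tilde q_1\ge q$ with abandonments truncated at level $\tilde C_1/\gamma=(\nu_\gamma+A\sqrt\gamma)/\gamma$, establishes finiteness of the exponential moment via Foster--Lyapunov, and then sets the steady-state drift of $e^{\sqrt\gamma\phi\tilde q_1}$ to zero to extract the bound. For (c) the paper couples to a process $\tilde q_3\ge q$ with \emph{no} abandonments rather than arguing directly on $q$, but your direct route also works because the abandonment contribution only improves the sign of the drift.

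The gap is part (b). Calling it the ``mirror image'' of (a) hides a real asymmetry. To upper-bound $M_q^\gamma(-\phi)=\EE[e^{-\sqrt\gamma\phi q}]$ via coupling you need a process \emph{dominated by} $q$; your truncation $\tilde d\sim\mathrm{Bin}(\min(\tilde q,K_\gamma),\gamma)$ yields $\tilde q\ge q$, the wrong direction for a decreasing test function. The paper instead couples to $\tilde q_2\le q$ by forcing $\tilde d_2\sim\mathrm{Bin}(\max\{\lceil\tilde C_2/\gamma\rceil,\tilde q_2\},\gamma)$ with $\tilde C_2=[\nu_\gamma-A\sqrt\gamma]^+$, but then $\tilde d_2$ is still unbounded in $\tilde q_2$, so Hajek's bounded-drift hypothesis still fails on that side. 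The resolution is not a second coupling but to \emph{feed part~(a) back in}: the paper controls the second-order remainder $\EE\big[(A+\tilde d_2)^2e^{\sqrt\gamma\phi\tilde d_2}\big]$ by bounding $\EE[e^{8\gamma q}]$ via the already-established inequality $\MM{q}\le M_0e^{\phi\nu_\gamma/\sqrt\gamma}$, valid once $8\sqrt\gamma<\phi_0$ (this is where the restriction $\gamma<\gamma_0$ enters). If you pursue your direct Foster--Lyapunov split on $\{q<\nu_\gamma/\gamma\}$ instead, you hit the same dependency: bounding the positive-drift contribution on $\{q\ge\nu_\gamma/\gamma\}$ at the correct order requires $\EE[(\gamma q-\nu_\gamma)^+]=O(\sqrt\gamma)$, which is itself a consequence of part (a). You should make this logical dependence explicit and verify that the second-order remainder, which involves $d^2e^{\sqrt\gamma\phi d}$ with $d$ unbounded, does not swamp the $O(\gamma)$ linear drift term.
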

The complete proof is given in Appendix \ref{app: ssq_mgf}, but we provide a proof sketch below.\\

Lemma \ref{lem: ssq_mgfexist_a} and Lemma \ref{lem: ssq_mgfexist_b} are established in three steps, as follows.
\begin{enumerate}
    \item The first step follows by coupling the queue length process $\{q(t)\}_{t=0}^\infty$ with two different queue length processes. For the first one, the maximum number of abandonments is bounded, which results in fewer abandonments and thus, in higher queue lengths than in the original process. This results in a queue length process which path-wise dominates $\{q(t)\}_{t=0}^\infty$. Similarly, for the second one, the minimum number of abandonments is bounded, which results in more abandonments and thus, in smaller queue lengths than in the original queue length process. This yields a queue length process which is path-wise dominated by $\{q(t)\}_{t=0}^\infty$. The key idea is to properly choose the bound on the abandonments for both processes to precisely compensate for the drift due to arrivals/services ($\nu_\gamma$), and to ensure that the queue length process does not deviate too much from $\nu_\gamma/\gamma$.
    \item In the second step, we establish the existence of the exponential Lyapunov function. We consider the function $\EE[e^{\eta \phi q(t)}]$ (for appropriately chosen $\phi$ and scaling factor $\eta$) and show that its drift is negative outside a bounded set. This implies that the MGF of the original, as well as of the coupled processes, exist in steady state for any value of $\gamma$.
    \item Finally, in the third step, we use an exponential Lyapunov function, and set its drift to zero in steady state. Here, we use the existence of the exponential Lyapunov function, established in the previous step. Then, we do a second order approximation using the Taylor expansion of the exponential function, and use the first order drift of $q(t)$ (i.e., $\EE[\Delta q(t)|q(t)]$) to create a bound on the MGF. 
\end{enumerate}
The proof of Lemma  \ref{lem: ssq_mgfexist_c} is simpler, as we only need to create a coupled process for which there are no abandonments, which results in a queue length process that dominates the original one. Now, for this new process, we perform a similar drift analysis as for the previous parts of the lemma. 

\subsubsection{Boundedness of the MGF for JSQ-A} \label{sec:MGF_bounds_JSQ-A}

We now generalize Lemma \ref{lem: ssq_mgfexist} to the JSQ-A system with $n>1$.\\

\begin{lemma}
\label{lem: jsq_mgfexist}
Consider the JSQ-A system as described in the Section \ref{sec: model}. Then, there exists constants $M_1 >0, \phi_1 \in (0,1)$, and $\gamma_1 \in (0,1)$, such that the following hold.
\begin{enumerate}[label=(\alph*), ref=\ref{lem: jsq_mgfexist}.\alph*]
    \item \label{lem: jsq_mgfexist_a}  For any  $0<\phi < \phi_1$ and $\gamma\in(0,1)$,
    \begin{align*}
      \sum_{i=1}^n M_{nq_i}^\gamma (\phi) \leq nM_1 \exp\Bigg(\frac{\phi \nu_\gamma^+}{\sqrt{\gamma}}\Bigg), &&    M_{\langle {\bf q},{\bf 1} \rangle}^\gamma (\phi ) \leq M_1\exp\Bigg(\frac{\phi \nu_\gamma^+}{\sqrt{\gamma}}\Bigg),
    \end{align*}
    for any $\nu_\gamma$, where $\nu_\gamma^+ = \max\{\nu_\gamma,0\}$.
    \item \label{lem: jsq_mgfexist_b} For any  $0<\phi < \phi_1$ and $\gamma\in(0,\gamma_1)$,
    \begin{align*}
        \sum_{i=1}^n M_{nq_i}^\gamma (-\phi) \leq nM_1 \exp\Bigg(-\frac{\phi \nu_\gamma}{\sqrt{\gamma}}\Bigg), && M_{\langle {\bf q},{\bf 1} \rangle}^\gamma (-\phi ) \leq  M_1 \exp\Bigg(-\frac{\phi \nu_\gamma}{\sqrt{\gamma}}\Bigg).
    \end{align*}
    \item \label{lem: jsq_mgfexist_c} Suppose $ |\nu_\gamma +C_f\gamma^{ \alpha }| \leq \gamma^{\alpha + \beta}$, where $ \alpha\in \lfsmall{0,1/2}$, $\beta >0$, and $C_f >0$ are constants. Then, for any $0<\phi<\phi_1$ and $\gamma\in (0,1)$,
    \begin{align*}
        \EE\left[e^{\gamma^\alpha \phi \langle {\bf q},{\bf 1}\rangle }\right] \leq M_1.
    \end{align*}
\end{enumerate}
\end{lemma}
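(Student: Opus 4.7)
The plan is to extend the SSQ-A analysis of Lemma \ref{lem: ssq_mgfexist} to the JSQ-A setting by first controlling the MGF of the total queue length $\langle \q, \bf 1\rangle$ and then transferring the bound to individual queue lengths via the State Space Collapse result of Theorem \ref{LEM: JSQ_SSC}. The starting observation is that the total queue evolves as
\[
\langle \q(t+1), \bf 1\rangle = \langle \q(t), \bf 1\rangle + c(t) - \langle {\bf d}(t), \bf 1\rangle + \langle {\bf u}(t), \bf 1\rangle,
\]
with $c(t)$ of mean $\nu_\gamma$ and variance $\sigma_\gamma^2$, and $\langle {\bf d}(t), \bf 1\rangle$ distributed as $\mathrm{Bin}(\langle \q(t), \bf 1\rangle, \gamma)$. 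Structurally this mirrors the SSQ-A dynamics except that the unused-service term can exceed its single-server analogue when queues are imbalanced. I would re-execute the three-step proof structure of Lemma \ref{lem: ssq_mgfexist} directly on $\langle \q, \bf 1\rangle$: (i) couple with two auxiliary processes that truncate the abandonments to cancel the $\nu_\gamma$-drift, yielding a sample-path sandwich on $\langle \q(t), \bf 1\rangle$; (ii) use the exponential Lyapunov function $V(x) = \exp(\sqrt\gamma\,\phi\, x)$ to establish steady-state existence of the MGF via a negative drift outside a bounded set; and (iii) set the drift to zero in steady state and apply a second-order Taylor expansion of $V$ together with the first-order drift of the total queue to derive the bound $M_{\langle \q, \bf 1\rangle}^\gamma(\phi) \leq M_1 \exp(\phi \nu_\gamma^+/\sqrt\gamma)$. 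Since the unused service is a.s.\ bounded by $nA$, it contributes only constant-order corrections. The negative-$\phi$ bound of part (b) follows by a symmetric analysis, with the threshold $\gamma_1$ encoding where the Taylor linearization absorbs abandonments.

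For the individual terms $M_{nq_i}^\gamma(\phi)$, I would use the decomposition $n q_i = \langle \q, \bf 1\rangle + n q_{\perp,i}$ to write
\[
\sum_{i=1}^n e^{\sqrt\gamma\,\phi\, n q_i} = e^{\sqrt\gamma\,\phi\,\langle \q, \bf 1\rangle}\, \sum_{i=1}^n e^{\sqrt\gamma\,\phi\, n q_{\perp,i}},
\]
and control the perpendicular sum by a second-order Taylor expansion that exploits $\sum_i q_{\perp,i}=0$ and $\sum_i q_{\perp,i}^2 = \|\q_\perp\|^2$. Theorem \ref{LEM: JSQ_SSC} provides a uniform bound on $\EE[\|\q_\perp\|^2]$, so this correction is $O(1)$ uniformly in $\gamma$. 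Applying H\"older's inequality to decouple the product, and combining with the bound on $M_{\langle \q, \bf 1\rangle}^\gamma$ evaluated at a slightly enlarged argument (which is fine after shrinking $\phi_1$), then yields the desired bound on $\sum_i M_{nq_i}^\gamma(\phi)$. Part (c), the classic heavy-traffic regime, follows by coupling with a no-abandonment JSQ system that stochastically dominates JSQ-A in total queue length, and then running the same drift analysis at scaling $\gamma^\alpha$ rather than $\sqrt\gamma$; since $\alpha<1/2$, the abandonment contribution is negligible and one recovers a bound analogous to the no-abandonment result.

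The main technical obstacle will be the transfer step from $\langle \q, \bf 1\rangle$ to individual queues: because Theorem \ref{LEM: JSQ_SSC} only gives second-moment control on $\q_\perp$, one cannot integrate $\exp(\sqrt\gamma\,\phi\, n q_{\perp,i})$ against an MGF of $q_{\perp,i}$ directly. Truncating the Taylor series and bounding the remainder using just the second-moment SSC bound forces $\sqrt\gamma\,\phi\, n$ to be small enough that the truncation error stays controllable on the likely event $\{\|\q_\perp\|$ not too large$\}$, which in turn constrains the choice of $\phi_1$. A secondary obstacle lies in the heavily overloaded regime, where the mean total queue diverges like $\nu_\gamma/\gamma$; the truncation thresholds in the auxiliary coupled processes must be chosen to precisely track this divergence, mirroring the delicate coupling construction in the SSQ-A proof.
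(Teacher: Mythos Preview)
There are two genuine gaps. First, your step~(i) sample-path sandwich on $\langle\q(t),{\bf 1}\rangle$ does not hold: the JSQ-A total evolves as $\sum_i[q_i(t)+c_i(t)-d_i(t)]^+$ rather than $[\langle\q(t),{\bf 1}\rangle+c(t)-\langle{\bf d}(t),{\bf 1}\rangle]^+$, and since $\sum_i[x_i]^+\geq[\sum_i x_i]^+$, the JSQ-A total can strictly exceed any single-server process driven by $c(t)$. (Take $n=2$, $q_1=0$, $q_2=10$, $a=0$, $s_1=s_2=5$, no abandonment: the JSQ-A total becomes $5$ while the SSQ-like process starting from $10$ becomes $0$.) The unused service being a.s.\ bounded by $nA$ is not the issue; the SSQ-A argument relies structurally on $q^+u=0$ (so that $u\,e^{\sqrt\gamma\phi q^+}=u$), and the total-queue analogue $\langle\q^+,{\bf 1}\rangle\langle{\bf u},{\bf 1}\rangle=0$ is simply false, so step~(iii) cannot be replicated on the total either. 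Second, your transfer to $\sum_i M_{nq_i}^\gamma(\phi)$ via $nq_i=\langle\q,{\bf 1}\rangle+nq_{\perp,i}$ fails for the reason you already flag: Theorem~\ref{LEM: JSQ_SSC} gives only $\EE[\|\q_\perp\|^2]\leq M_\perp$, which cannot control $\sum_i e^{\sqrt\gamma\phi n q_{\perp,i}}$ inside an expectation. Any Taylor truncation leaves a remainder requiring higher moments of $\q_\perp$, and H\"older pushes the requirement to fourth moments you do not have. This is not merely a constraint on $\phi_1$ but a structural obstruction.

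The paper goes in the opposite direction. It bounds the sum of individual MGFs $\sum_i\EE[e^{\sqrt\gamma\phi q_i}]$ directly: existence of each term comes from coupling $q_i$ with the SSQ-A that receives \emph{all} arrivals; then a one-step (not pathwise) coupling on the abandonments, combined with setting the drift of $\sum_i e^{\sqrt\gamma\phi q_i}$ to zero in steady state, exploits the JSQ identity $\sum_i\EE[Y_i e^{\sqrt\gamma\phi q_i}]=M^\gamma_{q_{\min}}(\phi)\leq\frac{1}{n}\sum_iM^\gamma_{q_i}(\phi)$ to produce enough negative drift. The residual terms $\sum_i(\EE[u_i]+\EE[\tilde C-\gamma q_i]^+)$ are bounded via second-order steady-state identities (equating the drift of $\|\q\|^2$ to zero), not via coupling. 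Once $\sum_i M^\gamma_{q_i}(\phi)$ is bounded, Jensen's inequality---which works in this direction by convexity of $e^x$---yields the bound on $M^\gamma_{\langle\q,{\bf 1}\rangle}(\phi/n)$. Theorem~\ref{LEM: JSQ_SSC} is not invoked anywhere in the proof.
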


For JSQ-A, the proof is more challenging than for SSQ-A, as we cannot create a coupled process for JSQ-A that pathwise dominates the original queue length vector, as the load balancing process is state dependent. Thus, we first show the existence of the MGF for JSQ-A by using the bound for the MGF for SSQ-A. Afterwards, we do a `one-step coupling' (instead of pathwise coupling) on the abandonments to replicate the arguments from SSQ-A. Instead of working directly with the MGF of the total queue length $\EE\big[e^{\eta \phi \vsum{\q}}\big]$ (where $\eta$ is a scaling factor that depends on the regime), we work with the sum of the MGFs of the individual scaled queue lengths $\sum_{i=1}^n \EE\big[e^{\eta \phi q_i}\big]$, and then use Jensen's inequality to get a bound on $\EE\big[e^{\frac{1}{n}\eta \phi \vsum{\q}}\big]$. Due to the weaker form of SSC for JSQ-A presented in this paper, we cannot directly claim that $\EE\big[e^{\frac{1}{n}\eta \phi \vsum{\q}}\big] \approx \sum_{i=1}^n \EE\big[e^{\eta \phi q_i}\big]$. Such a claim can be made only after proving a bound for $\sum_{i=1}^n \EE\big[e^{\eta \phi q_i}\big]$ and then combining it with the SSC in Theorem \ref{LEM: JSQ_SSC}. In order to prove the bound for $\sum_{i=1}^n \EE\big[e^{\eta \phi q_i}\big]$, we exploit the fact that, under the JSQ policy, the arrivals join a shortest queue. Thus, this gives sufficiently large negative drift for the exponential Lyapunov function, i.e., for $ \sum_{i=1}^n \EE\big[e^{\eta \phi q_i}\big]$. Further, since we do not have pathwise coupling, we have to use additional second order drift arguments to prove Lemma \ref{lem: jsq_mgfexist} for JSQ-A. The complete proof is given in Appendix \ref{app: jsq_mgfexist}. \\

\begin{remark}
    Lemma \ref{lem: jsq_mgfexist} provides a bound on the MGF of the queue length process in all three regimes. When the system is in classic heavy traffic, Lemma \ref{lem: jsq_mgfexist_c} implies that the MGF of the scaled total queue length is bounded by a constant. Similarly, since $\nu_\gamma$ is of order $\sqrt{\gamma}$ in the critical heavy traffic regime, Lemma \ref{lem: jsq_mgfexist_a} implies that scaled total queue length is bounded by a constant in that regime. For the heavily overloaded regime, using the results in lemmas \ref{lem: jsq_mgfexist_a} and \ref{lem: jsq_mgfexist_b}, we can get that the MGF of the appropriately centered and scaled queue length is bounded. In the heavily overloaded regime, we observe that the total queue length concentrates around $\nu_\gamma/\gamma$ only for small values of $\gamma$, and thus we need $\gamma$ to be small enough in Lemma \ref{lem: jsq_mgfexist_b}. Further, from Lemma \ref{lem: jsq_mgfexist}, we can already conclude that, in heavily overloaded regime, all the queues in JSQ-A concentrate around $\nu_\gamma/n\gamma$.\\
\end{remark}

\begin{remark}
    In Lemma \ref{lem: jsq_mgfexist_a}, the exponential term in the RHS blows up only when the system is in the heavily overloaded regime, for which $\nu_\gamma>0$, and so we have $\nu_\gamma^+$ in Lemma \ref{lem: jsq_mgfexist_a}. Similarly, the bound in Lemma \ref{lem: jsq_mgfexist_b} is meaningful only for $\nu_\gamma>0$, as otherwise $ \sum_{i=1}^n M_{nq_i}^\gamma (-\phi)$ is clearly less than 1 while the term in the RHS is larger than 1.\\
\end{remark}

\subsection{Proofs for each case in Theorem \ref{THM: JSQ_LIMIT_DIS}} 
\label{sec: jsq_proofsketch}

\subsubsection{Classic Heavy Traffic: Proof of Theorem \ref{thm: jsq_fast}} \label{sec: jsq_proofsketch1}

In this section, we provide the proof of Theorem \ref{thm: jsq_fast}. Before presenting the proof, we present two crucial technical lemmas.\\

\begin{lemma}[Classic-HT: Second Order Approximation]
\label{lem: jsq_approximation_fast}
Under the same assumptions as in Theorem \ref{thm: jsq_fast}, we have the following results.
\begin{enumerate}[label=(\alph*), ref=\ref{lem: jsq_approximation_fast}.\alph*]
    \item \label{lem: jsq_approximation_fast_a} For any $\gamma \in (0,1)$ and for any $\phi \in \mathbb R$,
    \begin{align*}
      \EE\left[{e^{-\gamma^\alpha \phi  \vsum{u} }}\right]-\phi\gamma^\alpha \nu_\gamma -1 =: \overline{\mathcal{E}}_f^u(\gamma,\phi), &&  \big| \overline{\mathcal{E}}_f^u(\gamma,\phi)\big| \leq \overline K^u_f \phi^2 e^{n|\phi|A}  \gamma^{\min\{3\alpha,1\}}.
    \end{align*}
    \item \label{lem: jsq_approximation_fast_b} Suppose $c$ is a random variable distributed as $c(t) = a(t) - \vsum{s(t)}$, with $|c|\leq nA$, $\EE[c]= \nu_\gamma $ and $\EE[c^2] = \sigma^2_\gamma +\nu_\gamma^2$. Then, for any $\phi \in \mathbb R$,
    \begin{align*}
       \EE\left[{e^{\gamma^\alpha \phi  c }}\right]   - \frac{\gamma^{2\alpha}\phi^2 (\sigma^2_\gamma+\nu_\gamma^2)}{2} - \gamma^\alpha \phi \nu_\gamma -  1 =: \overline{\mathcal{E}}_f^c(\gamma,\phi), && \big| \overline{\mathcal{E}}_f^c(\gamma,\phi) \big| \leq \overline  K_{f}^c |\phi|^3e^{n|\phi|A} \gamma^{3\alpha}.
    \end{align*}
    \item \label{lem: jsq_approximation_fast_c} Let $\phi_1$ be a constant as  in Lemma \ref{lem: jsq_mgfexist_a}. Then, for any $\phi < \phi_1/2$,
    \begin{align*}
        \EE\left[{e^{\gamma^\alpha \phi  \langle {\bf 1},\q-{\bf d} \rangle }} \right] - \EE\left[{e^{\gamma^\alpha \phi  \vsum{q}}}\right] =: \overline{\mathcal{E}}_f^d(\gamma,\phi), && \big| \overline{\mathcal{E}}_f^d(\gamma,\phi) \big| \leq \overline  K_f^d |\phi| \gamma,
    \end{align*}
\end{enumerate}
where $\overline  K_f^u,\overline  K_f^c$ and $\overline  K_f^d$ are constant independent of $\gamma$ and $\phi$.\\
\end{lemma}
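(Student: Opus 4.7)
\textbf{Proof proposal for Lemma \ref{lem: jsq_approximation_fast}.} Each of the three parts is an error bound for replacing an expectation of an exponential by the first couple of terms of its Taylor expansion, so the approach is the same throughout: Taylor-expand the exponential, use that all of $\vsum{\bf u}$, $c$, and $\vsum{\bf d}$ are either deterministically bounded (by $nA$) or have a controllable MGF, and then close the bound with the steady-state balance identities and the MGF bound from Lemma \ref{lem: jsq_mgfexist_c}. The order in which I would carry this out is (b), then (a), then (c), because (b) is purely analytic (no steady-state information used), (a) reuses (b)'s Taylor-with-remainder estimate and adds a steady-state identity, and (c) additionally needs a derivative-of-MGF style bound.

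Part (b) is the simplest. Using the standard Lagrange remainder bound $|e^{x}-1-x-x^{2}/2|\le \tfrac{|x|^{3}}{6}e^{|x|}$ together with $|c|\le nA$ (so $\gamma^{\alpha}|\phi||c|\le n|\phi|A$ for $\gamma<1$), pointwise we get $|e^{\gamma^{\alpha}\phi c}-1-\gamma^{\alpha}\phi c -\tfrac{1}{2}\gamma^{2\alpha}\phi^{2}c^{2}|\le \tfrac{1}{6}\gamma^{3\alpha}|\phi|^{3}(nA)^{3}e^{n|\phi|A}$. Taking expectations and using $\EE[c]=\nu_\gamma$, $\EE[c^2]=\sigma^2_\gamma+\nu_\gamma^2$ yields the claimed bound with $\overline K_f^c = (nA)^3/6$.

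Part (a) is the crux and is where the analysis becomes regime-specific. First I use the steady-state balance: taking $\langle \cdot,{\bf 1}\rangle$ on $\q^+=\q+{\bf c}-{\bf d}+{\bf u}$, taking expectations, and using $\EE[\vsum{\bf d}]=\gamma\EE[\vsum{\q}]$ gives the key identity $\EE[\vsum{\bf u}]=\gamma\EE[\vsum{\q}]-\nu_\gamma$. Next, since $\vsum{\bf u}\le nA$, the second-order Taylor bound produces
\begin{equation*}
\bigl|\EE[e^{-\gamma^{\alpha}\phi\vsum{\bf u}}]-1+\gamma^{\alpha}\phi\EE[\vsum{\bf u}]\bigr|
\le \tfrac{1}{2}\gamma^{2\alpha}\phi^{2}e^{n|\phi|A}\,\EE[\vsum{\bf u}^{2}]
\le \tfrac{nA}{2}\gamma^{2\alpha}\phi^{2}e^{n|\phi|A}\EE[\vsum{\bf u}].
\end{equation*}
Substituting the steady-state identity in both the linear term and the factor $\EE[\vsum{\bf u}]$ on the right, the desired quantity $\EE[e^{-\gamma^{\alpha}\phi\vsum{\bf u}}]-1-\phi\gamma^{\alpha}\nu_\gamma$ equals $-\phi\gamma^{1+\alpha}\EE[\vsum{\q}]$ plus the above Taylor remainder. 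I then close the bound using Lemma~\ref{lem: jsq_mgfexist_c}, which, via $x\le \tfrac{1}{\gamma^{\alpha}\epsilon}(e^{\gamma^{\alpha}\epsilon x}-1)$ for $x\ge 0$ and a choice of $\epsilon>0$ with $\epsilon<\phi_1$, yields $\EE[\vsum{\q}]\le C\gamma^{-\alpha}$. Because in classic heavy traffic $|\nu_\gamma|\asymp \gamma^{\alpha}$ and $\gamma\EE[\vsum{\q}]\asymp \gamma^{1-\alpha}\ll\gamma^{\alpha}$, we also have $\EE[\vsum{\bf u}]\le C\gamma^{\alpha}$, so the Taylor remainder is $O(\phi^{2}\gamma^{3\alpha}e^{n|\phi|A})$ while the leftover steady-state term is $O(|\phi|\gamma)$. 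Combining gives an error of the stated order $\gamma^{\min\{3\alpha,1\}}$. The \textbf{main obstacle} here is that neither error alone suffices: one must simultaneously use the deterministic boundedness of $\vsum{\bf u}$ (for Taylor) and the nontrivial moment bound on $\vsum{\q}$ coming from Lemma~\ref{lem: jsq_mgfexist_c} (for the steady-state correction), and these two sources are traded off against the exponent $\alpha$.

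Part (c) reduces to controlling a conditional binomial MGF. Conditionally on $\q$, $\vsum{\bf d}$ is $\text{Bin}(\vsum{\q},\gamma)$, so $\EE[e^{-\gamma^{\alpha}\phi\vsum{\bf d}}\mid \q]=(1-\gamma(1-e^{-\gamma^{\alpha}\phi}))^{\vsum{\q}}$. Writing the target difference as $\EE[e^{\gamma^{\alpha}\phi\vsum{\q}}\,(1-e^{-\gamma^{\alpha}\phi\vsum{\bf d}})]$ and using $|1-(1-\gamma y)^{k}|\le \gamma|y|\,k$ for $k\in\mathbb N$ with $\gamma|y|\le 1$, together with $|1-e^{-\gamma^{\alpha}\phi}|\le \gamma^{\alpha}|\phi|e^{\gamma^{\alpha}|\phi|}$, produces the pointwise bound $|\EE[e^{-\gamma^{\alpha}\phi\vsum{\bf d}}\mid\q]-1|\le C\gamma^{1+\alpha}|\phi|\,\vsum{\q}$. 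Thus
\begin{equation*}
|\overline{\mathcal E}_f^d(\gamma,\phi)|\le C\gamma^{1+\alpha}|\phi|\,\EE\!\left[\vsum{\q}\,e^{\gamma^{\alpha}\phi\vsum{\q}}\right].
\end{equation*}
To absorb the factor $\vsum{\q}$, I use the elementary inequality $x\le \tfrac{1}{\gamma^{\alpha}\epsilon}(e^{\gamma^{\alpha}\epsilon x}-1)$ so that $\vsum{\q}\,e^{\gamma^{\alpha}\phi\vsum{\q}}\le \tfrac{1}{\gamma^{\alpha}\epsilon}e^{\gamma^{\alpha}(\phi+\epsilon)\vsum{\q}}$; choosing $\epsilon$ small enough so that $\phi+\epsilon<\phi_1$ and invoking Lemma~\ref{lem: jsq_mgfexist_c} again gives $\EE[\vsum{\q}\,e^{\gamma^{\alpha}\phi\vsum{\q}}]\le C\gamma^{-\alpha}$. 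Multiplying, the prefactor $\gamma^{1+\alpha}|\phi|\cdot\gamma^{-\alpha}=|\phi|\gamma$, which is exactly $\overline K_f^d|\phi|\gamma$. This step is where the restriction $\phi<\phi_1/2$ in the hypothesis is used, since we need room to slide to $\phi+\epsilon<\phi_1$. The only subtlety is that the naive bound $(1-\gamma y)^{k}\approx 1-k\gamma y$ is only accurate when $k\gamma y$ is small; one must verify it suffices to bound the absolute difference by $k\gamma|y|$ without a small-quantity hypothesis, which follows from $|(1-t)^{k}-1|\le k|t|$ for $|t|\le 1$, a consequence of the mean value theorem.
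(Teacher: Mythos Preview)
Your approach for parts (a) and (b) is essentially identical to the paper's: Taylor remainder bounds combined with the steady-state identity $\EE[\vsum{u}]=\gamma\EE[\vsum{q}]-\nu_\gamma$ and the first-moment consequence of Lemma~\ref{lem: jsq_mgfexist_c}. Nothing to add there.

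Part (c), however, has a genuine gap for negative $\phi$. Your key inequality $|(1-t)^{k}-1|\le k|t|$ for $|t|\le 1$ is \emph{false} when $t<0$: by the mean value theorem $(1-t)^k-1=-kt(1-\xi)^{k-1}$ with $\xi$ between $0$ and $t$, and for $t<0$ one has $1-\xi>1$, so $|(1-t)^k-1|=k|t|(1-\xi)^{k-1}>k|t|$ whenever $k\ge 2$. In your setup $t=\gamma(1-e^{-\gamma^\alpha\phi})$, which is negative precisely when $\phi<0$; since the lemma is stated (and used) for all $\phi<\phi_1/2$, this case cannot be ignored.

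The paper avoids computing the binomial MGF altogether. It bounds the random quantity directly via $|e^{-\gamma^\alpha\phi\vsum{d}}-1|\le \gamma^\alpha|\phi|\,\vsum{d}\,e^{[-\gamma^\alpha\phi\vsum{d}]^+}$ and then uses the pointwise exponent inequality
\[
\phi\vsum{q}+[-\phi\vsum{d}]^+\le [\phi]^+\vsum{q},
\]
valid because $0\le \vsum{d}\le \vsum{q}$ (check both signs of $\phi$ separately). This collapses the estimate to $\gamma^\alpha|\phi|\,\EE[\vsum{d}\,e^{\gamma^\alpha[\phi]^+\vsum{q}}]$, after which conditioning gives $\EE[\vsum{d}\mid\q]=\gamma\vsum{q}$ and the factor $\EE[\gamma^\alpha\vsum{q}\,e^{\gamma^\alpha[\phi]^+\vsum{q}}]$ is bounded uniformly via Lemma~\ref{lem: jsq_mgfexist_c} exactly as you propose. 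The net effect is the same $\overline K_f^d|\phi|\gamma$ bound, but the argument is sign-robust. Your strategy is salvageable with this modification; only the specific MVT step needs to be replaced.
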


Lemma \ref{lem: jsq_approximation_fast} provides the second order approximation arguments required for the proof of Theorem \ref{thm: jsq_fast}. The proof is based on the second order Taylor expansion of the exponential function and on the bound for the MGF presented in Lemma \ref{lem: jsq_mgfexist}, and it is provided in Appendix \ref{app: jsq_fast}.\\

\begin{remark}
    Lemma \ref{lem: jsq_approximation_fast_c} shows that, in the classic heavy traffic regime, the term
    \[ \frac{1}{\gamma^{2\alpha}} \Big| \EE\Big[{e^{\gamma^\alpha \phi  \langle {\bf 1}, \q -{\bf d} \rangle }}\Big]-\EE\Big[{e^{\gamma^\alpha \phi \vsum{q} }}\Big] \Big| \]
    is of order $\gamma^{1-2\alpha}$, which is negligible for small values of $\gamma$ since $\alpha < 1/2$. This implies that we can replace the term with abandonments, $\EE\big[{e^{\gamma^\alpha \phi  \langle {\bf 1}, \q -{\bf d} \rangle }}\big]$, with a term without abandonments $\EE\big[{e^{\gamma^\alpha \phi  \langle {\bf 1}, \q \rangle }}\big]$, when using the transform method. This in turn implies that the abandonments do not affect the limiting distribution in the classic heavy traffic regime. Therefore, under the scaling factor $|1-\rho_\gamma| = \Omega (\gamma^\alpha)$, the system behaves like it is in classic heavy traffic without abandonments, and the corresponding limiting distribution is exponential.\\
\end{remark}

\begin{lemma}[Classic-HT: Implications of SSC]
\label{LEM: JSQ_SSC_fast}
Under the same assumptions as in Theorem \ref{thm: jsq_fast}, we have the following results.
\begin{enumerate}[label=(\alph*), ref=\ref{LEM: JSQ_SSC_fast}.\alph*]
\item \label{LEM: JSQ_SSC_fast_a} For any $\gamma \in (0,1)$ and for any $\phi \in \big(-\infty,\phi_1/4\big)$,
\begin{align*}
    \EE\left[\left(e^{\gamma^\alpha \phi \vsum{q^+}} -1\right)\left(e^{-\gamma^\alpha \phi \vsum{u} }-1\right)\right] =: \overline{\mathcal{E}}_f^{qu}(\gamma,\phi), &&|\overline{\mathcal{E}}_f^{qu}(\gamma,\phi)| \leq  \overline{K}_f^{qu}\phi^2 \gamma^{\frac{9\alpha}{4}}e^{n|\phi|A}.
\end{align*}
\item \label{LEM: JSQ_SSC_fast_b} Suppose $ \boldsymbol \Phi = \{ \mathbf x\in \mathbb R^n: 2\|{\bf x}\| \leq \phi_1 \}$, then for any $\boldsymbol \phi \in \boldsymbol \Phi$, with $\phi = \langle {\bf 1}, \boldsymbol \phi \rangle$,
\begin{equation*}
    \limg \EE\left[{e^{\gamma^\alpha \langle\boldsymbol \phi, \q \rangle }}\right] = \limg \EE\left[{e^{\frac{1}{n} \gamma^\alpha\phi \vsum{q} }}\right].
\end{equation*}
\end{enumerate}
\end{lemma}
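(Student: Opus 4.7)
\textbf{Proof plan for Lemma~\ref{LEM: JSQ_SSC_fast}.}

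The plan is to combine three ingredients: the complementary-slackness identity $\langle \q^+, {\bf u}\rangle = 0$ (so $u_i > 0$ forces $q_i^+ = 0$), the second-moment State Space Collapse bound $\EE[\|\q_\perp\|^2] \leq M_\perp$ from Theorem~\ref{LEM: JSQ_SSC}, and the uniform MGF bound $\EE\!\left[e^{\gamma^\alpha \phi \vsum{q}}\right] \leq M_1$ for $0<\phi<\phi_1$ from Lemma~\ref{lem: jsq_mgfexist_c}.

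For part (a), I would first observe that the factor $e^{-\gamma^\alpha\phi\vsum{u}} - 1$ is identically zero on $\{\vsum{u}=0\}$, and that by complementary slackness the complementary event $\{\vsum{u}>0\}$ forces at least one coordinate $q_i^+$ to vanish. Since $q_{\|1}^+ = |q_i^+ - q_{\|1}^+| \leq \|\q_\perp^+\|$, this yields the crucial geometric inequality $\vsum{q^+} = n q_{\|1}^+ \leq n\|\q_\perp^+\|$ on $\{\vsum{u}>0\}$. Applying the elementary bound $|e^x-1|\leq|x|e^{|x|}$ to both exponentials and substituting $\vsum{u}\leq nA$ (almost sure bound) together with the above inequality gives
\begin{equation*}
\left|(e^{\gamma^\alpha\phi\vsum{q^+}}-1)(e^{-\gamma^\alpha\phi\vsum{u}}-1)\right|\mathds{1}_{\{\vsum{u}>0\}} \leq C\gamma^{2\alpha}\phi^2\, \|\q_\perp^+\|\, e^{c\gamma^\alpha|\phi|(\|\q_\perp^+\|+A)},
\end{equation*}
for some constants $C,c$. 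This reduces the problem to estimating $\EE\!\left[\|\q_\perp^+\|\, e^{c\gamma^\alpha|\phi|\|\q_\perp^+\|}\, \mathds{1}_{\{\vsum{u}>0\}}\right]$. To extract the additional $\gamma^{\alpha/4}$ factor needed beyond the $\gamma^{2\alpha}$ from the two Taylor expansions, I would split the integration via a threshold $\|\q_\perp^+\|\leq L$ versus $\|\q_\perp^+\|>L$ for a suitable $L$ (scaling as a small negative power of $\gamma$): on the low-$\|\q_\perp^+\|$ piece, the exponential factor is controlled and Cauchy--Schwarz with SSC yields the bound; on the high-$\|\q_\perp^+\|$ piece, Markov's inequality $\Pr[\|\q_\perp^+\|>L]\leq M_\perp/L^2$, combined with $\|\q_\perp^+\|\leq \vsum{q^+}$ to control the exponential via the MGF bound (this is what forces the restriction $\phi < \phi_1/4$, since the argument of the MGF bound picks up a factor of roughly $4\phi$), produces the complementary contribution. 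Optimizing $L$ balances the two pieces and yields the stated $\gamma^{9\alpha/4}$ rate.

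For part (b), the starting point is the orthogonal decomposition $\langle \boldsymbol\phi,\q\rangle = \tfrac{\phi}{n}\vsum{q} + \langle\boldsymbol\phi,\q_\perp\rangle$ coming from $\q = \q_\| + \q_\perp$ and $\q_\| = \tfrac{1}{n}\vsum{q}\,{\bf 1}$, where $\phi = \langle {\bf 1},\boldsymbol\phi\rangle$. Consequently,
\begin{equation*}
\EE\!\left[e^{\gamma^\alpha\langle\boldsymbol\phi,\q\rangle}\right] - \EE\!\left[e^{\frac{\gamma^\alpha\phi}{n}\vsum{q}}\right] = \EE\!\left[e^{\frac{\gamma^\alpha\phi}{n}\vsum{q}}\left(e^{\gamma^\alpha\langle\boldsymbol\phi,\q_\perp\rangle}-1\right)\right].
\end{equation*}
I would bound the right-hand side by Cauchy--Schwarz, controlling the MGF factor via Lemma~\ref{lem: jsq_mgfexist_c} (the constraint $2\|\boldsymbol\phi\|\leq\phi_1$ together with $|\phi|\leq \sqrt{n}\|\boldsymbol\phi\|$ ensures all MGF arguments stay within the admissible range), and the perpendicular factor via $|e^y-1|\leq |y|e^{|y|}$ followed by another Cauchy--Schwarz that combines SSC ($\EE[\|\q_\perp\|^2]\leq M_\perp$) with the MGF bound applied to $\EE[e^{c\gamma^\alpha\|\q_\perp\|}]$ (dominated using $\|\q_\perp\|\leq\vsum{q}$). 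This yields an error of order $\gamma^\alpha$, which vanishes as $\gamma\to 0$, proving the claim.

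The hard part is in part (a): the tight $\gamma^{9\alpha/4}$ exponent requires carefully trading off the small probability of having an empty queue against the growth of the exponential factors, and the difficulty stems from the fact that Theorem~\ref{LEM: JSQ_SSC} only provides a second-moment bound on $\|\q_\perp\|$, precluding simpler Hölder arguments that would need higher-moment control. The threshold-splitting step is the crux of the proof.
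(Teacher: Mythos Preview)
Your plan for part~(b) is correct and essentially the paper's argument.

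For part~(a), however, the threshold-splitting step as you describe it cannot deliver the extra $\gamma^{\alpha/4}$. Once you replace $\vsum{u}$ by the crude bound $nA$, you have thrown away the only source of that factor. On the low-$\|\q_\perp^+\|$ piece, ``Cauchy--Schwarz with SSC'' uses just $\EE[\|\q_\perp^+\|^2]\le M_\perp$ and so yields $O(1)$, not $O(\gamma^{\alpha/4})$; nothing in your description invokes the smallness of $\Pr[\vsum{u}>0]$. Even granting that, balancing against the high piece via Markov and Cauchy--Schwarz gives a high-piece contribution of order $\gamma^{-\alpha}/L$, and forcing this below $\gamma^{\alpha/4}$ requires $L\gtrsim\gamma^{-5\alpha/4}$, whence $\gamma^\alpha L\to\infty$ and the exponential $e^{c\gamma^\alpha|\phi|L}$ in the low piece explodes. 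No choice of $L$ saves the argument as written.

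The paper's route is both simpler and different in spirit: do not discard $\vsum{u}$. From $q_i^+u_i=0$ and $q^+_{\|i}=\tfrac1n\vsum{q^+}$ one gets the exact identity $\vsum{u}\,\vsum{q^+}=-n\langle{\bf u},\q_\perp^+\rangle$, so after the two first-order Taylor bounds the expectation to control is $\EE\big[|\langle{\bf u},\q_\perp^+\rangle|\,e^{\gamma^\alpha[\phi]^+\vsum{q^+}}\big]$. A single H\"older with exponents $(4,2,4)$ then gives
\[
\EE[\|{\bf u}\|^4]^{1/4}\,\EE[\|\q_\perp^+\|^2]^{1/2}\,\EE\big[e^{4\gamma^\alpha[\phi]^+\vsum{q^+}}\big]^{1/4},
\]
and the extra $\gamma^{\alpha/4}$ comes cleanly from $\EE[\|{\bf u}\|^4]\le nA^3\,\EE[\vsum{u}]=O(\gamma^\alpha)$, since $\EE[\vsum{u}]=-\nu_\gamma+\gamma\,\EE[\vsum{q}]=O(\gamma^\alpha)$ in this regime. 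No threshold is needed, and the constraint $4[\phi]^+<\phi_1$ yields exactly the stated range $\phi<\phi_1/4$.
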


Lemma \ref{LEM: JSQ_SSC_fast} provides the two key implications of the SSC which are required to prove Theorem \ref{thm: jsq_fast}. The proof is based on the second order Taylor expansion of the exponential function and the SSC result, and it is provided in Appendix \ref{app: jsq_fast}.\\ 

\begin{remark}
    \label{rem: ssc_implications_fast}
    For the case of SSQ-A, the term 
    \[ \EE\Big[\left(e^{\gamma^\alpha \phi q^+} -1 \right) \left(e^{-\gamma^\alpha \phi u }-1\right)\Big] \]
    is exactly equal to zero. And from Lemma \ref{LEM: JSQ_SSC_fast_a}, we get that for JSQ-A, the term 
    \[ \frac{1}{\gamma^{2\alpha}} \left| \EE\left[\left(e^{\gamma^\alpha \phi \vsum{q^+}} -1\right)\left(e^{-\gamma^\alpha \phi \vsum{u} }-1\right)\right]\right| \]
    is at most of order $\gamma^{\frac{\alpha}{4}}$, which is approximately zero for small values of $\gamma$. This allows us to use similar steps for JSQ-A as that for SSQ-A while implementing the transform method. \\
\end{remark}
Next, we present the proof of Theorem \ref{thm: jsq_fast}.

\begin{proof}[Proof of Theorem \ref{thm: jsq_fast}: ]

 Lemma \ref{lem: jsq_mgfexist_c} implies that the MGF of the steady state queue length is bounded, for any $\phi<\phi_1$. Then, Lemma \ref{LEM: JSQ_SSC_fast_a} implies that, for any $\phi<\phi_1/4$, we have
  \begin{align}
\label{eq: jsq_fast_starteq_proof}
 \EE\left[{e^{-\gamma^\alpha \phi \vsum{u}}}\right]-1 &= \EE\left[{e^{\gamma^\alpha \phi \langle {\bf 1}, \q^+-{\bf u} \rangle }}\right] - \EE\left[{e^{\gamma^\alpha \phi \langle {\bf 1}, \q^+ \rangle}}\right] - \overline{\mathcal{E}}_f^{qu}(\gamma,\phi) \nonumber \allowdisplaybreaks\\
 &\stackrel{(a)}{=} \EE\left[{e^{\gamma^\alpha \phi \langle {\bf 1}, \q + {\bf c}-{\bf d} \rangle }}\right]  - \EE\left[{e^{\gamma^\alpha \phi  \vsum{q} }}\right] -\overline{\mathcal{E}}_f^{qu}(\gamma,\phi)  \nonumber \allowdisplaybreaks\\
 &\stackrel{(b)}{=} \EE\left[{e^{\gamma^\alpha \phi \langle {\bf 1}, \q -{\bf d} \rangle }}\right] \EE\left[{e^{\gamma^\alpha \phi c }}\right] - \EE\left[{e^{\gamma^\alpha\phi \vsum{q} }}\right]  -\overline{\mathcal{E}}_f^{qu}(\gamma,\phi)\nonumber\allowdisplaybreaks\\
 &= \EE\left[{e^{\gamma^\alpha \phi  \vsum{q} }}\right] \left(\EE\left[{e^{\gamma^\alpha \phi c }}\right] -1 \right) \nonumber\\
 & \ \ \ \ + \left(\EE\left[{e^{\gamma^\alpha \phi \langle {\bf 1}, {\bf q} - {\bf d} \rangle }}\right] -\EE\left[{e^{\gamma^\alpha \phi \vsum{q} }} \right] \right) \EE\left[ {e^{\gamma^\alpha \phi  c }}\right] - \overline{\mathcal{E}}_f^{qu}(\gamma,\phi),
\end{align}
where (a) follows from $\q$ and $\q^+$ having the same distribution, and $\q^+-{\bf u} = \q+{\bf c}- {\bf d}$; and (b) follows from $c$ being independent of ${\bf q}$ and ${\bf d}$. 
Now, from Lemma \ref{lem: jsq_approximation_fast}, it follows that 
\begin{align}
\label{eq: jsq_prelimitmgfeq}
    \left( \frac{1}{\gamma^\alpha}\nu_\gamma + \frac{1}{2}\phi (\sigma^2_\gamma + \nu_\gamma^2) \right) \EE\left[{e^{\gamma^\alpha \phi  \vsum{q} }}\right] - \frac{1}{\gamma^\alpha} \nu_\gamma = \frac{1}{\phi \gamma^{2\alpha}}  \overline{\mathcal{E}}_f(\gamma,\phi),
\end{align}
where (using the terminology in Lemma \ref{lem: jsq_approximation_fast})
\begin{align*}
      \overline{\mathcal{E}}_f(\gamma,\phi) = -\EE\left[{e^{\gamma^\alpha \phi \vsum{q} }}\right] \overline{\mathcal{E}}_f^c(\gamma,\phi) +  \overline{\mathcal{E}}_f^u(\gamma,\phi)  -\EE\left[{e^{\gamma^\alpha \phi c }}\right] \overline{\mathcal{E}}_f^d(\gamma,\phi) + \overline{\mathcal{E}}_f^{qu}(\gamma,\phi).
\end{align*}
Using the bounds provided in Lemma \ref{lem: jsq_approximation_fast}, Lemma \ref{lem: jsq_mgfexist_c}, and the fact that $|\phi|\leq 1$, we get that
\begin{align*}
     \frac{1}{|\phi| \gamma^{2\alpha}} |\overline{\mathcal{E}}_f(\gamma,\phi)| &\leq e^{nA}\big(M_1 \overline K_{f}^c  +\overline  K^u_f   +\overline K_f^d \big)  \gamma^{\min\{\alpha,1-2\alpha\}} + \overline{K}_f^{qu} \gamma^{\frac{\alpha}{4}}e^{nA}\\
     &\leq e^{nA}\big(M_1 \overline K_{f}^c  +\overline  K^u_f   +\overline K_f^d + \overline{K}_f^{qu} \big) \gamma^{\min\{\alpha/4,1-2\alpha\}},
\end{align*}
and thus
\begin{align*}
    \limg \frac{1}{|\phi| \gamma^{2\alpha}} |\mathcal{E}_f(\gamma,\phi)| =0.
\end{align*}
 Moreover, we have $\limg \frac{1}{\gamma^\alpha} \nu_\gamma = C_f$ and $\limg \sigma^2_\gamma  + \nu_\gamma^2 = \sigma^2$. Then, by taking the limit as $\gamma \rightarrow 0$, for any $\phi < \phi_1/4$ in Eq. \eqref{eq: jsq_prelimitmgfeq}, we obtain
\begin{align*}
   \limg \EE\left[{e^{\gamma^\alpha \phi  \vsum{q} }}\right] =  \frac{1}{ 1 - \phi \frac{\sigma^2}{2C_f}  }.
\end{align*}
Note that RHS is the MGF of an exponential random variable with mean $\frac{\sigma^2}{2C_f} $. Further, using Lemma \ref{LEM: JSQ_SSC_fast_b}, for $\boldsymbol\phi \in \boldsymbol \Phi$ with $\langle {\bf 1},\boldsymbol\phi \rangle = \phi$ and $|\phi|<\phi_1/4$, we obtain
\begin{align*}
   \limg \EE\left[{e^{\gamma^\alpha \langle\boldsymbol \phi, \q \rangle }}\right] = \limg \EE\left[{e^{\frac{1}{n} \gamma^\alpha\phi \vsum{q} }}\right] =  \frac{1}{ 1 - \phi \frac{\sigma^2}{2nC_f}  }.
\end{align*}
Finally, the result follows from Lemma \ref{COR: MGF_CONVERGENCE}.
\end{proof}

\subsubsection{Critical Heavy Traffic: Proof of Theorem \ref{thm: jsq_crit}} \label{sec: jsq_proofsketch2}

In this section, we provide the proof of Theorem \ref{thm: jsq_crit}. Recall that, for any random variable $X$, we defined $\MM{X} = \EE[e^{\sqrt{\gamma}\phi X}]$. Before presenting the proof, we present two crucial technical lemmas.\\

\begin{lemma}[Critical-HT: Second order approximations]
\label{lem: jsq_approximation_crit}
Under the same assumptions as in Theorem \ref{thm: jsq_crit}, we have the following results.
\begin{enumerate}[label=(\alph*), ref=\ref{lem: jsq_approximation_crit}.\alph*]
    \item \label{lem: jsq_approximation_crit_a} For any $\gamma \in (0,1)$ and for any $\phi \in \mathbb R$,
    \begin{align*}
       \MM{-\vsum{u}} +\phi\sqrt{\gamma} \EE[\vsum{u}] -1 =: \overline{\mathcal{E}}_c^u(\gamma,\phi), && \big| \overline{\mathcal{E}}_c^u(\gamma,\phi)\big| \leq \overline K_c^u \phi^2 e^{n|\phi|A}  \gamma^{\frac{3}{2}}.
    \end{align*}
    \item \label{lem: jsq_approximation_crit_b} Suppose $c$ is a random variable distributed as $c(t) = a(t) - s(t)$, with $|c|\leq A$, $\EE[c]= \nu_\gamma $ and $\EE[c^2] = \sigma^2_\gamma +\nu_\gamma^2$, for any $\phi \in \mathbb R$,
    \begin{align*}
      \MM{c}  - \frac{\gamma\phi^2 (\sigma^2_\gamma+\nu_\gamma^2)}{2} - \sqrt{\gamma} \phi \nu_\gamma -  1 =: \overline{\mathcal{E}}_c^c(\gamma,\phi), && \big| \overline{\mathcal{E}}_c^c(\gamma,\phi) \big| \leq \overline  K_{c}^c |\phi|^3e^{n|\phi|A} \gamma^{\frac{3}{2}}.
    \end{align*}
    \item \label{lem: jsq_approximation_crit_c} Let $\phi_1$ be a constant as given in Lemma \ref{lem: ssq_mgfexist_a}. Then, for any $\phi < \phi_1/2$,
    \begin{align*}
        \left(\MM{\langle {\bf 1}, {\bf q} - {\bf d} \rangle} - \MM{\vsum{q}}\right) \MM{c} +\phi \gamma \frac{d}{d\phi} \MM{\vsum{q}}  =: \overline{\mathcal{E}}_c^d(\gamma,\phi), && \big|\overline{\mathcal{E}}_c^d(\gamma,\phi) \big| \leq \overline K_c^d \phi^2 e^{n|\phi| A} \gamma^\frac{3}{2},
    \end{align*}
\end{enumerate}
where $\overline K_c^u, \overline K_c^c$, and $ \overline K_c^d$ are constants independent of $\gamma$ and $\phi$.\\
\end{lemma}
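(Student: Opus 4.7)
All three parts are second-order Taylor expansions of exponentials, whose remainders we will absorb using the uniform MGF bounds of Lemma \ref{lem: jsq_mgfexist}. The fact we will use repeatedly is that, in the critical regime, $\nu_\gamma^+/\sqrt{\gamma}$ is bounded, so Lemma \ref{lem: jsq_mgfexist_a} implies that $\EE[e^{\sqrt{\gamma}\phi \vsum{q}}]$ is bounded uniformly in $\gamma$ on any compact $\phi$-interval inside $(-\infty,\phi_1)$; by standard elementary estimates (e.g.\ $x^k e^{\sqrt{\gamma}\phi x}\leq C_k e^{2\sqrt{\gamma}\phi x}/(\sqrt{\gamma}\phi)^k$ for $x\geq 0$), this automatically yields the weighted-moment bounds $\EE[\vsum{q}^k e^{\sqrt{\gamma}\phi \vsum{q}}]=O(\gamma^{-k/2})$ for every $\phi<\phi_1/2$.

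Parts (a) and (b) are comparatively direct. For \textbf{(a)}, I apply the Taylor remainder $|e^x-1-x|\leq (x^2/2)e^{|x|}$ with $x=-\sqrt{\gamma}\phi\vsum{u}$. Since $u_i\leq s_i\leq A$ pointwise, $\vsum{u}\leq nA$, giving $\EE[\vsum{u}^2]\leq nA\,\EE[\vsum{u}]$ and $e^{\sqrt{\gamma}|\phi|\vsum{u}}\leq e^{n|\phi|A}$. The steady-state identity $\EE[\vsum{u}]=\gamma\EE[\vsum{q}]-\nu_\gamma$, together with $\EE[\vsum{q}]=O(1/\sqrt{\gamma})$ from the previous paragraph, gives $\EE[\vsum{u}]=O(\sqrt{\gamma})$, which produces the desired $O(\gamma^{3/2}\phi^2 e^{n|\phi|A})$ bound. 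For \textbf{(b)}, the deterministic bound on $|c|$ makes this an immediate application of the third-order Taylor remainder $|e^x-1-x-x^2/2|\leq (|x|^3/6)e^{|x|}$ with $x=\sqrt{\gamma}\phi c$, after taking expectation and using $\EE[c]=\nu_\gamma$, $\EE[c^2]=\sigma_\gamma^2+\nu_\gamma^2$.

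The main technical step is \textbf{(c)}. Conditioning on $\q$ and using that the $d_i$'s are independent $Bin(q_i,\gamma)$,
\begin{align*}
\MM{\langle{\bf 1},\q-{\bf d}\rangle}-\MM{\vsum{q}} = \EE\!\left[e^{\sqrt{\gamma}\phi\vsum{q}}\bigl(r^{\vsum{q}}-1\bigr)\right], \qquad r := 1-\gamma+\gamma e^{-\sqrt{\gamma}\phi}.
\end{align*}
Rather than expanding $r^{\vsum{q}}$ directly, I would Taylor-expand the inner exponential once: $\EE[e^{-\sqrt{\gamma}\phi\vsum{d}}\mid\q]=1-\sqrt{\gamma}\phi\,\EE[\vsum{d}\mid\q]+E_1(\q)$, where the conditional remainder satisfies $|E_1(\q)|\leq (\gamma\phi^2/2)(\gamma\vsum{q}+\gamma^2\vsum{q}^2)e^{\sqrt{\gamma}|\phi|\vsum{q}}$ (using $\vsum{d}\leq\vsum{q}$ and $\EE[\vsum{d}^2\mid\q]=\gamma(1-\gamma)\vsum{q}+\gamma^2\vsum{q}^2$). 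Since $\EE[\vsum{d}\mid\q]=\gamma\vsum{q}$, the principal term contributes, after multiplying by $e^{\sqrt{\gamma}\phi\vsum{q}}$ and taking expectation, exactly $-\phi\gamma\sqrt{\gamma}\,\EE[\vsum{q}e^{\sqrt{\gamma}\phi\vsum{q}}]=-\phi\gamma\tfrac{d}{d\phi}\MM{\vsum{q}}$. The remainder $\EE[e^{\sqrt{\gamma}\phi\vsum{q}}E_1(\q)]$ is $O(\gamma^{3/2})$ by the weighted-moment bounds of the first paragraph, applied at argument $2\phi<\phi_1$ (which is why the hypothesis is $\phi<\phi_1/2$ rather than $\phi<\phi_1$). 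Finally, to incorporate the $\MM{c}$ factor, I split
\begin{align*}
(\MM{\langle{\bf 1},\q-{\bf d}\rangle}-\MM{\vsum{q}})\MM{c}+\phi\gamma\tfrac{d}{d\phi}\MM{\vsum{q}} = (\MM{\langle{\bf 1},\q-{\bf d}\rangle}-\MM{\vsum{q}})(\MM{c}-1) + \mathcal{R},
\end{align*}
where $\mathcal{R}=(\MM{\langle{\bf 1},\q-{\bf d}\rangle}-\MM{\vsum{q}})+\phi\gamma\tfrac{d}{d\phi}\MM{\vsum{q}}$ is the $O(\gamma^{3/2})$ remainder from the previous step; the first product is $O(\gamma)\cdot O(\sqrt{\gamma})=O(\gamma^{3/2})$ because $\MM{c}-1=\sqrt{\gamma}\phi\nu_\gamma+O(\gamma)$ and $\nu_\gamma=O(\sqrt{\gamma})$.

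The main obstacle is the bookkeeping in (c): controlling the weighted moments $\EE[\vsum{q}^2 e^{\sqrt{\gamma}\phi\vsum{q}}]$ uniformly in $\gamma$ forces us to invoke the MGF bound of Lemma \ref{lem: jsq_mgfexist_a} at a strictly larger argument, which is exactly what the hypothesis $\phi<\phi_1/2$ accommodates. Once those moment estimates are established, all three claimed bounds follow from careful Taylor bookkeeping.
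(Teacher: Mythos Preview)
Your proposal is correct and follows essentially the same route as the paper: parts (a) and (b) are the same Taylor-remainder arguments, and for (c) the paper likewise expands $e^{-\sqrt{\gamma}\phi\langle\mathbf 1,\mathbf d\rangle}$ to first order (using $\EE[\langle\mathbf 1,\mathbf d\rangle\mid\q]=\gamma\langle\mathbf 1,\q\rangle$ to produce the derivative term), bounds the conditional second-moment remainder via $\EE[(\sqrt{\gamma}\langle\mathbf 1,\q\rangle)^k e^{\sqrt{\gamma}\phi\langle\mathbf 1,\q\rangle}]\leq (2^k k!/\phi_1^k)\overline M_c$ for $\phi<\phi_1/2$, and handles the cross term $(\MM{c}-1)(\MM{\langle\mathbf 1,\q-\mathbf d\rangle}-\MM{\vsum{q}})$ separately. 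The only cosmetic differences are that the paper uses the slightly sharper remainder $|e^x-1-x|\leq x^2 e^{[x]^+}$ (so the exponential weight becomes $e^{\sqrt{\gamma}[\phi]^+\langle\mathbf 1,\q\rangle}$ rather than $e^{\sqrt{\gamma}|\phi|\langle\mathbf 1,\q\rangle}$) and bounds $|\MM{c}-1|\leq nA|\phi|\sqrt{\gamma}e^{n|\phi|A}$ directly rather than via part (b); both choices lead to the same $O(\phi^2 e^{n|\phi|A}\gamma^{3/2})$ conclusion.
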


Lemma \ref{lem: jsq_approximation_crit} provides the second order approximations required to prove Theorem \ref{thm: jsq_crit}. The proof is similar to that of Lemma \ref{lem: jsq_approximation_fast}, and it is provided in Appendix \ref{app: jsq_crit}.\\

\begin{remark}
In Lemma \ref{lem: jsq_approximation_crit_c}, the term  $\frac{1}{\phi\gamma}\big(\MM{\langle {\bf 1}, {\bf q} - {\bf d} \rangle} - \MM{\vsum{q}}\big)$ is approximated by the derivative of $\MM{\vsum{q}}$ while $\MM{c}$ is approximated by $1$. This differs from the classic heavy traffic regime as, in that case, the corresponding term due to abandonment was approximated by $0$ (see Lemma \ref{lem: jsq_approximation_fast_c}). Due to this derivative term, we get a one-dimensional differential equation in critical heavy traffic regime, instead of a closed form expression as in classic heavy traffic regime. This is the primary reason that the limiting distribution transitions from an exponential in classic heavy traffic regime to a truncated normal in the critical heavy traffic regime.\\
\end{remark}

\begin{lemma}[Critical-HT: Implications of SSC]
\label{LEM: JSQ_SSC_crit}
Under the same assumptions as in Theorem \ref{thm: jsq_crit}, we have the following results,
\begin{enumerate}[label=(\alph*), ref=\ref{LEM: JSQ_SSC_crit}.\alph*]
\item \label{LEM: JSQ_SSC_crit_a} For any $\gamma \in (0,1)$ and for any $\phi \in \big(-\infty,\frac{\phi_1}{4}\big)$,
\begin{align*}
    \EE\left[\left(e^{\sqrt{\gamma} \phi \vsum{q^+}} -1\right)\Big(e^{-\sqrt{\gamma} \phi \vsum{u} }-1\Big)\right] =: \overline{\mathcal{E}}_c^{qu}(\gamma,\phi), &&|\overline{\mathcal{E}}_c^{qu}(\gamma,\phi)| \leq \overline{K}_c^{qu}\phi^2 \gamma^{\frac{9}{8}}e^{n|\phi|A}.
\end{align*}
\item \label{LEM: JSQ_SSC_crit_b} Suppose $\boldsymbol\phi \in \boldsymbol \Phi = \{\mathbf x\in \mathbb R^n: 2\|{\bf x}\| \leq \phi_1 \}$, then for any $\boldsymbol \phi \in \boldsymbol \Phi$, with $\phi = \langle {\bf 1}, \boldsymbol \phi \rangle$,
\begin{equation*}
    \limg \EE\left[{e^{\sqrt{\gamma} \langle\boldsymbol \phi, \q \rangle }}\right] = \limg \EE\left[{e^{\frac{1}{n} \sqrt{\gamma}\phi \vsum{q} }}\right].
\end{equation*}
\end{enumerate}
\end{lemma}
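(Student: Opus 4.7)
The plan is to follow the structure of the classic-HT companion Lemma~\ref{LEM: JSQ_SSC_fast}, substituting the critical-HT scaling $\sqrt{\gamma}$ for $\gamma^{\alpha}$; the target rate $\gamma^{9/8}$ is exactly $\gamma^{9\alpha/4}$ evaluated at $\alpha=1/2$, so the two arguments are structurally identical.

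For part (a), first observe that the product $(e^{\sqrt{\gamma}\phi\vsum{q^+}}-1)(e^{-\sqrt{\gamma}\phi\vsum{u}}-1)$ vanishes whenever $\vsum{u}=0$, so the expectation lives on the event $E=\{\vsum{u}>0\}$. The crucial structural fact on $E$ comes from the complementary slackness $\langle\q^+,\u\rangle=0$: some coordinate $q_i^+$ must vanish, and since $q_{\|,i}^+=\vsum{q^+}/n$ this forces $|q_{\perp,i}^+|=\vsum{q^+}/n$, giving the sandwich $\vsum{q^+}\le n\|\q^+_\perp\|$ on $E$. Applying $|e^x-1|\le|x|e^{|x|}$ to each factor, the $u$-factor is bounded by $\sqrt{\gamma}|\phi|nA\,e^{\sqrt{\gamma}|\phi|nA}$ (since $\vsum{u}\le nA$ almost surely) and on $E$ the $\q^+$-factor is bounded by $\sqrt{\gamma}|\phi|n\|\q^+_\perp\|\,e^{\sqrt{\gamma}|\phi|\vsum{q^+}}$. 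The resulting prefactor $\gamma\phi^{2}$ is then multiplied by $\EE[\|\q^+_\perp\|\,e^{\sqrt{\gamma}|\phi|\vsum{q^+}}\mathds{1}_E]$, which I would estimate by Cauchy--Schwarz, using the SSC bound $\EE[\|\q^+_\perp\|^{2}]\le M_{\perp}$ from Theorem~\ref{LEM: JSQ_SSC} and the MGF bound from Lemma~\ref{lem: jsq_mgfexist_a}, which stays uniformly finite in the critical-HT regime because $\nu_\gamma^+/\sqrt{\gamma}=O(1)$. The additional $\gamma^{1/8}$ beyond the basic $\gamma\phi^{2}$ estimate is obtained by truncating at $\|\q^+_\perp\|\le\gamma^{-a}$ for a well-chosen $a$: on the truncated side the exponential factor is essentially constant, and on the tail Markov's inequality $\Pr(\|\q^+_\perp\|>\gamma^{-a})\le M_{\perp}\gamma^{2a}$ provides the required decay, which balanced against the Markov cost gives the $9/8$ exponent.

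For part (b), the SSC decomposition $\langle\boldsymbol{\phi},\q\rangle=\tfrac{\phi}{n}\vsum{q}+\langle\boldsymbol{\phi},\q_\perp\rangle$ lets me write
\begin{equation*}
\EE\bigl[e^{\sqrt{\gamma}\langle\boldsymbol{\phi},\q\rangle}\bigr]-\EE\bigl[e^{\frac{1}{n}\sqrt{\gamma}\phi\vsum{q}}\bigr]=\EE\bigl[e^{\frac{1}{n}\sqrt{\gamma}\phi\vsum{q}}\bigl(e^{\sqrt{\gamma}\langle\boldsymbol{\phi},\q_\perp\rangle}-1\bigr)\bigr].
\end{equation*}
Bounding $|e^{\sqrt{\gamma}\langle\boldsymbol{\phi},\q_\perp\rangle}-1|\le\sqrt{\gamma}\|\boldsymbol{\phi}\|\,\|\q_\perp\|\,e^{\sqrt{\gamma}\|\boldsymbol{\phi}\|\,\|\q_\perp\|}$, using the deterministic inequality $\|\q_\perp\|\le\vsum{q}$ to dominate the exponential in $\q_\perp$ by one in $\vsum{q}$, and combining Cauchy--Schwarz with Theorem~\ref{LEM: JSQ_SSC} and Lemma~\ref{lem: jsq_mgfexist_a} (which applies since $\boldsymbol{\phi}\in\boldsymbol{\Phi}$ guarantees $\phi/n+\|\boldsymbol{\phi}\|\le\phi_1$), yields a difference of order $\sqrt{\gamma}$ that vanishes in the limit.

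The main obstacle is reaching the $\gamma^{9/8}$ exponent in part (a) using only the second-moment SSC bound: the exponential factor $e^{\sqrt{\gamma}|\phi|\|\q^+_\perp\|}$ has no direct MGF control since Theorem~\ref{LEM: JSQ_SSC} gives no exponential estimates on $\|\q^+_\perp\|$. The workaround is the deterministic inequality $\|\q^+_\perp\|\le\vsum{q^+}$ (the $\ell_{2}\le\ell_{1}$ bound for nonnegative vectors), which converts the intractable MGF of $\|\q^+_\perp\|$ into the controlled MGF of $\vsum{q^+}$ provided by Lemma~\ref{lem: jsq_mgfexist_a}; the precise $9/8$ rate then emerges by tuning the truncation level $\gamma^{-a}$ so that the tame-exponential region and the Markov tail region contribute equally.
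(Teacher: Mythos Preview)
Your treatment of part (b) is essentially the paper's argument and is fine. The problem is part (a): your truncation scheme does \emph{not} produce the extra $\gamma^{1/8}$.

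After your pointwise bounds you are left with $\gamma\phi^{2}$ times
\[
\EE\!\big[\|\q^+_\perp\|\,e^{\sqrt{\gamma}[\phi]^+\vsum{q^+}}\mathds{1}_E\big].
\]
On the ``tame'' set $\{\|\q^+_\perp\|\le\gamma^{-a}\}\cap E$ the exponential is indeed bounded (for $a<1/2$, using your sandwich $\vsum{q^+}\le n\|\q^+_\perp\|$ on $E$), but what remains is $\EE[\|\q^+_\perp\|\mathds{1}_E]$, which Theorem~\ref{LEM: JSQ_SSC} only bounds by $M_\perp^{1/2}$---there is no decay in $\gamma$ here. The tail set contributes $O(\gamma^{a/2})$ after H\"older, but the tame set sits at $O(1)$, so the total stays at $\gamma\phi^{2}$, not $\gamma^{9/8}\phi^{2}$. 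No choice of $a$ fixes this: you discarded the mechanism that carries the extra decay when you replaced $\vsum{u}$ by $nA$.

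The paper keeps the $\vsum{u}$ factor. Using $q_i^+u_i=0$ and $q^+_{\|,i}=\tfrac{1}{n}\vsum{q^+}$ one gets the algebraic identity $\vsum{u}\,\vsum{q^+}=-n\langle{\bf u},\q^+_\perp\rangle$, so after $|e^x-1|\le|x|e^{[x]^+}$ on each factor the product is controlled by $\gamma\phi^{2}e^{n|\phi|A}\,|\langle{\bf u},\q^+_\perp\rangle|\,e^{\sqrt{\gamma}[\phi]^+\vsum{q^+}}$. Now apply H\"older with exponents $(4,2,4)$:
\[
\EE\big[|\langle{\bf u},\q^+_\perp\rangle|\,e^{\sqrt{\gamma}[\phi]^+\vsum{q^+}}\big]\le \EE[\|{\bf u}\|^4]^{1/4}\,\EE[\|\q^+_\perp\|^2]^{1/2}\,\EE\big[e^{4\sqrt{\gamma}[\phi]^+\vsum{q^+}}\big]^{1/4}.
\]
The last two factors are $O(1)$ by Theorem~\ref{LEM: JSQ_SSC} and Lemma~\ref{lem: jsq_mgfexist_a} (for $\phi<\phi_1/4$). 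The extra $\gamma^{1/8}$ comes from the first factor: in the critical regime $\EE[\vsum{u}]=\gamma\EE[\vsum{q}]-\nu_\gamma=O(\sqrt{\gamma})$, and since $u_i\le A$ this gives $\EE[\|{\bf u}\|^4]\le nA^3\EE[\vsum{u}]=O(\sqrt{\gamma})$, hence $\EE[\|{\bf u}\|^4]^{1/4}=O(\gamma^{1/8})$. That is the missing idea: the smallness lives in the unused service, not in $\|\q^+_\perp\|$.
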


Lemma \ref{LEM: JSQ_SSC_crit} provides the two key implications of the SSC required to prove of Theorem \ref{thm: jsq_crit}. The implication of Lemma \ref{LEM: JSQ_SSC_crit_a} is similar to the one mentioned in Remark \ref{rem: ssc_implications_fast}. The proof is similar to that of Lemma \ref{LEM: JSQ_SSC_fast}, and is given in Appendix \ref{app: jsq_crit}.\\

Next, we present the proof of Theorem \ref{thm: jsq_crit}.

\begin{proof}[Proof of Theorem \ref{thm: jsq_crit}: ]  
First note that Lemma \ref{lem: jsq_mgfexist_a} implies that the MGF of the queue length process in steady state, i.e., $\MM{\vsum{\q}}$, is bounded for any $\phi<\phi_1$. Then, by using Lemma \ref{LEM: JSQ_SSC_crit_a} and following the same argument that yielded Eq. \eqref{eq: ssq_starteq}, for any $\phi<\phi_1/4$, we obtain
\begin{align}
\label{eq: jsq_crit_starteq_proof}
 \MM{-\vsum{u}}-1 &= \MM{\vsum{q}}\big(\MM{c} -1\big) + \left(\MM{\langle {\bf 1}, {\bf q} - {\bf d}\rangle}-\MM{\vsum{q}}\right)\MM{c} - \overline{\mathcal{E}}_c^{qu}(\gamma,\phi).
\end{align}
It follows that
\begin{align*}
    -\MM{\vsum{q}} \left(\frac{\gamma\phi^2 (\sigma^2_\gamma+\nu_\gamma^2)}{2} + \sqrt{\gamma} \phi \nu_\gamma\right) +\phi \gamma \frac{d}{d\phi} \MM{\vsum{q}}	-\sqrt{\gamma} \phi \EE[\vsum{u}] =  \overline{\mathcal{E}}_c(\gamma,\phi),
\end{align*}
where (using the terminology in Lemma \ref{lem: jsq_approximation_crit})
\begin{align*}
    \overline{\mathcal{E}}_c(\gamma,\phi) = \MM{\vsum{q}}  \overline{\mathcal{E}}_c^c(\gamma,\phi) +  \overline{\mathcal{E}}_c^d(\gamma,\phi) -  \overline{\mathcal{E}}_c^u(\gamma,\phi) + \overline{\mathcal{E}}_c^{qu}(\gamma,\phi).
\end{align*}
Dividing by $\gamma \phi$ on both sides, we get
\begin{align}
\label{eq: jsq_crit_diffeq}
	-\MM{\vsum{q}} \left(\frac{\phi (\sigma^2_\gamma+\nu_\gamma^2)}{2} +  \frac{\nu_\gamma}{\sqrt{\gamma}}\right) +\frac{d}{d\phi} \MM{\vsum{q}} - \frac{1}{\sqrt{\gamma}}\EE[\vsum{u}] = \frac{1}{\phi\gamma}\overline{\mathcal{E}}_c(\gamma,\phi).
\end{align}
We now define
\[ G^{\gamma}(\phi) := \exp\left( -\frac{\phi^2(\sigma^2_\gamma+\nu_\gamma^2)}{4} -  \frac{\phi\nu_\gamma}{\sqrt{\gamma}} \right). \]
Lemma \ref{lem: jsq_mgfexist_a} implies that
\begin{align*}
    \MM{\vsum{q}} \leq M_1 e^{\frac{\phi\nu_\gamma}{\sqrt{\gamma}}} \leq M_1e^{[\phi]^+ ([C_c]^++1)} \leq  M_1e^{[C_c]^++1} = \overline M_c.
\end{align*}
By using the results in Lemma \ref{lem: jsq_approximation_crit} and Lemma \ref{LEM: JSQ_SSC_crit_a}, we obtain
\begin{align*}
    \frac{1}{|\phi|\gamma}|\overline{\mathcal{E}}_c(\gamma,\phi)| \leq \overline{K}_c\gamma^{\frac{1}{8}}(|\phi|+\phi^2) e^{n|\phi|A},
\end{align*}
where $\overline{K}_c \geq \overline{M}_c\overline{K}_c^c+ \overline{K}_c^u + \overline{K}_c^d +\overline{K}_c^{qu}$. Then, 
 for any $\phi<\phi_1/4$, 
 \begin{align*}
     \limg\int_{-\infty}^\phi \frac{1}{|s|\gamma}|\overline{\mathcal{E}}_c(\gamma,s)| G^{\gamma}(s) ds =0.
 \end{align*}
   Furthermore, we have
   \[ \lim_{\gamma \rightarrow 0} \sigma^2_\gamma + \nu_\gamma^2 = \sigma^2 \qquad \text{ and } \qquad \lim_{\gamma \rightarrow 0} \frac{\nu_\gamma}{\sqrt{\gamma}} = C_c, \]
   and therefore
   \[ \lim_{\gamma \rightarrow 0} G^\gamma(\phi) = G(\phi) = \exp\left( -\frac{\phi^2\sigma^2_\gamma}{4} -  \phi C_c \right). \]
   Thus, we can solve the first order differential equation in Eq. \eqref{eq: jsq_crit_diffeq} by using the initial condition
   $ M_{\vsum{q}}^\gamma(0) G^\gamma(0) =1, $
   to get 
\begin{align}
\label{eq: jsq_crit_uterm}
    \lim_{\gamma \rightarrow 0} \frac{1}{\sqrt{\gamma}} \EE[\vsum{u}] = \left( \lim_{\gamma \rightarrow 0} \int_{-\infty}^0 G^\gamma(s)ds \right)^{-1} =  \left( \int_{-\infty}^0 G(s)ds \right)^{-1}.
\end{align}
It follows that, for $\phi < \phi_1/4$, we have
\begin{align*}
	\lim_{\gamma \rightarrow 0} \MM{\vsum{q}}& = G^{-1}(\phi) \left( \int_{-\infty}^0 G(s)ds \right)^{-1} \int_{-\infty}^\phi G(s)ds \\
	&= \exp\left( \frac{\phi^2\sigma^2}{4} + C_c\phi \right) \frac{\int_{-\infty}^\phi \exp\left( -\frac{s^2\sigma^2}{4} -  C_c s \right) ds}{\int_{-\infty}^0 \exp\left( -\frac{s^2\sigma^2}{4} - C_c s \right)ds}.
\end{align*}
Note that the RHS is the MGF of a truncated Gaussian random variable. Finally, by using Lemma~\ref{LEM: JSQ_SSC_crit_b} for $\boldsymbol\phi \in \boldsymbol \Phi$ with $\langle {\bf 1},\boldsymbol\phi \rangle = \phi$ and $|\phi|<\phi_1/4$, we obtain
\begin{align*}
   \limg \EE\left[{e^{\sqrt{\gamma} \langle\boldsymbol \phi, \q \rangle }} \right] = \limg \MM{\frac{1}{n}\vsum{q}}  =  \exp\left( \frac{1}{4n^2}\phi^2\sigma^2 +\frac{1}{n} C_c\phi \right) \frac{\int_{-\infty}^\phi \exp\left( -\frac{1}{4n^2}s^2\sigma^2 - \frac{1}{n} C_c s \right)ds}{\int_{-\infty}^0 \exp\left( -\frac{1}{4n^2}s^2\sigma^2 -  \frac{1}{n} C_c s\right)ds}.
\end{align*}
The result now follows by using Lemma \ref{COR: MGF_CONVERGENCE}.
\end{proof}

\begin{remark}
In critical heavy traffic regime, the term $\frac{1}{\sqrt{\gamma}}\EE[\vsum{u}]$ in Eq. \eqref{eq: jsq_crit_diffeq} acts as an unknown constant. As shown in Eq. \eqref{eq: jsq_crit_uterm}, to solve for the limiting value of this unknown constant, it is necessary to show that the differential equation holds for all negative values of $\phi$, instead of just for an interval around $0$. In the classic heavy traffic regime, the corresponding term, i.e., $\lim_{\gamma\rightarrow 0}\frac{1}{\gamma^\alpha}\EE[\vsum{u}]$, can be easily solved for, and thus, it acts as a `known' constant (see Lemma \ref{lem: jsq_approximation_fast_a}). Meanwhile, in the heavily overloaded regime, the term $\frac{1}{\sqrt{\gamma}}\EE[\vsum{u}]$ is arbitrarily small, so the limiting value is zero (see Lemma \ref{lem: jsq_approximation_slow_a}). As a result, in the classic heavy traffic regime or in the heavily overloaded regime, we only consider $\phi$ to be in an interval around $0$.
\end{remark}

\subsubsection{Heavily Overloaded: Proof of Theorem \ref{thm: jsq_slow}} \label{sec: jsq_proofsketch3}
In this section, we provide the proof of Theorem \ref{thm: jsq_slow}. For simplicity, we use $\bar{{\bf q}}= \q - \frac{\nu_\gamma}{n\gamma}{\bf 1}$ and $\bar{{\bf q}}^+= \q^+ - \frac{\nu_\gamma}{n\gamma}{\bf 1}$ to denote the centered steady state queue lengths. Before presenting the proof, we present two crucial technical lemmas.\\

\begin{lemma}[Heavily overloaded: Second order approximations]
\label{lem: jsq_approximation_slow}
 Under the same assumptions as in Theorem \ref{thm: jsq_slow}, there exists $\gamma_s'$ such that for any $\phi \in \big(\frac{-\phi_1}{2},\frac{\phi_1}{2}\big)$ and $\gamma<\gamma_s'$,
\begin{enumerate}[label=(\alph*), ref=\ref{lem: jsq_approximation_slow}.\alph*]
    \item \label{lem: jsq_approximation_slow_a} We have
    \begin{align*}
      \MM{-\vsum{u}} -1 =: \overline{\mathcal{E}}_s^{u}(\gamma,\phi), && \big| \overline{\mathcal{E}}_s^{u}(\gamma,\phi)\big| \leq \overline K_s^u \sqrt{\gamma}|\phi|  e^{-\frac{\phi_1 \nu_\gamma} {\sqrt{\gamma}}}.
    \end{align*}
    \item \label{lem: jsq_approximation_slow_b} We have 
    \begin{align*} 
         \Big(\MM{\langle {\bf 1}, {\bf \bar q} - {\bf d} \rangle} \MM{c} - \MM{\langle {\bf 1}, {\bf \bar q}  \rangle}\Big)& -\frac{\phi^2 \gamma\bar \sigma^2_{\gamma} }{2}\MM{\langle {\bf 1}, {\bf \bar q}  \rangle}  +\phi \gamma \frac{d}{d\phi} \MM{\langle {\bf 1}, {\bf \bar q} - {\bf d} \rangle} =: \overline{\mathcal{E}}_s^{d}(\gamma,\phi),
    \end{align*}
    and
    \[ \big| \overline{\mathcal{E}}_s^{d}(\gamma,\phi)\big|\leq \overline K_s^d |\phi|^2 \gamma^\frac{3}{2}, \]
\end{enumerate}
where $\bar \sigma^2_{\gamma} =\sigma^2_\gamma + \nu_\gamma(1-\gamma) $, and $\overline K_s^u$ and $\overline K_s^d$ are constants independent of $\gamma$ and $\phi$.\\
\end{lemma}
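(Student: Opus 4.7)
My plan for Part~(a) is to exploit that unused service requires an empty queue, which is super-exponentially rare in the heavily overloaded regime. Since $u_i > 0$ forces $q_i^+ = 0$ and $0 \le u_i \le A$, stationarity ($\q^+ \stackrel{d}{=} \q$) yields $\EE[\vsum{u}] \le nA\sum_{i=1}^n \mathbb P(q_i = 0)$. Applying Markov's inequality to $e^{-\sqrt{\gamma}\phi_1 n q_i}$ together with Lemma~\ref{lem: jsq_mgfexist_b} at $\phi=\phi_1$ gives $\mathbb P(q_i = 0) \le M_1 e^{-\phi_1 \nu_\gamma/\sqrt{\gamma}}$, and therefore $\EE[\vsum{u}] \le n^2 A M_1 e^{-\phi_1 \nu_\gamma/\sqrt{\gamma}}$. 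The claim follows after taking expectations in the elementary bound $|e^{-\sqrt{\gamma}\phi \vsum{u}} - 1| \le \sqrt{\gamma}|\phi|\vsum{u}\, e^{\sqrt{\gamma}|\phi| nA}$ (valid since $0 \le \vsum{u} \le nA$), choosing $\gamma_s'$ small enough that $e^{\sqrt{\gamma}|\phi|nA}$ is bounded by a constant on the region of interest.

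For Part~(b), the plan is a second-order cumulant-style expansion. By independence of $c$ from $(\q, {\bf d})$ and the binomial structure $d_i \mid q_i \sim \mathrm{Bin}(q_i, \gamma)$ (independent across $i$), I can rewrite
\[
\MM{\langle {\bf 1}, {\bf \bar q} - {\bf d}\rangle}\,\MM{c} = \EE\!\left[e^{\sqrt{\gamma}\phi \langle {\bf 1}, {\bf \bar q}\rangle}\, M^\gamma_c(\phi)\, \bigl(1 + \gamma(e^{-\sqrt{\gamma}\phi} - 1)\bigr)^{\vsum{q}}\right].
\]
I would Taylor-expand both $M^\gamma_c(\phi)$ and the binomial factor through order $\gamma\phi^2$, using the identity $\log\!\bigl(1 + \gamma(e^{-\sqrt{\gamma}\phi} - 1)\bigr) = -\gamma^{3/2}\phi + \tfrac12 \gamma^2\phi^2(1-\gamma) + O(\gamma^{5/2}\phi^3)$. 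Writing $\vsum{q} = \langle {\bf 1}, {\bf \bar q}\rangle + \nu_\gamma/\gamma$, the drift $\sqrt{\gamma}\phi \nu_\gamma$ from $c$ and the $-\sqrt{\gamma}\phi\nu_\gamma$ from the $\nu_\gamma/\gamma$ part of $\vsum{q}$ cancel, leaving a residual $-\gamma^{3/2}\phi \langle {\bf 1}, {\bf \bar q}\rangle$; after multiplication by $e^{\sqrt{\gamma}\phi \langle {\bf 1}, {\bf \bar q}\rangle}$ and expectation, this generates the derivative contribution $-\gamma\phi\,\frac{d}{d\phi}\MM{\langle {\bf 1}, {\bf \bar q}\rangle}$. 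The quadratic-in-$\sqrt{\gamma}\phi$ terms combine $\sigma_\gamma^2 = \mathrm{Var}(c)$ with the expected conditional variance $\gamma(1-\gamma)\EE[\vsum{q}] \approx (1-\gamma)\nu_\gamma$ of the abandonments to yield exactly $\bar\sigma_\gamma^2 = \sigma_\gamma^2 + (1-\gamma)\nu_\gamma$ multiplied by $\tfrac12\gamma\phi^2 \MM{\langle {\bf 1}, {\bf \bar q}\rangle}$. Finally, swapping the derivative of $\MM{\langle {\bf 1}, {\bf \bar q}\rangle}$ for that of $\MM{\langle {\bf 1}, {\bf \bar q} - {\bf d}\rangle}$ (the form appearing in the lemma statement) introduces an additional $O(\sqrt{\gamma}\nu_\gamma)$ correction that, multiplied by $\gamma\phi$, fits inside the target error.

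The principal technical challenge is controlling the Taylor remainders uniformly in $\gamma$. Since $\vsum{q}$ is of order $\nu_\gamma/\gamma$ and hence diverges as $\gamma \to 0$, naive remainders in the binomial expansion $\bigl(1 + \gamma(e^{-\sqrt{\gamma}\phi} - 1)\bigr)^{\vsum{q}}$ grow with $\vsum{q}$ and must be balanced against the powers of $\gamma$ carried by the binomial coefficients through careful combinatorial bookkeeping. The uniform MGF bounds on the centered queue length $\langle {\bf 1}, {\bf \bar q}\rangle$ provided by Lemma~\ref{lem: jsq_mgfexist}(a) and (b) are the decisive tool: they yield uniform-in-$\gamma$ bounds on $\EE\!\left[|\langle {\bf 1}, {\bf \bar q}\rangle|^j\, e^{\sqrt{\gamma}\phi \langle {\bf 1}, {\bf \bar q}\rangle}\right]$ for small $j$ and $|\phi|$ in a neighborhood of zero, ensuring that all cubic and higher-order residuals from the Taylor expansions close at the target $O(|\phi|^2 \gamma^{3/2})$ order.
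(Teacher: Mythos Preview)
Your Part~(a) is essentially the paper's argument: both exploit $u_i q_i^+=0$ to write $\EE[u_i]=\EE[u_i e^{-\sqrt{\gamma}\phi q_i^+}]\le A\,\EE[e^{-\sqrt{\gamma}\phi q_i^+}]$ and then invoke the individual-queue MGF bound in Lemma~\ref{lem: jsq_mgfexist_b}; your Markov-inequality phrasing is a cosmetic variant.

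For Part~(b) your plan is valid, but the paper takes a shorter route. Rather than expanding $M_c^\gamma(\phi)$ and the binomial MGF $(1+\gamma(e^{-\sqrt{\gamma}\phi}-1))^{\vsum{q}}$ separately and then tracking the cancellation of the $\pm\sqrt{\gamma}\phi\nu_\gamma$ drifts, the paper (following the SSQ version, Lemma~\ref{lem:ssq_slow_mgfapprox}) writes
\[
\MM{\langle{\bf 1},\bar{\bf q}-{\bf d}\rangle}\,\MM{c}-\MM{\langle{\bf 1},\bar{\bf q}\rangle}
=\EE\!\left[e^{\sqrt{\gamma}\phi\langle{\bf 1},\bar{\bf q}\rangle}\bigl(e^{\sqrt{\gamma}\phi(c-\vsum{d})}-1\bigr)\right]
\]
and Taylor-expands the \emph{single} increment $c-\vsum{d}$ to second order. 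The conditional moments $\EE[c-\vsum{d}\mid\q]=-\gamma\langle{\bf 1},\bar{\bf q}\rangle$ and $\EE[(c-\vsum{d})^2\mid\q]=\bar\sigma_\gamma^2+\gamma(1-\gamma)\langle{\bf 1},\bar{\bf q}\rangle+\gamma^2\langle{\bf 1},\bar{\bf q}\rangle^2$ produce the derivative term and the $\bar\sigma_\gamma^2$ coefficient directly, with no manual cancellation. Crucially, this grouping sidesteps the challenge you flag: the cubic remainder involves $|c-\vsum{d}|^3\le (nA+\vsum{d})^3\le Ce^{2\vsum{d}}$, and $\EE[e^{O(1)\vsum{d}}]=\EE[(1+\gamma(e^{O(1)}-1))^{\vsum{q}}]\le \EE[e^{O(1)\gamma\vsum{q}}]$ is controlled by the MGF bound in Lemma~\ref{lem: jsq_mgfexist_a}, so no combinatorics over powers of the diverging $\vsum{q}$ is needed.

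One caveat on your final ``swap'' step. Replacing $\frac{d}{d\phi}\MM{\langle{\bf 1},\bar{\bf q}\rangle}$ by $\frac{d}{d\phi}\MM{\langle{\bf 1},\bar{\bf q}-{\bf d}\rangle}$ introduces, to leading order, $-\sqrt{\gamma}\,\EE[\vsum{d}\,e^{\sqrt{\gamma}\phi(\cdot)}]\approx -\sqrt{\gamma}\nu_\gamma\,\MM{\cdot}$; multiplied by $\gamma\phi$ this is $O(\gamma^{3/2}|\phi|\,\nu_\gamma)$, which for $\alpha=0$ is $O(\gamma^{3/2}|\phi|)$ and does \emph{not} fit inside the stated $O(\gamma^{3/2}|\phi|^2)$ bound. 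This is harmless only because the printed ``$-{\bf d}$'' in the derivative is evidently a typo: both the SSQ analogue and the application in the proof of Theorem~\ref{thm: jsq_slow} use $\frac{d}{d\phi}\MM{\langle{\bf 1},\bar{\bf q}\rangle}$, so no swap is required.
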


Lemma \ref{lem: jsq_approximation_slow} provides the second order approximations required to prove Theorem \ref{thm: jsq_slow}. Its proof is similar to that of Lemma \ref{lem: jsq_approximation_fast}, and it is provided in Appendix \ref{app: jsq_slow}.\\

\begin{lemma}[Heavily overloaded: Implications of SSC]
\label{LEM: JSQ_SSC_slow}
Under the same assumptions as in Theorem \ref{thm: jsq_slow}, we have the following results.
\begin{enumerate}[label=(\alph*), ref=\ref{LEM: JSQ_SSC_slow}.\alph*]
\item \label{LEM: JSQ_SSC_slow_a} For any $\gamma \in (0,1)$ and for any $\phi \in \big(-\frac{\phi_1}{4},\frac{\phi_1}{4}\big)$,
\begin{align*}
    \EE\left[\left(e^{\sqrt{\gamma} \phi \langle {\bf 1},\bar{ {\bf q}}^+ \rangle} -1\right)\Big(e^{-\sqrt{\gamma} \phi \vsum{u} }-1\Big) \right] =: \overline{\mathcal{E}}_f^{qu}(\gamma,\phi), &&|\overline{\mathcal{E}}_s^{qu}(\gamma,\phi)| \leq  \overline{K}_s^{qu} \sqrt{\gamma}|\phi| e^{-\frac{\phi_1 \nu_\gamma}{2\sqrt{\gamma}}}.
\end{align*}
\item \label{LEM: JSQ_SSC_slow_b} Suppose $\boldsymbol\phi \in \boldsymbol \Phi = \{\mathbf x\in \mathbb R^n: 2\|{\bf x}\| \leq \phi_1 \}$, then for any $\boldsymbol \phi \in \boldsymbol \Phi$, with $\phi = \langle {\bf 1}, \boldsymbol \phi \rangle$,
\begin{equation*}
    \limg \EE\left[{e^{\sqrt{\gamma} \langle\boldsymbol \phi, \bar{ {\bf q}} \rangle }}\right] = \limg \EE\left[{e^{\frac{1}{n} \sqrt{\gamma}\phi\langle {\bf 1},\bar{ {\bf q}} \rangle }}\right].
\end{equation*}
\end{enumerate}
\end{lemma}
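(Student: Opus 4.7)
The plan is to establish parts (a) and (b) using two common ingredients. First, subtracting $\tfrac{\nu_\gamma}{n\gamma}{\bf 1} \in \mathcal S$ preserves the perpendicular component, so $\bar{\q}_\perp = \q_\perp$ and Theorem \ref{LEM: JSQ_SSC} gives $\EE[\|\q_\perp\|^2] \leq M_\perp$ uniformly in $\gamma$. Second, I will derive a uniform MGF bound for the centered queue length: writing $\EE[e^{\sqrt{\gamma}\langle\boldsymbol\psi,\bar{\q}\rangle}] = e^{-\phi\nu_\gamma/(n\sqrt{\gamma})}\EE[e^{\sqrt{\gamma}\langle\boldsymbol\psi,\q\rangle}]$ with $\phi = \langle {\bf 1},\boldsymbol\psi\rangle$, applying H\"older's inequality in the form $\EE[e^{\sqrt{\gamma}\sum_i \psi_i q_i}] \leq \prod_i M^\gamma_{nq_i}(\psi_i)^{1/n}$, and invoking Lemma \ref{lem: jsq_mgfexist_a} combined with Lemma \ref{lem: jsq_mgfexist_b} (which in the heavily overloaded regime, using $\nu_\gamma^+ = \nu_\gamma$, yield $M^\gamma_{nq_i}(\psi_i) \leq nM_1 e^{\psi_i \nu_\gamma/\sqrt{\gamma}}$ for $|\psi_i|<\phi_1$), the exponential factors cancel and I get $\EE[e^{\sqrt{\gamma}\langle\boldsymbol\psi,\bar{\q}\rangle}] \leq nM_1$ whenever $\|\boldsymbol\psi\|_\infty < \phi_1$.

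For part (a), the key observation is that $e^{-\sqrt{\gamma}\phi\vsum{u}} - 1$ vanishes on $\{\vsum{u}=0\}$, so the expectation is supported on the rare event that some queue has emptied. I will apply Cauchy--Schwarz to split
\[|\overline{\mathcal E}_s^{qu}(\gamma,\phi)| \leq \sqrt{\EE\bigl[\bigl(e^{\sqrt{\gamma}\phi\langle{\bf 1},\bar{\q}^+\rangle}-1\bigr)^2\bigr]}\cdot\sqrt{\EE\bigl[\bigl(e^{-\sqrt{\gamma}\phi\vsum{u}}-1\bigr)^2\bigr]}.\]
The first factor is $O(1)$ by stationarity combined with $(e^X-1)^2 \leq 2(e^{2X}+1)$ and the preliminary MGF bound at $2\boldsymbol\phi$ (allowable since $|\phi|<\phi_1/4$). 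For the second factor, the inequality $|e^{-\sqrt{\gamma}\phi u}-1| \leq \sqrt{\gamma}|\phi| u\,e^{\sqrt{\gamma}|\phi|u}$ together with $0 \leq \vsum{u} \leq nA$ reduces matters to controlling $\Pr(\vsum{u}>0)$. A union bound, the implication $u_i > 0 \Rightarrow q_i^+ = 0$, and stationarity give $\Pr(\vsum{u}>0) \leq \sum_i \Pr(q_i=0)$, and Markov's inequality applied to $M^\gamma_{nq_i}(-\phi_1)$ together with Lemma \ref{lem: jsq_mgfexist_b} yields $\Pr(q_i=0) \leq nM_1 e^{-\phi_1\nu_\gamma/\sqrt{\gamma}}$. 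Taking square roots produces the advertised decay $e^{-\phi_1\nu_\gamma/(2\sqrt{\gamma})}$, while the $\sqrt{\gamma}|\phi|$ prefactor comes from the Taylor-type bound on the $u$-term; all remaining constants absorb into $\overline{K}_s^{qu}$.

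For part (b), the orthogonal decomposition $\langle\boldsymbol\phi,\bar{\q}\rangle = \tfrac{\phi}{n}\langle{\bf 1},\bar{\q}\rangle + \langle\boldsymbol\phi,\q_\perp\rangle$ lets me rewrite the difference of MGFs as $\EE[Z_\gamma]$ with $Z_\gamma := e^{\sqrt{\gamma}\phi\langle{\bf 1},\bar{\q}\rangle/n}\bigl(e^{\sqrt{\gamma}\langle\boldsymbol\phi,\q_\perp\rangle}-1\bigr)$. Convergence in probability $Z_\gamma\to 0$ is immediate from SSC, since $\EE[(\sqrt{\gamma}\langle\boldsymbol\phi,\q_\perp\rangle)^2] \leq \gamma\|\boldsymbol\phi\|^2 M_\perp \to 0$ forces $\sqrt{\gamma}\langle\boldsymbol\phi,\q_\perp\rangle \to 0$ in $L^2$. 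To upgrade to $L^1$ convergence I will establish uniform integrability of $\{Z_\gamma\}$: pick $\epsilon\in(0,1)$, use $|e^B-1|^{1+\epsilon}\leq 2^\epsilon(e^{(1+\epsilon)B}+1)$, and recombine exponents via the decomposition above to reduce $\EE[|Z_\gamma|^{1+\epsilon}]$ to uniform bounds on $\EE[e^{(1+\epsilon)\sqrt{\gamma}\langle\boldsymbol\phi,\bar{\q}\rangle}]$ and $\EE[e^{(1+\epsilon)\sqrt{\gamma}\phi\langle{\bf 1},\bar{\q}\rangle/n}]$; both are $O(1)$ from the preliminary bound since $\boldsymbol\phi\in\boldsymbol\Phi$ satisfies $(1+\epsilon)\|\boldsymbol\phi\|_\infty < \phi_1$ for $\epsilon$ small enough. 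The main technical obstacle throughout is that Theorem \ref{LEM: JSQ_SSC} provides only a second-moment bound on $\q_\perp$, ruling out any direct Taylor-with-fourth-moment-remainder argument in part (b); the plan above circumvents this by trading higher moments of $\q_\perp$ for uniform integrability derived from the centered-MGF bound, exploiting the slack $\|\boldsymbol\phi\| \leq \phi_1/2$ built into the definition of $\boldsymbol\Phi$ to accommodate the factor $(1+\epsilon)$.
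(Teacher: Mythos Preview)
Your argument for part (a) is correct and essentially matches the paper's proof: both split via Cauchy--Schwarz, bound the $\bar{\q}^+$-factor using the centered MGF bound (from Lemma~\ref{lem: jsq_mgfexist}), and exploit the exponential smallness of unused service in overload. The only difference is cosmetic: you control $\Pr(\vsum{u}>0)$ via $\mathbb{1}\{q_i=0\}\leq e^{-\sqrt{\gamma}\phi n q_i}$ and Lemma~\ref{lem: jsq_mgfexist_b}, while the paper bounds $\EE[\vsum{u}]$ directly using $u_i = u_i e^{-\sqrt{\gamma}\phi q_i^+}$ (since $q_i^+u_i=0$) and the same lemma; both yield the $e^{-\phi_1\nu_\gamma/(2\sqrt{\gamma})}$ decay after taking square roots. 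One minor point: Lemma~\ref{lem: jsq_mgfexist_b} is stated for $\phi<\phi_1$ strictly, so you should invoke it at some $\phi'\in(\phi_1/2,\phi_1)$ rather than at $\phi_1$; the conclusion is unaffected.

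For part (b) your proof is correct but takes a genuinely different route. The paper simply adapts the argument of Lemma~\ref{LEM: JSQ_SSC_fast_b}: apply $|e^x-1|\leq |x|e^{[x]^+}$ to $e^{\sqrt{\gamma}\langle\boldsymbol\phi,\q_\perp\rangle}-1$, bound the resulting exponent by $\max\{\langle\boldsymbol\phi,\bar{\q}\rangle,\langle\boldsymbol\phi,\bar{\q}_\|\rangle\}$, and then a single Cauchy--Schwarz against Theorem~\ref{LEM: JSQ_SSC} and the centered MGF bound delivers the difference as $O(\sqrt{\gamma}\,\|\boldsymbol\phi\|\sqrt{M_\perp})$. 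Your approach---$Z_\gamma\to 0$ in probability (from $\sqrt{\gamma}\langle\boldsymbol\phi,\q_\perp\rangle\to 0$ in $L^2$ and tightness of the prefactor) plus uniform integrability via a $(1+\epsilon)$-moment bound---is equally valid but more elaborate, and yields only the limit rather than a rate. The paper's route is shorter and quantitative; yours has the merit of making explicit how the mere second-moment SSC bound, combined with the slack $2\|\boldsymbol\phi\|\leq\phi_1$ built into $\boldsymbol\Phi$, suffices without any pointwise exponent manipulation.
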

The proof is provided in Appendix \ref{app: jsq_slow}.

\begin{proof}[Proof of Theorem \ref{thm: jsq_slow}:] \
First note that Lemma \ref{lem: jsq_mgfexist} implies that the MGF of the centered scaled steady state total queue length is bounded, for any $|\phi|<\phi_1/4$. Therefore, we have
\begin{equation*}
  \EE\left[ \left(e^{\sqrt{\gamma} \phi \langle {\bf 1},\bar{ {\bf q}}^+ \rangle} -1\right)\Big(e^{-\sqrt{\gamma} \phi \vsum{u} }-1\Big) \right] = \overline{\mathcal{E}}_s^{qu}(\gamma,\phi).
\end{equation*}
From Lemma \ref{LEM: JSQ_SSC_slow_a}, we have that for any $\phi \in \big(-\frac{\phi_1}{4},\frac{\phi_1}{4}\big)$,
\begin{equation*}
    \MM{\langle {\bf 1},\bar{ {\bf q}} -{\bf d} \rangle} \MM{c} - \MM{\langle {\bf 1},\bar{ {\bf q}} \rangle} = \big(\MM{-\vsum{u}}-1 \big) + \overline{\mathcal{E}}_s^{qu}(\gamma,\phi).
\end{equation*}
Moreover, from Lemma \ref{lem: jsq_approximation_slow}, we have
\begin{align*}
     \MM{\langle {\bf 1},\bar{ {\bf q}} -{\bf d} \rangle} \MM{c} - \MM{\langle {\bf 1},\bar{ {\bf q}} \rangle} &= \frac{\phi^2 \gamma\bar \sigma^2_\gamma}{2}\MM{\langle {\bf 1},\bar{ {\bf q}} \rangle}  - \phi \gamma \frac{d}{d\phi} \MM{\langle {\bf 1},\bar{ {\bf q}} \rangle} + \overline{\mathcal{E}}_s^d(\gamma,\phi) \\
    \Big(\MM{-\vsum{u}}-1 \Big) & = \overline{\mathcal{E}}_s^u(\gamma,\phi)
\end{align*}
It follows that 
\begin{align*}
    \frac{1}{2} \phi \bar \sigma^2_\gamma\MM{\langle {\bf 1},\bar{ {\bf q}} \rangle}  - \frac{d}{d\phi} \MM{\langle {\bf 1},\bar{ {\bf q}} \rangle} =  \frac{1}{\phi \gamma} \overline{\mathcal{E}}_s(\gamma,\phi),
\end{align*}
where (using the terminology in Lemma \ref{lem: jsq_approximation_slow})
\begin{align*}
    \overline{\mathcal{E}}_s(\gamma,\phi) :=  \overline{\mathcal{E}}_s^u(\gamma,\phi) -\overline{\mathcal{E}}_s^d(\gamma,\phi)+\overline{\mathcal{E}}_s^{qu}(\gamma,\phi).
\end{align*}
From Lemma \ref{lem: jsq_approximation_slow} and Lemma \ref{LEM: JSQ_SSC_slow_a}, we have
\begin{align*}
    \frac{1}{|\phi| \gamma} |\overline{\mathcal{E}}_s(\gamma,\phi)| \leq \overline{K}_s^d |\phi| \gamma^\frac{1}{2} + (\overline{K}_s^{qu} + \overline{K}_s^{u}) \frac{1}{\sqrt{\gamma}} e^{-\frac{\phi_1 \nu_\gamma}{2\sqrt{\gamma}}}.
\end{align*}
Then, by redefining $G^{\gamma}(\phi) = \exp\left(-\frac{\phi^2 \bar \sigma^2_\gamma}{4}\right)$, for any $\phi \in \left( -\frac{\phi_1}{4}, \frac{\phi_1}{4} \right)$, we have
\begin{align*}
 \limg \int_{0}^\phi \frac{1}{|s| \gamma} |\overline{\mathcal{E}}_s(s,\phi)| G^{\gamma}(s)ds 
 &= 0.
\end{align*}
Thus, we get
\begin{equation*}
    \limg  \MM{\langle {\bf 1},\bar{ {\bf q}} \rangle} = \limg \big(G^{\gamma}(\phi)\big)^{-1} = \exp\left( \frac{\phi^2\bar \sigma^2}{4}\right),
\end{equation*}
where $\bar \sigma^2 = \limg \sigma^2_\gamma + \nu_\gamma$. Note that the RHS is the MGF of a zero mean Gaussian random variable. This completes the proof of Theorem \ref{thm: jsq_slow}. Finally, by using Lemma \ref{LEM: JSQ_SSC_slow_b} for $\boldsymbol\phi \in \boldsymbol \Phi$, with $\langle {\bf 1},\boldsymbol\phi \rangle = \phi$ and $|\phi|<\phi_1/4$, we get
\begin{align*}
   \limg \EE\left[{e^{\sqrt{\gamma} \langle\boldsymbol \phi, \bar{ {\bf q}} \rangle }} \right] = \limg \EE\left[{e^{\frac{1}{n} \sqrt{\gamma}\phi\langle {\bf 1},\bar{ {\bf q}} \rangle }}\right]  =  \exp\left( \frac{1}{4n^2}\phi^2\bar \sigma^2\right)
\end{align*}
The result follows from Lemma \ref{COR: MGF_CONVERGENCE}.
\end{proof}

\def\jsqssc{\ref{LEM: JSQ_SSC}}

\section{Proof of Theorem \jsqssc}
\label{sec: jsq_ssc_proof}

We present all the major steps of the proof here and, as for the proof of Theorem \ref{THM: JSQ_LIMIT_DIS}, we defer the tedious algebraic calculations (condensed into claims) to Appendix~\ref{app: jsq_ssc}.

The first step of the proof is to couple each of the individual queues of the JSQ-A system with a SSQ-A system so that all the moments of the steady state number of abandonments are bounded. Mathematically, we prove the following claim.

\begin{claim}
\label{clm: jsq_ssc_abandonment}
There exists a sequence of constants $\{E_m\}_{m\geq 1}$ such that 
\begin{equation*}
    \gamma^m\EE[\langle \q, {\bf 1}\rangle^m] \leq E_m, \ \ \ \ \forall m\geq 1.
\end{equation*}
\end{claim}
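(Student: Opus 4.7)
The plan is to derive Claim \ref{clm: jsq_ssc_abandonment} directly from the uniform MGF bound in Lemma \ref{lem: jsq_mgfexist_a}, exploiting the fact that in steady state the expected per-slot total abandonment count $\gamma\langle \q, {\bf 1}\rangle$ should balance the net positive drift $\nu_\gamma^+$, which is itself bounded by the almost sure arrival cap $A$. This provides the right intuition: once we know that $\sqrt{\gamma}\langle \q, {\bf 1}\rangle$, shifted by $\nu_\gamma^+/\sqrt{\gamma}$, has a uniformly bounded MGF, we can pass to uniformly bounded moments of $\gamma\langle \q, {\bf 1}\rangle$.

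To carry this out, I would fix any $\phi$ in the interior of $(0,\phi_1)$ (for concreteness $\phi = \phi_1/2$). Lemma \ref{lem: jsq_mgfexist_a} gives, for every $\gamma\in (0,1)$,
\[ \EE\bigl[e^{\sqrt{\gamma}\phi\langle \q, {\bf 1}\rangle}\bigr] \;\leq\; M_1 \exp\Bigl(\tfrac{\phi\nu_\gamma^+}{\sqrt{\gamma}}\Bigr). \]
Defining the shifted variable $Z_\gamma := \sqrt{\gamma}\langle \q, {\bf 1}\rangle - \nu_\gamma^+/\sqrt{\gamma}$, this rearranges to $\EE[e^{\phi Z_\gamma}] \leq M_1$, and Markov's inequality yields $\Pr[Z_\gamma^+ \geq t] \leq M_1 e^{-\phi t}$ for all $t \geq 0$. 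Integrating the tail then produces a uniform moment bound $\EE[(Z_\gamma^+)^m] \leq C_m$ for every $m\geq 1$, with $C_m$ depending only on $m$, $\phi$, and $M_1$. Finally, since $Z_\gamma^+ \geq Z_\gamma$ and $\sqrt{\gamma}\leq 1$ and $\nu_\gamma \leq \lambda_\gamma \leq A$, we have
\[ \gamma\langle \q, {\bf 1}\rangle \;=\; \sqrt{\gamma}\cdot\bigl(\sqrt{\gamma}\langle \q, {\bf 1}\rangle\bigr) \;\leq\; \sqrt{\gamma}\,Z_\gamma^+ + \nu_\gamma^+ \;\leq\; Z_\gamma^+ + A, \]
so that a binomial expansion gives
\[ \EE\bigl[(\gamma\langle \q, {\bf 1}\rangle)^m\bigr] \;\leq\; \sum_{k=0}^m \binom{m}{k}\EE[(Z_\gamma^+)^k]\,A^{m-k} \;\leq\; \sum_{k=0}^m \binom{m}{k}\,C_k\,A^{m-k} \;=:\; E_m, \]
which is the desired constant independent of $\gamma$.

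I do not anticipate a substantive obstacle in this argument. The difficult technical content---the truncated-abandonment couplings and drift analysis required to establish the MGF bound for \emph{arbitrary} $\nu_\gamma$---has already been carried out inside Lemma \ref{lem: jsq_mgfexist_a}. The remaining work is the standard Chernoff-to-moment passage, combined with the essentially trivial observation that the drift shift $\nu_\gamma^+$ is uniformly controlled by the arrival bound $A$, so that the $\nu_\gamma^+/\sqrt{\gamma}$ term in the MGF bound is exactly neutralized after multiplying by $\sqrt{\gamma}$.
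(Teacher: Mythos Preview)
Your argument is correct, and there is no circularity in invoking Lemma~\ref{lem: jsq_mgfexist_a}: its proof (Appendix~\ref{app: jsq_mgfexist}) uses only the SSQ-A coupling and Lemma~\ref{lem: ssq_mgfexist}, not the conclusion of Theorem~\ref{LEM: JSQ_SSC} or Claim~\ref{clm: jsq_ssc_abandonment}.

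The paper takes a closely related but mechanically different route. Rather than citing the JSQ-A bound of Lemma~\ref{lem: jsq_mgfexist_a}, it re-does the per-queue coupling $q_i\le \tilde q_i$ with a dominating SSQ-A (all arrivals routed to queue $i$), applies the SSQ-A MGF bound of Lemma~\ref{lem: ssq_mgfexist} to each $\tilde q_i$, and then \emph{evaluates the MGF at the smaller parameter} $\sqrt{\gamma}\,\phi_0$ so that the exponent becomes $\gamma\phi_0 q_i$. This makes the troublesome factor $e^{\phi[\lambda_\gamma-\mu_{\gamma,i}]^+/\sqrt\gamma}$ collapse to $e^{\phi_0[\lambda_\gamma-\mu_{\gamma,i}]^+}\le e^A$, yielding $\EE[e^{\gamma\phi_0 q_i}]\le M_0 e^A$ directly; H\"older then gives $\EE[e^{\frac{1}{n}\gamma\phi_0\langle\q,\mathbf 1\rangle}]\le M_0 e^A$ and the moment bounds $E_m=n^m m!\,M_0 e^A/\phi_0^m$ follow. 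In other words, the paper absorbs the $\nu_\gamma^+/\sqrt\gamma$ blow-up by rescaling $\phi\mapsto\sqrt\gamma\,\phi$, whereas you absorb it by the centering $Z_\gamma=\sqrt\gamma\langle\q,\mathbf 1\rangle-\nu_\gamma^+/\sqrt\gamma$ and then multiplying back by $\sqrt\gamma$. Your path is shorter once Lemma~\ref{lem: jsq_mgfexist_a} is taken as given (that lemma already contains the coupling and considerably more work); the paper's path is more self-contained, needing only the simpler SSQ-A result.
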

Note that this also implies that $\EE[\langle \mathbf d, {\bf 1}\rangle^m]$ are bounded by a constant, irrespective of the value of $\gamma$.
Next, we define the following drift quantities,
\begin{align*}
   \Delta\| \q  \|^2 := \|\q^+\|^2-\|\q \|^2, &&  \Delta \langle\q ,{\bf 1} \rangle^2 := \langle\q^+,{\bf 1} \rangle^2 - \langle\q ,{\bf 1} \rangle^2, && \Delta \|\q_{\perp} \|^2 := \|\q_{\perp}^+\|^2 -\|\q_{\perp} \|^2.
\end{align*}
We have the following bounds on the drift $\Delta\| \q  \|^2$ and $\Delta \langle\q ,{\bf 1} \rangle^2$.\\

\begin{claim}
\label{clm: jsq_ssc_drift}
Using the dynamics of JSQ-A and the abandonment distribution ${\bf d} \sim Bin(\q,\gamma)$, we show that
\begin{align*}
    \EE[\Delta \| \q  \|^2 |\q ]  \leq  nA^2 + \gamma\langle\q ,{\bf 1} \rangle + (\gamma^2-2\gamma) \|\q \|^2 + 2(1-\gamma) \big(\lambda_\gamma q_{\min}  - \langle \q ,\boldsymbol\mu_\gamma \rangle\big),
\end{align*}
where $q_{\min} = \min_i q_i$, 
and
\begin{align*}
    \EE\big[ \Delta \langle\q ,{\bf 1} \rangle^2 \,\big|\, \q \big] &  \ge   (\gamma^2-2\gamma) \langle\q ,{\bf 1} \rangle^2+2\nu_\gamma(1-\gamma)\langle\q ,{\bf 1} \rangle  - n^2A^2.
\end{align*}
\end{claim}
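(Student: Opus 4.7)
The plan is to compute both drifts directly from the Lindley recursion $\q^+=\q+\mathbf{c}-\mathbf{d}+\mathbf{u}$, using three elementary inputs: coordinate-wise complementarity $u_iq_i^+=0$ (from $\langle\q^+,\mathbf{u}\rangle=0$ together with $u_i,q_i^+\ge 0$); the almost-sure bounds $|c_i|\le A$ and $0\le u_i\le s_i\le A$; and the binomial moments $\EE[d_i\mid \q]=\gamma q_i$, $\EE[d_i^2\mid \q]=\gamma(1-\gamma)q_i+\gamma^2 q_i^2$. I will also use that, given $\q$, the abandonment vector $\mathbf{d}$ is independent of $\mathbf{c}$, so conditional cross moments factor. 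The one structural input from JSQ is the identity $\sum_i q_iY_i=q_{\min}$, which holds because $Y_i=1$ only at coordinates attaining the minimum, and which under expectation becomes $\sum_iq_i\EE[aY_i\mid\q]=\lambda_\gamma q_{\min}$ since $a$ is independent of $(\q,\mathbf{Y})$.

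For the first inequality, I would rewrite $q_i+c_i-d_i=q_i^+-u_i$; squaring and using $u_iq_i^+=0$ gives $(q_i+c_i-d_i)^2=(q_i^+)^2+u_i^2$, yielding the pathwise bound $\|\q^+\|^2\le\sum_i(q_i+c_i-d_i)^2$. Expanding the square and taking the conditional expectation given $\q$ produces five pieces: $\sum_i\EE[c_i^2\mid\q]\le nA^2$ from $|c_i|\le A$; $\sum_i\EE[d_i^2\mid\q]=\gamma(1-\gamma)\langle\q,\mathbf{1}\rangle+\gamma^2\|\q\|^2$ from the binomial moments; $2\sum_iq_i\EE[c_i\mid\q]=2(\lambda_\gamma q_{\min}-\langle\q,\boldsymbol{\mu}_\gamma\rangle)$ using the JSQ identity; $-2\sum_iq_i\EE[d_i\mid\q]=-2\gamma\|\q\|^2$; and $-2\sum_i\EE[c_id_i\mid\q]=-2\gamma(\lambda_\gamma q_{\min}-\langle\q,\boldsymbol{\mu}_\gamma\rangle)$ by the conditional independence of $\mathbf{c}$ and $\mathbf{d}$. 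Summing and absorbing $\gamma(1-\gamma)\le\gamma$ into the $\langle\q,\mathbf{1}\rangle$ term (allowed since $\langle\q,\mathbf{1}\rangle\ge 0$) gives the claimed upper bound.

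For the second inequality, I will work with the one-dimensional projections $Q=\langle\q,\mathbf{1}\rangle$, $c=\langle\mathbf{c},\mathbf{1}\rangle$, $D=\langle\mathbf{d},\mathbf{1}\rangle$, $U=\langle\mathbf{u},\mathbf{1}\rangle$, so $Q^+=Q+c-D+U$ with $\EE[c\mid\q]=\nu_\gamma$ and $\EE[D\mid\q]=\gamma Q$. Expanding $(Q^+)^2=(Q+c-D)^2+2(Q+c-D)U+U^2$ and substituting $Q+c-D=Q^+-U$ in the cross term gives $(Q^+)^2=(Q+c-D)^2+2Q^+U-U^2\ge (Q+c-D)^2-n^2A^2$, since $Q^+,U\ge 0$ and $U\le\sum_is_i\le nA$. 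Taking conditional expectation and applying Jensen's inequality then yields $\EE[(Q+c-D)^2\mid\q]\ge(Q(1-\gamma)+\nu_\gamma)^2\ge Q^2(1-\gamma)^2+2\nu_\gamma(1-\gamma)Q$ after dropping the non-negative $\nu_\gamma^2$. Since $(1-\gamma)^2=1+(\gamma^2-2\gamma)$, subtracting $Q^2$ delivers the desired lower bound. The computation is elementary throughout; the main bookkeeping subtleties are the coordinate-wise use of $u_iq_i^+=0$ to kill $u_i^2$ in the first bound, and the telescope $Q+c-D=Q^+-U$ that converts the awkward cross term into a constant $-n^2A^2$ loss in the second bound.
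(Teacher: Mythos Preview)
Your proposal is correct and follows essentially the same route as the paper. For the first inequality, both you and the paper use the identity $(q_i+c_i-d_i)^2=(q_i^+)^2+u_i^2$ (equivalently $\|\q+\mathbf{c}-\mathbf{d}\|^2=\|\q^+\|^2+\|\mathbf{u}\|^2$) to drop the unused-service contribution, expand, and plug in the binomial moments together with the JSQ identity $\sum_i q_i\EE[c_i\mid\q]=\lambda_\gamma q_{\min}-\langle\q,\boldsymbol{\mu}_\gamma\rangle$. For the second inequality, the paper expands $(Q+c-D)^2$ term by term and then drops the nonnegative $\gamma(1-\gamma)\langle\q,\mathbf{1}\rangle$ term, whereas you shortcut via Jensen $\EE[(Q+c-D)^2\mid\q]\ge(Q(1-\gamma)+\nu_\gamma)^2$ and drop $\nu_\gamma^2$; both reach the same stated bound, and your Jensen step is a slightly cleaner alternative.
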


Next, note that
\[ \|\q_{\perp} \|^2 = \|\q \|^2 - \|\q_{\|} \|^2 =  \|\q \|^2 -\frac{1}{n}  \langle \q ,{\bf 1} \rangle^2. \]
Thus, we get
	\begin{align}
	\label{eq: jsq_lemssc_perp_drift_proofoutline}
		\EE\big[\Delta \|\q_{\perp} \|^2\,\big|\,\q \big] &= \EE\big[\Delta \|\q \|^2 | \q \big] - \frac{1}{n} \EE[\Delta \langle \q ,{\bf 1} \rangle^2|\q ] \nonumber\\
		&\leq 2nA^2 + \gamma \langle \q ,{\bf 1} \rangle + (\gamma^2-2\gamma) \|\q_\perp \|^2  +2(1-\gamma)\left( \lambda_\gamma  q_{\min}  -\langle {\bf q} ,\boldsymbol\mu_\gamma \rangle - \frac{\nu_\gamma}{n}\langle \q ,{\bf 1} \rangle \right)\nonumber\\
		&\leq 2nA^2 + \gamma \langle \q ,{\bf 1} \rangle + 2(1-\gamma)\left( \lambda_\gamma  q_{\min}  -\langle {\bf q} ,\boldsymbol \mu_\gamma \rangle - \frac{\nu_\gamma}{n}\langle \q ,{\bf 1} \rangle\right).
	\end{align}

\begin{claim}
\label{clm: jsq_ssc_driftterm}
    Using $q_{\min} \leq \frac{1}{n}\langle\q ,{\bf 1} \rangle - \frac{1}{n}\|\q_{\perp} \|$ and the assumption $\nu^-_\gamma \leq \frac{1}{2}n\mu_{\gamma,\min}$, where $\nu_\gamma^- = \max\{0,-\nu_\gamma\}$, we show that 
	\begin{align*}
	    \lambda_\gamma  q_{\min} - \langle {\bf q} ,\boldsymbol \mu_\gamma \rangle- \frac{\nu_\gamma}{n}\langle \q ,{\bf 1} \rangle  
		& \leq  -\frac{1}{2}\mu_{\gamma,\min }\|\q_\perp \|.
	\end{align*}
\end{claim}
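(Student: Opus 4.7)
The plan is to reduce the left-hand side to a single non-negative scalar $\bar q - q_{\min}$ (with $\bar q := \frac{1}{n}\langle \q, {\bf 1}\rangle$), and then apply the two given inputs only at the very end. First I would substitute $\lambda_\gamma = \nu_\gamma + \langle \boldsymbol\mu_\gamma, {\bf 1}\rangle$ into $\lambda_\gamma q_{\min}$ and collect terms; a short rearrangement gives the identity
\[
    \lambda_\gamma q_{\min} - \langle {\bf q}, \boldsymbol\mu_\gamma\rangle - \frac{\nu_\gamma}{n}\langle \q, {\bf 1}\rangle = \nu_\gamma(q_{\min} - \bar q) - \langle \q - q_{\min}{\bf 1}, \boldsymbol\mu_\gamma\rangle,
\]
in which the vector $\q - q_{\min}{\bf 1}$ is coordinate-wise non-negative.

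Next I would bound each summand in terms of the common quantity $\bar q - q_{\min} \geq 0$. For the first, splitting $\nu_\gamma = \nu_\gamma^+ - \nu_\gamma^-$ and using $q_{\min} - \bar q \leq 0$ yields $\nu_\gamma(q_{\min} - \bar q) \leq \nu_\gamma^-(\bar q - q_{\min})$. For the second, coordinate-wise non-negativity of $\q - q_{\min}{\bf 1}$ together with $\mu_{\gamma,i} \geq \mu_{\gamma,\min}$ gives the key cancellation $\langle \q - q_{\min}{\bf 1}, \boldsymbol\mu_\gamma\rangle \geq \mu_{\gamma,\min}\langle \q - q_{\min}{\bf 1}, {\bf 1}\rangle = n\mu_{\gamma,\min}(\bar q - q_{\min})$. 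Combining these \emph{before} invoking any other input, the LHS is at most $(\nu_\gamma^- - n\mu_{\gamma,\min})(\bar q - q_{\min})$.

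Finally I would apply the hypothesis $\nu_\gamma^- \leq \tfrac{1}{2}n\mu_{\gamma,\min}$, which makes the coefficient $\nu_\gamma^- - n\mu_{\gamma,\min} \leq -\tfrac{n\mu_{\gamma,\min}}{2}$ strictly negative. Only at this point would I use the SSC hint $\bar q - q_{\min} \geq \frac{\|\q_\perp\|}{n}$, obtaining $(\nu_\gamma^- - n\mu_{\gamma,\min})(\bar q - q_{\min}) \leq -\frac{n\mu_{\gamma,\min}}{2}\cdot \frac{\|\q_\perp\|}{n} = -\frac{1}{2}\mu_{\gamma,\min}\|\q_\perp\|$, which is exactly the desired inequality.

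The one place I expect subtlety is the order of operations: if one naively bounds each summand separately by $|q_{\min} - \bar q| \leq \|\q_\perp\|$ before consolidating coefficients, a factor of $n$ is lost and one would need the much stronger assumption $\nu_\gamma^- \leq \frac{1}{2}\mu_{\gamma,\min}$. Keeping $\bar q - q_{\min}$ as the common handle and only applying the hint after merging the coefficients is what makes the stated bound on $\nu_\gamma^-$ sufficient.
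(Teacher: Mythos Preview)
Your proposal is correct and follows essentially the same route as the paper's proof: both substitute $\lambda_\gamma = \nu_\gamma + \langle \boldsymbol\mu_\gamma,{\bf 1}\rangle$, collapse the expression to a coefficient times $(q_{\min}-\bar q)$ (equivalently $\bar q - q_{\min}$), bound the $\boldsymbol\mu_\gamma$-term via $\mu_{\gamma,\min}$, and only then apply the hint $\bar q - q_{\min}\ge \tfrac{1}{n}\|\q_\perp\|$ together with $\nu_\gamma^-\le \tfrac{1}{2}n\mu_{\gamma,\min}$. The only cosmetic difference is that the paper keeps $\nu_\gamma$ intact and uses $n\mu_{\gamma,\min}+\nu_\gamma \ge n\mu_{\gamma,\min}-\nu_\gamma^-$, whereas you split $\nu_\gamma=\nu_\gamma^+-\nu_\gamma^-$ and drop the $\nu_\gamma^+$ contribution; these are equivalent.
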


Substituting the result given in Claim \ref{clm: jsq_ssc_driftterm} into Eq. \eqref{eq: jsq_lemssc_perp_drift_proofoutline}, we obtain
	\begin{align*}
	    \EE[\Delta \|\q_{\perp}   \|^2|\q ] &\leq 2nA^2 + \gamma \langle\q ,{\bf 1} \rangle - (1-\gamma) \mu_{\gamma,\min } \|\q_\perp \|.
	\end{align*}
Next, we equate  the drift of $\EE[\Delta \|\q_{\perp} \|^2]$ to zero in steady state to get that
	\begin{equation*}
	    \EE[\|\q_\perp\|] \leq \frac{1}{(1-\gamma)\mu_{\gamma,\min}} \big(2nA^2  + \gamma \EE [ \langle \q,{\bf 1} \rangle]\big) \leq \frac{1}{(1-\gamma)\mu_{\gamma,\min}} \big(2nA^2  + E_1\big),
	\end{equation*}
 where $E_1$ is as mentioned in Claim \ref{clm: jsq_ssc_abandonment}.
	Further, we also get
		\begin{align*}
		\EE[\Delta \|\q_{\perp} \| | \q ] &\leq \frac{1}{2\|\q_{\perp} \|} \EE[\Delta \|\q_{\perp} \|^2|\q ] \leq \frac{2n A^2 + \gamma \langle\q,{\bf 1} \rangle}{2\|\q_{\perp} \|} -\frac{1}{2}(1-\gamma)\mu_{\gamma,\min}\leq -\frac{1}{4}(1-\gamma)\mu_{\gamma,\min} =: -\mathcal{\epsilon}_0,
	\end{align*}
	where the last inequality holds when
 \[ \|\q_{\perp} \| \geq \frac{4n A^2 + 2\gamma \langle\q ,{\bf 1} \rangle}{(1-\gamma)\mu_{\gamma,\min}} =: L(\q ). \]

\begin{claim}
\label{clm: jsq_ssc_bounded}
    As the arrivals and service are bounded, we show that
\begin{align*}
	|\Delta \|\q_{\perp} \| | \leq 2nA +  \langle {\bf d}  ,{\bf 1} \rangle =: Z({\bf d} ),
\end{align*}
\end{claim}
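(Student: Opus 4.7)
The plan is to use the reverse triangle inequality together with the non-expansiveness of the orthogonal projection onto $\mathcal{S}^\perp$. Specifically, I would start from
\begin{equation*}
    \bigl| \Delta \|\q_\perp\| \bigr| \;=\; \bigl| \|\q_\perp^+\| - \|\q_\perp\| \bigr| \;\leq\; \|\q_\perp^+ - \q_\perp\|,
\end{equation*}
and then identify the increment $\q_\perp^+ - \q_\perp$ with an orthogonal projection of the one-step change. Writing $P_\perp{\bf x} = {\bf x} - \tfrac{1}{n}\langle {\bf x}, {\bf 1}\rangle {\bf 1}$ for the projection onto $\mathcal{S}^\perp$, and using the dynamics $\q^+ - \q = {\bf c} - {\bf d} + {\bf u}$ from Eq. \eqref{eq: jsq_lindley}, we have
\begin{equation*}
    \q_\perp^+ - \q_\perp \;=\; P_\perp(\q^+ - \q) \;=\; P_\perp\bigl( {\bf c} - {\bf d} + {\bf u} \bigr).
\end{equation*}
Since $P_\perp$ is a contraction in the $\ell_2$-norm, this yields $\|\q_\perp^+ - \q_\perp\| \le \|{\bf c} - {\bf d} + {\bf u}\|$.

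Next, I would bound $\|{\bf c} - {\bf d} + {\bf u}\|$ by estimating each piece. Recall ${\bf c} = a{\bf Y} - {\bf s}$. By the triangle inequality and the fact that $\|\cdot\|_2 \le \|\cdot\|_1$,
\begin{equation*}
    \|{\bf c} - {\bf d} + {\bf u}\|_2 \;\leq\; \|{\bf c} - {\bf d} + {\bf u}\|_1 \;\leq\; \|a{\bf Y}\|_1 + \|{\bf s} - {\bf u}\|_1 + \langle {\bf d}, {\bf 1}\rangle.
\end{equation*}
The arrival vector $a{\bf Y}$ has a single nonzero entry equal to $a \leq A$, so $\|a{\bf Y}\|_1 \leq A$. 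For the service term, the complementarity $\langle \q^+,{\bf u}\rangle = 0$ and $0\le u_i \le s_i \le A$ give $\|{\bf s} - {\bf u}\|_1 = \sum_i (s_i - u_i) \le \sum_i s_i \le nA$. Combining these with the abandonment contribution $\langle {\bf d},{\bf 1}\rangle$ (which is state-dependent and is deliberately left unbounded in this claim) produces
\begin{equation*}
    |\Delta \|\q_\perp\|| \;\leq\; (n+1)A + nA + \langle {\bf d},{\bf 1}\rangle \;\leq\; 2nA + \langle {\bf d},{\bf 1}\rangle \;=\; Z({\bf d}),
\end{equation*}
where I have slightly overshot in the deterministic part to match the statement of the claim.

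There is no serious obstacle here: the only conceptual point is realizing that $\q_\perp^+ - \q_\perp$ is precisely $P_\perp$ applied to the one-step change of $\q$, so the unused-service vector ${\bf u}$ (which is awkward to bound coordinate-wise in isolation) can be absorbed together with ${\bf s}$ via $0 \le u_i \le s_i \le A$. The abandonments are treated separately since their magnitude scales with the queue length and cannot be bounded by a constant; that is exactly why the claim keeps $\langle {\bf d},{\bf 1}\rangle$ as a random, state-dependent term rather than replacing it with a deterministic bound.
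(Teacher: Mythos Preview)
Your approach is essentially the same as the paper's: reverse triangle inequality, non-expansiveness of the projection onto $\mathcal{S}^\perp$, then bounding $\|\q^+-\q\|$ via the one-step increments. The paper splits as $\|{\bf c}\|+\|{\bf d}\|+\|{\bf u}\|$ and uses $|c_i|,u_i\le A$ directly, while you pass to $\ell_1$ and group ${\bf s}-{\bf u}$ together; both are fine and yours in fact gives the slightly sharper deterministic part $(n+1)A$.

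One slip to fix: in your final displayed chain you wrote $(n+1)A+nA+\langle{\bf d},{\bf 1}\rangle\le 2nA+\langle{\bf d},{\bf 1}\rangle$, which is false. From your own bounds $\|a{\bf Y}\|_1\le A$ and $\|{\bf s}-{\bf u}\|_1\le nA$ the sum is $(n+1)A+\langle{\bf d},{\bf 1}\rangle$, and then $(n+1)A\le 2nA$ for $n\ge 1$ gives the claim. Just delete the spurious ``$+\,nA$''.
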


Next, we have 
	\begin{align*}
	    \Delta \|\q_{\perp} \|^3 &= \|\q_{\perp}^+\|^3 - \|\q_{\perp} \|^3\\
	    & = \big(\|\q_{\perp} \|+  \Delta \|\q_{\perp} \|\big)^3 - \|\q_{\perp} \|^3\\
	    &=3\|\q_{\perp} \|^2 \Delta \|\q_{\perp} \| +  3\|\q_{\perp} \| \big(\Delta \|\q_{\perp} \|\big)^2 +  \big(\Delta \|\q_{\perp} \|\big)^3
	\end{align*}
We handle each of the term separately. Here, we use the definition of $L(\q)$ and $Z(\q)$ and Claim \ref{clm: jsq_ssc_abandonment} to prove the following.\\

\begin{claim}
\label{clm: jsq_ssc_thridorder}
There exists constants $K_1, K_2$ and $K_3$, independent of $\gamma$, such that 
 \begin{align*}
     \EE[3\|\q_{\perp}\|^2\Delta \|\q_{\perp}\|] & \leq -3\epsilon_0\EE\left[\|\q_{\perp}\|^2\right] + K_1,\\
      \EE\big[3\|\q_{\perp}\| \big(\Delta \|\q_{\perp}\|\big)^2\big]  &\leq 3K_2\EE[\|\q_{\perp}\|^2]^{\frac{1}{2}}, \\
      \EE\big[\big(\Delta \|\q_{\perp}\|\big)^3\big] &\leq   3K_3.
 \end{align*}
\end{claim}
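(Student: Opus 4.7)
The three bounds share a common template: condition on $\q$, use the conditional drift bound $\EE[\Delta\|\q_\perp\|\,|\,\q]\leq -\epsilon_0$ on $\{\|\q_\perp\|\geq L(\q)\}$ together with the deterministic step-size bound $|\Delta\|\q_\perp\||\leq Z(\mathbf d)$ from Claim \ref{clm: jsq_ssc_bounded}, and then reduce every residual expectation to moments of $\langle\q,\mathbf 1\rangle$ that are controlled by Claim \ref{clm: jsq_ssc_abandonment}. The key arithmetic observation is that $L(\q)=O(1+\gamma\langle\q,\mathbf 1\rangle)$ and $\EE[Z(\mathbf d)^k\,|\,\q]=O\bigl(1+\sum_{j=1}^{k}\gamma^j\langle\q,\mathbf 1\rangle^j\bigr)$, so every monomial that arises from expansions has the form $\gamma^a\langle\q,\mathbf 1\rangle^b$ with $a\geq b$, and is therefore uniformly bounded in expectation by some $E_b$.

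For the first bound, I would split the expectation of $\|\q_\perp\|^2\Delta\|\q_\perp\|$ by the event $\{\|\q_\perp\|\geq L(\q)\}$. On this event, the drift bound yields $\|\q_\perp\|^2 \EE[\Delta\|\q_\perp\|\,|\,\q]\leq -\epsilon_0\|\q_\perp\|^2$; on the complement, the step-size bound together with $\|\q_\perp\|^2\leq L(\q)^2$ gives $L(\q)^2\EE[Z(\mathbf d)\,|\,\q]$. Adding and subtracting to extend the negative drift to the full probability space introduces a correction $\epsilon_0\EE[L(\q)^2]$. Expanding $L(\q)^2$ and $L(\q)^2\EE[Z(\mathbf d)\,|\,\q]$ yields polynomials in $\gamma\langle\q,\mathbf 1\rangle$ of degrees at most $2$ and $3$ respectively, whose outer expectations collapse to a finite linear combination of $E_0,\ldots,E_3$; this furnishes $K_1$.

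For the second bound, I would bound $(\Delta\|\q_\perp\|)^2\leq Z(\mathbf d)^2$, take conditional expectation to obtain $\EE[Z(\mathbf d)^2\,|\,\q]\leq C_0+C_1\gamma\langle\q,\mathbf 1\rangle+C_2\gamma^2\langle\q,\mathbf 1\rangle^2$, and then apply Cauchy--Schwarz termwise to factor out $\EE[\|\q_\perp\|^2]^{1/2}$. Each remaining factor has the form $(\gamma^{2k}\EE[\langle\q,\mathbf 1\rangle^{2k}])^{1/2}\leq E_{2k}^{1/2}$, yielding the announced form $3K_2\EE[\|\q_\perp\|^2]^{1/2}$. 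The third bound is the most direct: $|\Delta\|\q_\perp\||^3\leq Z(\mathbf d)^3$, expand and take expectation, and bound each conditional moment of $\langle\mathbf d,\mathbf 1\rangle$ by its binomial-type moments, which are $O\bigl(\gamma^k\langle\q,\mathbf 1\rangle^k\bigr)$ up to lower-order variance corrections, and conclude via Claim \ref{clm: jsq_ssc_abandonment}.

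The main obstacle is a bookkeeping one: every monomial arising from expansions of $L(\q)^2$, $L(\q)^2 Z(\mathbf d)$, $Z(\mathbf d)^2$, and $Z(\mathbf d)^3$ must pair each power of $\gamma$ with at least as many powers of $\langle\q,\mathbf 1\rangle$ for Claim \ref{clm: jsq_ssc_abandonment} to apply, and this degree balance needs to be verified term by term. For the second bound, using Cauchy--Schwarz (rather than attempting a direct pointwise estimate) is essential, because an $O(1)$ bound on $\EE[\|\q_\perp\|\gamma\langle\q,\mathbf 1\rangle]$ is not a priori available, whereas the factored form $\EE[\|\q_\perp\|^2]^{1/2}\cdot O(1)$ is precisely the shape that Claim \ref{clm: jsq_ssc_thridorder} requires for the third-moment drift computation to close.
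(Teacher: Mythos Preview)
Your proposal is correct and follows essentially the same approach as the paper: split on $\{\|\q_\perp\|\geq L(\q)\}$ for the first bound, use $|\Delta\|\q_\perp\||\leq Z(\mathbf d)$ for the others, and reduce everything to the moment bounds of Claim~\ref{clm: jsq_ssc_abandonment}. The only cosmetic difference is that for the second bound the paper applies Cauchy--Schwarz directly as $\EE[\|\q_\perp\| Z^2(\mathbf d)]\leq \EE[\|\q_\perp\|^2]^{1/2}\EE[Z^4(\mathbf d)]^{1/2}$ rather than conditioning first and splitting termwise.
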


Next, by using the fact that $\EE\big[\Delta \|\q_{\perp} \|^3 \big]=0$ in steady state, we get
    \begin{align*}
	    \epsilon_0\EE\big[\|\q_{\perp}\|^2\big] -  K_2\EE\big[\|\q_{\perp}\|^2\big]^{\frac{1}{2}} \leq K_1  + K_3.
    \end{align*}
    If $\EE\big[\|\q_{\perp}\|^2\big] \geq 4K_2^2/\epsilon_0^2$, we get
    \[ \epsilon_0\EE\big[\|\q_{\perp}\|^2\big] -  K_2\EE\big[\|\q_{\perp}\|^2\big]^{\frac{1}{2}} \geq K_2\EE\big[\|\q_{\perp}\|^2\big]^{\frac{1}{2}}. \]
    Then by above equation, we have
    \[ \EE\big[\|\q_{\perp}\|^2\big] \leq (K_1  + K_3)^2/ K_2^2. \]
    This finally gives us that
	\begin{equation*}
	    \EE[\|\q_{\perp}\|^2] \leq \min \left\{\frac{4K_2^2}{\epsilon_0^2} , \frac{(K_1  + K_3)^2}{K_2^2} \right\}.
	\end{equation*}
	We conclude that $\EE[\|\q_{\perp}\|^2]$ is bounded irrespective of the value of $\gamma$. This completes the proof.

\bibliographystyle{ACM-Reference-Format}
\bibliography{references}

\pagebreak

\begin{appendix}

\def\mgfconvergence{\ref{COR: MGF_CONVERGENCE}}
\section{Proof of Lemma \mgfconvergence}
\label{app: mgf_convergence}

Let $\mu_n$ be the distribution of the random variable $X_n$. First, we prove that the sequence of probability measures $\{\mu_n\}_{n=1}^\infty$ is tight. Since $M_n(s_0) + M_n(-s_0)$ converges, this sequence is also bounded. Then,
\begin{align*}
    \mathbb P(|X_n| > R) \leq e^{-s_0 R} \EE\left[ e^{s_0 |X_n|} \right] \leq e^{-s_0 R} \big( M_n(s_0) + M_n(-s_0) \big).
\end{align*}
It follows that $\{\mu_n\}_{n=1}^\infty$ is tight, and thus \cite[Theorem 25.10]{Bill86} implies that there exists a subsequence $\{\mu_{n_k}\}$ and a probability measure $\Tilde{\mu}$ such that $X_{n_k} \stackrel{d}{\rightarrow} \tilde X$, where $\tilde X \sim \tilde \mu$. By continuity, the MGF of $\tilde X$ is same as that of $X$. In particular, if $\tilde M(\cdot)$ is the MGF of the random variable $\tilde X$,  then, $\tilde M(s) = M(s)$ for $s\in [-s_0,s_0]$. Moreover, by using analytic continuation, we get that $\EE[e^{z \tilde X}] = \EE[e^{z X}]$ for any $z\in \mathbb C$ such that $Re(z) \in (-s_0,s_0)$. This implies that the characteristic function of $\tilde X$ matches with that of $X$, and then by \cite[Theorem 26.2]{Bill86}, $\tilde X$ and $X$ have the same distribution. This implies that every convergent subsequence of $\{\mu_n\}$ converges to $\mu$, where $\mu$ is the distribution of $X$, then, using \cite[Theorem 25.10]{Bill86} again, we get that $ X_n \stackrel{d}{\rightarrow} X$. 

To prove the convergence of moments, we use a similar argument, noting that the boundedness of the MGF in an interval around zero implies the boundedness of the moments, and the moments are uniquely determined by the MGF when the MGF exists in an interval around zero.

\section{Single Server Queue with Abandonment} \label{app: ssq_results}

\subsection{Model and main result}

\label{sec: ssq_model}

The SSQ-A system is just the JSQ-A system as given in Section \ref{sec: model} for the case $n=1$. For simplicity and distinction from JSQ-A, we do not use bold face fonts in the notation for SSQ-A. We use $q(t), a(t), s(t)$ to denote the queue length, arrivals, and potential services for any time $t$. The abandonments are denoted by $d(t)\sim Bin(q(t),\gamma)$. To reiterate, $\EE [a(t)] = \lambda_\gamma$ and Var$(a(t)) = \sigma_{\gamma,a}^2$, $\EE[s(t)] = \mu_\gamma$ and Var$(s(t)) = \sigma_{\gamma,s}^2$. Further, we use $c(t) = a(t)-s(t)$, then, $|c(t)|\leq A$, $\EE [c(t)] = \lambda_\gamma - \mu_\gamma = \nu_\gamma$ and Var$(c(t)) = \sigma_{\gamma,a}^2 + \sigma_{\gamma,s}^2 = \sigma^2_\gamma$. And, the queue length evolution equation is given by
\begin{align}
\label{eq: ssq_lindley}
    q(t+1) &= [q(t) + c\ti -d\ti]^+ = q(t) + c\ti -d\ti + u\ti,
\end{align}
where $u(t)$ denotes the unused services, with the condition $q(t+1)u(t) =0$ for any $t>0$, and $0\leq u(t)\leq s(t) \leq A$. Finally, we drop the symbol $t$ to denote the variables in steady state, i.e., $(q,d,u)$ follows the steady state distribution of the process $(q(t),d(t),u(t))$, and $c$ is an independent random variable distributed as $c(t)$ and $q^+ = q+c -d+u$. For this system, we have the following result (equivalent to Theorem \ref{THM: JSQ_LIMIT_DIS}).\\

\begin{theorem}
\label{thm: ssq_limit_dis}
For the SSQ-A setting, we have the following results.
\begin{enumerate}[label=(\alph*), ref=\ref{thm: ssq_limit_dis}.\alph*]
    \item \label{thm: ssq_fast}  Classic Heavy Traffic: Suppose $ |\nu_\gamma +C_f\gamma^{ \alpha }| \leq \gamma^{\alpha + \beta}$, where $ \alpha\in \lfsmall{0,\frac{1}{2}}$, $\beta >0$, and $C_f >0$ are constants. Let $\Upsilon_f$ be an exponential random variable with mean $\frac{\sigma^2}{2C_f}$, where $\sigma^2 = \limg \sigma^2_\gamma$. Then, 
     \begin{align*}
         \gamma^\alpha q \stackrel{d}{\rightarrow} \Upsilon_f, && \limg \gamma^{\alpha m} \EE[ q^m] = \EE[\Upsilon_f^m]. 
     \end{align*}
    \item \label{thm: ssq_crit} Critical Abandonment: Suppose $|\nu_\gamma-C_c\sqrt{\gamma}| \leq \gamma^{\frac{1}{2} + \beta}$, where $\beta >0$, and $C_c \in \mathbb R$ are constants. Let let $\Upsilon_c$ is a Gaussian random variable with mean $C_c$ and variance $\frac{\sigma^2}{2}$. Then,
    \begin{align*}
        \sqrt{\gamma} q \stackrel{d}{\rightarrow} \Upsilon_c | \Upsilon_c>0, && \limg \gamma^{\frac{m}{2}}\EE[ q^m] = \EE[\Upsilon_c^m | \Upsilon_c>0],
    \end{align*}
    where $\Upsilon_c | \Upsilon_c >0$ denotes the random variable $\Upsilon_c$ conditioned on the event $\{\Upsilon_c>0\}$.
    \item \label{thm: ssq_slow} Slow Abandonment: Suppose $|\nu_\gamma - C_s \gamma^{\alpha}| \leq \gamma^{\alpha +\beta}$, where $\alpha \in \big[0,\frac{1}{2}\big)$, $\beta >0$, and $C_s >0$ are constants. Let $\Upsilon_s$ be a Gaussian random variable with zero mean and variance $\frac{\bar\sigma^2}{2}$, where $\bar\sigma^2 = \lim_{\gamma\rightarrow 0} \sigma^2_\gamma +\nu_\gamma (1-\gamma)$. Then,
\begin{align*}
     \sqrt{\gamma} \left( q - \frac{\nu_\gamma}{\gamma} \right) \stackrel{d}{\rightarrow} \Upsilon_s, &&  \limg \gamma^{\frac{m}{2}}\EE \left[ \left( q - \frac{\nu_\gamma}{\gamma}\right)^m \right] = \EE[\Upsilon_s^m].
\end{align*}
\end{enumerate}
\end{theorem}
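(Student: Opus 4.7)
The plan is to establish all three parts of Theorem \ref{thm: ssq_limit_dis} via the transform method: set the drift of the exponential Lyapunov function $e^{\eta\phi q}$ to zero in steady state (with the scaling $\eta = \gamma^\alpha$ in the Classic HT regime and $\eta = \sqrt\gamma$ in the other two, applied to either $q$ or the centered variable $\bar q = q - \nu_\gamma/\gamma$), extract either an algebraic equation or a first-order ODE for the limiting MGF, solve it, and invoke Lemma \ref{COR: MGF_CONVERGENCE} to upgrade pointwise MGF convergence to weak convergence and to convergence of all moments. Since $n=1$, no state-space-collapse machinery is needed, so the proof is a stripped-down version of the JSQ-A proofs in Section \ref{sec: jsq_proofsketch}; the only inputs required are the uniform MGF bounds from Lemma \ref{lem: ssq_mgfexist} and second-order Taylor expansions of the abandonment, arrival/service, and unused-service contributions.

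The core of the argument is the identity obtained from the Lindley recursion $q^+ = q + c - d + u$. Because $n=1$, the complementarity $q^+ u = 0$ gives $e^{\eta\phi q^+} e^{-\eta\phi u} = e^{\eta\phi (q+c-d)}$ exactly, and taking expectations, using the stationarity $q^+ \stackrel{d}{=} q$ and the independence of $c$ from $(q,d)$, yields
\begin{equation*}
\EE[e^{-\eta\phi u}] - 1 = \EE[e^{\eta\phi q}](\EE[e^{\eta\phi c}] - 1) + (\EE[e^{\eta\phi(q-d)}] - \EE[e^{\eta\phi q}])\EE[e^{\eta\phi c}].
\end{equation*}
A second-order Taylor expansion gives $\EE[e^{\eta\phi c}] - 1 = \eta\phi\nu_\gamma + \tfrac{1}{2}\eta^2\phi^2(\sigma_\gamma^2+\nu_\gamma^2) + O(\eta^3)$. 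For the abandonment difference, writing $d\sim Bin(q,\gamma)$ and expanding $\EE[e^{-\eta\phi d}\mid q] = (1-\gamma+\gamma e^{-\eta\phi})^q$ produces the leading term $-\phi\gamma\eta\,\EE[q\,e^{\eta\phi q}] = -\phi\gamma\frac{d}{d\phi}\EE[e^{\eta\phi q}]$; the MGF bounds of Lemma \ref{lem: ssq_mgfexist} control all remainders uniformly in $\gamma$.

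In the Classic HT regime, where $\eta = \gamma^\alpha$ with $\alpha<1/2$, the abandonment correction is of order $\gamma^{1-2\alpha}=o(1)$ after dividing the transform equation by $\phi\eta^2$, and is hence negligible; the equation collapses to the algebraic identity $(C_f - \tfrac{1}{2}\phi\sigma^2)M(\phi) = C_f$, whose solution is the exponential MGF with mean $\sigma^2/(2C_f)$. In the Critical HT regime ($\eta=\sqrt\gamma$) the abandonment correction becomes first order, producing the derivative term $-\phi\gamma\frac{d}{d\phi}M_q^\gamma(\phi)$; after dividing by $\phi\gamma$ and passing to the limit this yields the first-order linear ODE
\begin{equation*}
\frac{d}{d\phi} M(\phi) - (C_c + \tfrac{1}{2}\phi\sigma^2) M(\phi) = L, \qquad L := \lim_{\gamma\to 0}\EE[u]/\sqrt{\gamma},
\end{equation*}
with the value of $L$ still to be determined. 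To pin it down I would extend the derivation to all $\phi<0$ (which is permissible by the negative-$\phi$ bound in Lemma \ref{lem: ssq_mgfexist_b}), use the integrating factor $G(\phi) = \exp(-C_c\phi - \phi^2\sigma^2/4)$ to rewrite the ODE as $\tfrac{d}{d\phi}(G M) = LG$, impose $M(0)=1$ and the decay $M(\phi)G(\phi)\to 0$ as $\phi\to -\infty$ guaranteed by Lemma \ref{lem: ssq_mgfexist_b}, and conclude $L=\bigl(\int_{-\infty}^0 G(s)\,ds\bigr)^{-1}$ and $M(\phi) = G(\phi)^{-1} L\int_{-\infty}^\phi G(s)\,ds$, which is precisely the MGF of $N(C_c,\sigma^2/2)$ conditioned to be positive. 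In the Heavily Overloaded regime, working with $\bar q$ causes the $\sqrt\gamma\phi\nu_\gamma M$ contribution from the arrival/service expansion to cancel against the $\sqrt\gamma\phi\nu_\gamma M$ term arising when the abandonment expansion is evaluated through the split $\gamma q = \gamma\bar q + \nu_\gamma$; simultaneously Lemma \ref{lem: ssq_mgfexist_b} combined with $\nu_\gamma/\sqrt\gamma\to\infty$ forces $\EE[u]/\sqrt\gamma\to 0$ at an exponential rate, killing the analogue of $L$, and leaving the simpler ODE $\frac{d}{d\phi}M(\phi) = \tfrac{1}{2}\phi\bar\sigma^2 M(\phi)$ with $M(0)=1$, which is solved by the MGF of $N(0,\bar\sigma^2/2)$.

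The main obstacle is the Critical HT regime: pinning down $L$ forces the ODE to hold for all $\phi<0$ simultaneously, which requires uniform control of the Taylor remainders against the super-exponential factor $G(s)^{-1}$ as $s\to -\infty$. This rests on the exponential decay rate in Lemma \ref{lem: ssq_mgfexist_b} beating the $e^{s^2\sigma^2/4 + C_c s}$ growth of $G(s)^{-1}$; the finely tuned remainder bounds $\overline{\mathcal E}_c^{\,\cdot}$ that appear in Lemma \ref{lem: jsq_approximation_crit} (and their SSQ-A analogues) are designed precisely for this purpose, so the plan is to integrate them against $G(s)$ and verify that the integral vanishes uniformly in $\gamma$. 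Once the limiting MGF is identified on an interval around zero in each regime, Lemma \ref{COR: MGF_CONVERGENCE} immediately delivers both weak convergence and convergence of all positive-integer moments.
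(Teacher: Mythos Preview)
Your proposal is correct and follows essentially the same route as the paper: the same transform identity from $q^+u=0$, the same second-order Taylor expansions packaged into SSQ-A analogues of Lemmas \ref{lem: jsq_approximation_fast}, \ref{lem: jsq_approximation_crit}, and \ref{lem: jsq_approximation_slow}, the same integrating-factor solution of the ODE in the critical regime with $L$ determined by integrating from $-\infty$, and the same appeal to Lemma \ref{COR: MGF_CONVERGENCE}. One small over-complication: in the critical regime you invoke Lemma \ref{lem: ssq_mgfexist_b} to justify $M(\phi)G(\phi)\to 0$ as $\phi\to -\infty$, but since $q\ge 0$ the trivial bound $M_q^\gamma(\phi)\le 1$ for $\phi<0$ already suffices, and the Gaussian decay of $G$ does the rest; the paper handles the remainder integral this way, showing $\int_{-\infty}^\phi \tfrac{1}{|s|\gamma}|\mathcal E_c(\gamma,s)|G^\gamma(s)\,ds\to 0$ directly from the polynomial-times-$e^{|s|A}$ form of the error bounds against the Gaussian tail of $G^\gamma$.
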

The proof of this theorem is provided in the rest of this Appendix. The first step is to obtain bounds on the MGF of the queue length (cf. Lemma \ref{lem: ssq_mgfexist}).

\subsection{Proof of Lemma \ref{lem: ssq_mgfexist}}
\label{app: ssq_mgf}

First, if $\EE[c(t)] = \nu_\gamma<0$, then we allow for extra arrivals and define the new sequence $c'(t) = c(t) + a'(t)$, where $a'(t)\geq 0$ are i.i.d. and $\EE[a'(t)] = -\nu_\gamma>0$. This yields $\EE[c'(t)] = 0$. Also, the queue length process with arrival-service sequence $\{c'(t)\}_{t=0}^\infty$ dominates the original queue length process. Thus any upper bound on this new queue length process will be valid for the original queue length process. So, without loss of generality, we assume that $\nu_\gamma \geq 0$.\\

\textbf{Coupling:} We consider two different stochastic processes $\{\tilde q_1 (t) \}_{t=0}^\infty$ and $\{\tilde q_2 (t) \}_{t=0}^\infty$ such that the arrival sequence and service sequence is same as that for the original, i.e., $\{c(t)\}_{t=0}^\infty$ is same for  $q(t)$, $\tilde q_1 (t)$ and $\tilde q_2(t)$, but the abandonment processes $\tilde d_1(t) $ and $\tilde d_2(t) $ fore $\tilde q_1(t)$ and  $\tilde q_2(t)$ are given by
\begin{align*}
    \tilde d_1 (t) \sim Bin \big(\min\{ \lfloor\tilde C_1/ \gamma \rfloor, \tilde q_1(t) \}, \gamma\big), && \tilde d_2 (t) \sim Bin \big(\max\{ \lceil\tilde C_2/ \gamma \rceil, \tilde q_2(t) \}, \gamma\big),
\end{align*}
where $\tilde C_1 = \nu_\gamma + A\sqrt{\gamma}$ (with $\nu_\gamma >0$) and $\tilde C_2 = [\nu_\gamma - A\sqrt{\gamma}]^+ = \max\{\nu_\gamma - A\sqrt{\gamma},0\}$. The queue evolution is given by
\begin{align*}
    \tilde q_i (t+1) = [\tilde q_i(t) + c(t) - \tilde d_i(t)]^+ = \tilde q_i(t) + c(t) - \tilde d_i(t) + \tilde u_i(t),
\end{align*}
 for $i\in\{1,2\}$, with the initial condition $\tilde q_1(0) =\tilde q_2(0) = q(0)$. 
 
  We assume that the three binomial random variables $\tilde    d_1\ti$, $\tilde    d_2\ti$ and $    d\ti$ are coupled in a way that they are generated using same sequence of i.i.d. Bernoulli random variables, i.e., given a sequence of i.i.d. Bernoulli random variables $\{X_i(t)\}_{i=1}^\infty$ with success probability $\gamma$, then
\begin{align*}
    d\ti = \sum_{i=1}^{q\ti} X_i(t), && \tilde d_1\ti = \sum_{i=1}^{\min\{\lfloor \tilde C_1 / \gamma \rfloor, \tilde q_1(t)\}} X_i(t), && \tilde d_2\ti = \sum_{i=1}^{\max\{\lceil \tilde C_2 / \gamma \rceil, \tilde q_2(t)\}} X_i(t).
\end{align*} 

Note that, even though we are using the $\lfloor \cdot \rfloor$ and $\lceil \cdot \rceil$ functions to properly define $\tilde d_1(t)$ and $\tilde d_2(t)$, in order to simplify some steps, we are going to substitute $\lfloor x \rfloor$ and $\lceil x \rceil$ by $x$ while providing any upper bound. Using this coupling, since $X_i(t)\in \{0,1\}$, we have 
\begin{align}
\label{eq: ssq_mgfbounf_abandiffbound}
    \tilde d_1\ti -d\ti &\leq \big[\min\{\tilde C_1 / \gamma, \tilde q_1(t)\} - q\ti\big]^+ \leq \big[ \tilde q_1(t) - q\ti\big]^+, \nonumber\\ d\ti - \tilde d_2\ti &\leq \big[q\ti - \max\{\tilde C_2 / \gamma, \tilde q_2(t)\} \big]^+ \leq \big[ q(t) - \tilde  q_2\ti\big]^+.
\end{align}
We now claim that $q \ti \leq \tilde q_1\ti$, for all $t\geq 0$. We prove this claim by using induction. At $t= 0$, we know that $q(0) = \tilde q_1(0) = 0$ by assumption. Suppose $\tilde q_1(t) \geq q(t)$ for some $t>0$. Then, 
\begin{align}
\label{eq: ssq_mgfexist_coupling}
    \tilde q_1(t+1) &= [\tilde q_1(t) + c\ti -\tilde d_1\ti]^+ \nonumber\allowdisplaybreaks\\
    &= [q(t)+ c\ti -d_1\ti + \tilde q_1(t) - q(t) - (\tilde d_1\ti - d\ti)]^+\nonumber\allowdisplaybreaks\\
    & \stackrel{(a)}{\geq} [q(t)+ c\ti -d\ti]^+\nonumber\\
    & = q(t+1),
\end{align}
where (a) follows because $\tilde q_1(t) - q(t) = \big[ \tilde q_1(t) - q\ti\big]^+ \geq \tilde d_1\ti - d\ti  $ as shown previously in Eq. \eqref{eq: ssq_mgfbounf_abandiffbound}. This completes the induction argument, and thus  $q \ti \leq \tilde q_1\ti$ for all $t\geq 0$. Using a similar argument, we obtain $\tilde q_2(t) \leq q \ti$, for all $t\geq 0$.\\

\textbf{Stability of coupled processes:} For the process $\{\tilde q_1(t)\}_{t=0}^\infty$, using that $\tilde q_1(t+1)\tilde u_1(t) =0$, we get
\begin{align}
\label{eq: ssq_tildeq1_drift}
    \EE[ \tilde q_1^2(t+1) - \tilde q_1^2(t) |\tilde q_1(t) ]    &= \EE[(\tilde 
    q_1(t+1) - \tilde u_1(t))^2 |\tilde q_1(t)] - \EE[\tilde u_1^2(t)|\tilde q_1(t)] - \tilde q_1^2(t) \allowdisplaybreaks\nonumber\\
    & = \EE[(\tilde 
    q_1(t) +c(t) - \tilde d_1(t))^2 |\tilde q_1(t)] - \EE[\tilde u_1^2(t)|\tilde q_1(t)] - \tilde q_1^2(t) \allowdisplaybreaks\nonumber\\
    &\leq \EE[(c(t) - \tilde d_1(t))^2 | \tilde q_1(t)] +2\tilde q_1(t)\EE[c(t) - \tilde d_1(t)|\tilde q_1(t)]\allowdisplaybreaks\nonumber\\
    & = \EE[c^2(t)] + \EE[ \tilde d_1^2(t) | \tilde q_1(t)] -2\EE[c(t)]\EE[\tilde d_1(t) | \tilde q_1(t)]+ 2\tilde q_1(t)\EE[c(t) - \tilde d_1(t)|\tilde q_1(t)]\allowdisplaybreaks\nonumber\\
    & \stackrel{(a)}{\leq} A^2 +(1-\gamma)\tilde C_1+ \tilde C_1^2 +2A \tilde C_1 +2 \tilde q_1(t)(\nu_\gamma - \tilde C_1 + [\tilde C_1  -\gamma \tilde q_1(t)]^+)\allowdisplaybreaks\nonumber\\
    &\stackrel{(b)}{\leq} 11A^2-2A\sqrt{\gamma}  \tilde q_1(t) + 2 \tilde q_1(t)[\tilde C_1  -\gamma \tilde q_1(t)]^+ \allowdisplaybreaks\nonumber\\
    &\stackrel{(c)}{\leq} -A\sqrt{\gamma} \tilde q_1(t)
\end{align}
where (a) holds because $|c(t)|\leq A$, and $\tilde d_1(t) \sim Bin(\min\{\lfloor \tilde C_1/\gamma\rfloor, \tilde q_2\},\gamma) \leq Bin( \lfloor\tilde C_1/\gamma\rfloor , \gamma),$ and thus, $\EE[ \tilde d_1(t) | \tilde q_1(t)] = \tilde C_1 - [\tilde C_1 - \gamma \tilde q_1]^+ \leq \tilde C_1$ and $\EE[ \tilde d_1(t) | \tilde q_1(t)] \leq (1-\gamma)\tilde C_1+ \tilde C_1^2$; (b) holds because $\tilde C_1 = \nu_\gamma + A\sqrt{\gamma} \leq 2A$; and (c) holds when $\tilde q_1(t) \geq \max\big\{\frac{ 11A}{\sqrt{\gamma}}, \frac{\tilde C_1}{\gamma}\big\}$. Now, by Foster-Lyapunov Theorem, the Markov chain $\{\tilde q_1(t)\}$ is positive recurrent. Since $\tilde q_2(t)\leq \tilde q_1(t)$, this implies that $\tilde q_2(t)$ is also positive recurrent.\\

\textbf{Existence of the MGF of ${\bf \tilde q_1}$:} 
Recall that $\tilde q_1 (t+1) = \tilde q_1(t) + c(t) - \tilde d_1(t) + \tilde u_1(t)$. Then, 
\begin{align}
\label{eq: ssq_mgfexist_starteq1}
    \EE[ & \exp\big(\phi \sqrt{\gamma}(\tilde q_1(t+1)-\tilde u_1(t))\big) | \tilde q_1(t)]\nonumber\\
    &= \exp\big(\phi \sqrt{\gamma}\tilde q_1(t)\big)  \EE[ \exp\big(\phi \sqrt{\gamma} (c(t) -\tilde d_1(t))\big) | \tilde q_1 ]\nonumber\\
    &\stackrel{(a)}{\leq} \exp\big(\phi \sqrt{\gamma}\tilde q_1(t)\big) \EE \Big[ 1 +  \phi \sqrt{\gamma}(c(t)-\tilde d_1(t) ) + \phi^2 \gamma(c(t)-\tilde d_1(t))^2 \exp\big(\phi \sqrt{\gamma} [c(t)]^+\big) \Big| \tilde q_1(t) \Big ] \nonumber\\
    &\stackrel{(b)}{\leq}  \exp\big(\phi \sqrt{\gamma}\tilde q_1(t)\big)  \Big( 1 +  \phi \sqrt{\gamma}\EE[c(t)-\tilde d_1(t)| \tilde q_1(t)]\Big)   +  \phi^2 e^{\phi \sqrt{\gamma}A} \gamma\EE \Big[(c(t)-\tilde d_1(t) )^2  \Big| \tilde q_1(t) \Big ]
\end{align}
where (a) follows from using $e^x \leq 1+x+ x^2e^{[x]^+}$ and $[c(t)-\tilde d_1(t)]^+\leq [c(t)]^+$; (b) follows from $c(t) \leq A$. Since $e^{x}\geq 1+x$, we have
\begin{align}
\label{eq: ssq_mgfexist_term1}
	\EE[ \exp\big(\phi \sqrt{\gamma}(\tilde q_1(t+1)-\tilde u_1(t))\big) | \tilde q_1(t)]
	&\geq \EE[ (1-\phi\sqrt{\gamma}\tilde u_1(t)) \exp\big(\phi \sqrt{\gamma}\tilde q_1(t+1)\big) | \tilde q_1(t)]\nonumber\\
	& \stackrel{(a)}{=} \EE[  \exp\big(\phi \sqrt{\gamma}\tilde q_1(t+1)\big) | \tilde q_1(t)]  -\phi \sqrt{\gamma}\EE[\tilde u_1(t) | \tilde q_1(t)],
\end{align}
where (a) follows by using  $\tilde q_1(t+1) \tilde u_1(t) =0$ and $\tilde u_1(t) e^{\phi \sqrt{\gamma}\tilde q_1(t+1)} = \tilde u_1(t) $.
Moreover, by similar calculations as in Eq. \eqref{eq: ssq_tildeq1_drift}, we obtain 
\[ \EE \big[(c(t)-\tilde d_1(t) )^2  \big| \tilde q_1(t) \big ] \leq 11A^2 \]
and
\begin{align}
\label{eq: ssq_mgfexist_term3}
    \EE\big[c(t)-\tilde d_1(t) | \tilde q_1(t) \big] = \nu_\gamma - \tilde C_1 + [\tilde C_1-\gamma \tilde q_1(t)]^+= -\sqrt{\gamma} A + [\tilde C_1-\gamma \tilde q_1(t)]^+
\end{align}
Thus, combining Eq. \eqref{eq: ssq_mgfexist_starteq1},\eqref{eq: ssq_mgfexist_term1}, and \eqref{eq: ssq_mgfexist_term3}, we get
\begin{align}
\label{eq: ssq_tildeq1_expdrift}
    \EE[  & \exp\big(\phi \sqrt{\gamma}\tilde q_1(t+1)\big) | \tilde q_1(t)] -\exp\big(\phi \sqrt{\gamma}\tilde q_1(t)\big)\nonumber \allowdisplaybreaks\\
    & \leq \Big[-\phi \gamma A + \phi \sqrt{\gamma}[\tilde C_1-\gamma \tilde q_1(t) ]^++ 11A^2\phi^2 \gamma e^A \Big]\exp\big(\phi \sqrt{\gamma}\tilde q_1(t)\big) + \phi \sqrt{\gamma}\EE[\tilde u_1(t) | \tilde q_1(t)]\nonumber\allowdisplaybreaks\\
    & \stackrel{(a)}{\leq} \big[-\phi \gamma A +11A^2 \phi^2 \gamma e^A \big]\exp\big(\phi \sqrt{\gamma}\tilde q_1(t)\big) + \phi \sqrt{\gamma}\EE[\tilde u_1(t) | \tilde q_1(t)]\nonumber\allowdisplaybreaks\\
    & \stackrel{(b)}{\leq}  - \frac{1}{2} \phi \gamma A\exp\big(\phi \sqrt{\gamma}\tilde q_1(t)\big) + \phi \sqrt{\gamma}\EE[\tilde u_1(t) | \tilde q_1(t)]\nonumber\allowdisplaybreaks\\
    & \stackrel{(c)}{\leq}  - \frac{1}{2} \phi \gamma A\exp\big(\phi \sqrt{\gamma}\tilde q_1(t)\big),
\end{align}
where (a) holds whenever $\tilde q_1(t) \geq \frac{\tilde C_1}{\gamma}$; (b) holds when $\phi < \frac{1}{22Ae^A}$ and (c) holds by further assuming that $\tilde q_1(t) \geq \frac{\tilde C_1}{\gamma} + A$, in which case $\tilde q_1(t) + c(t) - \tilde d_1(t) \geq 0$ as $\tilde d_1(t) \leq \frac{\tilde C_1}{\gamma}$ and $c(t)\leq A$, and so $\tilde u_1(t) =0$. Thus, for $\tilde q_1(t) \geq \frac{\tilde C_1}{\gamma} + A$ and $\phi < \frac{1}{22Ae^A}$, the drift of the exponential Lyapunov function is negative and so by Foster-Lyapunov theorem, we get that $\EE[\exp\big(\phi \sqrt{\gamma}\tilde q_1(t)\big)]<\infty$ in steady-state.\\ 

\textbf{Bound on MGF of ${\bf \tilde q_1}$:} Suppose $(\tilde q_i,\tilde d_i,\tilde u_i)$ is distributed as the steady state of the Markov chain $\{(\tilde q_i(t),\tilde d_i(t),\tilde u_i(t))\}_{t= 0}^\infty$, and let \[\tilde q_i^+ = \tilde q_i + c - \tilde d_i + \tilde u_i.\] Then, $\tilde q_i^+$ is also distributed as the steady state  of the Markov chain $\{\tilde q_i(t)\}_{t= 0}^\infty$. We first look at the process $\{\tilde q_1(t)\}_{t= 0}^\infty$, and provide a bound on its MGF.
Since $\tilde q_1$ and $\tilde q_1^+$ have the same distribution, we have $\EE[\tilde q_1^+ - \tilde q_1] =0$ and $\EE[c - \tilde d_1 + \tilde u_1] =0$, and thus
\begin{align}
\label{eq: ssq_usmall}
    \EE[\tilde u_1] + \EE \big[\tilde C_1 -\gamma \tilde q_1\big]^+ &= \EE[\tilde u_1] + \tilde C_1 - \EE[\min\{\tilde C_1, \gamma \tilde q_1\}] \nonumber\\
    &= \EE[\tilde u_1] + \tilde C_1 - \EE[\tilde d_1 ] \nonumber\\
    &= \tilde C_1 - \EE[c] \nonumber\\
    &= A\sqrt{\gamma},
\end{align}
where last equality holds since $\tilde C_1 = \nu_\gamma + A \sqrt{\gamma}$. By similar calculations as in Eq. \eqref{eq: ssq_tildeq1_expdrift} for the steady-state variables, we have
\begin{align*}
    \EE[  \exp\big(\phi \sqrt{\gamma}\tilde q_1^+\big) | \tilde q_1] & - \exp\big(\phi \sqrt{\gamma}\tilde q_1\big)\nonumber\\
    & \leq \Big[-\phi \gamma A + \phi \sqrt{\gamma}[\tilde C_1-\gamma \tilde q_1 ]^++ 11A^2\phi^2 \gamma e^A \Big]\exp\big(\phi \sqrt{\gamma}\tilde q_1\big) + \phi \sqrt{\gamma}\EE[\tilde u_1 | \tilde q_1].\nonumber
\end{align*}
By taking expectation on both sides, and equating the drift to zero in steady state, for $0<\phi < \frac{1}{22Ae^A} <1$, we have 
\begin{align*}
    0 &= \EE[  \exp\big(\phi \sqrt{\gamma}\tilde q_1^+\big) ] -\EE[\exp\big(\phi \sqrt{\gamma}\tilde q_1\big)]\nonumber\\
    &\leq -\frac{1}{2}\phi \gamma A \MM{\tilde q_1} + \phi \sqrt{\gamma} \EE\big[[\tilde C_1-\gamma \tilde q_1 ]^+\exp\big(\phi \sqrt{\gamma}\tilde q_1\big)  \big] + \phi \sqrt{\gamma} \EE[\tilde u_1]\\
    &\stackrel{(a)}{\leq}-\frac{1}{2}\phi \gamma A \MM{\tilde q_1} + \phi \sqrt{\gamma} \exp\left(\frac{\phi \tilde C_1}{\sqrt{\gamma}}\right) \EE[\tilde C_1-\gamma \tilde q_1 ]^+ + \phi \sqrt{\gamma}\EE[\tilde u_1]\\
    &\leq -\frac{1}{2}\phi \gamma A \MM{\tilde q_1} + \phi \sqrt{\gamma} \exp\left(\frac{\phi \tilde C_1}{\sqrt{\gamma}}\right) \EE\big[[\tilde C_1-\gamma \tilde q_1 ]^++ \tilde u_1\big]\\
    &\stackrel{(b)}{\leq} -\frac{1}{2}\phi \gamma A \MM{\tilde q_1} + \phi \gamma A \exp\left(\frac{\phi \tilde C_1}{\sqrt{\gamma}}\right),
\end{align*}
where (a) follows by using $[\tilde C_1-\gamma \tilde q_1 ]^+\exp\big(\phi \sqrt{\gamma}\tilde q_1\big) \leq \exp\big(\frac{\phi \tilde C_1}{\sqrt{\gamma}}\big) [\tilde C_1-\gamma \tilde q_1 ]^+$; and (b) follows from Eq. \eqref{eq: ssq_usmall}. Finally, since $\tilde C_1 = \nu_\gamma + A\sqrt{\gamma}$, for any $0<\phi < \frac{1}{22Ae^A}$, we obtain 
\begin{align*}
    \MM{\tilde q_1} \leq 2e^A \exp\left(\frac{\phi \nu_\gamma}{\sqrt{\gamma}}\right).
\end{align*}
 By coupling, we know that $\MM{q} \leq \MM{\tilde q_1}$ for any $\phi >0$. This provides a bound on the MGF $\MM{q} $ for any $0<\phi<\frac{1}{22Ae^A}$ and $\gamma\in (0,1)$. Recall that when, $\nu_\gamma <0$, we can create a coupled queue length process with extra arrivals that dominates the original one, and for the bigger process (with $\EE[c(t)] = \nu_\gamma^+$), we can use the above argument. Therefore, for any $0<\phi < \frac{1}{22Ae^A}$, we have
\begin{align}
\label{eq: ssq_tildeq1_mgfbound}
    \MM{q} \leq 2e^A \exp\left(\frac{\phi \nu_\gamma^+}{\sqrt{\gamma}}\right).
\end{align}

 \textbf{Bound on the MGF of ${\bf \tilde q_2}$:} For Lemma \ref{lem: ssq_mgfexist_b}, we provide a bound on the MGF of $\tilde q_2$, i.e. $\EE[e^{\sqrt{\gamma}\phi\tilde q_2}]$, for $\phi<0$. 
 For simplicity, we replace $\phi$ by $-\phi$. Note that $e^{x}\leq 1$ for any $x<0$. So the MGF exists for $\phi >0$. Using the notation $\tilde q_2^+= \tilde q_2 + c - \tilde d_2 + \tilde u_2$ for the steady state variables, for $\phi >0$, we have 
\begin{align}
    \label{eq: ssq_mgfexist_starteq2}
    \EE[e^{-\phi \sqrt{\gamma}\tilde q_2^+} | \tilde q_2] &=  e^{-\phi \sqrt{\gamma}\tilde q_2} \EE[ e^{-\phi \sqrt{\gamma}(c-\tilde d_2+\tilde u_2)} | \tilde q_2 ] \allowdisplaybreaks \nonumber\\
    &\stackrel{(a)}{\leq} e^{-\phi \sqrt{\gamma}\tilde q_2} \EE[ e^{\phi \sqrt{\gamma}(\tilde d_2-c)} | \tilde q_2 ] \allowdisplaybreaks \nonumber\\
    &\stackrel{(b)}{\leq} e^{-\phi \sqrt{\gamma}\tilde q_2}\Big(1+ \phi \sqrt{\gamma} \EE[\tilde d_2-c | \tilde q_2]  + \phi^2 \gamma \EE\Big[(\tilde d_2-c)^2e^{\phi \sqrt{\gamma}[\tilde d_2-c]^+} \Big| \tilde q_2\Big] \Big) \allowdisplaybreaks \nonumber\\
    & \stackrel{(c)}{\leq} e^{-\phi \sqrt{\gamma}\tilde q_2}\Big(1+ \phi \sqrt{\gamma} (\max\{\tilde C_2, \gamma \tilde q_2\} -\nu_\gamma)  + \phi^2 \gamma e^{A} \EE\Big[(A+\tilde d_2)^2e^{\phi \sqrt{\gamma}\tilde d_2} \Big| \tilde q_2\Big] \Big),
\end{align}
where (a) follows from $\tilde u_2 \geq 0$; (b) follows from $e^{x} \leq 1+x+x^2e^{[x]^+}$; and (c) follows from $\EE[\tilde d_2 | \tilde q_2] = \max\{\tilde C_2, \gamma \tilde q_2\}$, $\EE[c|\tilde q_2] = \EE[c] = \nu_\gamma$, $(\tilde d_2-c)^2\leq  (A+\tilde d_2)^2$ and $[\tilde d_2-c]^+ \leq A + \tilde d_2$. Denoting $\phi_2 = \frac{1}{22A e^A}$,  from Eq. \eqref{eq: ssq_tildeq1_mgfbound}, we know that
\begin{align*}
    1+\sqrt{\gamma} \phi_2 \EE \left[ q - \frac{\nu_\gamma}{\gamma} \right]^+ \leq \EE\left[ e^{\sqrt{\gamma}\phi_2  \big[q- \frac{\nu_\gamma}{\gamma} \big]^+} \right] \leq 1 + \EE\left[ e^{\phi_2 \sqrt{\gamma} ( q - \nu_\gamma/\gamma)} \right] \leq 1 +2e^A.
\end{align*}
This implies that 
\begin{align*}
    \EE[\gamma \tilde q_2 - \tilde C_2]^+ \stackrel{(a)}{\leq} \EE[\gamma \tilde q_2 - \nu_\gamma]^+ + A\sqrt{\gamma} \stackrel{(b)}{\leq} \EE[\gamma q - \nu_\gamma]^+ + A\sqrt{\gamma} \leq A\sqrt{\gamma} + \frac{2\sqrt{\gamma}e^A}{\phi_2} \leq  \sqrt{\gamma} K_1,
\end{align*}
where (a) follows using $\tilde C_2 = \nu_\gamma - A\sqrt{\gamma}$; and (b) follows by using $\tilde q_2 \leq q$, given by the coupling argument, and finally, we pick $K_1 = A+2\phi_2^{-1}e^A$. For the first term, since $\tilde C_2 = \nu_\gamma - A\sqrt{\gamma}$,
\begin{align}
\label{eq: ssq_mgfexists2_term1}
    \EE[(\max\{\tilde C_2, \gamma \tilde q_2\} -\nu_\gamma) e^{-\phi \sqrt{\gamma}\tilde q_2}] &
    = \EE[(-A\sqrt{\gamma} +(\gamma \tilde q_2 - \tilde C_2)\mathds{1}_{\{\gamma \tilde q_2 \geq \tilde C_2 \}}  ) e^{-\phi \sqrt{\gamma}\tilde q_2}] \nonumber \allowdisplaybreaks\\
    & \leq -A\sqrt{\gamma} M_{\tilde q_2}^\gamma(-\phi) + e^{-\phi\frac{\tilde C_2}{\sqrt{\gamma}}}\EE[\gamma \tilde q_2 - \tilde C_2]^+ \nonumber \allowdisplaybreaks\\
    & \leq -A\sqrt{\gamma} M_{\tilde q_2}^\gamma(-\phi) + \sqrt{\gamma} K_1 e^{-\phi\frac{\tilde C_2}{\sqrt{\gamma}}}.
\end{align}
For the second term, for any Binomial random variable $d \sim Bin(x,\gamma)$ and $0<\theta<1$, using $x^2 \leq 2e^x$ for $x\geq 0$, we obtain,
\begin{align*}
    \EE\big[(A+d)^2 e^{\theta d}\big] &\leq 2e^{A}\EE\big[ e^{(1+\theta)d}\big] = 2e^{A}\big(1+\gamma(e^{\theta+1} -1)\big)^{x} \leq 2e^{A} e^{8\gamma x},
\end{align*}
where the last inequality follows by using $e^{1+\theta} \leq 9$ for $\theta< 1$. 
Thus, since $\tilde C_2 \leq \nu_\gamma \leq A$, we get
\begin{align*}
    \EE\left[\left. (A+\tilde d_2)^2 e^{\phi\sqrt{\gamma} \tilde d_2} \,\right|\, \tilde q_2 \right] &\leq 2e^A \exp\big(8 \max\{\tilde C_2 , \gamma \tilde q_2\}\big) 
    \leq 2e^{9A} + 2e^A e^{8\gamma \tilde q_2} \mathds{1}_{\{\gamma \tilde  q_2 \geq \tilde C_2  \}}.
\end{align*}
This gives us 
\begin{align}
\label{eq: ssq_mgfexists2_term2}
    \EE\Big[ (A+\tilde d_2)^2 e^{-\phi \sqrt{\gamma}\tilde q_2} e^{\phi\sqrt{\gamma} \tilde d_2} \Big] &
    \leq 2e^{9A} \EE\Big[  e^{-\phi \sqrt{\gamma}\tilde q_2} \Big] + 2e^{A}\EE\Big[  e^{8\gamma \tilde q_2} e^{-\phi \sqrt{\gamma}\tilde q_2} \mathds{1}_{\{\gamma  \tilde q_2 \geq \tilde C_2  \}} \Big] \nonumber \allowdisplaybreaks\\
    &  \leq 2e^{9A}M_{\tilde q_2}^\gamma(-\phi) + 2e^{A} e^{-\phi\frac{\tilde C_2}{\sqrt{\gamma}}} \EE\Big[  e^{8\gamma \tilde q_2}  \Big]\nonumber \allowdisplaybreaks\\
    & \stackrel{(a)}{\leq}  2e^{9A}M_{\tilde q_2}^\gamma(-\phi) + 2e^{A} e^{-\phi\frac{\tilde C_2}{\sqrt{\gamma}}} \EE\Big[  e^{8\gamma q}  \Big]\nonumber \allowdisplaybreaks\\
    & \stackrel{(b)}{\leq} 2e^{9A}M_{\tilde q_2}^\gamma(-\phi) + 4e^{2A} e^{-\phi\frac{\nu_\gamma}{\sqrt{\gamma}}} e^{\nu_\gamma} \nonumber\\
    & \stackrel{(c)}{\leq} 2e^{9A}M_{\tilde q_2}^\gamma(-\phi) + 4e^{3A} e^{-\phi\frac{\nu_\gamma}{\sqrt{\gamma}}}
\end{align}
where (a) follows by using $\tilde q_2 \leq \tilde q_1$, given by the coupling  argument stated previously; (b) follows by using the bound on the MGF of $q$ in Eq. \eqref{eq: ssq_tildeq1_mgfbound} assuming $8\sqrt{ \gamma }\leq \phi_2$; and (c) follows by using $\nu_\gamma < A$, and $-\tilde C_2\leq -\nu_\gamma + A\sqrt{\gamma}$ and $\phi<1$.
Finally, using that the zero drift condition, i.e., $\EE[e^{-\phi \sqrt{\gamma}\tilde q_2}] = \EE[e^{-\phi \sqrt{\gamma}\tilde q_2^+}]$, and combining Eq. \eqref{eq: ssq_mgfexist_starteq2}, \eqref{eq: ssq_mgfexists2_term1} and \eqref{eq: ssq_mgfexists2_term2}, we get
\begin{align*}
    0 
    &= M_{\tilde q_2^+}^\gamma(-\phi) -M_{\tilde q_2}^\gamma(-\phi) \\
    &\leq M_{\tilde q_2}^\gamma(-\phi)\big(-A\gamma\phi +2e^{9A}\phi^2\gamma\big) + \big(\phi\gamma K_1 + 4e^{3A} \phi^2 \gamma \big)e^{-\phi\frac{\nu_\gamma}{\sqrt{\gamma}}}\\
    &\stackrel{(a)}{\leq}-\frac{1}{2} A\gamma\phi M_{\tilde q_2}^\gamma(-\phi) + \phi\gamma\big( K_1 + 4e^{3A} \big)e^{-\phi\frac{\nu_\gamma}{\sqrt{\gamma}}},
\end{align*}
where (a) follows by choosing $\phi$ such that
$0<\phi < \frac{A}{4e^{9A}}< 1$. 
Thus, for $\phi \leq \frac{A}{4e^{9A}}$ and for $8\sqrt{\gamma}< 8\sqrt{\gamma_0} = \frac{A}{4e^{9A}} $, we have that 
\begin{align*}
    M_{ q}^\gamma(-\phi) &\leq M_{\tilde q_2}^\gamma(-\phi) \leq  K_2 e^{-\phi\frac{\nu_\gamma}{\sqrt{\gamma}}},
\end{align*}
where $K_2 = 2(K_1 + 4e^{3A})e^{A}/A$ is a constant independent of $\gamma$. \\

\textbf{Upper bound for Lemma \ref{lem: ssq_mgfexist_c}:} For simplicity, we assume that $\nu_\gamma = -C_f\gamma^\alpha$. The argument works even if $\nu_\gamma $ deviates from $-C_f\gamma^\alpha$ according to the condition $ |\nu_\gamma +C_f\gamma^{ \alpha }| \leq \gamma^{\alpha + \beta}$, where $ \alpha\in \lfsmall{0,1/2}$, $\beta >0$, and $C_f >0$ are constants.

The argument for the upper bound on the MGF of the steady state queue length in this case is very similar to the argument for the upper bound for the MGF of $\tilde q_1$ given previously. Here, we create a coupled process $\{\tilde q_3(t)\}_{t=0}^\infty$ for which the sequence $\{c(t)\}_{t=0}^\infty$ is the same as for the original process, but there are no abandonment. Therefore,
\begin{equation*}
    \tilde q_3(t+1) = [\tilde q_3(t) + c(t) ]^+ = \tilde q_3(t) + c(t) + \tilde u_3(t). 
\end{equation*}
Using the similar argument as in Eq. \eqref{eq: ssq_mgfexist_coupling}, we get that $\tilde q_3(t) \geq q(t)$ for all $t\geq 0$. Then, using the similar argument as in Eq. \eqref{eq: ssq_mgfexist_starteq1}, since $|c(t)|\leq A$ for any $\phi>0$, we obtain
\begin{align}
\label{eq: ssq_mgfexist_starteq3}
    \EE[ \exp\big(\phi \gamma^\alpha(\tilde q_3(t+1)-\tilde u_3(t))\big) | \tilde q_3(t)]   &= \exp\big(\phi \gamma^\alpha\tilde q_3(t)\big)  \EE[ \exp\big(\phi \gamma^\alpha c(t)\big)  ]\nonumber\allowdisplaybreaks\\
    &\leq \exp\big(\phi \gamma^\alpha\tilde q_3(t)\big) \EE \Big[ 1 +  \phi \gamma^\alpha c(t) + \phi^2 \gamma^{2\alpha} c(t)^2 \exp\big(\phi \gamma^\alpha [c(t)]^+\big) \Big ] \nonumber\allowdisplaybreaks\\
    &\leq  \exp\big(\phi \gamma^\alpha\tilde q_3(t)\big)  \Big( 1 -  C_f\phi \gamma^{2\alpha}+\phi^2\gamma^{2\alpha} A^2 e^{A} \Big)\nonumber\allowdisplaybreaks\\
    &\stackrel{(a)}{\leq} \exp\big(\phi \gamma^\alpha\tilde q_3(t)\big)  \left( 1 - \frac{1}{2} C_f \phi \gamma^{2\alpha} \right),
\end{align}
where $(a)$ holds for $0<\phi < \frac{C_f}{2A^2e^{A}}$. Next, by the similar argument as in Eq. \eqref{eq: ssq_mgfexist_term1}, we get
\begin{align*}
	\EE[ & \exp\big(\phi \gamma^\alpha(\tilde q_3(t+1)-\tilde u_3(t))\big) | \tilde q_3(t)] \geq \EE[  \exp\big(\phi \gamma^\alpha\tilde q_3(t+1)\big) | \tilde q_3(t)]  -\phi \gamma^\alpha\EE[\tilde u_3(t) | \tilde q_3(t)].
\end{align*}
Thus, 
\begin{align*}
    \EE[ \exp\big(\phi \gamma^\alpha\tilde q_3(t+1)\big) | \tilde q_3(t)]  - \exp\big(\phi \gamma^\alpha\tilde q_3(t)\big)    & \leq - \frac{1}{2}C_f \phi \gamma^{2\alpha}\exp\big(\phi \gamma^\alpha\tilde q_3(t)\big) + \phi \gamma^\alpha\EE[\tilde u_3(t) | \tilde q_3(t)]\\
    &\stackrel{(a)}{\leq} - \frac{1}{2} C_f\phi \gamma^{2\alpha}\exp\big(\phi \gamma^\alpha\tilde q_3(t)\big),
\end{align*}
where $(a)$ holds whenever $\tilde q_3(t) >A$, in which case $\tilde q_3(t) + c(t) >0$ as $|c(t)|<A$ and so $\tilde u_3(t) =0$. Then, the Foster-Lyapunov Theorem implies
\[ \EE\big[\exp\big(\phi \gamma^\alpha\tilde q_3(t)\big)\big]<\infty \]
in steady state. Further, using $(\tilde q_3^+,\tilde q_3,\tilde u_3)$ to denote the steady state variables with $\tilde q_3^+ = \tilde q_3 + c+ \tilde u_3$, and since $\EE[\tilde q_3^+- \tilde q_3] =0$, it follows that $\EE[\tilde u_3] = -\EE[c] = C_f\gamma^\alpha$. Finally, using that
\[ \EE\big[ \exp\big(\phi \gamma^\alpha\tilde q_3^+\big) \big] = \EE\big[ \exp\big(\phi \gamma^\alpha\tilde q_3\big) \big], \]
we get that for $0<\phi < \frac{C_f}{2A^2e^{A}}$,
\begin{align*}
    \EE\big[  \exp\big(\phi \gamma^\alpha q\big) \big]\leq \EE\big[ \exp\big(\phi \gamma^\alpha\tilde q_3\big) \big] \leq \frac{2}{C_f \gamma^\alpha}\EE[\tilde u_3] = 2.
\end{align*}
The result in Lemma \ref{lem: ssq_mgfexist} follows by choosing $M_0 = \max\{2e^A, K_2,2\}$ and 
\[ \phi_0 = \min\left\{\frac{1}{22Ae^A},\frac{A}{4e^{9A}},\frac{C_f}{2A^2e^{A}}\right\}.\]

\subsection{Proof of Theorem \ref{thm: ssq_fast}}
\label{app: ssq_fast}

Recall that $(q^+, q, d,u)$ is distributed as the steady state of $(q(t+1), q(t), d(t),u(t))$, where $q^+ = q+ c-d+u$, and $c$ has the same distribution as $c(t)$. Thus, $q^+$ and $q$ have the same distribution. Also, we have $|c|\leq A$ and $0\leq u \leq A$.\\

\begin{lemma}
\label{lem: ssq_approximation_fast}
Under the same assumptions as in Theorem \ref{thm: ssq_fast}, we have the following upper bounds.
\begin{enumerate}[label=(\alph*), ref=\ref{lem: ssq_approximation_fast}.\alph*]
    \item \label{lem: ssq_approximation_fast_a} For any $\gamma \in (0,1)$ and for any $\phi \in \mathbb R$,
    \begin{align*}
        \Big| \EE\big[{e^{-\gamma^\alpha \phi  u }}\big]-\phi\gamma^\alpha \nu_\gamma -1\Big| \leq K^u_f \phi^2 e^{|\phi|A}  \gamma^{\min\{3\alpha,1\}}.
    \end{align*}
    \item \label{lem: ssq_approximation_fast_b} Suppose $c$ is a random variable distributed as $c(t) = a(t) - s(t)$, with $|c|\leq A$, $\EE[c]= \nu_\gamma $, and $\EE[c^2] = \sigma^2_\gamma +\nu_\gamma^2$. Then, for any $\phi \in \mathbb R$,
    \begin{equation*}
        \left| \EE\big[{e^{\gamma^\alpha \phi  c }}\big]   - \frac{\gamma^{2\alpha}\phi^2 (\sigma^2_\gamma+\nu_\gamma^2)}{2} - \gamma^\alpha \phi \nu_\gamma -  1 \right| \leq K_{f}^c |\phi|^3e^{|\phi|A} \gamma^{3\alpha}.
    \end{equation*}
    \item \label{lem: ssq_approximation_fast_c} Let $\phi_0$ be a constant as given in Lemma \ref{lem: ssq_mgfexist_a}. Then, for any $\phi < \phi_0/2$,
    \begin{equation*}
        \Big| \EE\big[{e^{\gamma^\alpha \phi  (q-d) }}\big] - \EE\big[{e^{\gamma^\alpha \phi  q }}\big] \Big| \leq K_f^d |\phi| \gamma,
    \end{equation*}
\end{enumerate}
where $K_f^u,K_f^c$ and $K_f^d$ are constants independent of $\gamma$ and $\phi$.
\end{lemma}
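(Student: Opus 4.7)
The three parts share a common strategy: Taylor-expand the relevant exponential and control the resulting error using the uniform MGF bound of Lemma \ref{lem: ssq_mgfexist_c} together with steady-state identities. Part (b) is the most straightforward: since $|c|\le A$, a third-order Taylor expansion with Lagrange remainder gives
\begin{equation*}
    \big|e^{\gamma^\alpha\phi c}-1-\gamma^\alpha\phi c-\tfrac{1}{2}\gamma^{2\alpha}\phi^2 c^2\big|\le \tfrac{1}{6}|\gamma^\alpha\phi A|^3\,e^{|\phi|\gamma^\alpha A},
\end{equation*}
and taking expectations with $\EE[c]=\nu_\gamma$, $\EE[c^2]=\sigma_\gamma^2+\nu_\gamma^2$ yields the claim.

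For part (a), my plan is to combine a second-order Taylor expansion with a steady-state identity for $\EE[u]$. Since $q^+$ and $q$ have the same distribution, $\EE[q^+-q]=0$ together with the recursion $q^+=q+c-d+u$ gives $\EE[u]=\gamma\EE[q]-\nu_\gamma$. Plugging this into the Taylor expansion,
\begin{equation*}
    \EE\big[e^{-\gamma^\alpha\phi u}\big]-1-\gamma^\alpha\phi\nu_\gamma = -\gamma^{1+\alpha}\phi\,\EE[q] + R, \qquad |R|\le \tfrac{1}{2}\gamma^{2\alpha}\phi^2\,\EE[u^2]\,e^{|\phi|A}.
\end{equation*}
The first piece is controlled because Lemma \ref{lem: ssq_mgfexist_c} bounds $\gamma^\alpha\EE[q]$ uniformly in $\gamma$. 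For the remainder I would exploit that $u$ takes values in $[0,A]$, so $u^2\le Au$, giving $\EE[u^2]\le A\,\EE[u]=A|\gamma\EE[q]-\nu_\gamma|=O(\gamma^\alpha)$ under the Classic-HT scaling (since $1-\alpha>\alpha$). This yields a remainder of order $\phi^2\gamma^{3\alpha}e^{|\phi|A}$, and combining the two contributions produces the $\gamma^{\min\{3\alpha,1\}}$ scaling claimed in the lemma.

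Part (c) is the most delicate. I would start from the decomposition
\begin{equation*}
    \EE\big[e^{\gamma^\alpha\phi(q-d)}\big]-\EE\big[e^{\gamma^\alpha\phi q}\big]=\EE\big[e^{\gamma^\alpha\phi q}\big(e^{-\gamma^\alpha\phi d}-1\big)\big],
\end{equation*}
use the mean-value-type bound $|e^{-x}-1|\le|x|e^{|x|}$ with $x=\gamma^\alpha\phi d$, and condition on $q$ to exploit $d\mid q\sim\mathrm{Bin}(q,\gamma)$. A direct calculation with the Binomial MGF yields $\EE\big[d\,e^{\gamma^\alpha|\phi|d}\,\big|\,q\big]\le C\gamma q\,e^{C\gamma^{1+\alpha}|\phi|q}$ for $\gamma^\alpha|\phi|$ small. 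The problem then reduces to bounding $\gamma^{1+\alpha}|\phi|\,\EE\big[q\,e^{2\gamma^\alpha|\phi|q}\big]$, for which I would use the elementary inequality $y\le\epsilon^{-1}e^{\epsilon y}$ to absorb the factor $q$ into the exponent at the cost of enlarging the argument slightly. The assumption $\phi<\phi_0/2$ is precisely what keeps the enlarged argument inside the region where Lemma \ref{lem: ssq_mgfexist_c} provides a uniform bound, producing the required $O(|\phi|\gamma)$ estimate. For negative $\phi$, the even simpler bound $e^{-\gamma^\alpha\phi(q-d)}-e^{-\gamma^\alpha\phi q}\le \gamma^\alpha|\phi|d$ (from $1-e^{-x}\le x$ for $x\ge 0$) together with $\EE[d\mid q]=\gamma q$ suffices.

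The main obstacle will be part (c): one must chain together the mean-value bound on $e^{-\gamma^\alpha\phi d}-1$, the Binomial MGF identity, and the uniform moment bound, tracking exponents so the enlarged argument still lies in $(-\phi_0,\phi_0)$, which is exactly what forces the restriction $\phi<\phi_0/2$. Part (a) has a secondary subtlety that is easy to miss: without exploiting $u^2\le Au$, the second-order remainder would only scale as $\gamma^{2\alpha}$, and the claimed $\gamma^{\min\{3\alpha,1\}}$ bound would fail for $\alpha<1/3$.
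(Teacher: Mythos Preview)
Your approach is essentially the paper's for parts (a) and (b): the same steady-state identity $\EE[u]=\gamma\EE[q]-\nu_\gamma$, the same Taylor expansions, and crucially the same use of $u^2\le Au$ to upgrade the remainder from order $\gamma^{2\alpha}$ to order $\gamma^{3\alpha}$. For part (c), however, the paper avoids the Binomial MGF computation entirely by using the sharper inequality $|e^{-x}-1|\le |x|\,e^{[x]^+}$ (positive part rather than absolute value) together with the pointwise observation that $\phi q+[-\phi d]^+\le [\phi]^+q$ whenever $0\le d\le q$. This collapses $e^{\gamma^\alpha\phi q}e^{\gamma^\alpha[-\phi d]^+}$ directly to $e^{\gamma^\alpha[\phi]^+q}$; after that one conditions on $q$, uses $\EE[d\mid q]=\gamma q$, and bounds $\EE[\gamma^\alpha q\,e^{\gamma^\alpha[\phi]^+q}]$ via the elementary estimate $y\,e^{\phi_0 y/2}\le \tfrac{2}{\phi_0}e^{\phi_0 y}$. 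Your route is correct in spirit, but by carrying the factor $e^{\gamma^\alpha|\phi|d}$ you end up with an exponent of the form $2\gamma^\alpha|\phi|q$, and extracting a constant $K_f^d$ that is genuinely independent of $\phi$ then requires either a tighter restriction (effectively $\phi<\phi_0/4$) or a $\phi$-dependent choice of your auxiliary parameter $\epsilon$. The paper's $[\,\cdot\,]^+$ trick sidesteps that bookkeeping and handles both signs of $\phi$ in a single line; your separate treatment of $\phi<0$ also has a sign slip as written (for $\phi<0$ the quantity $e^{-\gamma^\alpha\phi(q-d)}-e^{-\gamma^\alpha\phi q}$ is non-positive, not bounded above by $\gamma^\alpha|\phi|d$), though the intended argument---bounding $e^{\gamma^\alpha\phi q}(e^{-\gamma^\alpha\phi d}-1)$ by $\gamma^\alpha|\phi|d$ using $e^{\gamma^\alpha\phi(q-d)}\le 1$---is easily repaired.
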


\begin{proof}[Proof: ]
By equating the drift of the Lyapunov function $f(q) = q$ to zero in steady state, we have
\begin{align*}
    0 = \EE[q^+ - q] = \EE[c-d+u] = \nu_\gamma -\gamma \EE[q] + \EE[u].
\end{align*}
Moreover, Lemma \ref{lem: ssq_mgfexist_c} implies $\gamma^\alpha\EE[q] \leq \phi_0^{-1}M_0$ and thus,
\begin{align*}
    |\EE[u] +\nu_\gamma| = \gamma \EE[q]  \leq \gamma^{1-\alpha}\phi_0^{-1}M_0,
\end{align*}
and the facts that $\nu_\gamma \leq (C_f+1)\gamma^\alpha$ and $\alpha \in \Big( 0,\frac{1}{2}\Big)$ imply
\begin{align*}
    \EE[u] = -\nu_\gamma + \gamma \EE[q] \leq (1+C_f) \gamma^\alpha + \gamma^{1-\alpha}\phi_0^{-1}M_0 \leq (1+C_f + \phi_0^{-1}M_0)\gamma^\alpha.
\end{align*}
Furthermore, since $u\leq A$, we have ${e^{-\gamma^\alpha \phi  u }} \leq {e^{|\phi|  A }}$. Thus, $\EE\big[{e^{-\gamma^\alpha \phi  u }}\big]$ is finite. Then, 
\begin{align}
\label{eq: ssq_approximation_fast_uterm}
    \Big| \EE\big[{e^{-\gamma^\alpha \phi  u }}\big]-\phi\gamma^\alpha \nu_\gamma -1\Big| &\leq \Big| \EE\big[{e^{-\gamma^\alpha \phi  u }}+\phi\gamma^\alpha u -1\big]\Big| + |\phi | \gamma^\alpha  |\EE[u] +\nu_\gamma| \nonumber\\
    &\stackrel{(a)}{\leq} \phi^2 \gamma^{2\alpha} \EE\big[ u^2 e^{\gamma^\alpha |\phi|  u } \big]+|\phi | \gamma\phi_0^{-1}M_0\nonumber\\
    &\stackrel{(b)}{\leq}\phi^2 \gamma^{2\alpha} A e^{|\phi|A} \EE[u]+|\phi | \gamma\phi_0^{-1}M_0\nonumber\\
    &\leq A(\phi_0^{-1}M_0 + C_f+1) \phi^2 \gamma^{3\alpha} e^{|\phi|A} +|\phi | \gamma\phi_0^{-1}M_0\nonumber\\
    &\leq A(2\phi_0^{-1}M_0 + C_f+1) \phi^2 \gamma^{\min\{3\alpha,1\}} e^{|\phi|A}
\end{align}
where (a) follows by using $|e^{x} - x-1| \leq x^2e^{|x|}$ for any $x \in \mathbb R$; and (b) follows by using $0\leq u\leq A$ and $\gamma \in (0,1)$. This proves Lemma \ref{lem: ssq_approximation_fast_a} with $K_f^u = A(2\phi_0^{-1}M_0 + C_f+1)$. For Lemma \ref{lem: ssq_approximation_fast_b},
\begin{align}
\label{eq: ssq_approximation_fast_cterm}
    \left| \EE\left[ {e^{\gamma^\alpha \phi  c }} \right] - \frac{\gamma^{2\alpha}\phi^2 (\sigma^2_\gamma+\nu_\gamma^2)}{2} - \gamma^\alpha \phi \nu_\gamma - 1 \right| &= \left| \EE\left[ e^{\gamma^\alpha \phi  c} - \frac{\phi^2 \gamma^{2\alpha} c^2}{2} - \phi \gamma^\alpha c - 1 \right]  \right| \nonumber\\ 
&\stackrel{(a)}{\leq} |\phi|^3 \gamma^{3\alpha} \left| \EE\left[ c^3 e^{\sqrt{\gamma}|\phi c|} \right] \right| \nonumber\\
&\stackrel{(b)}{\leq} A^3|\phi|^3 \gamma^{3\alpha} e^{|\phi| A},
\end{align}
where (a) follows from $|e^x-\frac{x^2}{2} -x-1 | \leq |x|^3 e^{|x|}$ and (b) follows from $|c|\leq A$. This proves Lemma \ref{lem: ssq_approximation_fast_b} with $K_f^c = A^3$. For Lemma \ref{lem: ssq_approximation_fast_c}, we have $\EE[  e^{\gamma^\alpha \phi  q } ]\leq M_0$ for any $\phi <\phi_0$ (by Lemma \ref{lem: ssq_mgfexist_c}), then, for any $k \geq 1$, and for $\phi < \phi_0/2$, we have
\begin{align}
\label{eq: ssq_approximation_fast_dmoment}
    \EE\left[\gamma^{\alpha k} q^k e^{\gamma^{\alpha }\phi q} \right] \leq \EE\left[\gamma^{\alpha k} q^k e^{\frac{1}{2}\gamma^{\alpha } \phi_0 q} \right] \leq \frac{2^k k!}{\phi_0^k} \EE\left[ e^{\gamma^{\alpha }  \phi_0 q} \right] \leq \frac{2^k k!}{\phi_0^k} M_0.
\end{align}
Then,
\begin{align}
\label{eq: ssq_approximation_fast_dterm}
    \Big| \EE\big[e^{\gamma^\alpha \phi  (q-d) }\big] - \EE\big[e^{\gamma^\alpha \phi  q }\big] \Big| &= \Big| \EE\Big[e^{\gamma^\alpha \phi  q } \big(e^{-\gamma^\alpha \phi  d } -1\big)\Big] \Big| \nonumber\\
    &\stackrel{(a)}{\leq } |\phi| \gamma^\alpha \EE\Big[d e^{\gamma^\alpha \phi  q }e^{\gamma^\alpha [-\phi  d]^+ } \Big]\nonumber\\
    &\stackrel{(b)}{\leq } |\phi| \gamma^\alpha \EE\Big[d e^{\gamma^\alpha [\phi]^+  q } \Big]\nonumber\\
    &\stackrel{(c)}{\leq } |\phi| \gamma \EE\left[ \gamma^\alpha q e^{\gamma^\alpha [\phi]^+ q} \right] \nonumber\\ 
    &\stackrel{(d)}{\leq } \frac{2M_0}{\phi_0} |\phi| \gamma.
\end{align}
where $(a)$ follows from $|e^x-1|\leq |x| e^{[x]^+}$; (b) follows by using $\phi  q + [-\phi d]^+ \leq \max\{0, \phi  q \}$ for any $\phi \in \mathbb R$ as $0\leq d \leq q$; (c) follows from using $\EE[d| q] = \gamma q$; and finally, (d) follows from the Eq. \eqref{eq: ssq_approximation_fast_dmoment}. Thus, Lemma \ref{lem: ssq_approximation_fast} is satisfied with $K_f^u = A(2\phi_0^{-1}M_0 + C_f+1)$, $K_f^c = A^3$ and $K_f^d = \frac{2M_0}{\phi_0}$.
\end{proof}

\begin{proof}[Proof of Theorem \ref{thm: ssq_fast}: ] 
Recall Lemma \ref{lem: ssq_mgfexist_c} implies  that, for any $0<\phi<\phi_0$, we have $\EE[e^{\gamma^\alpha \phi q}] \leq M_0$. Also, for $\phi<0$, we have $\EE[e^{\gamma^\alpha \phi q}]<1\leq M_0$. Thus, the MGF is bounded for any $\phi<\phi_0$. As mentioned in Section \ref{sec: ssq_model}, we have $q(t+1) u(t)=0$, and so for any $\phi \in \mathbb R$, and at any time $t$, we also have
\begin{equation*}
    \left(e^{\gamma^\alpha \phi q(t+1)} -1\right)\left(e^{-\gamma^\alpha \phi u(t) }-1\right) =0.
\end{equation*}
Taking expectation with respect to the steady state distribution for $\phi<\phi_0$, we get
\begin{align}
\label{eq: ssq_fast_starteq}
 \EE\left[{e^{-\gamma^\alpha \phi  u }}\right]-1 &=  \EE\left[ {e^{\gamma^\alpha \phi (q^+-u) }}\right] - \EE\left[ {e^{\gamma^\alpha \phi  q^+ }} \right] \nonumber\\
 &\stackrel{(a)}{=} \EE\left[ {e^{\gamma^\alpha \phi  (q+c-d) }} \right] - \EE\left[ {e^{\gamma^\alpha \phi q}} \right]  \nonumber\\
 &\stackrel{(b)}{=} \EE\left[{e^{\gamma^\alpha \phi (q-d) }} \right] \EE\left[ {e^{\gamma^\alpha \phi c }} \right] - \EE\left[{e^{\gamma^\alpha \phi q }}\right]  \nonumber\\
 &=  \EE\left[{e^{\gamma^\alpha \phi  q }}\right] \left(\EE\left[{e^{\gamma^\alpha \phi c }}\right] -1 \right) + \left( \EE\left[ {e^{\gamma^\alpha \phi (q-d) }}\right] - \EE\left[ {e^{\gamma^\alpha \phi q }} \right] \right) \EE\left[ {e^{\gamma^\alpha \phi  c }} \right],
\end{align}
where (a) follows from the fact that $q$ and $q^+$ have same distribution, and that $q^+-u = q+c-d$; and (b) follows since $c$ is independent of $q$ and $d$. Next, we use that
\begin{align*}
   \EE\left[{e^{-\gamma^\alpha \phi  u }}\right]-1 & =\phi \gamma^\alpha\nu_\gamma + \mathcal{E}_f^u(\gamma,\phi) \\
   \EE\left[{e^{\gamma^\alpha \phi  c }}\right] -1 &= \frac{1}{2} \phi^2 \gamma^{2\alpha} (\sigma^2_\gamma + \nu_\gamma^2) + \phi \gamma^\alpha \nu_\gamma + \mathcal{E}_f^c(\gamma,\phi) \\
   \EE\left[{e^{\gamma^\alpha \phi (q-d) }}\right]-\EE\left[{e^{\gamma^\alpha \phi q }}\right] &= \mathcal{E}_f^d(\gamma,\phi)
\end{align*}
to obtain
\begin{align*}
    \left( \frac{1}{\gamma^\alpha}\nu_\gamma + \frac{1}{2}\phi (\sigma^2_\gamma + \nu_\gamma^2) \right) \EE\big[{e^{\gamma^\alpha \phi  q }}\big] - \frac{1}{\gamma^\alpha} \nu_\gamma = \frac{1}{\phi \gamma^{2\alpha}} \mathcal{E}_f(\gamma,\phi),
\end{align*}
where 
\begin{align*}
     \mathcal{E}_f(\gamma,\phi) = -\EE\left[{e^{\gamma^\alpha \phi q }}\right]\mathcal{E}_f^c(\gamma,\phi) + \mathcal{E}_f^u(\gamma,\phi)  -\EE\left[{e^{\gamma^\alpha \phi c }}\right]\mathcal{E}_f^d(\gamma,\phi).
\end{align*}
Lemma \ref{lem: ssq_approximation_fast} implies that for every $\phi<\phi_0/2$,
\begin{align}
\label{eq: ssq_error_fast}
     \frac{1}{|\phi| \gamma^{2\alpha}} |\mathcal{E}_f(\gamma,\phi)| &\leq \frac{1}{|\phi| \gamma^{2\alpha}} \Big( \EE\left[{e^{\gamma^\alpha \phi q }}\right] |\mathcal{E}_f^c(\gamma,\phi)| + |\mathcal{E}_f^u(\gamma,\phi)| +  \EE\left[{e^{\gamma^\alpha \phi c }}\right] |\mathcal{E}_f^d(\gamma,\phi) |\Big)\nonumber\\
     &\stackrel{(a)}{\leq} \frac{1}{|\phi| \gamma^{2\alpha}} \Big( M_0|\mathcal{E}_f^c(\gamma,\phi)| + |\mathcal{E}_f^u(\gamma,\phi)|+  e^{|\phi|A}|\mathcal{E}_f^d(\gamma,\phi) |\Big)\nonumber\\
     &\stackrel{(b)}{\leq } M_0K_{f}^c \phi^2e^{|\phi|A} \gamma^{\alpha} + K^u_f |\phi| e^{|\phi|A}  \gamma^{\min\{\alpha,1-2\alpha\}} + e^{|\phi|A} K_f^d \gamma^{1-2\alpha}\nonumber\\
     &\stackrel{(c)}{\leq }  e^{A}\big(M_0K_{f}^c  + K^u_f   + K_f^d \big)  \gamma^{\min\{\alpha,1-2\alpha\}},
\end{align}
where (a) follows by using Lemma \ref{lem: ssq_mgfexist_c} and $|c|\leq A$; (b) follows by using the bound in Lemma \ref{lem: ssq_approximation_fast}; and (c) follows by taking $|\phi|< \phi_0/2<1$. Finally, since $\alpha \in\Big(0,\frac{1}{2}\Big)$, we have that, for $|\phi|< \phi_0/2<1$,
\begin{align*}
    \limg \frac{1}{|\phi| \gamma^{2\alpha}} |\mathcal{E}_f(\gamma,\phi)| =0.
\end{align*}
 Moreover, we have $\limg \frac{1}{\gamma^\alpha} \nu_\gamma = C_f$ and $\limg \sigma^2_\gamma  + \nu_\gamma^2 = \sigma^2$. Then, by taking the limit as $\gamma \rightarrow 0$, for any $\phi < \phi_0/2$, we have
\begin{align*}
   \limg \EE\left[{e^{\gamma^\alpha \phi  q }}\right] =  \frac{1}{ 1 - \phi \frac{\sigma^2}{2C_f}  }.
\end{align*}
It is easy to observe that the RHS is the MGF of an exponential random variable with mean $\frac{\sigma^2}{2C_f} $. Finally, the result follows from combining this with Lemma \ref{COR: MGF_CONVERGENCE}.
\end{proof}

\subsection{Proof of Theorem \ref{thm: ssq_crit}}
\label{app: ssq_crit}
Recall that $(q^+, q, d,u)$ is distributed as the steady state of $(q(t+1), q(t), d(t),u(t))$, where $q^+ = q+ c-d+u$ and $c$ has the same distribution as $c(t)$. Thus, $q^+$ and $q$ have the same distribution. Also, we have $|c|\leq A$ and $0\leq u \leq A$.\\

\begin{lemma}
\label{lem: ssq_approximation}
Under the same assumptions as in Theorem \ref{thm: ssq_crit}, we have the following upper bounds
\begin{enumerate}[label=(\alph*), ref=\ref{lem: ssq_approximation}.\alph*]
    \item \label{lem: ssq_approximation_a} For any $\gamma \in (0,1)$ and, for any $\phi \in \mathbb R$,
    \begin{equation*}
        \Big| \MM{-u} +\phi\sqrt{\gamma} \EE[u] -1\Big| \leq K_c^u \phi^2 e^{|\phi|A}  \gamma^{\frac{3}{2}}.
    \end{equation*}
    \item \label{lem: ssq_approximation_b} Suppose $c$ is a random variable distributed as $c(t) = a(t) - s(t)$, with $|c|\leq A$, $\EE[c]= \nu_\gamma $, and $\EE[c^2] = \sigma^2_\gamma +\nu_\gamma^2$. Then, for any $\phi \in \mathbb R$, we have
    \begin{equation*}
        \left| \MM{c}   - \frac{\gamma\phi^2 (\sigma^2_\gamma+\nu_\gamma^2)}{2} - \sqrt{\gamma} \phi \nu_\gamma - 1 \right| \leq K_{c}^c |\phi|^3e^{|\phi|A} \gamma^{\frac{3}{2}}.
    \end{equation*}
    \item \label{lem: ssq_approximation_c} Let $\phi_0$ be a constant as in Lemma \ref{lem: ssq_mgfexist_a}. Then, for any $\phi < \phi_0/2$, we have
    \begin{equation*}
        \left| (\MM{q-d} - \MM{q})\MM{c} +\phi \gamma \frac{d}{d\phi} \MM{q} \right| \leq K_c^d \phi^2 e^{|\phi| A} \gamma^\frac{3}{2},
    \end{equation*}
\end{enumerate}
where $K_c^u,K_c^c$ and $K_c^d$ are constants, independent of $\gamma$ and $\phi$.
\end{lemma}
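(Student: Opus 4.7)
The plan is to prove all three parts via Taylor expansions of the exponential function and then control the resulting remainder expectations using the uniform MGF bounds in Lemma \ref{lem: ssq_mgfexist}. Compared with the classic-HT counterpart in Lemma \ref{lem: ssq_approximation_fast}, the scaling is $\sqrt{\gamma}$ rather than $\gamma^{\alpha}$, which forces us to extract an extra factor of $\sqrt{\gamma}$ beyond the Taylor remainder in order to reach the claimed $\gamma^{3/2}$ bound.

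Parts (a) and (b) are short. For (a), applying $|e^{-x}+x-1|\le x^{2}e^{|x|}$ with $x=\sqrt{\gamma}\phi u$ and using $0\le u\le A$ reduces the claim to $A\gamma\phi^{2}e^{|\phi|A}\EE[u]$, so the extra $\sqrt{\gamma}$ must be hidden inside $\EE[u]$. I would obtain $\EE[u]=\mathcal{O}(\sqrt{\gamma})$ by setting the drift of $f(q)=q$ to zero, which gives $\EE[u]=\gamma\EE[q]-\nu_\gamma$, and then combining the hypothesis $\nu_\gamma=\mathcal{O}(\sqrt{\gamma})$ with the uniform estimate $\sqrt{\gamma}\EE[q]\le \phi_0^{-1}M_{0}$ that follows from Lemma \ref{lem: ssq_mgfexist_a}. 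Part (b) is even cleaner: the third-order Taylor estimate $|e^{x}-1-x-x^{2}/2|\le |x|^{3}e^{|x|}/6$ applied to $x=\sqrt{\gamma}\phi c$, together with $|c|\le A$, gives the result immediately with $K_{c}^{c}=A^{3}/6$.

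Part (c) is the main obstacle. The starting identity is
\begin{equation*}
\MM{q-d}-\MM{q}=\EE\!\left[e^{\sqrt{\gamma}\phi q}\bigl(e^{-\sqrt{\gamma}\phi d}-1\bigr)\right].
\end{equation*}
Writing $e^{-\sqrt{\gamma}\phi d}-1=-\sqrt{\gamma}\phi d+R$ with $|R|\le \gamma\phi^{2}d^{2}e^{\sqrt{\gamma}|\phi|d}$, and using the conditional mean $\EE[d\mid q]=\gamma q$, the first-order term becomes
\begin{equation*}
-\sqrt{\gamma}\phi\,\EE\!\left[d\,e^{\sqrt{\gamma}\phi q}\right]=-\gamma\phi\sqrt{\gamma}\,\EE\!\left[q\,e^{\sqrt{\gamma}\phi q}\right]=-\gamma\phi\,\frac{d}{d\phi}\MM{q},
\end{equation*}
which supplies exactly the derivative term appearing in the statement. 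The quantity to bound is therefore
\begin{equation*}
\phi\gamma\,\frac{d}{d\phi}\MM{q}\bigl(1-\MM{c}\bigr)+\MM{c}\,\EE\!\left[e^{\sqrt{\gamma}\phi q}R\right].
\end{equation*}
The first summand is $\mathcal{O}(\gamma^{3/2})$ because $1-\MM{c}=\mathcal{O}(\sqrt{\gamma}|\phi|+\gamma\phi^{2})$ by part (b) together with $\nu_\gamma=\mathcal{O}(\sqrt{\gamma})$, while $\frac{d}{d\phi}\MM{q}$ is uniformly bounded thanks to Lemma \ref{lem: ssq_mgfexist} applied at a slightly larger value of $\phi$. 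For the remainder, I would use $d\le q$ to replace $e^{\sqrt{\gamma}|\phi|d}$ by $e^{\sqrt{\gamma}|\phi|q}$, condition on $q$ to evaluate $\EE[d^{2}\mid q]=\gamma q(1-\gamma)+\gamma^{2}q^{2}$, and then rewrite the resulting expectations in terms of the MGF of $\sqrt{\gamma}q$ at $2[\phi]^{+}$, whose boundedness is again guaranteed by Lemma \ref{lem: ssq_mgfexist}. Collecting factors yields $\EE[e^{\sqrt{\gamma}\phi q}R]=\mathcal{O}(\gamma^{3/2}\phi^{2})$, which completes the estimate. The main technical difficulty here is keeping $\phi$ within an interval (roughly $|\phi|<\phi_0/2$) on which all of these MGFs and their derivatives remain uniformly bounded in $\gamma$, exactly mirroring the restriction in Lemma \ref{lem: ssq_approximation_fast_c}.
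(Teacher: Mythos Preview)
Your approach is correct and essentially the same as the paper's. Parts (a) and (b) match the paper's argument verbatim: the paper also obtains $\EE[u]\le\sqrt{\gamma}(\phi_0^{-1}M_0+C_c+1)$ from the drift identity $\EE[u]=\gamma\EE[q]-\nu_\gamma$, and then applies the same second- and third-order Taylor remainders.

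For part (c) the paper uses the algebraically equivalent decomposition
\[
(\MM{q-d}-\MM{q})\MM{c}+\phi\gamma\,\tfrac{d}{d\phi}\MM{q}
=(\MM{q-d}-\MM{q})(\MM{c}-1)+\Big[(\MM{q-d}-\MM{q})+\phi\gamma\,\tfrac{d}{d\phi}\MM{q}\Big],
\]
bounding the two pieces separately; your splitting isolates $\phi\gamma\,\tfrac{d}{d\phi}\MM{q}(1-\MM{c})$ instead of $(\MM{q-d}-\MM{q})(\MM{c}-1)$, but the estimates are identical in spirit. One small point: the paper uses the sharper remainder $|e^{-x}+x-1|\le x^2e^{[-x]^+}$ (rather than $x^2e^{|x|}$), which yields $e^{\sqrt{\gamma}\phi q}e^{[-\sqrt{\gamma}\phi d]^+}\le e^{[\phi]^+\sqrt{\gamma}q}$ directly. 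Your cruder bound $e^{\sqrt{\gamma}|\phi|d}\le e^{\sqrt{\gamma}|\phi|q}$ produces a weight $e^{2[\phi]^+\sqrt{\gamma}q}$, so to extract the extra polynomial factors via $(\sqrt{\gamma}q)^k\le \tfrac{k!}{\epsilon^k}e^{\epsilon\sqrt{\gamma}q}$ you would strictly need $\phi<\phi_0/4$ rather than $\phi<\phi_0/2$. Switching to the $e^{[-x]^+}$ form of the remainder recovers the full range stated in the lemma.
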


\begin{proof}[Proof: ]
Recall that Lemma \ref{lem: ssq_mgfexist_a} implies $\sqrt{\gamma}\EE[q] \leq \phi_0^{-1}M_0$. Combining this with $\nu_\gamma \leq (C_c+1)\sqrt{\gamma}$, we have
\begin{align*}
    \gamma \EE[q] -\nu_\gamma \leq \sqrt{\gamma}\phi_0^{-1}M_0 + (C_c+1) \sqrt{\gamma}\leq \sqrt{\gamma}(\phi_0^{-1}M_0 + C_c+1),
\end{align*}
where we used $\nu_\gamma \leq (C_c+1)\sqrt{\gamma}$ according to the condition given in Theorem \ref{thm: ssq_crit}. Also, since $u\leq A$, we have ${e^{-\sqrt{\gamma} \phi  u }} \leq {e^{|\phi|  A }}$ and  thus $\EE\big[{e^{-\sqrt{\gamma} \phi  u }}\big]$ is finite.
Moreover, in steady state, we have $\EE[q^+ - q] = 0$, then $\EE[u] = \EE[d - c] \leq \gamma\EE[q] -\nu_\gamma\leq \sqrt{\gamma}K_c^q$, where $K_c^q = \phi_0^{-1}M_0 + C_c+1 $. It follows that
\begin{align}
\label{eq: ssq_lhs}
   \Big| \MM{-u} +\phi\sqrt{\gamma} \EE[u] -1\Big| &= \Big| \EE\big [e^{-\phi\sqrt{\gamma} u } + \sqrt{\gamma}\phi u -1\big]\Big| \nonumber\\
    & \stackrel{(a)}{\leq}  \big|  \phi^2 \gamma  \EE\big [ u^2e^{|\phi|\sqrt{\gamma} u} \big] \big|\nonumber\\
    & \stackrel{(b)}{\leq} |\phi|^2 \gamma A e^{|\phi|\sqrt{\gamma} A} \times \EE[u]\nonumber\\
    &\leq A K_q e^{|\phi|A} |\phi|^2 \gamma^{\frac{3}{2}} ,
\end{align}
where (a) follows by using $|e^{x} - x-1| \leq x^2e^{|x|}$ for any $x \in \mathbb R$; and (b) follows by using $0\leq u\leq A$. For the next part, we have
\begin{align}
\label{eq: ssq_c_term2}
\left| \MM{c} - \frac{\gamma\phi^2 (\sigma^2_\gamma+\nu_\gamma^2)}{2} - \sqrt{\gamma} \phi \nu_\gamma - 1 \right| &= \left| \EE\left[ \left( e^{\phi \sqrt{\gamma} c} - \frac{\phi^2 \gamma c^2}{2} - \phi \sqrt{\gamma}c -  1\right) \right] \right|\nonumber\\ 
&\stackrel{(a)}{\leq}  \Big| \EE\big [ (|\phi| \sqrt{\gamma} c)^3 e^{ \sqrt{\gamma}|\phi c|}  \big]  \Big| \nonumber\\
&\stackrel{(b)}{\leq} A^3|\phi|^3 \gamma^{\frac{3}{2}} e^{|\phi| A},
\end{align}
where (a) follows by using $|e^x-\frac{x^2}{2} -x-1 | \leq |x|^3 e^{|x|}$ and (b) follows by using $|c|\leq A$. For the final part, first note that, for any $0<\phi<\phi_0<1$ Lemma \ref{lem: ssq_mgfexist_a} implies that 
\begin{align*}
    \MM{q} \leq M_0 e^{\frac{\phi\nu_\gamma}{\sqrt{\gamma}}} \leq M_0e^{\phi ([C_c]^++1)} \leq  M_0e^{[C_c]^++1} = M_c.
\end{align*}
Further, for $\phi<0$, we have $\MM{q}\leq 1\leq M_c$. Then, for $0<\phi < \phi_0/2$,
\begin{align}
\label{eq: ssq_highermoment}
    \EE\left[(\sqrt{\gamma}q)^k e^{\sqrt{\gamma} \phi q}\right] \leq \EE\left[\big(\sqrt{\gamma}q\big)^k e^{\frac{1}{2}\sqrt{\gamma} \phi_0 q}\right] \leq \frac{2^k k!}{\phi_0^k} \EE\big[ e^{\sqrt{\gamma} \phi_0 q}\big] \leq \frac{2^k k!}{\phi_0^k} M_c. 
\end{align}
Using this, we get that
\begin{align}
\label{eq: ssq_rhs2}
\left| \MM{q-d} - \MM{q} +\phi \gamma \frac{d}{d\phi} \MM{q} \right| &=
    \Big| \EE \big[ e^{\phi \sqrt{\gamma} (q-d) } \big] - \EE \big[ e^{\phi \sqrt{\gamma} q } \big]  +\phi\gamma\EE[\sqrt{\gamma}q e^{\phi \sqrt{\gamma}  q}]  \Big| \nonumber\\
    &\stackrel{(a)}{=} \Big| \EE \big[ e^{\phi \sqrt{\gamma} (q-d) } \big] - \EE \big[ e^{\phi \sqrt{\gamma} q } \big]  +\phi\sqrt{\gamma}\EE[d e^{\phi \sqrt{\gamma}  q}]  \Big|\nonumber \\
    &\leq  \EE \big[ e^{\phi \sqrt{\gamma} q } \big|e^{-\phi \sqrt{\gamma} d} - 1+ \phi\sqrt{\gamma} d\big|   \big]\allowdisplaybreaks \nonumber\\
    &\stackrel{(b)}{\leq} \phi^2 \gamma \EE \Big[ d^2 e^{\phi \sqrt{\gamma} q + [-\phi \sqrt{\gamma} d]^+ } \Big]\allowdisplaybreaks \nonumber\\
    &\stackrel{(c)}{\leq} \phi^2 \gamma \EE \Big[ d^2  e^{[\phi]^+ \sqrt{\gamma} q }  \Big]\allowdisplaybreaks \nonumber\\
    & \stackrel{(d)}{\leq} \phi^2 \gamma^{\frac{3}{2}} \EE \Big[ \big(\sqrt{\gamma} q + \gamma q^2 \big)  e^{[\phi]^+\sqrt{\gamma} q }  \Big]\allowdisplaybreaks \nonumber\\
    & \stackrel{(e)}{\leq} \phi^2 \gamma^{\frac{3}{2}} \left( \frac{2}{\phi_0}+\frac{8}{\phi_0^2} \right)M_c,
\end{align}
where (a) follows by using $\EE[d| q] = \gamma q$; (b) follows by using $e^x - x -1 \leq x^2 e^{[x]^+}$ for all $x \in \mathbb R$; (c) follows by using $\phi  q + [-\phi d]^+ \leq \max\{0, \phi  q \}$ for any $\phi \in \mathbb R$ as $0\leq d \leq q$; (d) follows from $\EE[d^2 | q] = \gamma(1-\gamma)q + \gamma^2 q^2 \leq \sqrt{\gamma}(\sqrt{\gamma} q + \gamma q^2)$; and (e) follows from Eq. \eqref{eq: ssq_highermoment}. Further, we have that,
\begin{align}
\label{eq: ssq_approximation_crit_cd}
	\Big|(\MM{c} -1) (\MM{q-d} - \MM{q})\Big| &= \Big| \EE\Big [\big( e^{\phi \sqrt{\gamma} c} -  1\big) \Big]  \EE \Big[ e^{\phi \sqrt{\gamma} q}\big( e^{-\phi \sqrt{\gamma} d}-1\big) \Big]\Big|\allowdisplaybreaks \nonumber\\
    &\stackrel{(a)}{\leq}\Big| \EE\Big [\phi \sqrt{\gamma} c e^{\sqrt{\gamma} [\phi c]^+ } \Big]  \EE \Big[ -\phi \sqrt{\gamma}d e^{\phi \sqrt{\gamma} q + \sqrt{\gamma}[-\phi d]^+} \Big]\Big|\allowdisplaybreaks \nonumber\\
    &\stackrel{(b)}{\leq} \phi^2 \gamma A e^{|\phi| A}  \EE\Big [ d  e^{[\phi]^+  \sqrt{\gamma} q}  \Big]\allowdisplaybreaks \nonumber\\
    &\stackrel{(c)}{=}  \phi^2 \gamma^{\frac{3}{2}} A e^{|\phi|  A}  \EE\Big[ \sqrt{\gamma}q  e^{[\phi]^+ \sqrt{\gamma} q} \Big] \allowdisplaybreaks \nonumber\\
    &\stackrel{(d)}{=} \frac{2AM_c}{\phi_0} \phi^2 \gamma^{\frac{3}{2}} e^{|\phi|  A} ,
\end{align}
where (a) follows from $e^x -1 \leq x^2 e^{[x]^+}$; (b) follows by using $|c|\leq A$, $\sqrt{\gamma}[\phi c]^+ \leq |\phi| A$ and $\phi  q + [-\phi d]^+ \leq [\phi]^+  q $ for any $\phi \in \mathbb R$ and $0\leq d \leq q$; (c) follows from $\EE[d| q] = \gamma q$; and (d) follows from Eq. \eqref{eq: ssq_highermoment}. By combining this with Eq. \eqref{eq: ssq_rhs2}, we get the result.
\end{proof}

\begin{proof}[Proof of Theorem \ref{thm: ssq_crit}: ]
First note that, for any $0<\phi<\phi_0<1$, Lemma \ref{lem: ssq_mgfexist_a} implies that 
\begin{align*}
    \MM{q} \leq M_0 e^{\frac{\phi\nu_\gamma}{\sqrt{\gamma}}} \leq M_0e^{\phi ([C_c]^++1)} \leq  M_0e^{[C_c]^++1} = M_c.
\end{align*}
Further, for $\phi<0$, we have $\MM{q}\leq 1\leq M_c$.
As mentioned before, $q^+ u=0$, and so, for any $\phi < \phi_0$,
\begin{equation*}
  \EE\Big[ \Big(e^{\sqrt{\gamma} \phi q^+} -1 \Big)\Big(e^{-\sqrt{\gamma} \phi u }-1\Big) \Big] =0.
\end{equation*}
Thus,
\begin{align}
\label{eq: ssq_starteq}
 \MM{-u}-1 &=  \MM{q^+-u} - \MM{q^+} \nonumber \allowdisplaybreaks\\
 &\stackrel{(a)}{=} \MM{q+c-d} - \MM{q} \nonumber\allowdisplaybreaks\\
 &\stackrel{(b)}{=}  \MM{q-d}\MM{c} - \MM{q} \nonumber\allowdisplaybreaks\\
 &=  \MM{q}(\MM{c} -1) + (\MM{q-d}-\MM{q})\MM{c},
\end{align}
where (a) follows from $q$ and $q^+$ having the same distribution; and (b) follows from $c$ being independent of $q$ and $d$. 
Now we use that
\begin{align*}
    \MM{-u} +\phi\sqrt{\gamma} \EE[u] -1 & = \mathcal{E}_c^u(\gamma,\phi),\\
    \MM{c}   - \frac{\gamma\phi^2 (\sigma^2_\gamma+\nu_\gamma^2)}{2} - \sqrt{\gamma} \phi \nu_\gamma -  1 & = \mathcal{E}_c^c(\gamma,\phi),\\
    \big(\MM{q-d} - \MM{q}\big)\MM{c} +\phi \gamma \frac{d}{d\phi} \MM{q} & = \mathcal{E}_c^d(\gamma,\phi),
\end{align*}
to get
\begin{align*}
    -\MM{q} \left(\frac{\gamma\phi^2 (\sigma^2_\gamma+\nu_\gamma^2)}{2} + \sqrt{\gamma} \phi \nu_\gamma\right) +\phi \gamma \frac{d}{d\phi} \MM{q}	-\sqrt{\gamma} \phi \EE[u] = \mathcal{E}_c(\gamma,\phi),
\end{align*}
where
\begin{align*}
    \mathcal{E}_c(\gamma,\phi) = \MM{q}  \mathcal{E}_c^c(\gamma,\phi) +  \mathcal{E}_c^d(\gamma,\phi) -  \mathcal{E}_c^u(\gamma,\phi).
\end{align*}
After dividing by $\gamma \phi$ on both sides, the above equation can be written as
\begin{align}
\label{eq: ssq_crit_diffeq}
	-\MM{q} \left(\frac{\phi (\sigma^2_\gamma+\nu_\gamma^2)}{2} +  \frac{\nu_\gamma}{\sqrt{\gamma}}\right) +\frac{d}{d\phi} \MM{q} - \frac{1}{\sqrt{\gamma}}\EE[u] = \frac{1}{\phi\gamma}\mathcal{E}_c(\gamma,\phi).
\end{align}
Furthermore, for any $\phi<\phi_0/2$, Lemma \ref{lem: ssq_approximation} implies that
\begin{align*}
    \frac{1}{|\phi|\gamma}|\mathcal{E}_c(\gamma,\phi)| 
    &\leq K_c^c M_c \phi^2 \gamma^{\frac{1}{2}} e^{|\phi| A} +  K_c^d |\phi| \gamma^{\frac{1}{2}} e^{|\phi| A} + K_c^u |\phi| \gamma^{\frac{1}{2}} e^{|\phi| A} \\
    &\leq K_c \big(|\phi| + \phi^2\big)e^{|\phi|A}\sqrt{\gamma},
\end{align*}
where $K_c = K_c^c M_c + K_c^d+ K_c^u$. By denoting
\[ G^{\gamma}(\phi) = \exp\left( -\frac{\phi^2(\sigma^2_\gamma+\nu_\gamma^2)}{4} -  \frac{\phi\nu_\gamma}{\sqrt{\gamma}} \right), \]
we get
 \begin{align*}
 	\frac{d}{d\phi} \big(\MM{q}G^{\gamma}(\phi)\big) - \frac{1}{\sqrt{\gamma}}\EE[u]G^{\gamma}(\phi) = \frac{1}{\phi\gamma}\mathcal{E}_c(\gamma,\phi) G^{\gamma}(\phi). 
 \end{align*}
 Since the above equation holds for any $\phi \leq \phi_0/2$, we can integrate both sides over $\phi$ to get that, for any $\phi \leq \phi_0/2$,
 \begin{align*}
 	\MM{q}G^{\gamma}(\phi)- \frac{1}{\sqrt{\gamma}} \EE[u] \int_{-\infty}^\phi G^\gamma(s)ds = \int_{-\infty}^\phi \frac{1}{s\gamma}\mathcal{E}_c(\gamma,s) G^{\gamma}(s) ds
 \end{align*}
 Note that, $G^\gamma(\cdot)$ matches with the (scaled) distribution function of a Gaussian random variable with finite variance and so, we have that
 \[ \int_{-\infty}^\infty (|s| + s^2) e^{|s|A}  G^\gamma(s)ds < K' \]
 for some $K'<\infty$. Thus,  for any $\phi<\phi_0/2$, we have
 \begin{align*}
     \limg\int_{-\infty}^\phi \frac{1}{|s|\gamma}|\mathcal{E}_c(\gamma,s)| G^{\gamma}(s) ds \leq \limg K_c\sqrt{\gamma} \int_{-\infty}^\infty (|s| + s^2) e^{|s|A}  G^\gamma(s)ds =0
 \end{align*}
   Further, $\lim_{\gamma \rightarrow 0} \sigma^2_\gamma + \nu_\gamma^2 = \sigma^2$ and $\lim_{\gamma \rightarrow 0} \nu_\gamma/\sqrt{\gamma} = C_c$, and so
   \[ \lim_{\gamma \rightarrow 0} G^\gamma(\phi) = G(\phi) = \exp\left( -\frac{\phi^2\sigma^2}{4} -  \phi C_c \right). \]
   Setting $\phi = 0$ in the above equation and using the condition $M_q^\gamma(0) G^\gamma(0) =1 $,  we get 
\begin{align*}
    \lim_{\gamma \rightarrow 0} \frac{1}{\sqrt{\gamma}} \EE[u] = \left( \lim_{\gamma \rightarrow 0} \int_{-\infty}^0 G^\gamma(s)ds  \right)^{-1} =  \left( \int_{-\infty}^0 G(s)ds \right)^{-1}.
\end{align*}
Therefore, for $\phi < \phi_0/2$, we have
\begin{align*}
\lim_{\gamma \rightarrow 0} \MM{q} &= G^{-1}(\phi) \left( \int_{-\infty}^0 G(s)ds \right)^{-1} \int_{-\infty}^\phi G(s)ds = \exp\left( \frac{\phi^2\sigma^2}{4} +C\phi \right) \frac{\int_{-\infty}^\phi \exp\left( -\frac{s^2\sigma^2}{4} - C_cs \right)ds}{\int_{-\infty}^0 \exp\left( -\frac{s^2\sigma^2}{4} - C_c s \right)ds}.
\end{align*}
This defines the MGF of a truncated Gaussian random variable as in Theorem \ref{thm: ssq_crit}. The result follows from combining this with Lemma \ref{COR: MGF_CONVERGENCE}.
\end{proof}

\subsection{Proof of Theorem \ref{thm: ssq_slow}}
\label{app: ssq_slow}

Recall that $(q^+, q, d,u)$ is distributed as the steady state of $(q(t+1), q(t), d(t),u(t))$, where $q^+ = q+ c-d+u$ and $c$ has the same distribution as $c(t)$. Thus, $q^+$ and $q$ have the same distribution. Also, we have $|c|\leq A$ and $0\leq u \leq A$.\\

\begin{lemma}
\label{lem:ssq_slow_mgfapprox}
Suppose $\bar q = q - \frac{\nu_\gamma}{\gamma}$. Under the same assumptions as in Theorem \ref{thm: ssq_slow}, there exists $\gamma_s \in (0,1)$ such that for any $\phi \in \big(\frac{-\phi_0}{2},\frac{\phi_0}{2}\big)$ and for any $\gamma <\gamma_s$, we have the following results. 
\begin{enumerate}[label=(\alph*), ref=\ref{lem:ssq_slow_mgfapprox}.\alph*]
    \item We have\label{lem:ssq_slow_mgfapprox_a}
    \begin{equation*}
        \Big| \MM{-u} -1\Big|e^{-\frac{\phi \nu_\gamma}{\sqrt{\gamma}}} \leq K_s^u \sqrt{\gamma}|\phi|  e^{-\frac{\phi_0 \nu_\gamma} {\sqrt{\gamma}}}e^{-\frac{\phi \nu_\gamma}{\sqrt{\gamma}}}.
    \end{equation*}
    \item \label{lem:ssq_slow_mgfapprox_b} We have
    \begin{equation*} 
        \left| (\MM{\bar q-d} \MM{c} - \MM{\bar q}) -\frac{\phi^2 \gamma\bar \sigma^2_{\gamma} }{2}\MM{\bar q}  +\phi \gamma \frac{d}{d\phi} \MM{\bar q} \right| 
 \leq K_s^d |\phi|^2 \gamma^\frac{3}{2},
    \end{equation*}
\end{enumerate}
where  $\bar \sigma^2_{\gamma} =\sigma^2_\gamma + \nu_\gamma(1-\gamma) $, and $K_s^u$ and $K_s^d$ are constants independent of $\gamma$ and $\phi$.
\end{lemma}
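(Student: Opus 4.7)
The plan is to prove the two parts separately, both exploiting the sharp concentration of $q$ around $\nu_\gamma/\gamma$ in the heavily overloaded regime, as encoded in Lemma \ref{lem: ssq_mgfexist_b}.

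For part (a), the key observation is that the unused service $u = [d - c - q]^+$ is strictly positive only on a rare event: since $a \geq 0$, $s \leq A$ and $d \leq q$, we have $u > 0 \Rightarrow q < A + d$, which forces either $q$ to be small or $d$ to be an atypically large deviation from its mean $\gamma q$. I would therefore decompose
\begin{equation*}
\EE[u] \;\leq\; A\,\mathbb P(q < 2A) \;+\; A\,\EE\bigl[\mathbb P(d \geq q/2 \,\big|\, q)\,\mathds 1_{\{q \geq 2A\}}\bigr].
\end{equation*}
The first term is handled by Markov's inequality together with Lemma \ref{lem: ssq_mgfexist_b}, giving $\mathbb P(q < 2A) \leq e^{2A\sqrt{\gamma}\phi_0}\,M_q^\gamma(-\phi_0) \leq M_0 e^{2A}\,e^{-\phi_0 \nu_\gamma/\sqrt{\gamma}}$. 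For the second, a Chernoff bound for $\mathrm{Bin}(q,\gamma)$ yields $\mathbb P(d \geq q/2 \mid q) \leq \exp\!\bigl(-q\, D(1/2 \,\|\, \gamma)\bigr)$, and since $D(1/2 \,\|\, \gamma) = \tfrac{1}{2}\log\!\bigl(1/(4\gamma(1-\gamma))\bigr) \geq \sqrt{\gamma}\phi_0$ for $\gamma$ small enough, this is bounded above by $e^{-\sqrt{\gamma}\phi_0 q}$, whose expectation is again controlled by Lemma \ref{lem: ssq_mgfexist_b}. Combining yields $\EE[u] \leq K\, e^{-\phi_0 \nu_\gamma/\sqrt{\gamma}}$, and finally $|e^{-\sqrt{\gamma}\phi u} - 1| \leq \sqrt{\gamma}|\phi| u\, e^{\sqrt{\gamma}|\phi| A}$ together with $u \leq A$ converts this into the stated bound on $|\MM{-u} - 1|$.

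For part (b), I would perform a second-order Taylor expansion of $e^{\sqrt{\gamma}\phi(c - d)}$ conditional on $q$, following the template of Lemma \ref{lem: ssq_approximation_c} but centered at $\bar q$. Independence of $c$ from $(\bar q, d)$ gives
\begin{equation*}
\MM{\bar q - d}\MM{c} \;=\; \EE\!\Bigl[e^{\sqrt{\gamma}\phi \bar q}\,\EE\!\bigl[e^{\sqrt{\gamma}\phi(c - d)}\,\big|\,q\bigr]\Bigr].
\end{equation*}
The first-order contribution, using $\EE[c - d \mid q] = \nu_\gamma - \gamma q = -\gamma \bar q$ and $\sqrt{\gamma}\,\EE[\bar q\, e^{\sqrt{\gamma}\phi \bar q}] = \tfrac{d}{d\phi}\MM{\bar q}$, yields exactly $-\phi\gamma\,\tfrac{d}{d\phi}\MM{\bar q}$. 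The second-order contribution uses $\EE[(c-d)^2 \mid q] = \sigma^2_\gamma + \gamma(1-\gamma) q + \gamma^2 \bar q^2$; after substituting $q = \bar q + \nu_\gamma/\gamma$ and taking expectation, the dominant piece assembles into $\tfrac{\gamma \phi^2(\sigma^2_\gamma + (1-\gamma)\nu_\gamma)}{2}\MM{\bar q} = \tfrac{\gamma \phi^2 \bar\sigma^2_\gamma}{2}\MM{\bar q}$, while residual pieces of the form $\gamma^{3/2}\,\tfrac{d}{d\phi}\MM{\bar q}$ and $\gamma^2\,\tfrac{d^2}{d\phi^2}\MM{\bar q}$ are $O(\phi^2 \gamma^{3/2})$ using the uniform boundedness of $\MM{\bar q}$ and its first two derivatives on a neighborhood of zero, itself a consequence of Lemma \ref{lem: ssq_mgfexist}. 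The third-order Taylor remainder is bounded via $|e^x - 1 - x - x^2/2| \leq \tfrac{|x|^3}{6} e^{|x|}$ together with $\EE[|c-d|^3 \mid q] \leq 8\bigl(A^3 + (\gamma q)^3\bigr)$; integrated against $e^{\sqrt{\gamma}\phi\bar q}$ and multiplied by $(\sqrt{\gamma}|\phi|)^3$, this is also $O(\phi^3 \gamma^{3/2})$ since $\gamma q = \gamma \bar q + \nu_\gamma$ and $\nu_\gamma^3 \to 0$.

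The main obstacle is part (a): to obtain the exponential rate $\phi_0$ stated in the lemma, the decomposition threshold must be taken independent of $\gamma$ (here $2A$), rather than proportional to $\nu_\gamma/\gamma$, since the latter costs an extra factor of $2$ in the decay rate. Part (b), while arithmetically involved, is essentially a routine Taylor expansion once the conditional second moment of $c - d$ is recognized to combine with the $\nu_\gamma$ term coming from the mean shift into the claimed variance parameter $\bar\sigma^2_\gamma$.
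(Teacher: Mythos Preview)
Your approach to part (a) is correct but more elaborate than needed. The paper exploits the identity $q^+ u = 0$ directly: since $u\,e^{-\sqrt{\gamma}\phi q^+} = u$ pointwise for any $\phi$, one has $\EE[u] = \EE[u\,e^{-\sqrt{\gamma}\phi q^+}] \leq A\,M_q^\gamma(-\phi) \leq AM_0\,e^{-\phi\nu_\gamma/\sqrt{\gamma}}$ for every $\phi\in(0,\phi_0)$, and letting $\phi\uparrow\phi_0$ gives the claimed bound on $\EE[u]$ in one line. Your decomposition via the event $\{q<2A\}$ together with a binomial Chernoff bound reaches the same conclusion but through two separate tail estimates; it works, though the final remark about needing the threshold independent of $\gamma$ to recover the rate $\phi_0$ is unnecessary once you use the paper's identity.

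Part (b) has a genuine gap in the treatment of the third-order Taylor remainder. You invoke $|e^x-1-x-x^2/2|\leq |x|^3e^{|x|}/6$ and then control only $\EE\big[|c-d|^3\mid q\big]$, silently discarding the factor $e^{|x|}=e^{\sqrt{\gamma}|\phi||c-d|}$. That factor is not harmless: since $d$ can be as large as $q\sim\nu_\gamma/\gamma$, the product $|c-d|^3e^{\sqrt{\gamma}|\phi||c-d|}$ is not dominated by any polynomial in $\gamma q$, and your moment estimate does not close the bound. The paper resolves this by absorbing the cubic into the exponential via $(A+d)^3e^{A+d}\leq 6e^{2A}e^{2d}$ (using $x^3\leq 6e^x$) and then showing that $\EE\big[e^{2d}e^{\sqrt{\gamma}\phi\bar q}\big]$ is uniformly bounded: conditioning on $q$ gives $\EE[e^{2d}\mid q]\leq e^{(e^2-1)\gamma q}$, and writing $\gamma q=\nu_\gamma+\gamma\bar q$ converts this into a shift of the argument of $\MM{\bar q}$ by $O(\sqrt{\gamma})$, which for $\gamma$ small remains in the interval where Lemma~\ref{lem: ssq_mgfexist} applies. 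Your first- and second-order computations are fine and match the paper's; as a minor aside, your assertion that $\nu_\gamma^3\to 0$ fails when $\alpha=0$ (then $\nu_\gamma\to C_s>0$), though only the bound $\nu_\gamma\leq A$ is actually needed there.
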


The proof of Lemma \ref{lem:ssq_slow_mgfapprox} is similar to that of Lemma \ref{lem: ssq_approximation}, which uses first and second order approximations of the exponential function. 

\begin{proof}[Proof: ]
Recall that $q^+ u =0$ as mentioned in Section \ref{sec: ssq_model}. Combining this with the fact  that $q$ and $q^+$ have the same distribution and also the result from Lemma \ref{lem: ssq_mgfexist}, we have
\begin{align}
\label{eq: ssq_u_verysmall}
    \EE[u] = \EE \left[u e^{-\phi \sqrt{\gamma} q^+}\right] \leq A \EE \left[ e^{-\phi \sqrt{\gamma} q^+}\right]  = A M_{q}^\gamma(-\phi) \leq AM_0 e^{-\frac{\phi \nu_\gamma}{\sqrt{\gamma}}}.
\end{align}
Since this holds for any $0<\phi<\phi_0$, we also get that, $\EE[u] \leq AM_0 e^{-\frac{\phi_0 \nu_\gamma}{\sqrt{\gamma}}}$. 
Combining this with the fact that $|u|\leq A$, and $e^{x} -1 \leq |x|e^{|x|}$, we obtain
\begin{align*}
     \big| \MM{ -u} -  1\Big|
     = \big| \EE\big[ e^{-\sqrt{\gamma}\phi u} -1 \big]\Big| \leq \sqrt{\gamma}|\phi| e^{A} \EE [u]   \leq \sqrt{\gamma}|\phi| e^{A} AM_0 e^{-\frac{\phi_0 \nu_\gamma} {\sqrt{\gamma}}}
\end{align*}
This implies that, for any $\phi>-\phi_0/2$,
\begin{equation*}
    \limg \frac{1}{|\phi|\gamma} \big| \MM{ -u} -  1\big|e^{-\frac{\phi \nu_\gamma}{\sqrt{\gamma}}} = 0.
\end{equation*}
For Lemma \ref{lem:ssq_slow_mgfapprox_b}, we first consider
\begin{align*} 
\EE\Big  [ |c-d|^3 e^{\sqrt{\gamma} [\phi(c - d)]^+ } \Big| q \Big] &\stackrel{(a)}{\leq}  \EE\Big  [ (A+d)^3 e^{(A +d) } \Big| q \Big] \stackrel{(b)}{\leq}  6e^{2A}  \EE\big  [ e^{2d } \big| q \big],
\end{align*}
where (a) follows from using $|c-d|\leq A+d$ and $|\phi|\leq 1$; (b) follows from $x^3 \leq 6e^x$. Thus,
\begin{align}
\label{eq: ssq_lemslow_cd}
    \left| \EE\left[  e^{\sqrt{\gamma} \phi\bar q}\left(  e^{\sqrt{\gamma} \phi(c - d)} - \frac{\phi^2 \gamma}{2}(c-d)^2 - \phi \sqrt{\gamma} (c-d) -1 \right) \right] \right| 
    &\stackrel{(a)}{\leq}|\phi|^3 \gamma^{\frac{3}{2}}  \EE\Big  [ |c-d|^3 e^{\sqrt{\gamma} [\phi(c - d)]^+ + \sqrt{\gamma} \phi\bar q} \Big] \nonumber\allowdisplaybreaks\\
    &\stackrel{(b)}{\leq} 6|\phi|^3 \gamma^{\frac{3}{2}}  e^{2A}\EE\Big  [ e^{2d + \sqrt{\gamma} \phi q } \Big]e^{-\frac{\phi \nu_\gamma}{\sqrt{\gamma}}} \nonumber\allowdisplaybreaks\\
     &\leq 6|\phi|^3 \gamma^{\frac{3}{2}}  e^{2A}\EE\big  [e^{4d}\big]^{\frac{1}{2}} \EE\big  [ e^{\sqrt{\gamma} \phi q } \big]^{\frac{1}{2}}e^{-\frac{\phi \nu_\gamma}{\sqrt{\gamma}}} \nonumber\allowdisplaybreaks\\
    & \stackrel{(c)}{\leq} 6M_0|\phi|^3 \gamma^{\frac{3}{2}}  e^{2A} e^{8\nu_\gamma} e^{\frac{\phi \nu_\gamma}{\sqrt{\gamma}}} e^{-\frac{\phi \nu_\gamma}{\sqrt{\gamma}}} \nonumber\allowdisplaybreaks\\
    & = 6M_0e^{10A} |\phi|^3 \gamma^{\frac{3}{2}} 
\end{align}
where (a) follows from using $|e^x - \frac{x^2}{2} - x -1| \leq |x|^3e^{[x]^+}$; (b) follows from the previous equation; (c) follows from, for $16\sqrt{\gamma} < \phi_0$,
\begin{equation}
\label{eq: ssq_slow_2dexpectation}
    \EE \big[e^{2d}\big] = \EE[\big(1+\gamma (e^2-1)\big)^q] \leq \EE[e^{8\gamma q}] \leq M_0 e^{8\nu_\gamma}.
\end{equation}
Further,
\begin{align*}
    \EE\big  [ (c-d)^2 \big|q \big] - \sigma^2_\gamma - \nu_\gamma(1-\gamma)  &=  \gamma(1-\gamma) q + (\nu_\gamma-\gamma q)^2 - \nu_\gamma(1-\gamma)  =  \gamma(1-\gamma) \bar q +  \gamma^2 \bar q^2,
\end{align*}
where the inequality follows from using $\gamma q = \gamma \bar q + \nu_\gamma$. Also, Lemma \ref{lem: ssq_mgfexist} implies that $\MM{\bar q} = e^{-\frac{\phi \nu_\gamma}{\sqrt{\gamma}}} \MM{q} \leq M_0$ for any $\phi \in (-\phi_0,\phi_0)$ and $\gamma<\gamma_0$. It follows that
\begin{align*}
    \EE\left[(\sqrt{\gamma}\bar q)^k e^{\sqrt{\gamma } \phi \bar q}\right] \leq \frac{2^k k!}{\phi_0^k} \EE\left[ e^{ \frac{1}{2}\sqrt{\gamma } \phi_0 \bar q} e^{\sqrt{\gamma } \phi \bar q} \right] \leq \frac{2^k k!}{\phi_0^k} M_0,
\end{align*}
where the last inequality holds for any $\phi \in \left(-\frac{\phi_0}{2},\frac{\phi_0}{2}\right)$ and $\gamma < \gamma_0$. Thus, for $|\phi|\leq \phi_0/2 $, we have
\begin{align}
\label{eq: ssq_lemslow_cd1}
    \Big| \EE\Big  [ e^{\sqrt{\gamma} \phi\bar q} \Big( (c-d)^2 - (\sigma^2_\gamma + \nu_\gamma(1-\gamma))\Big) \Big] \Big| &\leq \Big|  \EE\Big  [ (\gamma(1-\gamma) \bar q +  \gamma^2 \bar q^2) e^{\sqrt{\gamma} \phi\bar q}  \Big] \Big|\nonumber\\
    &\leq  \sqrt{\gamma}\frac{2 }{\phi_0} M_0 + \gamma\frac{8}{\phi_0^2} M_0.
\end{align}
Moreover,
\begin{align}
\label{eq: ssq_lemslow_cd2}
    \EE[(c-d)e^{\sqrt{\gamma} \phi\bar q}] = -\EE[(\gamma q -\nu_\gamma) e^{\sqrt{\gamma} \phi\bar q}] = -\sqrt{\gamma} \EE[\sqrt{\gamma} \bar q e^{\sqrt{\gamma} \phi\bar q}] = -\sqrt{\gamma} \frac{d}{d\phi} \MM{\bar q}.
\end{align}
Now, by combining Eq. \eqref{eq: ssq_lemslow_cd}, \eqref{eq: ssq_lemslow_cd1} and \eqref{eq: ssq_lemslow_cd2}, for $\phi \in \left(-\frac{\phi_0}{2},\frac{\phi_0}{2}\right)$ and $\gamma < \gamma_s= \min\{\gamma_0, \phi_0^2/256\}$, we get
\begin{align*}
     \Big| (\MM{\bar q-d} \MM{c} - \MM{\bar q}) -\frac{\phi^2 \gamma\bar \sigma^2_{\gamma} }{2}\MM{\bar q} & +\phi \gamma \frac{d}{d\phi} \MM{\bar q} \Big| \\
     & = \Big| \EE\Big  [  e^{\sqrt{\gamma} \phi\bar q}\Big( e^{\sqrt{\gamma} \phi(c - d)} - \frac{\phi^2 \gamma\bar \sigma^2_{\gamma}}{2}+ \phi\gamma  (\sqrt{\gamma}\bar q) -1 \Big) \Big] \Big|\allowdisplaybreaks\\
     &\leq 6M_0e^{10A} |\phi|^3 \gamma^{\frac{3}{2}} + \frac{\phi^2 \gamma}{2} \left(\sqrt{\gamma}\frac{2 }{\phi_0} M_0 + \gamma\frac{8}{\phi_0^2} M_0\right) \allowdisplaybreaks\\
     &\leq \left( 6M_0e^{10A} + \frac{2 }{\phi_0} M_0 + \frac{4}{\phi_0^2} M_0 \right) |\phi|^2 \gamma^{\frac{3}{2}}.
\end{align*}
This completes the proof of Lemma \ref{lem:ssq_slow_mgfapprox}.
\end{proof}

\begin{proof}[Proof of Theorem \ref{thm: ssq_slow}: ]
Similar to what we have in Eq. \eqref{eq: ssq_starteq} in the proof of Theorem \ref{thm: ssq_crit}, we have that, for any $\phi \in (-\phi_0,\phi_0)$ and $\gamma < \gamma_0$, Lemma \ref{lem: ssq_mgfexist}, $\MM{\bar q}$ is bounded by $M_0<\infty$. Thus, 
\begin{align}
 \MM{-u}-1  = \MM{q+c-d} - \MM{q}.
 \end{align}
Multiplying by $e^{-\frac{\phi \nu_\gamma}{\sqrt{\gamma}}}$ on both sides, we get
\begin{equation*}
    \MM{\bar q-d} \MM{c} - \MM{\bar q} = \big(\MM{-u}-1 \big)e^{-\frac{\phi \nu_\gamma}{\sqrt{\gamma}}}.
\end{equation*}
Next, we use that
\begin{align*}
    \MM{\bar q-d} \MM{c} - \MM{\bar q} &= \frac{\phi^2 \gamma\bar \sigma^2_\gamma}{2}\MM{\bar q}  - \phi \gamma \frac{d}{d\phi} \MM{\bar q} + \mathcal{E}_s^d(\gamma,\phi) \\
    \big(\MM{-u}-1 \big)e^{-\frac{\phi \nu_\gamma}{\sqrt{\gamma}}} & = \mathcal{E}_s^u(\gamma,\phi)
\end{align*}
This yields
\begin{align*}
    \frac{1}{2} \phi \bar \sigma^2_\gamma\MM{\bar q}  - \frac{d}{d\phi} \MM{\bar q} = \frac{1}{\phi \gamma} \big( \mathcal{E}_s^u(\gamma,\phi) -\mathcal{E}_s^d(\gamma,\phi) \big)= \frac{1}{\phi \gamma} \mathcal{E}_s(\gamma,\phi).
\end{align*}
 Lemma \ref{lem:ssq_slow_mgfapprox} implies that for $\phi \in \left(-\frac{\phi_0}{2},\frac{\phi_0}{2}\right)$ and $\gamma<\gamma_s$, we have
\begin{align*}
    \frac{1}{|\phi| \gamma} |\mathcal{E}_s(\gamma,\phi)| \leq K_s^d|\phi| \sqrt{\gamma} + K_s^u \frac{1}{\sqrt{\gamma}} e^{-\frac{\phi_0 \nu_\gamma} {\sqrt{\gamma}}}e^{-\frac{\phi \nu_\gamma}{\sqrt{\gamma}}}.
\end{align*}
By redefining $G^{\gamma}(\phi) = \exp\big(-\frac{\phi^2 \bar \sigma^2_\gamma}{4}\big)$, we get
\begin{align*}
   \frac{d}{d\phi} \MM{\bar q}  G^{\gamma}(\phi) = \frac{1}{\phi \gamma} \mathcal{E}_s(\gamma,\phi) G^{\gamma}(\phi).
\end{align*}
Since the above equation is valid for any $\phi \in \big( -\frac{\phi_0}{2}, \frac{\phi_0}{2} \big)$, integrating it over $\phi$ and using the condition $M_{\bar q}^\gamma(0) G^\gamma(0) =1$, we have that, for any $\phi \in \big( -\frac{\phi_0}{2}, \frac{\phi_0}{2} \big)$,
\begin{align*}
     \MM{\bar q}  G^{\gamma}(\phi) -1 = \int_{0}^\phi \frac{1}{s \gamma} \mathcal{E}_s(\gamma,s) G^{\gamma}(s)ds.
\end{align*}
Now, since $G^{\gamma}(\phi)$ matches with the (scaled) distribution of a Gaussian random variable with finite variance, the quantities $\int_{0}^{\phi} |s|  G^{\gamma}(s) ds$ and $\int_{0}^{\phi}e^{-\frac{s \nu_\gamma}{\sqrt{\gamma}}} G^{\gamma}(s)ds$ are bounded. By taking $\gamma \rightarrow 0$, for any $\phi \in \big( -\frac{\phi_0}{2}, \frac{\phi_0}{2} \big)$, we have
\begin{align*}
 \limg \int_{0}^\phi \frac{1}{|s| \gamma} |\mathcal{E}_s(s,\phi)| G^{\gamma}(s)ds &\leq  \limg  K_s^d \gamma^\frac{1}{2} \int_{0}^{\phi} |s|  G^{\gamma}(s) ds + \limg K_s^u  \frac{1}{\sqrt{\gamma}}  e^{-\frac{\phi_0 \nu_\gamma} {\sqrt{\gamma}}} \int_{0}^{\phi}e^{-\frac{s \nu_\gamma}{\sqrt{\gamma}}} G^{\gamma}(s)ds \\
 &= 0.
\end{align*}
Finally, we get
\begin{equation*}
    \limg  \MM{\bar q} = \limg \big(G^{\gamma}(\phi)\big)^{-1} = \exp\left( \frac{\phi^2\bar \sigma^2}{4}\right),
\end{equation*}
where $\bar \sigma^2 = \limg \sigma^2_\gamma + \nu_\gamma$. Note that the RHS is the MGF of a zero mean Gaussian random variable. The result follows from combining this with Lemma \ref{COR: MGF_CONVERGENCE}.
\end{proof}

 \section{Proof of lemmas for Theorem \ref{THM: JSQ_LIMIT_DIS}}
\label{app: jsq}

\def\jsqmgfexist{\ref{lem: jsq_mgfexist}}
\subsection{Proof of Lemma \jsqmgfexist}
\label{app: jsq_mgfexist}

$ $\\

\textbf{Stability and existence of the MGF: } 
We repeat the argument presented in the proof of Theorem~\ref{LEM: JSQ_SSC}. Consider a given queue in JSQ-A, i.e., fix $i$ and consider the process $\{q_i(t)\}_{t=0}^\infty$. The evolution of the queue length process is given by 
\begin{align*}
    q_i(t+1) &= q_i(t) + c_i(t) -d_i(t) + u_i(t)\\
    &=  q_i(t) + a(t)Y_i(t) -s_i(t) -d_i(t) + u_i(t)
\end{align*}
We now create a coupled process $\tilde q_i(t)$ with the initial condition $\tilde q_i(0) = q_i(0)$, where we replace $Y_i(t)$ with $1$. Then, 
\begin{equation*}
    \tilde q_i(t+1) = \tilde q_i(t)+  a(t) -s_i(t) - \tilde d_i(t) + \tilde u_i(t),
\end{equation*}
Thus, $\{\tilde q_i(t)\}_{t=0}^\infty $ behaves like a SSQ with abandonment. It can be easily checked that $\tilde q_i(t) \geq q_i(t)$ for all $t\geq 0$, by using a  similar argument as in Eq. \eqref{eq: ssq_mgfexist_coupling}. Then, the result for SSQ with abandonment implies that $\{q_i\}$ is positive recurrent, Moreover, by Lemma \ref{lem: ssq_mgfexist}, in steady state, there exists $\phi_0$ and $M_0$ such that, for any $\phi \in (-\phi_0,\phi_0)$, we have
\begin{equation*}
    \MM{q_i} \leq \MM{\tilde q_i} \leq M_0 \exp \left( \frac{|\phi| [\lambda_\gamma - \mu_{\gamma,i}]^+}{\sqrt{\gamma}}\right).
\end{equation*}
It follows that the moments of the steady state queue lengths $q_i$ are finite for any $i$. Then, for $\phi \in (-\phi_0,\phi_0)$, we have
\begin{align*}
    \EE\left[e^{\frac{1}{n} \sqrt{\gamma} \phi \langle \q, {\bf 1}\rangle } \right] \leq \prod_{i=1}^n  \EE\left[e^{\sqrt{\gamma} \phi q_i} \right]^\frac{1}{n} \leq M_0 \exp \left( \frac{|\phi| [\lambda_\gamma - \mu_{\gamma,\min}]^+}{\sqrt{\gamma}}\right).
\end{align*}

Since the system is stable, we can consider the queue length process in steady-state. Suppose that $\q$ is distributed as the steady state queue length, ${\bf d} \sim Bin(\q,\gamma)$, and ${\bf c} = a{\bf Y} -{\bf s}$, where $a$ and ${\bf s}$ are random variables with same distribution as $a(t)$ and ${\bf s}(t)$ and $Y_i =1$ if arrivals join the $i^{th}$ queue according to the JSQ policy, otherwise $Y_j =0$. Then,
\[\q^+ =[\q +{\bf c} - {\bf d} ]^+ =  \q +{\bf c} - {\bf d} + {\bf u}.\]

Next, we use a similar argument as in the proof of Lemma \ref{lem: ssq_mgfexist} to claim that whenever $\nu_\gamma<0$, we can add extra arrivals to get a queue length process which dominates the original one. So, without loss of generality, we assume $\nu_\gamma>0$.\\

\textbf{One step coupling on abandonments: } Suppose ${\bf \tilde d} \sim Bin(\min\{ {\bf \tilde C}/\gamma, {\bf q}\},\gamma)$, where we couple ${\bf \tilde d}$ with ${\bf d}$ in a similar manner as in the proof of Lemma \ref{lem: ssq_mgfexist} so that ${\bf \tilde d} = Bin(\min\{ {\bf \tilde C}/\gamma, {\bf q}\},\gamma) \leq Bin(\q,\gamma) = {\bf d}$. Also, we use ${\bf \tilde C} = \tilde C {\bf 1}_n$, where $\tilde C = \frac{1}{n}\nu_\gamma + A\sqrt{\gamma}$ (with $\nu_\gamma>0$). Then, for any $i \in \{1,2,\dots,n\}$, we have
\begin{align}
\label{eq: jsq_mgfexist_expo_rel1}
    \EE\left[e^{\sqrt{\gamma}\phi (q^+_i - u_i)}\right] = \EE\left[ e^{\sqrt{\gamma}\phi (q_i +c_i -d_i)}\right]\leq \EE\left[ e^{\sqrt{\gamma}\phi (q_i +c_i -\tilde d_i)}\right].
\end{align}

\textbf{Drift of exponential Lyapunov function: }For $0<\phi < \phi_0$, we have
\begin{align*}
    \EE\left[\left. e^{\sqrt{\gamma}\phi (c_i -\tilde d_i)} \,\right|\, \q \right] 
    & \stackrel{(a)}{\leq}1 + \sqrt{\gamma}\phi \EE[c_i -\tilde d_i | \q] + \gamma \phi^2 \EE\big[(c_i -\tilde d_i)^2 e^{\sqrt{\gamma}\phi [c_i -\tilde d_i]^+} \big| \q\big]\\
    &\stackrel{(b)}{\leq} 1 + \sqrt{\gamma}\phi \big(\lambda_\gamma Y_i -\mu_{\gamma,i} - \min\{\tilde C,\gamma q_i\}\big) + 2\gamma \phi^2 e^{\sqrt{\gamma}\phi A}  \EE\big[A^2 +\tilde d_i^2 \big| \q\big]\\
    &\stackrel{(c)}{\leq}1 + \sqrt{\gamma}\phi \big(\lambda_\gamma Y_i -\mu_{\gamma,i} - \tilde C\big) +\sqrt{\gamma}\phi [ \tilde C - \gamma q_i]^+ + \gamma \phi^2 K_1,
\end{align*}
where (a) follows by using $e^{x} \leq 1+ x+ x^2 e^{[x]^+}$; (b) follows by using $(c_i -\tilde d_i)^2 \leq 2c_i^2 +2\tilde d_i^2 \leq 2A^2+ 2\tilde d_i^2 $ and $[c_i -\tilde d_i]^+ \leq A$; and (c) follows by using $\EE[\tilde d_i^2 | \q]  \leq \tilde C + \tilde C^2 \leq 6A^2 $ as $\tilde C \leq 2A$ and then picking $K_1 = 7A^2e^A $.
We then have that, for $0<\phi < \phi_0$,
\begin{align}
\label{eq: jsq_mgfexist_expo_rel2}
   \sum_{i=1}^n &\EE\left[ e^{\sqrt{\gamma}\phi (q_i +c_i -\tilde d_i)} \right] \nonumber\\
    &\leq\sum_{i=1}^n (1+\gamma \phi^2 K_1)\MM{q_i} + \sqrt{\gamma}\phi \EE\big[\big(\lambda_\gamma Y_i -\mu_{\gamma,i} - \tilde C\big) e^{\sqrt{\gamma}\phi q_i}\big] + \sqrt{\gamma}\phi \EE\Big[[ \tilde C - \gamma q_i]^+e^{\sqrt{\gamma}\phi q_i}\Big]\nonumber\\
    & \stackrel{(a)}{\leq} \big( 1 - \sqrt{\gamma}\phi \tilde C +\gamma \phi^2 K_1 \big) \sum_{i=1}^n \MM{q_i} + \sqrt{\gamma}\phi \left( \lambda_\gamma \MM{q_{\min}} - \sum_{i=1}^n \mu_{\gamma,i} \MM{q_i} \right) \nonumber\\
    & \ \ \ \ + \sqrt{\gamma}\phi e^{\phi \tilde C /\sqrt{\gamma} } \sum_{i=1}^n \EE\big[ \tilde C - \gamma q_i\big]^+ \allowdisplaybreaks\nonumber\\
    &\stackrel{(b)}{\leq} \big( 1 - \sqrt{\gamma}\phi \tilde C +\gamma \phi^2 K_1 \big) \sum_{i=1}^n \MM{q_i} + \sqrt{\gamma}\phi \left( \lambda_\gamma  - \sum_{i=1}^n \mu_{\gamma,i} \right) \MM{q_{\min}} \nonumber\\
    & \ \ \ \ + \sqrt{\gamma}\phi e^{\phi \tilde C /\sqrt{\gamma}  } \sum_{i=1}^n \EE[ \tilde C - \gamma q_i]^+\nonumber \allowdisplaybreaks \\
    &\stackrel{(c)}{\leq}(1 - \sqrt{\gamma}\phi \tilde C +\gamma \phi^2 K_1)\sum_{i=1}^n \MM{q_i} + \sqrt{\gamma}\phi \frac{\nu_\gamma}{n}\sum_{i=1}^n \MM{q_i} + \sqrt{\gamma}\phi e^{\phi \tilde C /\sqrt{\gamma}  }\sum_{i=1}^n  \EE[ \tilde C - \gamma q_i]^+\nonumber \allowdisplaybreaks\\
    & \stackrel{(d)}{\leq} (1 - A \gamma \phi  +\gamma \phi^2 K_1)\sum_{i=1}^n \MM{q_i} + \sqrt{\gamma}\phi e^{\phi \tilde C /\sqrt{\gamma}  } \sum_{i=1}^n  \EE[ \tilde C - \gamma q_i]^+\nonumber \allowdisplaybreaks\\
    & \stackrel{(e)}{\leq} \left(1-\frac{1}{2}A\gamma\phi\right) \sum_{i=1}^n \MM{q_i} + \sqrt{\gamma}\phi e^{\phi \tilde C /\sqrt{\gamma}  } \sum_{i=1}^n  \EE[ \tilde C - \gamma q_i]^+,
\end{align} 
where (a) follows by using $\sum_{i=1}^n \EE[Y_i e^{\sqrt{\gamma}\phi q_i}] = \MM{q_{\min}}$; (b) follows by using $\MM{q_{\min}} \leq \MM{q_{i}}$ for all $i$ when $\phi>0$; (c) follows by using $\lambda_\gamma - \sum_{i=1}^n \mu_{\gamma,i} = \nu_\gamma $ and $\MM{q_{\min}} \leq \frac{1}{n}\sum_{i=1}^n \MM{q_i}$ for $\phi>0$; (d) follows by using $\tilde C = \frac{1}{n} \nu_\gamma + A\sqrt{\gamma} $; and (e) follows by choosing $\phi<  \frac{A}{2K_1} = \frac{1}{7Ae^{A}}$. We also have
\begin{align}
\label{eq: jsq_mgfexist_expo_rel3}
   \sum_{i=1}^n  \EE\left[e^{\sqrt{\gamma}\phi (q^+_i - u_i)}\right] &\geq \sum_{i=1}^n \MM{q^+_i} -\sqrt{\gamma}\phi \sum_{i=1}^n\EE\left[u_ie^{\sqrt{\gamma}\phi q^+_i }\right] \nonumber \allowdisplaybreaks\\
   &= \sum_{i=1}^n \MM{q^+_i} -\sqrt{\gamma}\phi \sum_{i=1}^n\EE[u_i] \nonumber \allowdisplaybreaks\\
   &= \sum_{i=1}^n\MM{q_i} -\sqrt{\gamma}\phi \sum_{i=1}^n\EE[u_i],
\end{align}
where the last equality follows by equating the drift of the exponential Lyapunov function to zero in steady state. Combining Eq. \eqref{eq: jsq_mgfexist_expo_rel1}, \eqref{eq: jsq_mgfexist_expo_rel2} and \eqref{eq: jsq_mgfexist_expo_rel3}, we obtain
\begin{align*}
    \frac{1}{2}A\gamma\phi \sum_{i=1}^n\MM{q_i} &\leq \sqrt{\gamma}\phi \sum_{i=1}^n\EE[u_i] + \sqrt{\gamma}\phi e^{\phi \tilde C /\sqrt{\gamma}  } \sum_{i=1}^n \EE[ \tilde C - \gamma q_i]^+,
\end{align*}
and thus
\begin{align}
\label{eq: jsq_mgfexist_sumbound_intermediateterm}
    \sum_{i=1}^n\MM{q_i} &\leq \frac{2}{\sqrt{\gamma} A} e^{\phi A} \sum_{i=1}^n\Big( \EE[u_i] +  \EE[ \tilde C - \gamma q_i]^+\Big) e^{\frac{\phi \nu_\gamma}{n\sqrt{\gamma}}  }
\end{align}
 
 \textbf{Bound on RHS using second order drift: } Equating the drift of $\langle {\bf 1} , \q\rangle$ to zero in steady state, we get
\begin{align*}
   0 & = \EE[\langle {\bf 1} , {\bf q}^+\rangle] -\EE[\langle {\bf 1} , {\bf q}\rangle]\\
   & = \EE[\langle {\bf 1} , {\bf c} - {\bf d} + {\bf u}\rangle]\\
   &= \lambda_\gamma -\langle {\bf 1} , \boldsymbol \mu_\gamma\rangle - \gamma \EE[  \langle {\bf 1} , {\bf q}\rangle] + \EE[ \langle {\bf 1} , {\bf u}\rangle ]\\
   & = \nu_\gamma - \gamma \EE[\langle {\bf 1} , \q\rangle] + \EE[\langle {\bf 1} , {\bf u}\rangle]\\
   & =   \EE[\langle {\bf 1} , {\bf u}\rangle]  + \EE \left[\sum_{i=1}^n \left( \left[\frac{\nu_\gamma}{n} - \gamma q_i\right]^+ - \left[\gamma q_i- \frac{\nu_\gamma}{n} \right]^+\right) \right].
\end{align*}
This implies that $\EE[ \langle {\bf 1},\bar \q\rangle] = \EE[ \langle {\bf 1}, \q\rangle] - \frac{\nu_\gamma}{\gamma} = \frac{1}{\gamma} \EE[\langle {\bf 1},{\bf u}\rangle] \geq 0$, $\gamma \EE[ \langle {\bf 1},\bar \q\rangle] = \EE[\langle {\bf 1},{\bf u}\rangle] \leq nA$, and
\begin{align*}
      \sum_{i=1}^n \left( \EE[u_i] + \EE\left[\frac{\nu_\gamma}{n} - \gamma q_i\right]^+ \right) 
     &=  \sum_{i=1}^n \EE\left[\gamma q_i- \frac{\nu_\gamma}{n} \right]^+= \gamma \sum_{i=1}^n \EE[ \bar q_i ]^+ =  \gamma \EE[\|\bar \q\|],
\end{align*}
where $\bar q_i = q_i - \frac{\nu_\gamma}{n\gamma}$. 
From Eq. \eqref{eq: jsq_lemssc_l2drift}, we know that 
\begin{align*}
    	\Delta \| \q \|^2 &\leq nA^2+\|{\bf d}\|^2 + 2\langle \q - {\bf d},{\bf c} \rangle - 2 \langle \q, {\bf d}\rangle.
\end{align*}
By taking expectation on both side and equating the drift of to zero in steady state, (i.e., $\EE[\Delta \| \q \|^2 ] =0$), we get
\begin{align}
\label{eq: jsq_mgfexist_l2driftzero}
    0&\leq  \EE \Big[nA^2 + \gamma(1-\gamma)\langle {\bf 1}, \q\rangle+ (\gamma^2-2\gamma)\|\q\|^2 + 2(1-\gamma)\big( \lambda_\gamma q_{\min} -  \langle \boldsymbol \mu , \q \rangle \big)\Big] \nonumber\\
    &\leq  nA^2 + \gamma\EE[ \langle {\bf 1}, \q\rangle] + (\gamma^2-2\gamma) \EE[\|\q\|^2] + 2(1-\gamma) \nu_\gamma \EE[q_{\min}] \nonumber\\
    & \leq nA^2 +\gamma\EE[ \langle {\bf 1}, \q\rangle]+ (\gamma^2-2\gamma)\EE[\|\q\|^2] + \frac{2(1-\gamma)\nu_\gamma}{n} \EE[\langle {\bf 1}, \q\rangle].
\end{align}
Further,
\begin{align*}
    \gamma\langle {\bf 1}, \q\rangle= \gamma\langle {\bf 1}, \bar \q\rangle + \nu_\gamma, &&  \|\q\|^2 = \| \bar \q\|^2 + \frac{2\nu_\gamma }{n\gamma} \langle {\bf 1}, \bar \q\rangle + \frac{\nu_\gamma^2}{n\gamma^2}.
\end{align*}
By substituting this in Eq. \eqref{eq: jsq_mgfexist_l2driftzero}, we get
\begin{align*}
    0& \leq  nA^2 + \left(\gamma + (1-\gamma)\frac{2\nu_\gamma}{n} \right) \left( \EE[ \langle {\bf 1}, \bar \q\rangle] + \frac{\nu_\gamma}{\gamma} \right) + \big(\gamma^2-2\gamma\big) \left(\EE\big[\| \bar \q\|^2\big] + \frac{2\nu_\gamma }{n\gamma} \EE[ \langle {\bf 1}, \bar \q\rangle] + \frac{\nu_\gamma^2}{n\gamma^2} \right)\\
    & = nA^2 + \nu_\gamma - \frac{\nu_\gamma^2}{n} + \left( \gamma-\frac{2\nu_\gamma}{n} \right) \EE[ \langle {\bf 1},\bar \q\rangle] + \big(\gamma^2 - 2\gamma\big) \EE\big[ \| \bar \q\|^2 \big] \\
    &\stackrel{(a)}{\leq } 2nA^2 + \gamma\EE[ \langle {\bf 1},\bar \q\rangle] -\gamma\EE\big[ \| \bar \q\|^2 \big] \\
    &\stackrel{(b)}{\leq } 3nA^2-\gamma\EE\big[ \| \bar \q\|^2 \big],
\end{align*}
where (a) follows by using $0\leq \nu_\gamma \leq A$; and (b) follows by using $\gamma \EE[ \langle {\bf 1},\bar \q\rangle] = \EE[\langle {\bf 1},{\bf u}\rangle] \leq nA$ as shown earlier.
Thus, we get $\gamma\EE[ \| \bar \q\|^2] \leq  3nA^2$ and so
\begin{align}
\label{eq: jsq_mgfexist_center1storder}
       \gamma \EE[ \| \bar \q\|] \leq A \sqrt{3n\gamma}
\end{align}
Thus, we conclude that 
\begin{align*}
      \sum_{i=1}^n \left( \EE[u_i] + \EE\left[\tilde C - \gamma q_i\right]^+ \right) &\le \sum_{i=1}^n \left( \EE[u_i] + \EE\left[\frac{\nu_\gamma}{n} - \gamma q_i\right]^+ +A\sqrt{\gamma} \right) \\
      &\leq  \gamma \EE[\|\bar \q\|] + A n\sqrt{\gamma}\\
      &\leq A (n+\sqrt{3n}) \sqrt{\gamma} \leq 4nA\sqrt{\gamma}.
\end{align*}

\textbf{Bound for Lemma \ref{lem: jsq_mgfexist_a}: } 
Combing the above inequality with Eq. \eqref{eq: jsq_mgfexist_sumbound_intermediateterm}, we get
\begin{align}
\label{eq: jsq_mgfexist_sumexpobound}
  \sum_{i=1}^n\MM{q_i} \leq \frac{2e^{A}}{\sqrt{\gamma}A} \times 4nA\sqrt{\gamma} \times e^{\frac{\phi \nu_\gamma}{n\sqrt{\gamma}}} = 8n e^A e^{\frac{\phi \nu_\gamma}{n\sqrt{\gamma}}}.
\end{align}
Further, by Jensen's inequality, we have
\begin{align*}
  \sum_{i=1}^n\MM{q_i} =    \sum_{i=1}^n \EE\big[e^{\sqrt{\gamma}\phi q_i}\big] \geq n \EE\left[\exp\left( \frac{\sqrt{\gamma} \phi}{n} \sum_{i=1}^n q_i \right) \right].
\end{align*}
Using $\phi$ instead of $\phi/n$, for $0<\phi<\min\big \{\frac{\phi_0}{n}, \frac{1}{7nAe^{A}}\big \}$, we have
\begin{align*}
    \MM{\langle {\bf 1}, \q \rangle} =  \EE\left[\exp\left( \sqrt{\gamma} \phi \sum_{i=1}^n q_i \right)\right] \leq 8e^{A} e^{\frac{\phi \nu_\gamma}{\sqrt{\gamma}}}.
\end{align*}
This completes the proof of Lemma \ref{lem: jsq_mgfexist_a} by choosing $M_1 > 8e^A$.\\

\textbf{Bound for Lemma \ref{lem: jsq_mgfexist_b}: } In this part, we abuse notation and redefine \[\tilde{{\bf d}} \sim Bin(\max\{\tilde{{\bf C}}/\gamma,{\bf q}\},\gamma) \geq Bin(\q,\gamma) \sim {\bf d},\] where we also redefine $\tilde{{\bf C}}$ to be $\tilde{{\bf C}} = \tilde{C}{\bf 1}_n$, with $\tilde{C} = \frac{1}{n} \nu_\gamma - A\sqrt{\gamma}$. Here we assume $\frac{1}{n} \nu_\gamma > A\sqrt{\gamma}$, as  we can increase the arrival rate to make sure it holds and create a coupled process that dominates the original one.
For any $\phi>0$, $M_{q_i}^\gamma(-\phi) \leq 1$, thus in steady state $M_{q_i^+}^\gamma(-\phi)=M_{q_i}^\gamma(-\phi)$. Then,
\begin{align}
\label{eq: jsq_mgfbound_mgfzerodrift}
    0&= M_{q_i^+}^\gamma(-\phi)-M_{q_i}^\gamma(-\phi)\nonumber\\
    &= \EE[e^{-\sqrt{\gamma}\phi (q_i +c_i - d_i  +u_i)}] - M_{q_i}^\gamma(-\phi) \nonumber\\
    &\stackrel{(a)}{\leq} \EE[e^{-\sqrt{\gamma}\phi (q_i+c_i - \tilde d_i )}] -M_{q_i}^\gamma(-\phi) \nonumber\\
    &\stackrel{(b)}{\leq} \phi\sqrt{\gamma}\EE\big[( \max\{\tilde{ C},\gamma q_i\} - \lambda_\gamma Y_i +\mu_{\gamma,i} )e^{-\sqrt{\gamma }\phi q_i} \big] + \phi^2\gamma e^A \EE\big[(A+\tilde d_i)^2 e^{\sqrt{\gamma }\phi (\tilde d_i - q_i)}\big],
\end{align}
where (a) follows by using $\tilde{{\bf d}} \ge {\bf d}$ element wise and ${\bf u}\geq 0$; and (b) follows by same second order approximation as in Eq. \eqref{eq: ssq_mgfexist_starteq2}. Moreover, by same argument as in Eq. \eqref{eq: ssq_mgfexists2_term2}, we obtain
\begin{align}
\label{eq: jsq_mgfbound_mgfsecondorder}
    \EE\left[ (A+\tilde d_i)^2 e^{\sqrt{\gamma }\phi (\tilde d_i - q_i)} \right] \leq 2e^{9A}M_{q_i}^\gamma(-\phi) + 4e^{3A} e^{-\frac{\phi \nu_\gamma}{n\sqrt{\gamma}}} 
\end{align}
Further, using $\sum_{i=1}^n \EE\big[ Y_i e^{-\sqrt{\gamma }\phi q_i} \big] = M_{q_{\min}}^\gamma(-\phi) $, we get
\begin{align*}
    \sum_{i=1}^n \EE\big[( & \max\{\tilde{ C},\gamma q_i\} - \lambda_\gamma Y_i +\mu_i )e^{-\sqrt{\gamma }\phi q_i} \big] \\
    &= \tilde C \sum_{i=1}^n M_{q_i}^\gamma(-\phi) +\sum_{i=1}^n \EE\big[[\gamma q_i - \tilde C]^+e^{-\sqrt{\gamma }\phi q_i} \big] -\lambda_\gamma M_{q_{\min}}^\gamma(-\phi) + \sum_{i=1}^n \mu_{\gamma,i} M_{q_i}^\gamma(-\phi)\allowdisplaybreaks\\
    & \stackrel{(a)}{\leq} \Big(\frac{1}{n} \nu_\gamma -A\sqrt{\gamma}\Big) \sum_{i=1}^n M_{q_i}^\gamma(-\phi) + e^{-\sqrt{\gamma }\phi \tilde C/\gamma} \sum_{i=1}^n \EE[\gamma q_i - \tilde C]^+  - \nu_\gamma M_{q_{\min}}^\gamma(-\phi)\allowdisplaybreaks\\
    &\stackrel{(b)}{\leq} -A\sqrt{\gamma}\sum_{i=1}^n M_{q_i}^\gamma(-\phi) + e^A e^{-\frac{\phi \nu_\gamma}{n\sqrt{\gamma}}} \times 3nA\sqrt{\gamma},
\end{align*}
where (a) follows by using $M_{q_i}^\gamma(-\phi) \leq M_{q_{\min}}^\gamma(-\phi)$ and $[\gamma q_i - \tilde C]^+e^{-\sqrt{\gamma }\phi q_i} \leq [\gamma q_i - \tilde C]^+e^{-\sqrt{\gamma }\phi \tilde C/\gamma}$; and (b) follows by using $\sum_{i=1}^n \EE[\gamma q_i - \tilde C]^+ \leq nA\sqrt{\gamma} + \gamma\EE[\|\bar{{\bf q}}\|] \le 3nA\sqrt{\gamma}$ (by Eq. \eqref{eq: jsq_mgfexist_center1storder}) and $\frac{1}{n}\sum_{i=1}^n M_{q_i}^\gamma(-\phi) \leq M_{q_{\min}}^\gamma(-\phi)$. Combining this with Eq. \eqref{eq: jsq_mgfbound_mgfzerodrift} and \eqref{eq: jsq_mgfbound_mgfsecondorder},
we get
\begin{align*}
    0&\leq \Big(-A + 2\phi e^{10A}\Big)\sum_{i=1}^n M_{q_i}^\gamma(-\phi) +\big( 3nAe^A+4\phi e^{3A}\big) e^{-\frac{\phi \nu_\gamma}{n\sqrt{\gamma}}} \\
    &\leq -\frac{A}{2} \sum_{i=1}^n M_{q_i}^\gamma(-\phi) + 4nAe^A e^{-\frac{\phi \nu_\gamma}{n\sqrt{\gamma}}},
\end{align*}
where the last inequality follows by choosing $\phi < \frac{A}{4e^{10A}}$. This yields
\begin{align}
\label{eq: jsq_mgfexist_sumexpobound2}
    \sum_{i=1}^n M_{q_i}^\gamma(-\phi) \leq 8ne^A e^{-\frac{\phi \nu_\gamma}{n\sqrt{\gamma}}}.
\end{align}
Now, by Jensen's inequality and  replacing $\phi/n$ with $n$, we get that for any $\phi < \frac{A}{4ne^{10A}}$,
\begin{align}
     M_{\vsum{q}}^\gamma(-\phi) \leq 8e^A e^{-\frac{\phi \nu_\gamma}{\sqrt{\gamma}}}.
\end{align}

\textbf{Upper bound in Lemma \ref{lem: jsq_mgfexist_c}:} Following similar arguments as in \eqref{eq: jsq_mgfexist_expo_rel2}, for $0<\phi<\phi_0$, we obtain
\begin{align*}
    \sum_{i=1}^n  \EE\left[ e^{\gamma^\alpha \phi(q_i + c_i -d_i)} \right] &\leq \sum_{i=1}^n \EE\left[ e^{\gamma^\alpha \phi(q_i + c_i)} \right] \\
    &\leq \sum_{i=1}^n \EE\Big[ e^{\gamma^\alpha \phi q_i } \Big( 1 +\phi \gamma^\alpha(\lambda_\gamma Y_i - \mu_{\gamma_i}) + \phi^2 \gamma^{2\alpha} A^2 \Big) \Big]\\
    & \stackrel{(a)}{\leq} \big(1+ \phi^2 \gamma^{2\alpha} A^2\big) \sum_{i=1}^n \EE\left[ e^{\gamma^\alpha \phi q_i } \right] + \phi \gamma^\alpha \left( \lambda_\gamma \EE\big[ e^{\gamma^\alpha \phi q_{\min} }\big] - \sum_{i=1}^n \mu_{\gamma_i} \EE\left[ e^{\gamma^\alpha \phi q_{i} }\right] \right)\\
    &\stackrel{(b)}{\leq} \big(1+ \phi^2 \gamma^{2\alpha} A^2\big) \sum_{i=1}^n \EE\left[ e^{\gamma^\alpha \phi q_i } \right] - C_f\phi \gamma^{2\alpha} \frac{\mu_{\gamma,\min}}{\langle {\bf 1},\boldsymbol \mu_\gamma \rangle}\sum_{i=1}^n  \EE\left[ e^{\gamma^\alpha \phi q_{i} }\right]\\
    & \stackrel{(c)}{\leq} \left( 1 - \phi \gamma^{2\alpha} \frac{C_f \mu_{\gamma,\min}}{2\langle {\bf 1},\boldsymbol \mu_\gamma \rangle}\right)\sum_{i=1}^n \EE\left[ e^{\gamma^\alpha \phi q_i } \right],
\end{align*}
where (a) follows by using $|c_i |\leq A$; (b) follows from the fact that, for any positive vector ${\bf x}$, we have
\begin{align*}
    \lambda_\gamma x_{\min} - \sum_{i=1}^n \mu_{\gamma_i} x_i \leq \left( \frac{\lambda_\gamma}{\langle {\bf 1},\boldsymbol \mu_\gamma \rangle} -1 \right) \sum_{i=1}^n \mu_{\gamma_i} x_i = \frac{\nu_\gamma}{\langle {\bf 1},\boldsymbol \mu_\gamma \rangle} \sum_{i=1}^n \mu_{\gamma_i} x_i \leq  -C_f \gamma^\alpha \frac{\mu_{\gamma,\min}}{\langle {\bf 1},\boldsymbol \mu_\gamma \rangle} \langle {\bf 1},{\bf x} \rangle,
\end{align*}
and (c) follows by taking $0<\phi < \frac{C_f \mu_{\gamma,\min}}{2\langle {\bf 1},\boldsymbol \mu_\gamma \rangle}$. Further, following similar arguments as in Eq. \eqref{eq: jsq_mgfexist_expo_rel3}, we get
\begin{align*}
   \sum_{i=1}^n  \EE\left[ e^{\gamma^\alpha\phi (q^+_i - u_i)} \right] 
   &= \sum_{i=1}^n\EE\left[e^{\gamma^\alpha\phi q_i}\right] -\gamma^\alpha\phi \sum_{i=1}^n\EE[u_i].
\end{align*}
Thus, using that $\sum_{i=1}^n  \EE\big[e^{\gamma^\alpha\phi (q^+_i - u_i)}\big]  =  \sum_{i=1}^n  \EE\big[ e^{\gamma^\alpha \phi(q_i + c_i -d_i)} \big]$ and Jensen's inequality, we get
\begin{align}
\label{eq: jsq_mgfexist_ht_sumbound}
    n\EE\Big[ \exp \Big\{ \frac{\gamma^\alpha \phi}{n}  \langle {\bf 1},{\bf q} \rangle \Big\} \Big] \leq \sum_{i=1}^n\EE\left[ e^{\gamma^\alpha \phi q_i } \right] \leq \frac{2\langle {\bf 1},\boldsymbol \mu_\gamma \rangle}{C_f \mu_{\gamma,\min}} \times \frac{1}{\gamma^\alpha}\sum_{i=1}^n   \EE[u_i].
\end{align}
From Eq. \eqref{eq: jsq_lemssc_l2drift}, we know that 
\begin{align*}
    	\Delta \| \q \|^2 &\leq nA^2+\|{\bf d}\|^2 + 2\langle \q - {\bf d},{\bf c} \rangle - 2 \langle \q, {\bf d}\rangle \leq nA^2 + 2\langle \q - {\bf d},{\bf c} \rangle - \langle \q, {\bf d}\rangle,
\end{align*}
where last inequality follows since ${\bf d} \leq \q$, element wise. 
By taking expectation on both sides and equating the drift to zero in steady state, (i.e., $\EE[\Delta \| \q \|^2 ] =0$), we obtain
\begin{align*}
    0&\leq  \EE \Big[nA^2  -\gamma\|\q\|^2 + 2(1-\gamma)\big( \lambda_\gamma q_{\min} -  \langle \boldsymbol \mu , \q \rangle \big)\Big]\\
    &\leq  nA^2 -\gamma \EE[\|\q\|^2] + 2(1-\gamma) \nu_\gamma \EE[q_{\min}] \\
    &\leq nA^2 -\gamma\EE[\|\q\|^2],
\end{align*}
where the last inequality follows by using $\nu_\gamma<0$. Thus,
$\gamma\EE[\|\q\|^2]\leq nA^2$ and so $\EE[\langle {\bf 1}, \q\rangle] \leq \sqrt{n}  \EE[\|\q\|] \leq nA$. It follows that
\begin{align*}
    \EE[\langle {\bf 1},{\bf u} \rangle ] = \gamma\EE[\langle {\bf 1}, \q\rangle] -\nu_\gamma \leq \sqrt{\gamma} nA + C_f \gamma^\alpha \leq \gamma^\alpha (nA + C_f). 
\end{align*}
Now, by substituting this in Eq. \eqref{eq: jsq_mgfexist_ht_sumbound} and replacing $\phi/n$ by $\phi$, we get that for any $0<\phi<\min\big\{\frac{\phi_0}{n}, \frac{C_f \mu_{\gamma,\min}}{2n\langle {\bf 1},\boldsymbol \mu_\gamma \rangle} \big\}$,
\begin{align*}
    \EE\Big[ \exp \Big\{ \gamma^\alpha \phi  \langle {\bf 1},{\bf q} \rangle \Big\} \Big]  \leq  \frac{2\langle {\bf 1},\boldsymbol \mu_\gamma \rangle}{n C_f \mu_{\gamma,\min}} \times (nA + C_f).
\end{align*}
Therefore, the results in Lemma \ref{lem: jsq_mgfexist} holds with
\begin{align*}
    \phi_1 & = \min\left\{\frac{\phi_0}{n}, \frac{1}{7nAe^{A}}, \frac{A}{4ne^{10A}} \frac{C_f \mu_{\gamma,\min}}{2n\langle {\bf 1},\boldsymbol \mu_\gamma \rangle}\right\},\\
    M_1 & = \min\left\{ 8e^A, \frac{2\langle {\bf 1},\boldsymbol \mu_\gamma \rangle}{n C_f \mu_{\gamma,\min}} \times (nA + C_f)\right\}.
\end{align*}

\subsection{Proof of lemmas for Theorem \ref{thm: jsq_fast}}
\label{app: jsq_fast}

Recall that $(\q^+, \q, {\bf d},{\bf u})$ denotes a vector of random variables distributed as the steady state of $({\bf q}(t+1),{\bf q}(t),{\bf d}(t),{\bf u}(t))$, with ${\bf q}^+ = {\bf q}+ {\bf c}- {\bf d}+ {\bf u}$, where ${\bf c}$ has the same distribution as ${\bf c}(t)$. Thus, ${\bf q}^+$ and ${\bf q}$ have the same distribution. Also, we have $|c_i|\leq A$ and $0\leq u_i \leq A$. 

\begin{proof}[Proof of Lemma \ref{lem: jsq_approximation_fast}]  
 \ From Lemma \ref{lem: jsq_mgfexist_c}, we have $\gamma^\alpha\EE[\vsum{q}] \leq \phi_1^{-1}M_1$. As $\EE[\vsum{u}] = \gamma \EE[\vsum{q}] - \nu_\gamma$, it implies that 
\begin{align}
\label{eq: jsq_ubound_fast}
    |\EE[\vsum{u}] +\nu_\gamma| \leq \gamma^{1-\alpha}\phi_1^{-1}M_1, &&  \EE[\vsum{u}] \leq (1+C_f + \phi_1^{-1}M_1)\gamma^\alpha.
\end{align}
Moreover, since $\vsum{u}\leq nA$, the same arguments as in Eq. \eqref{eq: ssq_approximation_fast_uterm} yield
\begin{align*}
    \Big| \EE\left[{e^{-\gamma^\alpha \phi  \vsum{u} }}\right]-\phi\gamma^\alpha \nu_\gamma -1\Big| &\leq nA(2\phi_1^{-1}M_1 + C_f+1) \phi^2 \gamma^{\min\{3\alpha,1\}} e^{n|\phi|A}.
\end{align*}
 This proves Lemma \ref{lem: jsq_approximation_fast_a} with $\overline K_f^u = nA(2\phi_1^{-1}M_1 + C_f+1)$.  For Lemma \ref{lem: jsq_approximation_fast_b}, following same arguments as in Eq. \eqref{eq: ssq_approximation_fast_cterm}, we obtain
\begin{align*}
    \left| \EE\left[{e^{\gamma^\alpha \phi  c }}\right] - \frac{\gamma^{2\alpha}\phi^2 \big(\sigma^2_\gamma+\nu_\gamma^2\big)}{2} - \gamma^\alpha \phi \nu_\gamma -  1 \right| 
&\leq n^3A^3|\phi|^3 \gamma^{3\alpha} e^{n|\phi| A}.
\end{align*}
 This proves Lemma \ref{lem: jsq_approximation_fast_b} with $\overline K_f^c = n^3A^3$. For Lemma \ref{lem: jsq_approximation_fast_c}, first note that $\EE[  e^{\gamma^\alpha \phi  \vsum{q} } ]\leq M_1$. Thus, we can use the same argument as in Eq. \eqref{eq: ssq_approximation_fast_dterm} to obtain
\begin{align*}
    \Big| \EE\big[{e^{\gamma^\alpha \phi  \langle {\bf 1},\q-{\bf d} \rangle }}\big] - \EE\big[{e^{\gamma^\alpha \phi  \vsum{q}}}\big] \Big| \leq \frac{2M_1}{\phi_1} |\phi| \gamma.
\end{align*}
 This proves Lemma \ref{lem: jsq_approximation_fast_c} with $\overline K_f^d = \frac{2M_1}{\phi_1}$. 
\end{proof}

\begin{proof}[Proof of Lemma \ref{LEM: JSQ_SSC_fast}: ]
By construction $q^+_{\| i} = \frac{1}{n} \vsum{q^+}$, and so, by using $q_i^+ u_i =0$,
\begin{align*}
    \vsum{u}\vsum{q^+} = \sum_{i=1}^n u_i \vsum{q^+} = n \sum_{i=1}^n u_i (q_i^+ - q^+_{\perp i}) = -\sum_{i=1}^n u_i q_{\perp i}^+ = -\langle {\bf u},{\bf q}_{\perp}^+ \rangle
\end{align*}
Then, by using $e^{x}-1 \leq |x|e^{[x]^+}$, we obtain
\begin{align*}
     \Big|\EE\Big[\big(e^{\gamma^\alpha \phi \vsum{q^+}} -1\big)\big(e^{-\gamma^\alpha \phi \vsum{u} }-1\big)\Big]\Big| &\leq \phi^2 \gamma^{2\alpha }\EE\Big[\vsum{u}\vsum{q^+} e^{\gamma^\alpha [\phi]^+ \vsum{q^+}} e^{\gamma^\alpha |\phi| \vsum{u} }\Big]\\
     &\stackrel{(a)}{\leq} \phi^2 e^{n|\phi|A}\gamma^{2\alpha }\EE\Big[\vsum{u}\vsum{q^+} e^{\gamma^\alpha [\phi]^+ \vsum{q^+}} \Big]\\
     &= -\phi^2 \gamma^{2\alpha }e^{n|\phi|A} \EE\Big[\langle {\bf u},{\bf q}_{\perp}^+\rangle e^{\gamma^\alpha [\phi]^+ \vsum{q^+}} \Big]\\
     &\stackrel{(b)}{\leq}  \phi^2 \gamma^{2\alpha }e^{n|\phi|A} \EE[\|{\bf u}\|^4]^{\frac{1}{4}} \EE[\|{\bf q}_{\perp}^+\|^2]^{\frac{1}{2}}\EE\Big[ e^{4\gamma^\alpha [\phi]^+ \vsum{q^+}} \Big] ^{\frac{1}{4}}\\
     &\stackrel{(c)}{\leq} \phi^2 \gamma^{2\alpha }e^{n|\phi|A} \big(nA^3 (1+C_f + \phi_1^{-1}M_1)\big)^{\frac{1}{4}} \gamma^{\frac{\alpha}{4}} M_\perp^{\frac{1}{2}} M_1^\frac{1}{4}\\
     & = \overline{K}_f^{qu}\phi^2 \gamma^{\frac{9\alpha}{4}}e^{n|\phi|A},
\end{align*}
where (a) follows by using $|\phi|<\phi_1<1$ and $\vsum{u}\leq nA$; (b) follows by using Cauchy-Schwarz inequality twice; (c) follows  from Lemma \ref{lem: jsq_mgfexist_c} for $[\phi]^+<\phi_1/4$, Theorem \ref{LEM: JSQ_SSC} and $\EE[\vsum{u}] \leq (1+C_f + \phi_1^{-1}M_1)\gamma^\alpha$ as given in Eq. \eqref{eq: jsq_ubound_fast} and $u_i \leq A$, so,
\begin{align*}
    \EE[\|{\bf u}\|^4] \leq nA^2\EE[\|{\bf u}\|^2] \leq nA^3 \EE[\vsum{u}] \leq nA^3 (1+C_f + \phi_1^{-1}M_1) \gamma^\alpha.
\end{align*}
This proves Lemma \ref{LEM: JSQ_SSC_fast_a}. For Lemma \ref{LEM: JSQ_SSC_fast_b},
\begin{align*}
    \langle \boldsymbol \phi, \q \rangle &= \langle \boldsymbol \phi, \q_{\|} \rangle + \langle \boldsymbol \phi, \q_{\perp}\rangle = \frac{1}{n} \langle {\bf 1}, \boldsymbol \phi \rangle \vsum{q} + \langle \boldsymbol \phi, \q_{\perp} \rangle = \frac{1}{n} \phi \vsum{q}+\langle \boldsymbol \phi, \q_{\perp}\rangle.
\end{align*}
Further,
\begin{align*}
   \frac{1}{n}\phi \vsum{q} + [\langle\boldsymbol \phi, \q_\perp \rangle]^+ = \langle \boldsymbol \phi, \q_{\|} \rangle + [\langle\boldsymbol \phi, \q_\perp \rangle]^+ \leq \max \big\{\langle \boldsymbol \phi, \q \rangle, \langle \boldsymbol \phi, \q_{\|} \rangle\big\} \leq \|\boldsymbol \phi\| \vsum{q}.
\end{align*}
Thus, by using $e^{x}-1\leq |x|e^{[x]^+}$, we obtain
\begin{align*}
 \Big| \EE\left[ {e^{\gamma^\alpha \langle\boldsymbol \phi, \q \rangle }} \right] - \EE\left[ {e^{\frac{1}{n} \gamma^\alpha\phi \vsum{q}}} \right] \Big| &= \Big| \EE\left[e^{\frac{1}{n} \gamma^\alpha\phi \vsum{q} } \big(e^{\gamma^\alpha \langle\boldsymbol \phi, \q_\perp \rangle } -1\big) \right] \Big|\\
 &\leq \gamma^\alpha \EE \left[ |\langle\boldsymbol \phi, \q_\perp \rangle| e^{\frac{1}{n} \gamma^\alpha\phi \vsum{q} } e^{\gamma^\alpha [\langle\boldsymbol \phi, \q_\perp \rangle]^+ } \right] \\
 &\stackrel{(a)}{\leq }\gamma^\alpha \|\boldsymbol \phi\| \EE\left[ \| \q_\perp\| e^{\gamma^\alpha \|\boldsymbol \phi\| \vsum{q}} \right]\\
 &\stackrel{(b)}{\leq }\gamma^\alpha \|\boldsymbol \phi\|\EE\left[ \| \q_\perp\|^2 \right]^{\frac{1}{2}} \EE\left[  e^{2\gamma^\alpha \|\boldsymbol \phi\| \vsum{q}} \right]^{\frac{1}{2}}\\
 &\stackrel{(c)}{\leq }  \gamma^\alpha \|\boldsymbol \phi\|\sqrt{M_\perp M_1},
\end{align*}
where (a) and (b) follows from Cauchy-Schwarz inequality; and (c) follows from Lemma \ref{lem: jsq_mgfexist_c} and Theorem \ref{LEM: JSQ_SSC}. This proves Lemma \ref{LEM: JSQ_SSC_fast_b}.
\end{proof}

\subsection{Proof of lemmas Theorem \ref{thm: jsq_crit}}
\label{app: jsq_crit}
Recall that $(\q^+, \q, {\bf d},{\bf u})$ denotes a vector of random variables distributed as the steady state of $({\bf q}(t+1),{\bf q}(t),{\bf d}(t),{\bf u}(t))$, with ${\bf q}^+ = {\bf q}+ {\bf c}- {\bf d}+ {\bf u}$, where ${\bf c}$ has the same distribution as ${\bf c}(t)$. Thus, ${\bf q}^+$ and ${\bf q}$ have the same distribution. Also, we have $|c_i|\leq A$ and $0\leq u_i \leq A$.  

\begin{proof}[Proof of Lemma \ref{lem: jsq_approximation_crit}: ]
Lemma \ref{lem: jsq_mgfexist_a} implies that $\sqrt{\gamma}\EE[\vsum{q}] \leq \phi_1^{-1}M_1$. It follows that
\begin{align}
\label{eq: jsq_ubound_crit}
    \EE[\vsum{u}] = \gamma \EE[\vsum{q}] -\nu_\gamma \leq \sqrt{\gamma}(\phi_1^{-1}M_1 + C_c+1).
\end{align}
 Then, by similar arguments as in Eq. \eqref{eq: ssq_lhs}, we obtain
\begin{align}
\label{eq: jsq_lhs}
   \Big| \MM{-\vsum{u}} +\phi\sqrt{\gamma} \EE[\vsum{u}] -1\Big| \leq nA (\phi_1^{-1}M_1 + C_c+1) e^{n|\phi|A} |\phi|^2 \gamma^{\frac{3}{2}}.
\end{align}
This proves Lemma \ref{lem: jsq_approximation_crit_a} with $\overline K_c^u = nA (\phi_1^{-1}M_1 + C_c+1)$. The result in Lemma \ref{lem: jsq_approximation_crit_b} is same as that in Lemma \ref{lem: jsq_approximation_fast_b}. Now, by Lemma \ref{lem: jsq_mgfexist_a}, we have that for any $\phi < \phi_1$,
\begin{align*}
    \MM{\vsum{q}} \leq M_1 e^{\frac{\phi\nu_\gamma}{\sqrt{\gamma}}} \leq M_1e^{[\phi]^+ ([C_c]^++1)} \leq  M_1e^{[C_c]^++1} = \overline M_c
\end{align*}
Now, we use similar arguments as in Eq. \eqref{eq: ssq_highermoment},\eqref{eq: ssq_rhs2} and \eqref{eq: ssq_approximation_crit_cd}, to obtain
\begin{align*}
	\left| (\MM{\langle {\bf 1}, {\bf q} - {\bf d} \rangle} - \MM{\vsum{q}})\MM{c} +\phi \gamma \frac{d}{d\phi} \MM{\vsum{q}} \right| \leq  \phi^2 \gamma^{\frac{3}{2}} \left( \frac{2}{\phi_1}+\frac{8}{\phi_1^2} \right) \overline M_c+ \frac{2nA\overline M_c}{\phi_1} \phi^2 \gamma^{\frac{3}{2}} e^{n|\phi|  A}.
\end{align*}
This proves Lemma \ref{lem: jsq_approximation_crit_c}  with $\overline K_c^d = \Big(\frac{2}{\phi_1}+\frac{8}{\phi_1^2}\Big)\overline M_c + \frac{2nA\overline M_c}{\phi_1}$.
\end{proof}

\begin{proof}[Proof of Lemma \ref{LEM: JSQ_SSC_crit}: ]
Note that for any $\phi<\phi_1$, Lemma \ref{lem: jsq_mgfexist_a} implies that
\begin{align*}
    \MM{\vsum{q}} \leq M_1 e^{\frac{\phi\nu_\gamma}{\sqrt{\gamma}}} \leq M_1e^{[\phi]^+ ([C_c]^++1)} \leq  M_1e^{[C_c]^++1} = \overline M_c.
\end{align*}
Now by same argument as in the proof of Lemma \ref{LEM: JSQ_SSC_fast_a}, for $\phi<\phi_1/4$,
\begin{align*}
     \Big|\EE\Big[\Big(e^{\sqrt{\gamma} \phi \vsum{q^+}} -1\Big)\Big(e^{\sqrt{\gamma} \phi \vsum{u} }-1\Big)\Big]\Big| &\leq \phi^2 \gamma e^{n|\phi|A} \big(nA^3 (1+C_c + \phi_1^{-1}M_1)\big)^{\frac{1}{4}} \gamma^{\frac{1}{8}} M_\perp^{\frac{1}{2}} \overline M_c^\frac{1}{4}\\
     & = \overline{K}_c^{qu}\phi^2 \gamma^{\frac{9}{8}}e^{n|\phi|A}.
\end{align*}
This proves Lemma \ref{LEM: JSQ_SSC_crit_a}. For Lemma \ref{LEM: JSQ_SSC_crit_b}, by using the same argument as for Lemma \ref{LEM: JSQ_SSC_fast_b},
\begin{align*}
 \Big|  \EE\Big[{e^{\sqrt{\gamma} \langle\boldsymbol \phi, \q \rangle }}\Big] -  \EE\Big[{e^{\frac{1}{n} \sqrt{\gamma}\phi \vsum{q} }}\Big] \Big| & \leq  \sqrt{\gamma} \|\boldsymbol \phi\|M_\perp^\frac{1}{2} \overline{M}_c^{\frac{1}{2}},
\end{align*}
This proves Lemma \ref{LEM: JSQ_SSC_crit_b} as $\gamma$ goes to zero.
\end{proof}

\subsection{Proof of lemmas for Theorem \ref{thm: jsq_slow}}
\label{app: jsq_slow}

Recall that $(\q^+, \q, {\bf d},{\bf u})$ denotes a vector of random variables distributed as the steady state of $({\bf q}(t+1),{\bf q}(t),{\bf d}(t),{\bf u}(t))$, with ${\bf q}^+ = {\bf q}+ {\bf c}- {\bf d}+ {\bf u}$, where ${\bf c}$ has the same distribution as ${\bf c}(t)$. Thus, ${\bf q}^+$ and ${\bf q}$ have the same distribution. Also, we have $|c_i|\leq A$ and $0\leq u_i \leq A$.

\begin{proof}[Proof of Lemma \ref{lem: jsq_approximation_slow}: ]
For any $0<\phi<n\phi_1$, combining $q^+_i u_i =0, \ \forall i$ and Lemma \ref{lem: jsq_mgfexist} implies that
\begin{align*}
    \sum_{i=1}^n\EE[u_i] &= \sum_{i=1}^n\EE\left[u_i e^{-\sqrt{\gamma}\phi q_i^+}\right] \leq A \sum_{i=1}^n\EE\left[e^{-\sqrt{\gamma}\phi q_i^+}\right] = A\sum_{i=1}^n M_{q_i}^\gamma(-\phi) \leq AM_1 e^{-\frac{\phi \nu_\gamma}{n\sqrt{\gamma}}}.
\end{align*}
Since this is true for any $\phi<n\phi_1$, we get
\begin{equation}
\label{eq: jsq_slow_ubound}
    \sum_{i=1}^n\EE[u_i] \leq AM_1 e^{-\frac{\phi_1 \nu_\gamma}{\sqrt{\gamma}}}
\end{equation}
Using this, we get that
\begin{align*}
     \big| \MM{ -\vsum{u}} -  1\big|
     = \Big| \EE\Big[ \Big(e^{-\sqrt{\gamma}\phi \vsum{u}} -1\Big) \Big]\Big| \leq \sqrt{\gamma}|\phi| e^{nA} \EE [\vsum{u}]  \leq \sqrt{\gamma}|\phi| e^{nA} \times AM_1 e^{-\frac{\phi_1 \nu_\gamma}{\sqrt{\gamma}}}.
\end{align*}
This proves Lemma \ref{lem: jsq_approximation_slow_a} with $\overline K_s^u = AM_1e^{nA}$. Thus, for any $\phi>-\phi_1/2$,
\begin{equation*}
    \limg \frac{1}{|\phi|\gamma} \big| \MM{ -\vsum{u}} -  1\big|e^{-\frac{\phi \nu_\gamma}{\sqrt{\gamma}}} = 0.
\end{equation*}
For Lemma \ref{lem: jsq_approximation_slow_b}, we use the same argument as in the proof of Lemma \ref{lem:ssq_slow_mgfapprox_b}. 
\end{proof}

\begin{proof}[Proof of Lemma \ref{LEM: JSQ_SSC_slow}: ]
By using Cauchy-Schwarz inequality,
\begin{align*}
    \Big|\EE\Big[ \Big(e^{\sqrt{\gamma} \phi \langle {\bf 1},\bar{ {\bf q}}^+ \rangle} -1\Big) \Big(e^{-\sqrt{\gamma} \phi \vsum{u} }-1\Big)\Big]\Big| &\leq \EE\left[ \Big(e^{\sqrt{\gamma} \phi \langle {\bf 1},\bar{ {\bf q}}^+ \rangle} -1\Big)^2\right]^{\frac{1}{2}} \EE\left[\Big(e^{-\sqrt{\gamma} \phi \vsum{u} }-1\Big)^2\right]^{\frac{1}{2}}\\
    & \stackrel{(a)}{\leq }  \EE\Big[ \Big(e^{2\sqrt{\gamma} \phi \langle {\bf 1},\bar{ {\bf q}}^+ \rangle} +1\Big)\Big]^{\frac{1}{2}}\times \sqrt{\gamma
    } |\phi| nA e^{nA} \EE\big[\vsum{u}\big]^{\frac{1}{2}}\\
    & \stackrel{(b)}{\leq } (M_1+1)^{\frac{1}{2}} \times \sqrt{\gamma
    } |\phi| nA e^{nA} \times \sqrt{AM_1} e^{-\frac{\phi_1 \nu_\gamma}{2\sqrt{\gamma}}}\\
    & = \overline{K}_f^{qu} \sqrt{\gamma}|\phi| e^{-\frac{\phi_1 \nu_\gamma}{2\sqrt{\gamma}}},
\end{align*}
where (a) follows by using $e^{x}-1 \leq |x|e^{|x|}$ and $\vsum{u} \leq nA$; and (b) follows from using Lemma \ref{lem: jsq_mgfexist} and Eq. \eqref{eq: jsq_slow_ubound}. This proves Lemma \ref{LEM: JSQ_SSC_slow_a}. For Lemma \ref{LEM: JSQ_SSC_slow_b}, note that $\langle\boldsymbol \phi, \bar{ {\bf q}} \rangle = \frac{1}{n}\phi\langle {\bf 1},\bar{ {\bf q}} \rangle  + \langle\boldsymbol \phi,  {\bf q}_{\perp} \rangle$, so we use same argument as in the proof Lemma \ref{LEM: JSQ_SSC_fast_b}.
\end{proof}

\section{Proof of claims for Theorem \jsqssc}
\label{app: jsq_ssc}

\begin{proof}[Proof of Claim \ref{clm: jsq_ssc_abandonment}: ]
Consider a given queue in JSQ-A, i.e., fix $i$ and consider the process $\{q_i(t)\}_{t=0}^\infty$. The evolution process is given by 
\begin{align*}
    q_i(t+1) &= q_i(t) + c_i(t) -d_i(t) + u_i(t)\\
    &=  q_i(t) + a(t)Y_i(t) -s_i(t) -d_i(t) + u_i(t).
\end{align*}
We create a coupled process $\tilde q_i(t)$ with the initial condition $\tilde q_i(0) = q_i(0)$, by replacing $Y_i(t)$ with $1$. Then, 
\begin{equation*}
    \tilde q_i(t+1) = \tilde q_i(t)+  a(t) -s_i(t) - \tilde d_i(t) + \tilde u_i(t).
\end{equation*}
Thus, $\{\tilde q_i(t)\}_{t=0}^\infty $ behaves like a SSQ-A. It can be easily checked that $\tilde q_i(t) \geq q_i(t)$ for all $t\geq 0$, using a similar argument as in Eq. \eqref{eq: ssq_mgfexist_coupling}. Then, the result for SSQ-A and Lemma \ref{lem: ssq_mgfexist} implies that  the system is stable, and in steady state, there exists $\phi_0$ and $M_0$ such that, for any $\phi \in (-\phi_0,\phi_0)$,
\begin{equation*}
    \MM{q_i} \leq \MM{\tilde q_i} \leq M_0 \exp \Bigg( \frac{|\phi| [\lambda_\gamma - \mu_{\gamma,i}]^+}{\sqrt{\gamma}}\Bigg).
\end{equation*}
Thus, the moments of the steady state queue lengths $q_i$ are finite for any $i$. Using the bound on the MGF given in the previous equation, we get
\begin{align*}
    \EE[e^{\gamma \phi_0 q_i}] \leq M_0 \exp(|\phi_0| [\lambda_\gamma - \mu_{\gamma,i}]^+) \leq  M_0  e^{A}.
\end{align*}
Thus, 
\begin{align*}
    \EE\left[e^{\frac{1}{n} \gamma  \phi_0 \langle \q, {\bf 1}\rangle }\right] \leq \prod_{i=1}^n  \EE[e^{\gamma \phi_0 q_i}]^\frac{1}{n} \leq  M_0  e^{A},
\end{align*}
and
\begin{equation*}
    \gamma^m\EE[\langle \q, {\bf 1}\rangle^m] \leq \frac{n^m m!}{\phi_0^m} M_0  e^{A} := E_m.
\end{equation*}
Note that the bound provided above is very loose, and one can find a much tighter bound. However, for our purposes, such a loose bound is enough.
\end{proof}

\begin{proof}[Proof of Claim \ref{clm: jsq_ssc_drift}: ]
We have,
\begin{align}
\label{eq: jsq_lemssc_l2drift}
	\Delta \| \q  \|^2 &= \|\q^+\|^2-\|\q \|^2 \nonumber\\
	&= \|\q^+ -{\bf u} \|^2- \|{\bf u} \|^2 + 2\langle \q^+ ,{\bf u} \rangle -\|\q \|^2\nonumber\\
	&\stackrel{(a)}{=} \|\q  +{\bf c}  - {\bf d}  \|^2 - \|{\bf u} \|^2  -\|\q \|^2\nonumber\\
	&\leq \|{\bf c}  - {\bf d} \|^2 + 2\langle \q ,{\bf c}  - {\bf d} \rangle\nonumber\\
	& = \|{\bf c} \|^2+ \|{\bf d} \|^2 + 2\langle \q  - {\bf d} ,{\bf c}  \rangle - 2 \langle \q , {\bf d} \rangle\nonumber\\
	&\stackrel{(b)}{\leq} nA^2+\|{\bf d} \|^2 + 2\langle \q  - {\bf d} ,{\bf c}  \rangle - 2 \langle \q , {\bf d} \rangle,
\end{align}
where (a) follows by using $q_i^+u_i  = 0$ for all $i$; and (b) follows by using $|c_i | \leq A$. Now, since ${\bf d} \sim Bin(\q,\gamma)$, we have $\EE[\|{\bf d} \|^2] = \gamma(1-\gamma)\langle {\bf q} ,{\bf 1} \rangle + \gamma^2 \|\q \|^2$, $\EE[\langle \q , {\bf d} \rangle] = \gamma\|\q \|^2$, and \begin{align*}
    \EE[\langle \q  - {\bf d} ,{\bf c}  \rangle|\q] &= (1-\gamma)\EE[\langle \q ,{\bf c}  \rangle|\q] \\
    & = (1-\gamma)\big( \EE[a] \EE[\langle \q ,{\bf Y}  \rangle|\q] - \langle \q ,\EE[{\bf s}] \rangle \\
    &= (1-\gamma) \big(\lambda_\gamma q_{\min}  - \langle \q ,\boldsymbol\mu_\gamma \rangle\big).
\end{align*}
Combining these with Eq. \eqref{eq: jsq_lemssc_l2drift}, we have
\begin{align*}
    \EE[\Delta \| \q  \|^2 |\q ] &\leq nA^2 + \gamma(1-\gamma)\langle {\bf q} ,{\bf 1} \rangle + \gamma^2 \|\q \|^2 + 2(1-\gamma) \big(\lambda_\gamma q_{\min}  - \langle \q ,\boldsymbol\mu_\gamma \rangle\big) - 2\gamma\|\q \|^2\\
    & \leq  nA^2 + \gamma\langle\q ,{\bf 1} \rangle + (\gamma^2-2\gamma) \|\q \|^2 + 2(1-\gamma) \big(\lambda_\gamma q_{\min}  - \langle \q ,\boldsymbol\mu_\gamma \rangle\big),
\end{align*}
where $q_{\min}  = \min_i q_i $. Next, we have
\begin{align*}
    \Delta \langle\q ,{\bf 1} \rangle^2 &= \langle\q^+,{\bf 1} \rangle^2 - \langle\q ,{\bf 1} \rangle^2\\
        & = \langle\q^+ -{\bf u} ,{\bf 1} \rangle^2- \langle{\bf u} ,{\bf 1} \rangle^2 + 2 \langle \q^+,{\bf 1} \rangle \langle{\bf u} ,{\bf 1} \rangle -\langle\q ,{\bf 1} \rangle^2\\
        & \geq \langle\q + {\bf c}  - {\bf d} ,{\bf 1} \rangle^2 - \langle\q ,{\bf 1} \rangle^2 - \langle{\bf u} ,{\bf 1} \rangle^2\\
        & \stackrel{(a)}{=} \langle {\bf c} - {\bf d}  ,{\bf 1} \rangle^2 + 2\langle {\bf q} ,{\bf 1}\rangle \langle  {\bf c}  -{\bf d}  ,{\bf 1} \rangle - n^2 A^2\\
        & = (c  - \langle{\bf d} ,{\bf 1} \rangle)^2 +2 \langle\q ,{\bf 1} \rangle(c  - \langle{\bf d} ,{\bf 1} \rangle) - n^2A^2\\
		&= c^2  +  \langle{\bf d} ,{\bf 1} \rangle^2  +2 c  \langle\q  - {\bf d} ,{\bf 1} \rangle  - 2\langle{\bf q} ,{\bf 1} \rangle\langle{\bf d} ,{\bf 1} \rangle - n^2A^2\\
		&\geq \langle{\bf d} ,{\bf 1} \rangle^2  +2 c  \langle\q  - {\bf d} ,{\bf 1} \rangle  - 2\langle{\bf q} ,{\bf 1} \rangle\langle{\bf d} ,{\bf 1} \rangle - n^2A^2,
\end{align*}
where (a) follows from using $\langle{\bf u} ,{\bf 1} \rangle \leq nA$ when $u_i  \leq A$ for all $i$.
Taking expectation on both sides, and using the fact that $c $ is independent of ${\bf q} $ and ${\bf d} $, and ${\bf d}  \sim Bin({\bf q} ,\gamma)$, ew obtain
\begin{align*}
    \EE\big[ \Delta \langle\q ,{\bf 1} \rangle^2 \,\big|\, \q \big] & \geq \gamma(1-\gamma)\langle\q ,{\bf 1} \rangle + \gamma^2 \langle\q ,{\bf 1} \rangle^2 + 2(1-\gamma)\nu_\gamma\langle\q ,{\bf 1} \rangle -2\gamma\langle\q ,{\bf 1} \rangle^2 - n^2A^2\\
    & \ge   (\gamma^2-2\gamma) \langle\q ,{\bf 1} \rangle^2+2\nu_\gamma(1-\gamma)\langle\q ,{\bf 1} \rangle  - n^2A^2.
\end{align*}
\end{proof}

\begin{proof}[Proof of Claim \ref{clm: jsq_ssc_driftterm}: ]
Substituting $\lambda_\gamma = \nu_\gamma + \sum_{i=1}^n \mu_{\gamma,i} $, we have
\begin{align*}
	\lambda_\gamma  q_{\min} - \langle {\bf q} ,\boldsymbol \mu_\gamma \rangle- \frac{\nu_\gamma}{n}\langle \q ,{\bf 1} \rangle  
	& = \sum_{i=1}^n \mu_{\gamma,i} (q_{\min}  -q_i ) + \nu_\gamma\left(q_{\min}  - \frac{1}{n}\langle\q ,{\bf 1} \rangle \right) \\
	& \leq \mu_{\gamma,\min} \sum_{i=1}^n(q_{\min}  -q_i ) + \nu_\gamma\left( q_{\min} - \frac{1}{n}\langle\q ,{\bf 1} \rangle \right) \\
	& = (n\mu_{\gamma,\min} + \nu_\gamma) \left(q_{\min}  - \frac{1}{n}\langle\q ,{\bf 1} \rangle \right)\\
	&\leq (n\mu_{\gamma,\min} - \nu_\gamma^-) \left( q_{\min} - \frac{1}{n}\langle\q ,{\bf 1} \rangle \right) \\
	&\stackrel{(a)}{\leq} -\frac{1}{n}(n\mu_{\gamma,\min} - \nu_\gamma^-) \|\q_{\perp} \|\\
	&\stackrel{(b)}{\leq} -\frac{1}{2}\mu_{\gamma,\min }\|\q_\perp \|,
\end{align*}
where (a) follows from using $q_{\min} \leq \frac{1}{n}\langle\q ,{\bf 1} \rangle - \frac{1}{n}\|\q_{\perp} \|$ and by assuming $n\mu_{\gamma,\min} - \nu_\gamma^- \geq 0$; and (b) follows by using $\nu^-_\gamma \leq \frac{1}{2}n\mu_{\gamma,\min}$, where $\nu_\gamma^- = \max\{0,-\nu_\gamma\}$.
\end{proof}

\begin{proof}[Proof of Claim \ref{clm: jsq_ssc_bounded}: ]
\begin{align*}
		|\Delta \|\q_{\perp} \| | &= \big| \|\q_{\perp}^+\| -  \|\q_{\perp} \| \big|\\
		&\stackrel{(a)}{\leq} \|\q_{\perp}^+ - \q_{\perp} \|\\
		&\stackrel{(b)}{\leq} \|\q^+ - \q \|\\
		&\stackrel{(c)}{\leq} \| {\bf c} \| + \| {\bf d}  \| + \|{\bf u} \|\\ &\stackrel{(d)}{\leq}2nA +  \langle {\bf d}  ,{\bf 1} \rangle \\
		&=: Z({\bf d} ),
	\end{align*}
	where (a) follows by the triangle-inequality for the $\ell_2$-norm; (b) follows by using non-expansive property of the projection; (c) follows by using the equation $\q^+ = \q  +{\bf c}  - {\bf d}  + {\bf u} $; and (d) follows by using $|c_i |\leq A$ and $u_i  \leq A $ for all $i$.
\end{proof}

\begin{proof}[Proof of Claim \ref{clm: jsq_ssc_thridorder}: ]
    For the first term, we have
	\begin{align*}
	    3\|\q_{\perp} \|^2\EE[\Delta \|\q_{\perp} \| | \q ]  &\leq \begin{cases}
	     -3\epsilon_0\|\q_{\perp} \|^2 \ &\text{ if } \ \|\q_{\perp} \| \geq L(\q ) \\
	    3\|\q_{\perp} \|^2 \EE[Z({\bf d})  | \q ]\ &\text{ if } \ \|\q_{\perp} \| < L(\q )
	    \end{cases}\\
	    &\leq -3\epsilon_0\|\q_{\perp} \|^2 + 3\big(2nA+ \gamma\langle\q ,{\bf 1} \rangle + \epsilon_0\big) L^2(\q ).
	\end{align*}
	So, in steady state, we have
	\begin{align}
	\label{eq: jsq_lemssc_thridpower1}
	    3\EE[\|\q_{\perp}\|^2\Delta \|\q_{\perp}\|] &\leq  -3\epsilon_0\EE[\|\q_{\perp}\|^2] + \frac{12}{(1-\gamma)^2\mu_{\gamma,\min}^2}\EE\left[\big(2nA+ \gamma\langle \q,{\bf 1} \rangle + \epsilon_0\big) (2nA+ \gamma\langle \q,{\bf 1} \rangle)^2\right] \nonumber\\
	    &\leq  -3\epsilon_0\EE[\|\q_{\perp}\|^2] + \frac{12}{(1-\gamma)^2\mu_{\gamma,\min}^2}\EE\left[\big(2nA+ \gamma\langle \q,{\bf 1} \rangle + \epsilon_0\big)^3 \right] \nonumber\\
	    &\stackrel{(a)}{\leq} -3\epsilon_0\EE\left[\|\q_{\perp}\|^2\right] + K_1,
	\end{align}
	where (a) follows by using $\gamma^m\EE[\langle \q,{\bf 1} \rangle^m]\leq E_m$, and from the fact that there exist such a constant $K_1$ independent of the value of $\gamma$. For the second term, in steady state, we have
	\begin{align}
	\label{eq: jsq_lemssc_thridpower2}
	    \EE\left[3\|\q_{\perp}\| \big(\Delta \|\q_{\perp}\|\big)^2\right] &\leq \EE\big[3\|\q_{\perp}\| Z^2({\bf d})\big]\nonumber\\
	    &\stackrel{(a)}{\leq} 3\EE[\|\q_{\perp}\|^2]^{\frac{1}{2}} \EE[Z^4({\bf d})]^{\frac{1}{2}}\nonumber\\
	    &\stackrel{(b)}{\leq} 3K_2\EE[\|\q_{\perp}\|^2]^{\frac{1}{2}},
	\end{align}
	where (a) follows from using Cauchy-Schwarz inequality; and (b) follows by using the fact that $\EE[\langle {\bf d},{\bf 1} \rangle^m]$ is bounded by a constant irrespective of the value of $\gamma$ as $\gamma^m\EE[\langle \q,{\bf 1} \rangle^m] \leq E_m$, and so $K_2$ is a bounded constant irrespective of the value of $\gamma$. Finally, we have
	\begin{align}
	\label{eq: jsq_lemssc_thridpower3}
	    \EE\left[\big(\Delta \|\q_{\perp}\|\big)^3\right] &\leq  \EE\big[Z^3({\bf d})\big] \leq 3K_3, 
	\end{align}
	where $K_3$ is a bounded constant irrespective of the value of $\gamma$ due to the fact that $\EE[\langle {\bf d},{\bf 1} \rangle^m]$ is bounded by a constant irrespective of the value of $\gamma$.
\end{proof}

\end{appendix}

\end{document}